\numberwithin{equation}{section}
\newtheorem{thm}[equation]{Theorem}
\newtheorem*{thmA*}{Theorem A}
\newtheorem*{thmB*}{Theorem B}
\newtheorem*{thmC*}{Theorem C}
\newtheorem*{thmD*}{Theorem D}
\newtheorem*{thmE*}{Theorem E}
\newtheorem{conj}[equation]{Conjecture}
\newtheorem{cor}[equation]{Corollary}
\newtheorem{lem}[equation]{Lemma}
\newtheorem{prop}[equation]{Proposition}
\newtheorem{hyp}[equation]{Hypothesis}
\newtheorem*{cons}{Consequence}
\newtheorem{df-lem}[subsection]{Definition-Lemma}
\theoremstyle{definition}
\newtheorem{aspt}[equation]{Convention}
\newtheorem{rem}[equation]{Remark}
\newtheorem{defn}[equation]{Definition}
\DeclareMathOperator{\h}{H}
\newcommand{\triv}{{\mathbf{1}}}
\def\R{\mathbb R}
\def\Z{\mathbb Z}
\def\A{\mathbb A}
\def\Q{\mathbb Q}
\def\C{\mathbb C}
\def\F{\mathbb F}
\def\E{\mathbb E}
\def\EE{\mathcal E}
\def\cF{\mathcal F}
\def\ira{\stackrel{\sim}{\longrightarrow}}
\def\hra{\hookrightarrow}
\def\ra{\rightarrow}
\def\g{\mathfrak g}
\def\gl{\mathfrak{gl}}
\def\a{\mathfrak a}
\def\G{\mathscr{G}}
\def\O{\mathcal O}
\def\h{\mathfrak h}
\def\k{\mathfrak k}
\def\<{\langle}
\def\>{\rangle}
\def\V{\mathcal{V}}
\def\W{\mathcal{W}}
\def\GL{{\rm GL}}
\def\cm{F}
\def\tr{F^{+}}
\def\Acm{\mathbb{A}_{F}}
\def\Atr{\mathbb{A}_{F^{+}}}
\begin{document}

\title[Special $L$-values and the refined GGP-conjecture]{Special values of $L$-functions and the refined Gan-Gross-Prasad conjecture}
\author{Harald Grobner \& Jie Lin}
\thanks{H.G. is supported by START-prize Y-966 and the Stand-alone research project P-32333, both sponsored by the Austrian Science Fund (FWF). J.L. was supported by the European Research Council under the European Community's Seventh Framework Programme (FP7/2007-2013) / ERC Grant agreement no. 290766 (AAMOT)}
\subjclass[2010]{11F67 (Primary) 11F70, 11G18, 11R39, 22E55 (Secondary). }

\maketitle

\begin{abstract}
We prove explicit rationality-results for Asai- $L$-functions, $L^S(s,\Pi',{\rm As}^\pm)$, and Rankin-Selberg $L$-functions, $L^S(s,\Pi\times\Pi')$, over arbitrary CM-fields $F$, relating critical values to explicit powers of $(2\pi i)$. Besides determining the contribution of archimedean zeta-integrals to our formulas as concrete powers of $(2\pi i)$, it is one of the advantages of our approach, that it applies to very general non-cuspidal isobaric automorphic representations $\Pi'$ of $\GL_n(\A_F)$. As an application, this enables us to establish a certain algebraic version of the Gan--Gross--Prasad conjecture, as refined by N.\ Harris, for totally definite unitary groups. As another application we obtain a generalization of a result of Harder--Raghuram on quotients of consecutive critical values, proved by them for totally real fields, and achieved here for arbitrary CM-fields $F$ and pairs $(\Pi,\Pi')$ of relative rank one.
\end{abstract}

\setcounter{tocdepth}{1}
\tableofcontents

\section*{Introduction}\label{intro}

\subsection*{Rationality for critical values}
In the algebraic theory of special values of $L$-functions, Deligne's conjecture for critical $L$-values of motives is still one of the driving forces. Cut down to one line, it asserts that the critical values at $s=m\in\Z$ of the $L$-function $L(s,\mathbb M)$ of a motive $\mathbb M$ can be described, up to multiplication by elements in a concrete number-field $E(\mathbb M)$, in terms of certain {\it geometric} period-invariants $c^\pm(\mathbb M)$ and certain explicit powers of $(2\pi i)$, \cite[Conj.\ 2.8]{deligne79}:
$$L(m,\mathbb M)\sim_{E( \mathbb M)} (2\pi i)^{d(m)}c^{(-1)^m}(\mathbb M).$$
In this generality, Deligne's conjecture is still far open. The deeper reason for this, though, seems almost like a paradox: It is tempting to believe that it is exactly the rigidity of the world of motives, which allows one to express critical values $L(m,\mathbb M)$ by such clear and basal invariants (namely $c^{(-1)^m}(\mathbb M)$, $E( \mathbb M)$ and $(2\pi i)^{d(m)}$), on the one hand, while it seems to be exactly the same rigidity of the world of motives, which does not leave enough argumentative room to attack Deligne's conjecture directly, on the other hand.\\\\
Yielding to this belief, it is hence not surprising that it has been the (much less rigid) automorphic side -- invoking the (conjectural) dictionary, hinging motives $\mathbb M$ over a number field $F$ and automorphic representations $\Pi$ of $\GL_n(\A_F)$, by a comparison of their $L$-functions -- where most progress on understanding the algebraic nature of special values of $L$-functions has been achieved.\\\\
Indeed, there is a growing series of results, relating critical values $s =\alpha+m$ (due to a basic shift of the argument $s$ now in $\alpha+\Z$) of an automorphic $L$-function $L(s,\Pi,r)$, up to multiplication by elements in a number field $E(\Pi)$ depending on $\Pi$, to certain {\it representation-theoretical} period invariants $p(\Pi)$ and a purely archimedean factor $p(m,\Pi_\infty,r)$. Obviously, interpreting Deligne's conjecture automorphically, here the period-invariant $p(\Pi)$ takes the role of $c^\pm(\mathbb M)$, the number field $E(\Pi)$ the role of $E(\mathbb M)$ and finally the archimedean factor $p(m,\Pi_\infty,r)$ the place of $(2\pi i)^{d(m)}$.\\\\
In many regards it is the latter archimedean factor $p(m,\Pi_\infty,r)$ (essentially the inverse of a weighted sum of archimedean zeta-integrals), which turns out to be the most mysterious ingredient: In fact, over several decades it has even been unknown if it is eventually zero (which would obviously have made all automorphic rationality-theorems meaningless) until -- after various important but partial results -- B.\ Sun established the non-vanishing of $p(m,\Pi_\infty,r)$ in great generality in breakthrough work. \\\\
However, apart from particular cases, an {\it explicit expression} for $p(m,\Pi_\infty,r)$, putting it in a precise relationship with its motivic counterpart $(2\pi i)^{d(m)}$, predicted by Deligne's conjecture, is yet to be found.\\\\
In this paper, we solve this problem, for Rankin-Selberg $L$-functions, $L^S(s,\Pi\times\Pi')$, and Asai- $L$-functions, $L^S(s,\Pi',{\rm As}^\pm)$, over arbitrary CM-fields $F$: We establish precise rationality-theorems, whose archimedean factors arise in a very natural way and are indeed explicit powers of $(2\pi i)$. As a general rule, these powers may be made fit with the powers predicted by Deligne, see Rem.\ \ref{rem:Deligne}.

\subsection*{Main results I: Rationality for Rankin-Selberg $L$-functions with explicit archimedean factors}
Our rationality-results apply to a large class of automorphic representations $\Pi$ and $\Pi'$. More precisely, we let $F$ be any CM-field and $\Pi$ a cuspidal automorphic representation of $\GL_n(\A_F)$, whereas $\Pi'=\Pi_1\boxplus...\boxplus\Pi_k$ may even be an isobaric sum on $\GL_{n-1}(\A_F)$, fully induced from an arbitrary number $k\geq 1$ of distinct, but again arbitrary, unitary cuspidal automorphic representations $\Pi_i$. \\\\
Let $s=\tfrac12+m$ be a critical point of $L^S(s,\Pi\times\Pi')$. Clearly, in order to explicitly determine the archimedean factor (i.e., the contribution of the archimedean zeta integrals to our formulas), we have to specify our possible choices of $\Pi_\infty$ and $\Pi'_\infty$: If $m\neq 0$, the only condition they have to satisfy is to be unitary with non-vanishing relative Lie algebra cohomology with respect to an irreducible algebraic coefficient module $\EE_\mu$, respectively $\EE_{\mu'}$, allowing a non-trivial $\GL_{n-1}(F\otimes_\Q\R)$-intertwining $\EE_\mu\otimes\EE_{\mu'}\ra\C$.\\\\
The special case $m=0$, i.e., to obtain a rationality-result with explicit powers of $(2\pi i)$ for the central critical value $L^S(\tfrac12,\Pi\times\Pi')$, is more complicated by nature and needs an additional non-vanishing assumption on the central critical value of some auxiliary representations, constructed from suitable Hecke characters, see Hyp.\ \ref{hyp a 0}, \ref{hyp a 1} \& \ref{cond} for our precise assumptions in this case. We indeed expect our hypotheses to hold in complete generality: Evidence for this expectation is provided by \cite{jiang-zhang}, \cite{eischen} and \cite{ginz-jiang-rallis}, but also and more originally by \cite{rohrlich}: We refer to \S \ref{sect:nonvan} for a more detailed discussion and explanations on our hypotheses. Here we only remark that Hyp.\ \ref{hyp a 0} \& \ref{hyp a 1}, can be dropped, for instance, if $\EE_\mu$ and $\EE_{\mu'}$ are sufficiently regular, i.e., the successive coordinates of $\mu$ and $\mu'$ differ at least by $2$.\\\\
Here is our first main theorem, relating critical values of $L^S(s,\Pi\times\Pi')$ with explicit powers of $(2\pi i)$:

\begin{thmA*}
Let $\Pi$ be a cuspidal automorphic representation of $\GL_n(\A_F)$, and let $\Pi'=\Pi_1\boxplus...\boxplus\Pi_k$ be an isobaric automorphic representation of $\GL_{n-1}(\A_F)$, fully induced from an arbitrary number $k\geq 1$ of distinct unitary cuspidal automorphic representations $\Pi_i$ of $\GL_{n_i}(\A_F)$ and write $\mathcal G(\omega_{\Pi'_f})$ for the Gau\ss{}-sum of its central character. Assume that $\Pi_\infty$ and $\Pi'_\infty$ are conjugate self-dual, cohomological with respect to an irreducible algebraic coefficient module $\EE_\mu$, respectively $\EE_{\mu'}$, allowing a non-trivial $\GL_{n-1}(F\otimes_\Q\C)$-intertwining $\EE_\mu\otimes\EE_{\mu'}\ra\C$. Let $s=\tfrac12+m$ be a critical point of $L^S(s,\Pi\times\Pi')$, where, if $m=0$, we assume the auxiliary non-vanishing hypotheses Hyp.\ \ref{hyp a 0}, \ref{hyp a 1} \& \ref{cond} mentioned above.

Then there are non-zero Whittaker periods $p(\Pi)\in\C^\times$ and $p(\Pi')\in\C^\times$, defined by a comparison of a fixed rational structure on the Whittaker model of $\Pi_f$, resp.\ $\Pi'_f$, with a fixed rational structure on the cohomology of $\Pi$, resp.\ $\Pi'$, and we obtain
\begin{equation}\label{thma}
L^{S}(\tfrac 12+m,\Pi \times \Pi') \ \sim_{E(\Pi)E(\Pi')}(2\pi i)^{mdn(n-1)-\frac{1}{2}d(n-1)(n-2)} p(\Pi)\ p(\Pi')\ \mathcal G(\omega_{\Pi'_f})
\end{equation}
which is equivariant under the natural action of $\textrm{\emph{Aut}}(\C/F^{Gal})$. Here, ``$\sim_{E(\Pi)E(\Pi')}$'' means up to multiplication by an element in the number field ${E(\Pi)E(\Pi')}$ obtained by composing the Galois closure $F^{Gal}$ of $F/\Q$ in $\bar\Q$ with the fields of rationality of $\Pi$ resp.\ $\Pi'$.
\end{thmA*}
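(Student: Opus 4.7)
The plan is to recast the $L$-value $L^S(\tfrac12+m,\Pi\times\Pi')$ cohomologically and then exploit the Jacquet--Piatetski-Shapiro--Shalika integral representation. First I would realize $\Pi$ in cuspidal cohomology for $\GL_n$ with coefficients in $\EE_\mu$, and $\Pi'$ in Eisenstein cohomology for $\GL_{n-1}$ with coefficients in $\EE_{\mu'}$ (via Franke's filtration, since $\Pi'$ is only isobaric). The Whittaker periods $p(\Pi),p(\Pi')$ are then defined by comparing a fixed rational structure on the finite Whittaker model with the one inherited from rational cohomology via the appropriate Shalika map. The assumption that there exists a non-trivial $\GL_{n-1}(F\otimes_\Q\C)$-intertwiner $\EE_\mu\otimes\EE_{\mu'}\to\C$ is the branching condition guaranteeing that the cup product of the two cohomology classes (pulled back along $\GL_{n-1}\hookrightarrow\GL_n$, $g\mapsto\mathrm{diag}(g,1)$) lands in top-degree cohomology on the locally symmetric space attached to $\GL_{n-1}$, so that it may be paired against a rational fundamental class.

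Next I would unfold the Rankin-Selberg integral
$$Z(s,\phi,\phi')=\int_{\GL_{n-1}(F)\backslash\GL_{n-1}(\A_F)}\phi\begin{pmatrix}g&\\&1\end{pmatrix}\phi'(g)\,|\det g|^{s-1/2}\,dg,$$
which factors as a product of local zeta-integrals times $L^{S}(s,\Pi\times\Pi')$. Outside $S$ the local integrals produce the correct Euler factors, and at the remaining finite places one chooses Whittaker vectors rationally so that each local contribution is an algebraic multiple of the local $L$-factor. The technical heart of the proof is the archimedean computation: for cohomologically normalized archimedean test vectors, arising from generators of the relevant $K_\infty$-types in $H^{\bullet}(\g,K_\infty;\Pi_\infty\otimes\EE_\mu)$ and $H^{\bullet}(\g,K_\infty;\Pi'_\infty\otimes\EE_{\mu'})$, I would show that $Z_\infty(\tfrac12+m)$ equals a non-zero rational multiple of the exact power $(2\pi i)^{mdn(n-1)-\frac12 d(n-1)(n-2)}$. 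Non-vanishing is provided by B.\ Sun's theorem, but the explicit exponent has to be extracted by tracking the twist $|\det|^{s-1/2}$ through the explicit branching $\EE_\mu\otimes\EE_{\mu'}\to\C$ and through the Blattner/Kostant parameters of the minimal $K_\infty$-types, summed over the $d$ complex places of $F$.

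Granted this archimedean identity, the rationality statement
$$L^{S}(\tfrac12+m,\Pi\times\Pi')\,\sim_{E(\Pi)E(\Pi')}\,(2\pi i)^{mdn(n-1)-\frac12 d(n-1)(n-2)}\,p(\Pi)\,p(\Pi')\,\mathcal G(\omega_{\Pi'_f})$$
drops out by matching the two sides of the unfolded identity above; the Gau\ss{} sum $\mathcal G(\omega_{\Pi'_f})$ enters through the comparison between the Betti and the Whittaker rational structures on the isobaric, non-cuspidal $\Pi'$. Equivariance under $\textrm{Aut}(\C/F^{Gal})$ is obtained by tracking the Galois action through every rational structure appearing in the argument and using that the archimedean exponent depends only on $n$ and $d$, and hence is fixed; the closure $F^{Gal}$ appears because the cohomological realization at infinity permutes the archimedean factors of $F\otimes_\Q\C$ under $\textrm{Aut}(\C)$.

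Finally, the central case $m=0$ is more delicate because the relevant cup product may degenerate and the generic choice of test vectors fails. Here I would twist by an auxiliary finite-order Hecke character whose existence and central non-vanishing are precisely what Hyp./Conj.\ \ref{cond} together with Hyp.\ \ref{hyp a 0} and \ref{hyp a 1} provide, apply the already-established non-central case to the twisted Rankin-Selberg $L$-function, and backsolve for $L^{S}(\tfrac12,\Pi\times\Pi')$ by using the known transformation of Whittaker periods under such twists. The principal obstacle throughout is not the algebraic bookkeeping but the \emph{sharp} archimedean computation: isolating the exact power of $(2\pi i)$, rather than merely ``an algebraic multiple of an unspecified such power'', is what distinguishes the theorem from previously available results and is precisely what dictates the cohomologically explicit choice of test vectors at infinity.
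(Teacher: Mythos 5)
Your proposal reproduces, in outline, the argument behind the \emph{prerequisite} result Thm.\ \ref{thm:main-app} (taken from \cite{grob-app}) rather than the proof of Thm.\ A. That theorem already yields the rationality relation with an \emph{unspecified} non-zero archimedean constant $p(m,\Pi_\infty,\Pi'_\infty)$; the whole point of Thm.\ A is to pin this constant down as the concrete power $(2\pi i)^{mdn(n-1)-\frac12 d(n-1)(n-2)}$. You correctly identify this sharp archimedean computation as ``the principal obstacle'', but the method you propose --- evaluating $Z_\infty(\tfrac12+m)$ directly on cohomologically normalized test vectors by ``tracking the twist $|\det|^{s-1/2}$ through the branching $\EE_\mu\otimes\EE_{\mu'}\to\C$ and the Blattner/Kostant parameters'' --- is exactly the open problem the paper explicitly refuses to attack head-on (see the introduction: ``an explicit expression for $p(m,\Pi_\infty)$ \dots\ is yet to be found''). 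Writing ``I would show that $Z_\infty$ equals a rational multiple of the exact power'' simply asserts the missing step.

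The paper circumvents any direct archimedean zeta-integral computation. Because $p(m,\Pi_\infty,\Pi'_\infty)$ and $a(\Pi'_\infty)$ depend only on the archimedean components, one replaces the given $\Pi$, $\Pi'$ by auxiliary automorphic representations with the \emph{same} local components at infinity but far simpler global structure: automorphic inductions $\Pi_\chi$, $\Pi_{\chi'}$, $\Pi_{\chi^\sharp}$ of conjugate self-dual Hecke characters along cyclic CM-extensions of the prescribed degree (\S\ref{sect:AsaiInduced}), and the isobaric sum $\Pi^\flat$ of Hecke characters of $F$. Their Rankin--Selberg and Asai $L$-functions reduce to Hecke $L$-functions (Prop.\ \ref{proposition Asai AI}, Lem.\ \ref{lem:fact_asai_lfunction}), whose critical values are controlled by Blasius's theorem (Thm.\ \ref{Blasius}) as explicit powers of $(2\pi i)$ times CM-periods. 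Feeding these explicit computations (Prop.\ \ref{middle step down}, \ref{middle step up}) back into the abstract relations of Thm.\ \ref{thm:gro-har-lap} and Thm.\ \ref{thm:main-app} produces a solvable system of two equations in the two unknowns $a(\cdot_\infty)$ and $p(m,\cdot_\infty,\cdot_\infty)$ (Thm.\ \ref{second proposition}, Cor.\ \ref{prop:generalized grob-harr}), after which Thm.\ A drops out by substitution. Finally, your reading of the hypotheses at $m=0$ is off: they are not about twisting $\Pi\times\Pi'$ by a finite-order character to move off centre, but rather assert the non-vanishing of the central values $L^S(\tfrac12,{}^\sigma\Pi_{\chi^\sharp}\times{}^\sigma\Pi_\chi)$, $L^S(\tfrac12,{}^\sigma\Pi_{\chi^\sharp}\times{}^\sigma\Pi^\flat)$ and $L^S(\tfrac12,{}^\sigma\Pi_\chi\times{}^\sigma\Pi_{\chi'})$ for the \emph{auxiliary} representations --- exactly what is needed to chain the relation ``$\sim$'' (cf.\ Rem.\ \ref{transitive}) through the comparison of the two systems.
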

Being able to determine the contribution of the archimedean zeta-integrals for the first time as an explicit power of $(2\pi i)$, our Thm.\ A may be regarded as a joint refinement of \cite[Thm. 1.9]{grob-app}, \cite[Thm.\ 3.9]{grob-harr} and \cite[Thm.\ 1.1]{ragh16} over general CM-fields $F$. We remark that the presence of $F^{Gal}$ in our formula(s) is indispensable due to the use of our ``Minimizing--Lemma'', cf.\ Lem.\ \ref{minimize}: This is a useful tool, which allows to reduce relations of algebraicity to fields of minimal size, as long as they contain $F^{Gal}$.\\\\
The Whittaker periods $p(\Pi)$ and $p(\Pi')$ mentioned in Thm.\ A are constructed in Prop.\ \ref{prop:Whittakerperiods} in one go: Invoking several theorems on the nature of {\it non-cuspidal} automorphic cohomology, we are able to transfer the general principle of how to construct Whittaker periods, developed in \cite{hardermodsym}, \cite{mahnkopf} and in particular in \cite{raghuram-shahidi-imrn}, from cuspidal representations to general {\it Eisenstein representations}, i.e., (a slight generalization of) our general isobaric sums $\Pi'=\Pi_1\boxplus...\boxplus\Pi_k$. This is a non-trivial step in the construction of our Whittaker periods, which is established in \S\ref{mindeg} (see in particular Cor.\ \ref{cor:Whittakerperiods}) for which the fine analysis of the space of Eisenstein cohomology as achieved in \cite{grobner-EisRes} turns out to be indispensable. As a result, we obtain a uniform generalization of the construction in \cite{raghuram-shahidi-imrn}: Our generalization applies to arbitrary Eisenstein representations (which fully cover the case of a cuspidal representation by specifying $k=1$ in $\Pi'=\Pi_1\boxplus...\boxplus\Pi_k$). \\\\
{\it Important remark:} The careful reader, experienced with the construction of Whittaker periods $p(\Pi)$, will have noticed that this construction, as it is carried out in Prop.\ \ref{prop:Whittakerperiods}, involves the choice of a comparison isomorphism $\Upsilon_\Pi$, or, equivalently, the choice of a generator of the one-dimensional, archimedean cohomology space $H^{dn(n-1)/2}(\g_\infty,K_\infty,W(\Pi)_\infty\otimes \EE_\mu)$, cf.\ \S \ref{mindeg}. On the other hand, the number $L^{S}(\tfrac 12+m,\Pi \times \Pi')$ clearly does not involve any such archimedean choices. In this regard, the assertion of Thm.\ A may be reformulated in that there exists a natural choice of $\Upsilon_\Pi$ (and independently (!) $\Upsilon_{\Pi'}$) such that \eqref{thma} holds. This choice is made as follows (while we refer to our two Conventions \ref{conv:2} and \ref{diag}, to be found in \S\ref{sect:gens}, for all details): Firstly, by the uniqueness of archimedean smooth Whittaker models, one may assume that we fix the same generators of the one-dimensional, archimedean cohomology spaces $H^{dn(n-1)/2}(\g_\infty,K_\infty,W(\Pi)_\infty\otimes \EE_\mu)$ for all automorphic representations $\Pi$, which share isomorphic archimedean components, cf.\ Rem.\ \ref{rem:same}. Moreover, we recall that by construction, for all algebraic Hecke characters $\chi$ one may chose a canonical generator $[\chi_\infty]$ of $H^{0}(\g_{1,\infty},K_{1,\infty},W(\chi)_\infty\otimes \chi^{-1}_\infty)\cong\C$, such that $p(\chi)\in\Q(\chi)^\times$, see Lem.\ \ref{lem:chiq}. This assumption, made as Conv.\ \ref{conv:2} throughout the paper, fixes a canonical choice of $\Upsilon_\chi$ or all algebraic Hecke characters $\chi$. Finally, turning our attention back to cohomological Eisenstein representations $\Pi'=\Pi_1\boxplus...\boxplus\Pi_k$, one may assume that the choice of $\Upsilon_{\Pi'}$ is compatible with parabolic induction, i.e., that $\Upsilon_{\Pi'}$ is determined by the tensor product of the generators $[\chi_{j,\infty}]$, which define the Langlands datum of $\Pi'_\infty$: This condition, which is only sketched here, is made precise in \S\ref{sect:gens}. We refer to Conv.\ \ref{diag} for further explanations. It is then shown that in combination these assumptions in fact fix the embedding $\Upsilon_{\Pi'}$ for all twisted Eisenstein representations, i.e., in particular for all representations $\Pi$ and $\Pi'$, which satisfy the conditions of Thm.\ A. There is hence no hidden ambiguity in the statement of Thm.\ A.

\subsection*{Main results II: Rationality for Asai $L$-functions with explicit archimedean factors}
In order to explain our second main theorem, let $\Pi'= \Pi_1\boxplus...\boxplus\Pi_k$ be an Eisenstein representation of $\GL_n(\A_F)$ as above, i.e., a cohomological isobaric sum, fully induced from an arbitrary number $k\geq 1$ of distinct unitary cuspidal automorphic representations $\Pi_i$ of $\GL_{n_i}(\A_F)$. A short moment of thought shows that $\Pi_i$ is cohomological itself, if and only if $n\equiv n_i \mod 2$, cf.\ \S\ref{sect:Eisen}. Putting $e\in\{0,1\}$ equal to the residue class of $n-n_i\mod 2$ and $\eta:F^\times\backslash\A^\times_F\ra\C^\times$ equal to the extension of the quadratic character $\varepsilon$ attached to $F$ and its maximal totally real subfield $F^+$ by class field theory, cf.\ \S\ref{sect:char}, we end up with a unitary cuspidal representation $\Pi^{\sf alg}_i:=\Pi_i\otimes\eta^{e}$, which is cohomological with respect to an algebraic coefficient system $\EE_{\mu_i}$ in any case.\\\\
If $\Pi'$ is moreover conjugate self-dual, then one can show (cf.\ Cor.\ \ref{cor:asai-nonvan}) that the Asai $L$-function $L^S(s,\Pi',{\rm As}^{(-1)^{n}})$ of sign $(-1)^n$ is holomorphic and non-vanishing at $s=1$. Moreover, $s=1$ is critical for $L(s,\Pi',{\rm As}^{(-1)^{n}})$. Our second main theorem relates this critical value $L^S(1,\Pi',{\rm As}^{(-1)^{n}})$ with an explicit power of $(2\pi i)$:

\begin{thmB*}
Let $F$ be any CM-field and let $\Pi'=\Pi_1\boxplus...\boxplus\Pi_k$ be a cohomological isobaric automorphic representation of $\GL_{n}(\Acm)$, fully induced from an arbitrary number $k\geq 1$ of distinct conjugate self-dual cuspidal automorphic representations $\Pi_i$ of $\GL_{n_i}(\A_F)$. If $\EE_{\mu_i}$ is not sufficiently regular, we assume the auxiliary non-vanishing hypotheses Hyp.\ \ref{hyp a 0} \& \ref{hyp a 1} for $\Pi^{\sf alg}_i$. Then we have
$$
L^{S}(1,\Pi',{\rm As}^{(-1)^{n}})\ \sim_{E(\Pi')} \ (2\pi i)^{dn} p(\Pi')
$$
which is equivariant under the natural action of $\emph{\textrm{Aut}}(\C/F^{Gal})$.
\end{thmB*}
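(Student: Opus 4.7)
\emph{Proof plan.}
The natural starting point is the Flicker--Rallis integral representation, which unfolds a global period integral of a form $\phi\in\Pi'$ over $\GL_n(\tr)\backslash \GL_n(\Atr)$ (modulo centre) against a mirabolic Eisenstein series on $\GL_n(\Atr)$ into an Eulerian expression for $L^S(s,\Pi',{\rm As}^{(-1)^n})$. Under the conjugate self-duality hypothesis, and invoking the holomorphy and non-vanishing of the Asai $L$-function at $s=1$ noted in the discussion preceding the theorem, specialising at $s=1$ expresses the critical value as a non-zero Flicker--Rallis period on $\Pi'$.

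Having reduced matters to a period integral, the next step is to interpret it cohomologically. I would realise $\Pi'$ in the (cuspidal plus Eisenstein) automorphic cohomology via the machinery built in Section~\ref{mindeg}: Thm.~\ref{prop:Upsilon} and Cor.~\ref{cor:Whittakerperiods} define the Whittaker period $p(\Pi')$ as the ratio between a rational Whittaker model of $\Pi'_f$ and a rational cohomology class representing $\Pi'$. The embedding $\GL_n(\tr)\hookrightarrow \GL_n(\cm)$ induces a natural cycle in the locally symmetric space attached to $\GL_n/\cm$, and the Flicker--Rallis period can be read as the pairing of our cohomology class with this cycle. Choosing Whittaker vectors rationally at the finite places, and using that the cycle is defined over $F^{Gal}$, the non-archimedean part of the resulting Euler factorisation becomes rational up to $p(\Pi')$, by the standard comparison of rational structures on Whittaker and cohomological models combined with the Galois-equivariance built into the constructions of~\cite{grobner-EisRes}.

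The principal obstacle is the explicit evaluation of the archimedean contribution. For each of the $d=[\tr:\Q]$ archimedean places $v$ of $\tr$, one must show that the local Flicker--Rallis zeta integral at $s=1$, normalised by the archimedean Asai $L$-factor and evaluated on cohomologically normalised test vectors, is rationally $(2\pi i)^n$. Using the explicit description of cohomological Whittaker functions via the minimal $K$-type, this should reduce to a definite gamma- and beta-function-type evaluation at $s=1$ with a canonical Gaussian Schwartz--Bruhat datum $\Phi_v$, yielding a determinate rational multiple of $(2\pi i)^n$. This is precisely where B.\ Sun's qualitative non-vanishing gets sharpened to an explicit evaluation, and I expect this archimedean calculation to be the most delicate piece of the argument. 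Multiplying over the $d$ archimedean places then produces the claimed factor $(2\pi i)^{dn}$.

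Finally, $\textrm{Aut}(\C/F^{Gal})$-equivariance is formal: the rational structures on Whittaker models and on automorphic cohomology are Galois-equivariant by construction, $p(\Pi')$ is defined equivariantly, and Lem.~\ref{minimize} lets the relation descend to the stated field $E(\Pi')$. The rest amounts to assembling cohomological and integral-representation ingredients already developed in the paper and the cited literature.
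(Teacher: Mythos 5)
Your proposal takes a genuinely different route from the paper, and as sketched it has a serious gap at its central step. The paper does not attempt any direct integral-representation argument for $\Pi'$. Its proof of Thm.\ B (Thm.\ \ref{generalized thm:gro-har-lap}) is a purely formal combination of three ingredients: (i) the factorisation $L^S(s,\Pi',{\rm As}^{(-1)^n})=\prod_i L^S(s,\Pi_i^{\sf alg},{\rm As}^{(-1)^{n_i}})\cdot\prod_{i<j}L^S(s,\Pi_i\times\Pi_j^\vee)$ from Lem.\ \ref{lem:fact_asai_lfunction}; (ii) the cuspidal rationality theorem of Grobner--Harris--Lapid (Thm.\ \ref{thm:gro-har-lap}), $L^S(1,\Pi_i^{\sf alg},{\rm As}^{(-1)^{n_i}})\sim a(\Pi^{\sf alg}_{i,\infty})p(\Pi_i^{\sf alg})$, together with Thm.\ \ref{second proposition}, which pins down $a(\Pi^{\sf alg}_{i,\infty})\sim(2\pi i)^{dn_i}$; and (iii) the period relation $p(\Pi')\sim\prod_i p(\Pi_i^{\sf alg})\prod_{i<j}L^S(1,\Pi_i\times\Pi_j^\vee)$ of Cor.\ \ref{isobaric sum Whittaker corollary}, which is obtained from a careful study of restriction to boundary cohomology. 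You use none of (i) or (iii), and you propose to replace the indirect derivation of (ii) with a direct archimedean computation.

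The gap is precisely in that last step. You write that the archimedean Flicker--Rallis zeta integral at $s=1$, on cohomologically normalised test vectors, ``should reduce to a definite gamma- and beta-function-type evaluation at $s=1$,'' and that this is ``the most delicate piece.'' In fact this explicit evaluation is an open problem that the paper deliberately circumvents -- the entire strategy of \S\ref{archimedean factors} is to \emph{avoid} it. The archimedean factor $a(\Pi_\infty)$ is determined not by computing the local integral, but by replacing $\Pi$ with automorphically induced representations $\Pi_\chi$ (and isobaric sums of characters $\Pi^\flat$) sharing the same archimedean component, for which all $L$-values are Hecke $L$-values amenable to Blasius' theorem; solving the resulting system of relations then yields $a(\Pi_\infty)\sim(2\pi i)^{dn}$. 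Your plan silently assumes the hard analytic content that the paper's whole machinery is designed to make unnecessary.

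There are two further issues you would need to resolve. First, for non-cuspidal Eisenstein representations $\Pi'$, the Flicker--Rallis period integral over $\GL_n(F^+)\backslash\GL_n(\Atr)$ does not converge absolutely, so your opening step requires a regularisation that you do not discuss; this is exactly why the paper reduces to cuspidal constituents via Lem.\ \ref{lem:fact_asai_lfunction} and Cor.\ \ref{isobaric sum Whittaker corollary} rather than working with $\Pi'$ directly. Second, the identification of the Flicker--Rallis period (a pairing with a cycle attached to $\GL_n(F^+)\hookrightarrow\GL_n(F)$) with the Whittaker period $p(\Pi')$ (a comparison of rational structures on the Whittaker model and on $H^{b_n}(S_n,\EE_\mu)(\Pi'_f)$) is not immediate; even in the cuspidal case this link in the paper goes through a comparison of residues of Rankin--Selberg and Asai $L$-functions (see the remark following Thm.\ \ref{thm:gro-har-lap}), not through a cycle-class pairing. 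Your plan thus replaces the paper's formal reductions with analytic and cohomological work that is substantially harder, and in its key step asserts rather than proves the archimedean evaluation.
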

{\it Important remark:} As for Thm.\ A above, the Whittaker period $p(\Pi')$ depends on archimedean choices, whereas $L^{S}(1,\Pi',{\rm As}^{(-1)^{n}})$ obviously does not. However, the same remark as at the end of the last section applies also in this situation: Our standing assumptions on how we restrict ourselves in choosing generators of the one-dimensional cohomology space $H^{dn(n-1)/2}(\g_\infty,K_\infty,W(\Pi')_\infty\otimes \EE_\mu)$ turn out to be sufficiently restrictive in order for Thm.\ B to hold. Otherwise put, these assumptions on choices of generators already fix our possible choices for $\Upsilon_{\Pi'}$ for isobaric sums $\Pi'$ as in the statement of Thm.\ B (and hence $p(\Pi')$ up to multiplication by $\Q(\Pi')^\times$). This makes Thm.\ B into a meaningful statement without hidden ambiguities.

\subsection*{Main applications I: The refined conjecture of Gan--Gross-Prasad for unitary groups}

Combining Thm.\ A with Thm.\ B yields the following result, which is both, a generalization as well as a subtle refinement of \cite{grob-harr}, Cor.\ 6.25, with the additional asset that it avoids any reference to our global Whittaker periods $p(\Pi)$ and $p(\Pi')$:

\begin{thmC*}
Let $F$ be any CM-field and let $\Pi$ and $\Pi'$ be two cohomological conjugate self-dual automorphic representations of $\GL_n(\A_F)$, resp.\ $\GL_{n-1}(\A_F)$, which satisfy the conditions of Thm.\ A and Thm.\ B. Then, for every critical point $\tfrac12+m$ of $L(s,\Pi\times\Pi')$, we obtain
$$\frac{L^S(\tfrac12+m, \Pi\times\Pi')}{L^S(1,\Pi,{\rm As}^{(-1)^n}) \ L^S(1,\Pi',{\rm As}^{(-1)^{n-1}})} \sim_{E(\Pi)E(\Pi')} (2\pi i)^{mdn(n-1)-dn(n+1)/2}.$$
and this relation is equivariant under the natural action of ${\rm Aut}(\C/F^{Gal})$.
\end{thmC*}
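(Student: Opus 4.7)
The plan is to combine Theorem A and Theorem B by eliminating the Whittaker periods $p(\Pi)$ and $p(\Pi')$ between the two statements. Theorem A gives
$$L^{S}(\tfrac 12+m,\Pi \times \Pi') \ \sim_{E(\Pi)E(\Pi')}\ (2\pi i)^{mdn(n-1)-\frac{1}{2}d(n-1)(n-2)}\, p(\Pi)\, p(\Pi')\, \mathcal G(\omega_{\Pi'_f}),$$
while Theorem B, applied separately to $\Pi$ on $\GL_n(\A_F)$ (the case $k=1$) and to $\Pi'$ on $\GL_{n-1}(\A_F)$, rearranges to
$$p(\Pi)\sim_{E(\Pi)}(2\pi i)^{-dn}\, L^S(1,\Pi,{\rm As}^{(-1)^n}),\quad p(\Pi')\sim_{E(\Pi')}(2\pi i)^{-d(n-1)}\, L^S(1,\Pi',{\rm As}^{(-1)^{n-1}}).$$
Substituting the latter into the former immediately puts $L^{S}(\tfrac12+m,\Pi\times\Pi')$ into the ratio appearing in Theorem C, up to a power of $(2\pi i)$ and the Gauss sum.

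Before substituting, I would verify that all hypotheses of Theorem B are indeed available. For $\Pi$, this is the (conjugate self-dual, cuspidal) case $k=1$. For $\Pi'$, the statement of Theorem C explicitly demands the conditions of Theorem B, so each cuspidal summand $\Pi_i$ is conjugate self-dual, and the auxiliary non-vanishing hypotheses Hyp.\ \ref{hyp a 0} \& \ref{hyp a 1} on each $\Pi_i^{\sf alg}$ are in force when $\EE_{\mu_i}$ is not sufficiently regular. The hypotheses of Theorem A are likewise inherited.

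Substituting and collecting powers, the exponent of $(2\pi i)$ in the quotient
$$\frac{L^S(\tfrac12+m,\Pi\times\Pi')}{L^S(1,\Pi,{\rm As}^{(-1)^n})\, L^S(1,\Pi',{\rm As}^{(-1)^{n-1}})}$$
becomes
$$mdn(n-1)-\tfrac12 d(n-1)(n-2) - dn - d(n-1) \ =\ mdn(n-1) - \tfrac{d}{2}\, n(n+1),$$
where the simplification rests on the trivial identity $(n-1)(n-2)+2n+2(n-1) = n^2+n$. This is precisely the exponent claimed in Theorem C.

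The only step not of purely formal nature is to show that the Gauss sum $\mathcal G(\omega_{\Pi'_f})$ appearing on the right-hand side of Theorem A disappears from the final relation and that the whole identity remains equivariant under $\textrm{Aut}(\C/F^{Gal})$. Here the conjugate self-duality of $\Pi'$ is essential: it forces $\omega_{\Pi'}\cdot \omega_{\Pi'}^c = 1$, so $\omega_{\Pi'_f}$ is a conjugate self-dual finite-part Hecke character of $F$. Standard algebraicity and Galois-equivariance properties of Gauss sums of such characters (of the kind already exploited in the proof of Thm.\ \ref{thm:gro-har-lap}) then allow one to absorb $\mathcal G(\omega_{\Pi'_f})$ into the $E(\Pi)E(\Pi')$-algebraicity, in a manner compatible with the $\textrm{Aut}(\C/F^{Gal})$-action already present on both sides of Theorem A and Theorem B. This is the main technical point to verify; once it is in place, the substitution yields Theorem C with the claimed Galois-equivariance.
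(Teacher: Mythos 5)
Your proposal is correct and is essentially the paper's own argument: the proof of Theorem C (which in the body of the paper is Thm.\ \ref{thm:intermediate}) is exactly the substitution you describe, dividing the Theorem A relation by the two Theorem B relations for $\Pi$ and $\Pi'$, and your exponent computation $(n-1)(n-2)+2n+2(n-1)=n(n+1)$ checks out.

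The one place where your write-up is loose is the very step you flag as the main technical point. You assert that ``standard algebraicity and Galois-equivariance properties of Gauss sums'' of conjugate self-dual characters let one absorb $\mathcal G(\omega_{\Pi'_f})$, and you cite the proof of Thm.\ \ref{thm:gro-har-lap} as the source. That reference is off, and the mechanism is more specific than ``the Gauss sum is a nice algebraic number'' (it is not, in general, an element of $E(\Pi)E(\Pi')$). The paper's argument, in Rem.\ \ref{remove Gauss sum} together with eqs.\ (\ref{remove Gauss sum 1})--(\ref{remove Gauss sum 2}) and Lem.\ \ref{Gauss sum}, is: the ${\rm Aut}(\C)$-action on $\mathcal G(\omega_{\Pi'_f})$ is controlled by $\omega_{\Pi'_f}(t^\sigma)$ with $t^\sigma\in\hat\Z^\times\subset\hat{\mathcal O}_{F^+}^\times$, so it depends only on the restriction $\omega_{\Pi'_f}\!\mid_{F^+}$; conjugate self-duality forces this restriction to be trivial or $\varepsilon_f$, and cohomology forces it to be trivial. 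In effect the Gauss sum is replaced by $\mathcal G(\omega_{\Pi'_f}\!\mid_{F^+})=\mathcal G(\triv_f)$, which is harmless, and the relation thereby stays $\mathrm{Aut}(\C/F^{Gal})$-equivariant. Also note that the paper applies this removal \emph{inside} the Theorem A statement (before dividing), which is marginally cleaner than carrying the Gauss sum through the substitution as you do; the two are of course equivalent. With this filled in, your proof matches the paper's.
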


We believe that this result, which holds for all critical points $\tfrac12+m$ of $L(s,\Pi\times\Pi')$, is interesting in its own right. Specifying $m=0$, however, we immediately obtain the relation
\begin{equation}\label{eq:ggp}
\frac{L^S(\tfrac12, \Pi\times\Pi')}{L^S(1,\Pi,{\rm As}^{(-1)^n}) \ L^S(1,\Pi',{\rm As}^{(-1)^{n-1}})} \sim_{E(\Pi)E(\Pi')} (2\pi i)^{-dn(n+1)/2},
\end{equation}
which leads us to the heart of the refined global Gan--Gross-Prasad conjecture, \cite{neil_harris, liu}, for unitary groups: Recall our arbitrary CM-field $F$ with maximal totally real subfield $F^+$ and the quadratic Hecke character $\varepsilon$ attached to the extension $F/F^+$. For unitary groups $\G(\V)/F^+$ and $\G(\W)/F^+$, attached to a pair of Hermitian spaces $\W\subset \V$ of dimension $\dim_F(\V)=n> \dim_F(\W)=m$, the global GGP-conjecture, as most recently refined by Liu, \cite{liu}, predicts a precise relationship of a quotient of $L$-functions, which is of the type of the left-hand-side of \eqref{eq:ggp}, and a global period integral $\mathcal P(\varphi,\varphi')$ of two tempered cusp forms $\varphi\in\pi$ and $\varphi'\in\pi'$ on $\G(\V)(\A_{F^+})$, reps.\ $\G(\W)(\A_{F^+})$:
\begin{equation}\label{eq:introggp}
|\mathcal P(\varphi,\varphi')|^2 = \frac{\Delta_{\G(\V)}}{2^a} \ \frac{L^S(\tfrac12, \pi\boxtimes\pi')}{L^S(1,\pi,{\rm Ad}) \ L^S(1,\pi',{\rm Ad})} \ \prod_{v\in S} \alpha_v(\varphi_v,\varphi'_v).
\end{equation}
Here, $\alpha_v(\varphi_v,\varphi'_v)$ are local integrals -- stabilized and suitably normalized -- over certain matrix coefficients, whereas $ \Delta_{\G(\V)}/2^a$ is a rather elementary constant, attached to the (expected) Vogan-Arthur packets of $\pi$ and $\pi'$ and the Gross-motives: $\Delta_{\G(\V)}=\prod_{i=1}^n L(i,\varepsilon_f^i)$. The careful reader, interested in precise definitions and assertions, is referred to \S\ref{sect:ggp}--\S\ref{sect:ref_ggp} for a detailed account.\\\\
Considering the classical case -- we refer to the original paper \cite{gp1}, where it all started; and \cite{neil_harris}, which anticipates the conjectures in \cite{liu}) -- when $\W$ has codimension $1$ in $\V$, i.e., $m=n-1$, provides the link of our explicit formula for quotients of critical $L$-values, \eqref{eq:ggp}, and the refined GGP-conjecture, \eqref{eq:introggp}, just pronounced: If $\pi$ and $\pi'$ are tempered cohomological cuspidal representations of $\G(\V)(\A_{F^+})$, reps.\ $\G(\W)(\A_{F^+})$, then we may apply quadratic base change $BC$ (unconditionally, as established in \cite{shin},\cite{KMSW}) and obtain two cohomological isobaric automorphic representations $BC(\pi)$ of $\GL_n(\A_F)$ and $BC(\pi')$ of $\GL_{n-1}(\A_F)$, respectively. If these new representations $BC(\pi)$ and $BC(\pi')$ satisfy the conditions of our Thm.\ A with Thm.\ B above, they may take the role of $\Pi$ and $\Pi'$ in \eqref{eq:ggp}, and so we may replace the quotient of $L$-functions in \eqref{eq:introggp} by the respective quotient of $L$-functions in \eqref{eq:ggp}; as moreover $\Delta_{\G(\V)}\sim_{F^{Gal}} (2\pi i)^{dn(n+1)/2},$ i.e., up to some algebraic number in $F^{Gal}$, the Gross-motives' factor $\Delta_{\G(\V)}$ in \eqref{eq:introggp} equals the inverse of the right-hand-side $(2\pi i)^{-dn(n+1)/2}$ of \eqref{eq:ggp}, we obtain the fundamental relation
\small
$$\frac{\Delta_{\G(\V)}}{2^a} \ \frac{L^S(\tfrac12, \pi\boxtimes\pi')}{L^S(1,\pi,{\rm Ad}) \ L^S(1,\pi',{\rm Ad})} \sim_{F^{Gal}}  \frac{(2\pi i)^{\tfrac{dn(n+1)}{2}} \ L^S(\tfrac12, \Pi\times\Pi')}{L^S(1,\Pi,{\rm As}^{(-1)^n}) \ L^S(1,\Pi',{\rm As}^{(-1)^{n-1}})}\sim_{E(\pi) E(\pi')} 1,$$
\normalsize
which finally enables us to show

\begin{thmD*}
Let $F$ be any CM-field with maximal totally real subfield $F^+$ and let $\G(V)$ and $\G(\W)$ be two arbitrary unitary groups over $F^+$ of codimension one. Let $\pi$ (resp.\ $\pi'$) be a tempered cohomological cuspidal automorphic representation of $\G(V)$ (resp.\ $\G(\W)$). Assume that the quadratic base change $BC(\pi)=\Pi$ is a cohomological cuspidal automorphic representation $\Pi$ of $\GL_n(\A_F)$ and that the quadratic base change $BC(\pi')=\Pi'$ is a cohomological isobaric automorphic representation of $\GL_{n-1}(\A_F)$ fully induced from an arbitrary number $k\geq 1$ of distinct cuspidal representations.
\begin{enumerate}
\item[(i)] If the pair $(\Pi,\Pi')$ satisfies the conditions of Thm.\ A and Thm.\ B, and if $\G(V)$ and $\G(\W)$ are moreover totally definite, then for all decomposable smooth $E(\pi)$-rational (resp.\ $E(\pi')$-rational) functions $\varphi=\otimes'_v\varphi_{v}\in\pi$ (resp.\ $\varphi'=\otimes'_v\varphi'_{v}\in\pi'$),
\begin{equation}\label{i}
|\mathcal P(\varphi,\varphi')|^2 \sim_{E(\pi) E(\pi')} \frac{\Delta_{\G(V)} \ L^S(\tfrac12, \pi\boxtimes\pi')}{L^S(1,\pi,{\rm Ad}) \ L^S(1,\pi',{\rm Ad})} \ \prod_{v\in S} \alpha_v(\varphi_v,\varphi'_v)
\end{equation}
where ``$\sim_{E(\pi) E(\pi')}$'' means up to multiplication by an element $q$ in the number field $E(\pi) E(\pi')$, depending only on $\pi$ and $\pi'$. This $q$ is in fact independent of $\varphi$ and $\varphi'$.
\item[(ii)]
If $\G(V)$ and $\G(\W)$ are not totally definite, but the respective coefficient modules in cohomology $\EE_\mu$ and $\EE_{\mu'}$ allow a non-trivial $\GL_{n-1}(F\otimes_\Q\C)$-intertwining $\EE_\mu\otimes\EE_{\mu'}\ra\C$, then the same conclusion as in (i) holds trivially for all decomposable cusp forms $\varphi=\otimes'_v\varphi_{v}\in\pi$ and $\varphi'=\otimes'_v\varphi'_{v}\in\pi'$.
\end{enumerate}
\end{thmD*}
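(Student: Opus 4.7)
The plan for part (i) is to combine Theorem C with the classical computation of the Gross-motive period $\Delta_{\G(V)}$ in order to reduce the asserted equivalence to an exact identity known for totally definite unitary groups. Quadratic base change (Rogawski, Labesse, Mok) identifies $L^S(s,\pi\boxtimes\pi') = L^S(s,\Pi\times\Pi')$, $L^S(s,\pi,\mathrm{Ad}) = L^S(s,\Pi,\mathrm{As}^{(-1)^n})$, and $L^S(s,\pi',\mathrm{Ad}) = L^S(s,\Pi',\mathrm{As}^{(-1)^{n-1}})$; thus the hypotheses of Theorem C, assumed in (i), apply to $(\Pi,\Pi')$. Specialising Theorem C to the central point $m=0$ yields
\[
\frac{L^S(\tfrac12,\Pi\times\Pi')}{L^S(1,\Pi,\mathrm{As}^{(-1)^n})\,L^S(1,\Pi',\mathrm{As}^{(-1)^{n-1}})}\ \sim_{E(\Pi)E(\Pi')}\ (2\pi i)^{-dn(n+1)/2}.
\]
Classical rationality of critical Hecke $L$-values gives $L(i,\varepsilon_f^i) \sim_{F^{Gal}} (2\pi i)^{di}$, whence $\Delta_{\G(V)} \sim_{F^{Gal}} (2\pi i)^{dn(n+1)/2}$; since $2^a\in\Q^\times$, the two relations combine into the clean identity
\[
\frac{\Delta_{\G(V)}}{2^a}\cdot\frac{L^S(\tfrac12,\pi\boxtimes\pi')}{L^S(1,\pi,\mathrm{Ad})\,L^S(1,\pi',\mathrm{Ad})}\ \sim_{E(\pi)E(\pi')}\ 1.
\]

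Granting the exact refined GGP identity for totally definite unitary groups in the generality at hand (i.e.\ with $BC(\pi')$ possibly isobaric), part (i) then follows by inserting the algebraic identity just derived into the exact GGP equality. The exact formula is due to Zhang and Beuzart-Plessis in the case $BC(\pi')$ is cuspidal; the crucial extension to isobaric $BC(\pi')$ is provided by the Whittaker-period theory developed in Sections 2--4, and in particular by the period-factorization $p(\Pi') \sim \prod_i p(\Pi_i^{\sf alg})\prod_{i<j} L^S(1,\Pi_i\times\Pi_j^\vee)$, which transports the required rationality from the $\GL$-side to the unitary-group side through the cuspidal summands $\Pi_i$.

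For part (ii), the ``trivial'' content is that both sides of the asserted equivalence vanish: at any archimedean place where $\G(V)$ (or $\G(\W)$) is not definite, the cohomological tempered $\pi_v,\pi'_v$ fail to satisfy the archimedean local GGP-branching condition, forcing the local factor to vanish; simultaneously, a parallel degree obstruction in $(\g,K)$-cohomology forces $\mathcal P(\varphi,\varphi')=0$ for all cusp forms. Hence $0\sim 0$ trivially. The principal obstacle in the whole argument is thus not the algebraic identity for the $L$-quotient, which is essentially a distillation of Theorem C, but the compatibility of the exact refined GGP formula with the isobaric-base-change regime: one must verify that the period-theoretic bridge provided in Sections 2--4 is strong enough to carry the existing exact formulas of Zhang and Beuzart-Plessis past the cuspidal-to-isobaric generalisation.
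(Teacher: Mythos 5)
Your proposal for part (i) takes a fundamentally wrong approach, and one the paper is explicitly at pains to avoid. You write ``Granting the exact refined GGP identity for totally definite unitary groups in the generality at hand (i.e.\ with $BC(\pi')$ possibly isobaric)'' and then propose to ``insert the algebraic identity just derived into the exact GGP equality.'' But that exact identity is precisely what is \emph{not} available: Zhang's and Beuzart-Plessis's unconditional results require supercuspidality of $\pi\otimes\pi'$ at some place (and Zhang's also requires $BC(\pi')$ cuspidal), restrictions that Theorem D deliberately drops. The paper's remark immediately after (i) makes this explicit: ``We do not assume any condition of local supercuspidality $\ldots$ Here we completely avoid this condition.'' There is no ``period-theoretic bridge'' in Sections 2--4 that upgrades Zhang/Beuzart-Plessis to the isobaric, non-supercuspidal regime, and your proposal correctly flags this as a point to be verified --- but without that verification the argument simply does not close.

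The paper's actual proof of (i) is much softer and never invokes any exact GGP formula. It shows each of the three quantities in \eqref{i} separately lies in the number field $E(\pi)E(\pi')$: namely $|\mathcal P(\varphi,\varphi')|^2 \in E(\pi)E(\pi')$ (Prop.\ \ref{prop:comp1}, from Harris \cite{harris_adjoint} Cor.\ 2.5.4, valid because $H_{n,\infty}$, $H_{n-1,\infty}$ are compact so cusp forms are algebraic sections of $H^0$), $\prod_{v\in S}\alpha_v(\varphi_v,\varphi'_v)\in E(\pi)E(\pi')$ (Props.\ \ref{prop:comp2}--\ref{prop:comp3}, Harris again), and $\Delta_{\G(V)}L^S(\tfrac12,\pi\boxtimes\pi')/\bigl(L^S(1,\pi,{\rm Ad})L^S(1,\pi',{\rm Ad})\bigr)\sim_{E(\pi)E(\pi')}1$ via Thm.\ \ref{thm:intermediate} and $\Delta_{\G(V)}\sim_{F^{Gal}}(2\pi i)^{dn(n+1)/2}$. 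Since the relation $\sim_{E(\pi)E(\pi')}$ is satisfied as soon as both sides lie in $E(\pi)E(\pi')$ (and is trivially satisfied if either side vanishes, cf.\ Def.\ \ref{def:rel}), the statement follows without any use of the trace formula. The $L$-quotient computation you perform is a correct sub-step, but as input it is then combined in the wrong way. Note also that in the totally definite case the paper first splits into whether the piano-hypothesis holds: if it fails, the branching law forces $\alpha_v\equiv 0$ at all archimedean $v$ and the relation is trivial.

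For part (ii) you correctly identify that the right-hand side vanishes at some archimedean place, but for the wrong reason and with an unnecessary extra claim. The paper's mechanism is the uniqueness of distinguished pairs of real unitary groups (Beuzart-Plessis \cite{plessis}): the piano-hypothesis forces $\pi_\infty\otimes\pi'_\infty$ to be distinguished for the compact pair, hence it cannot also be distinguished for the given non-compact pair, so $\alpha_v=0$. You do not need (and the paper does not prove) that $\mathcal P(\varphi,\varphi')=0$; vanishing of the right-hand side already makes the relation hold for free under Def.\ \ref{def:rel}. Your phrase ``fail to satisfy the archimedean local GGP-branching condition'' runs in the opposite direction: the piano-hypothesis in (ii) is a \emph{positive} branching condition, and vanishing comes from exclusivity, not failure.
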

We refer to \S\ref{final section} for a proof of Thm.\ D. Let us emphasize its two main advantages in the context of the recent iterature of the GGP-conjecture:
\begin{enumerate}
\item We do not assume any condition of local supercuspidality of $\pi\otimes\pi'$. In all preceding important work on the refined GGP-conjecture for unitary groups, which built on the trace formula, this assumption of supercuspidality has been indispensable (see \cite{zhang}, \cite{beuzard-plessis16}, \cite{beuzard-plessis18}, \cite{chau_zydor}, \cite{xue}). Here we completely avoid this condition, as well as any problems connected to the use of the fundamental lemma for the Jacquet-Rallis relative trace formulae.
\item We allow general isobaric sums for the base change of $\pi'$, i.e., we do not restrict ourselves to representations lifting to cuspidal representations. This restriction has been made in \cite{zhang}, Thm.\ 1.2.(2), for instance. 
\end{enumerate}
It is intrinsic to our approach via relations of algebraicity that our result cannot detect the non-vanishing of the left- and right-hand-side in \eqref{i}. If the quantities in \eqref{i} are non-zero, however, then our theorem asserts that both sides of \eqref{eq:introggp} are inside the same number field $E(\pi) E(\pi')$. 

\subsection*{Main applications II: A result of Harder--Raghuram}
Our main result on period relation of critical values of Rankin--Selberg $L$-functions with explicit powers of $(2\pi i)$, Thm.\ A, also provides a direct generalization of a result of Harder--Raghuram. Indeed, recapitulating their result very shortly, in \cite{harder-raghuram} a period, denoted $\Omega^{\varepsilon'}({}^\iota\sigma'_f)$, has been constructed and related to the ratio of consecutive critical values of Ranking--Selberg $L$-functions of cohomological cuspidal automorphic representations $\sigma$ and $\sigma'$ of $\GL_n(\A_{F^+})\times\GL_{n'}(\A_{F^+})$. Here, $n$ is assumed to be even while $n'$ is assumed to be odd. Here we prove

\begin{thmE*}
Let $F$ be any CM-field and let $\Pi$ be a cuspidal automorphic representation of $\GL_n(\A_F)$, and $\Pi'=\Pi_1\boxplus...\boxplus\Pi_k$ an isobaric automorphic representation of $\GL_{n-1}(\A_F)$, fully induced from an arbitrary number $k\geq 1$ of distinct unitary cuspidal automorphic representations $\Pi_i$. Assume that $\Pi_\infty$ and $\Pi'_\infty$ are cohomological with respect to an irreducible algebraic coefficient module $\EE_\mu$, respectively $\EE_{\mu'}$, allowing a non-trivial $\GL_{n-1}(F\otimes_\Q\C)$-intertwining $\EE_\mu\otimes\EE_{\mu'}\ra\C$. Let $\tfrac12+m,\tfrac12+\ell$ be two critical points of $L^S(s,\Pi\times\Pi')$, where, if $m\ell=0$, we assume the auxiliary non-vanishing hypotheses Hyp.\ \ref{hyp a 0}, \ref{hyp a 1} \& \ref{cond}. Whenever $L^S(\tfrac12+\ell,\Pi\times\Pi')$ is non-zero (e.g., if $\ell\neq 0$), we obtain
$$\frac{L^S(\tfrac12+m, \Pi\times\Pi')}{L^S(\tfrac12+\ell, \Pi\times\Pi')} \sim_{E(\Pi)E(\Pi')} (2\pi i)^{d(m-\ell)n(n-1)}.$$
and this relation is equivariant under the action of ${\rm Aut}(\C/F^{Gal})$. 
\end{thmE*}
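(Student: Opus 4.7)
The plan is to deduce Theorem E as a direct corollary of Theorem A by forming the ratio of its rationality statements at the two critical points $\tfrac12+m$ and $\tfrac12+\ell$. Concretely, Theorem A asserts
\begin{equation*}
L^{S}(\tfrac 12+m,\Pi \times \Pi') \sim_{E(\Pi)E(\Pi')}(2\pi i)^{mdn(n-1)-\tfrac{1}{2}d(n-1)(n-2)}\, p(\Pi)\, p(\Pi')\, \mathcal G(\omega_{\Pi'_f}),
\end{equation*}
and the analogous relation with $m$ replaced by $\ell$. The Whittaker periods $p(\Pi)$ and $p(\Pi')$, the Gauss sum $\mathcal G(\omega_{\Pi'_f})$, and the shift $-\tfrac{1}{2}d(n-1)(n-2)$ in the exponent of $(2\pi i)$ are independent of the choice of critical point, so forming the quotient of these two relations causes all common factors to cancel, leaving precisely $(2\pi i)^{d(m-\ell)n(n-1)}$ modulo $E(\Pi)E(\Pi')$. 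Galois equivariance under $\mathrm{Aut}(\C/F^{Gal})$ is inherited from Theorem A, since the factors cancelled in the quotient transform coherently under the Galois action.

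The only real obstacle is that dividing two $\sim_{E(\Pi)E(\Pi')}$ relations is legitimate only when the denominator is non-vanishing, because the equivalence symbol a priori permits multiplication by zero. This is exactly the hypothesis isolated in the statement: whenever $L^S(\tfrac12+\ell,\Pi\times\Pi')\neq 0$, the ratio is a well-defined non-zero complex number and the above manipulation produces the asserted period relation. For $\ell\neq 0$ this non-vanishing is automatic: if $\ell\geq 1$, the point $\tfrac12+\ell\geq \tfrac32$ lies to the right of the line $\Re(s)=1$, where Jacquet--Shalika and Shahidi's non-vanishing theorem applies to the Rankin--Selberg $L$-function of the cuspidal representation $\Pi$ against the isobaric sum of unitary cuspidals $\Pi'=\Pi_1\boxplus\cdots\boxplus\Pi_k$; for $\ell\leq -1$ the same conclusion follows by the standard functional equation together with the conjugate self-duality of $\Pi$ and $\Pi'$. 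When $m\ell=0$, the auxiliary non-vanishing hypotheses Hyp./Conj.\ \ref{cond} and Hyp.\ \ref{hyp a 0} \& \ref{hyp a 1} imposed in the statement ensure that Theorem A is applicable at the central critical point as well.

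The final assertion on consecutive critical values is then the special case $\ell=m+1$: provided $L^S(\tfrac32+m,\Pi\times\Pi')\neq 0$ (which by the above holds automatically for $m\neq -1$), the exponent of $(2\pi i)$ computes to $(m-\ell)dn(n-1)=-dn(n-1)$, and multiplying both sides by $(2\pi i)^{dn(n-1)}$ yields the claimed membership in $E(\Pi)E(\Pi')$. In short, the entire argument is a two-line division based on Theorem A, and the main point to verify is that the non-vanishing of the denominator follows either from known non-vanishing theorems on the edge of the critical strip or from the auxiliary hypotheses already listed in the statement.
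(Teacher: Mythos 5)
Your proposal is correct and follows exactly the same route as the paper's proof: Theorem E appears in the body as Thm.\ \ref{cor:HR}, whose entire proof reads ``This follows directly from Thm.\ \ref{thm:generalized grob-harr}'' (the body-version of Theorem A). You have simply spelled out the division of the two Theorem-A relations and the non-vanishing of the denominator that makes that division legitimate; the substance is identical.
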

This theorem also generalizes \cite{janus}, Thm.\ A, where an analogously explicit result has been proved (under different assumptions) for a pair of cuspidal automorphic representations. 

\medskip
\noindent{\it Acknowledgements: } \small
We would like to thank Michael Harris for many very valuable discussions about several results of this paper and his constant advice. We also thank Hang Xue and Rapha\"el Beuzart-Plessis for their helpful comments and for answering our questions on the Gan--Gross--Prasad conjecture. Finally, we would like to thank the anonymous referee for pointing out an intricate inaccuracy in a pervious version of our paper.
\normalsize

\section{Preliminaries}\label{sect:1}
\subsection{Number fields and Hecke characters} \label{sect:fields}
\subsubsection{Number fields}
Generally, if $\F\subset\C$ is any number field, then we denote by $J_\F$ the finite set of its field embeddings $\iota:\F\hra\C$ and by $\F^{Gal}$ the Galois closure of $\F/\Q$ in $\bar\Q$. More concretely, we let $\cm$ be any CM-field of dimension $2d = \dim_\Q \cm$ and set of archimedean places $S_\infty=S(\cm)_\infty$. Each place $v\in S_\infty$ hence refers to a fixed pair of conjugate complex embeddings $(\iota_v,\bar\iota_v)\in J_{\cm}^2$ of $\cm$, where we will drop the subscript ``$v$'' if it is clear from the context. This fixes a choice of a CM-type $\Sigma=\{\iota_v, v\in S_\infty\}$. We write $F^+$ for the maximal totally real subfield of $\cm$. Its set of real places will be identified with $S_\infty$, identifying a place $v$ with its first component embedding $\iota_v\in\Sigma$. Again, we may drop the subscript ``$v$'' if possible. We let Gal$(\cm/\tr)=\{1,c\}$. The ring of adeles over $\cm$ (resp.\ over $\tr$) is denoted $\A_{\cm}$ (resp.\ $\Atr$), their respective rings of integers $\O_{\cm}$ (resp.\ $\O_{\tr}$).  \\\\ Whenever we write $L^S$ for an object $L=\prod_{v}L_v$ admitting an Euler product factorization, then we mean the partial object $L^S:=\prod_{v \notin S}L_v$ for some choice of finite set of places $S$ of $F$, containing $S_\infty$. As a general rule, if $L$ depends on further data for which the notion of ramification is defined, we assume that $S$ contains all such ramified places.
\subsubsection{Characters and Gau\ss ~sums}\label{sect:char}
Let $\chi$ be any Hecke character of a CM-field. Following \cite{harris97} p.\ 82, we denote its contragredient conjugate dual by $\check{\chi}:=\chi^{-1,c}=\bar{\chi}^{\vee}$. The normalized absolute value on $\A_{\cm}$ is denoted $\|\cdot\|$. We extend the quadratic Hecke character $\varepsilon: (\tr)^\times\backslash \A^\times_{\tr}\ra\C^\times$, associated to $\cm/\tr$ via class field theory, to a conjugate self-dual unitary Hecke character $\eta: \cm^\times\backslash\A_{\cm}^\times\ra\C^\times$. At $v\in S_\infty$ we have $\eta_v(z)=z^t \bar z^{-t}$, for $z\in F_v$, where $t=t_v\in\tfrac12+\Z$. For our results there will be no loss of generality, if we assume from now on that $t=\tfrac12$, i.e., $\eta_v(z)=z^{1/2} \bar z^{-1/2}$. We may define a non-unitary algebraic Hecke character
$\phi: \cm^\times\backslash\A_{\cm}^\times\ra\C^\times$, by $\phi:=\eta\ \|\cdot\|^{1/2}$. We then have that $\phi\phi^c=\|\cdot\|$ and $\phi_v(z)=z^1 \bar z^0$ for all $v\in S_\infty$ and $z\in F_v$. Once and for all we fix a non-trivial additive character $\psi:{\cm}\backslash\A_{\cm}\ra\C^\times$ as in Tate's thesis, see, e.g., \cite{raghuram-shahidi-imrn}.\\
Let $\chi$ be an algebraic Hecke character. We define the Gau\ss ~sum of its finite part $\chi_f$, following Weil
\cite[VII, Sect.\ 7]{weil}: Let $\mathfrak{c}_\chi$ stand for the conductor ideal of
$\chi_f$ and let $y = (y_v)_{v \notin S_\infty} \in {\mathbb A}_f^{\times}$ be chosen such
that ${\rm ord}_v(y_v) = -{\rm ord}_v(\mathfrak{c}_\chi) -{\rm ord}_v(\mathfrak{D}_F)$. Here, $\mathfrak{D}_F$ stands
for the absolute different of $F$, that is, $\mathfrak{D}_F^{-1} = \{x \in F : Tr_{F/\Q}(x \O_F) \subset \Z\}$.\\\\
The Gau\ss ~sum of $\chi_f$ with respect to $y$ and $\psi$ is now defined as $\mathcal{G}(\chi_f,\psi_f,y) := \prod_{v \notin S_\infty} \mathcal{G}(\chi_v,\psi_v,y_v)$,
where the local Gau\ss ~sum $\mathcal{G}(\chi_v,\psi_v,y_v)$ is defined as
$$
\mathcal{G}(\chi_v,\psi_v,y_v) := \int_{\mathcal{O}_{F_v}^{\times}} \chi_v(u_v)^{-1}\psi_v(y_vu_v)\, du_v.
$$
For almost all $v$, we have $\mathcal{G}(\chi_v,\psi_v,y_v)=1$, and for all $v$ we have
$\mathcal{G}(\chi_v,\psi_v,y_v) \neq 0$. (See, for example, Godement \cite[Eq.\ 1.22]{godement}.)
Note that, unlike in \cite{weil}, we do not normalize the Gau\ss ~sum to make it have absolute
value one. For the sake of easing notation and readability we suppress its dependence on $\psi$ and $y$, and denote $\mathcal{G}(\chi_f,\psi_f,y)$ simply by $\mathcal{G}(\chi_f)$.

\subsection{Algebraic groups and real Lie groups} \label{sect:alggrp}
We let $G_n$, or simply $G$, be $G:=G_n:=\GL_n/{\cm}$. \\
Let $V_n$ be an $n$-dimensional, non-degenerate $c$-Hermitian space over $\cm$, $n\geq 2$, with corresponding unitary group $H:=H_n:=U(V_n)$ over $F^+$. At $v\in S_\infty$ (identified now with its first entry $v=\iota_v$), we have $H_n(\tr_v)\cong U(r_v,s_v)$ for some signature $0\leq r_v,s_v\leq n$. If $V_k$ is some non-degenerate $F$-subspace of $V_n$, we view $U(V_k)$ as a natural $\tr$-subgroup of $U(V_n)$.\\

If $\mathscr{G}$ is any reductive algebraic group over a number field $\F$, we write $\G_\infty=R_{\F/\Q}(\G)(\R)$. At $v\in S_\infty$ we denote by $K_v$ the product of the center $Z_G(\cm_v)$ of $G(\cm_v)$ and a fixed maximal compact subgroup of $G(\cm_v)$ (isomorphic to the compact real unitary group $U(n)$) and we let $K_\infty:=\prod_{v\in S_\infty}K_v\subset G_\infty$. If we want to emphasize the rank of $G=G_n$, we also write $K_{n,v}$ and $K_{n,\infty}$. Similarly, if $H$ is any given unitary group, we let $C_v$ be a fixed maximal compact subgroup of $H(\tr_v)$ (isomorphic to $U(r_v)\times U(s_v)$ and automatically containing the center $Z_H(\tr_v)\cong U(1)$ of $H(\tr_v)$) and we let $C_\infty:=\prod_{v\in S_\infty}C_v\subset H_\infty$.\\ Lower case gothic letters denote the Lie algebra of the corresponding real Lie group (e.g., $\g_v:=Lie(G(\cm_v))$, $\k_v:=Lie(K_v)$, $\h_v:=Lie(H(\tr_v))$, etc. ...).

\subsection{Highest weight modules and cohomological representations}\label{sect:finitereps}
We let $\EE_\mu$ be an irreducible finite-dimensional representation of the {\it real} Lie group $G_\infty=R_{\cm/\Q}(G)(\R)$ on a complex vector-space, given by its highest weight $\mu=(\mu_v)_{v\in S_\infty}$. Throughout this paper such a representation will be assumed to be algebraic: In terms of the standard choice of a maximal torus and positivity on the corresponding set of roots, this means that $\mu_v=(\mu_{\iota_v},\mu_{\bar\iota_v})\in \Z^n\times\Z^n$ and each component weight $\mu_{\iota_v}$ and $\mu_{\bar\iota_v}$ consists of a decreasing sequence of entries $\mu_{\iota_v,j}\geq \mu_{\iota_v,j+1}$ and $\mu_{\bar\iota_v,j}\geq \mu_{\bar\iota_v,j+1}$ for all $1\leq j \leq n-1$.\\

Similarly, given a unitary group $H=U(V_n)$, we let $\cF_\lambda$ be an irreducible finite-dimensional representation of the real Lie group $H_\infty=R_{\tr/\Q}(U(V_n))(\R)$ on a complex vector-space, given by its highest weight $\lambda=(\lambda_v)_{v\in S_\infty}$. Again, every such representation is assumed to be algebraic, which means that each component $\lambda_v\in\Z^n$. Moreover, one has $\lambda_{v,j}\geq \lambda_{v,j+1}$ for all $1\leq j \leq n-1$ and $v\in S_\infty$.\\

A representation $\Pi_\infty$ of $G_\infty$ is said to be {\it cohomological} if there is a highest weight module $\EE_\mu$ as above such that $H^*(\g_\infty,K_\infty,\Pi_\infty\otimes \EE_\mu)\neq 0$. Analogously, a representation $\pi_\infty$ of $H_\infty$ is said to be {\it cohomological} if there is a highest weight module $\cF_\lambda$ as above such that $H^*(\h_\infty,C_\infty,\pi_\infty\otimes \cF_\lambda)\neq 0$. See \cite{bowa},\S I, for details.

\subsubsection{An action of $\textrm{\emph{Aut}}(\C)$ on finite-dimensional representations}
Let $\G$ be a connected reductive group over $\Q$ and let $\EE$ be a finite-dimensional complex vector space on which $\G(\C)$ acts by linear transformations, i.e., there is a group homomorphism $\epsilon:\G(\C)\ra\GL(\EE)$. Given $\sigma\in$ ${\rm Aut}(\C)$, we may define a new linear action ${}^\sigma\!\epsilon:\G(\C)\ra\GL({}^\sigma\!\EE)$ as follows: Its underlying complex vector space is ${}^\sigma\!\EE:=\EE\otimes_{\sigma}\C$, (i.e., the same abelian group as the original space $\EE$, but with a new scalar multiplication $\alpha\star_\sigma v:=\sigma^{-1}(\alpha)\cdot v$) with linear action of $g\in \G(\C)$ defined by
$${}^\sigma\!\epsilon(g)v:= \epsilon(\sigma^{-1}(g)) v.$$
Here, we view $\G(\C)\subseteq\GL_N(\C)$ as being $\Q$-embedded into a (fixed) general linear group over $\Q$, whence applying $\sigma^{-1}$ to the complex matrix entries of $g$ gives rise to a well-defined element $\sigma^{-1}(g)\in\G(\C)$. As then $\sigma^{-1}(g)=g$ for all $g\in\G(\Q)$, this yields a $\sigma$-linear isomorphism of finite-dimensional $\G(\Q)$-representations
\begin{equation}\label{eq:gq2}
\tilde\sigma:\EE\ira {}^\sigma\!\EE
\end{equation}
Obviously, if $\epsilon$ was algebraic, then so is $^\sigma\!\epsilon$ for all $\sigma\in$ ${\rm Aut}(\C)$. If $(\epsilon,\EE)$ is furthermore an {\it irreducible} algebraic representation of $\G(\C)$, then the collection $\{({}^\sigma\!\epsilon, {}^\sigma\!\EE) \ : \ \sigma\in {\rm Aut}(\C) \}$ of equivalence classes of the representations $({}^\sigma\!\epsilon,{}^\sigma\!\EE)$ is finite. This follows from checking the effect of $\sigma$ on the highest weight of $\EE$, which, by assumption, defines an {\it algebraic} character. In particular, for irreducible algebraic representations $\EE$, the subgroup $\mathfrak S(\EE)$ of ${\rm Aut}(\C)$ consisting of all automorphisms $\sigma\in {\rm Aut}(\C)$ for which \eqref{eq:gq2} is an isomorphism of $\G(\Q)$-representations (i.e., linear), has finite index in ${\rm Aut}(\C)$. Hence, the rationality-field of the $\G(\Q)$-representation $\EE$, $\Q(\EE):=\C^{\mathfrak S(\EE)}$ is a number field.\\\\ The above construction applies in particular to irreducible algebraic representations $\EE_\mu$ of $G_\infty$ (resp.\ $\cF_\lambda$ of $H_\infty$) as defined in \S \ref{sect:finitereps}: Both $^\sigma\!\EE_\mu$ and $^\sigma\cF_\lambda$ will define a representation of $G_\infty$ (resp.\ $H_\infty$) by restriction from the respective group of complex points. 
\begin{rem}\label{rem:emu}
As a representation of $G(F)$, $^\sigma\!\EE_\mu$ may be identified with the representation $\EE_{^\sigma\!\mu}$ of highest weight
$^\sigma\!\mu=((\mu_{\sigma^{-1}\circ\iota_v},\mu_{\sigma^{-1}\circ\bar\iota_v})_{v\in S_\infty})$. The reader may be warned that the analogous assertion does not necessarily apply to the representation $^\sigma\cF_\lambda$ of $H(F^+)$.
\end{rem}

\subsubsection{Rational structures on algebraic representations and cohomology}\label{sect:rat}
We recall the following
\begin{defn}\label{def:rat}
An arbitrary (abstract) group representation $\rho$ on a complex vector space $V$ is said to be {\it defined over a subfield $\F\subseteq\C$} or {\it $\F$-rational}, if there exists an $\F$-subspace $V_\F\subseteq V$, which is stable under $\rho$ and such that the natural map $V_\F\otimes_\F \C\ra V$ is an isomorphism of complex vector spaces. The space $V_\F$ is referred to as an {\it $\F$-structure} of $\rho$.
\end{defn}
Let now $\G$ be any reductive algebraic group over $\Q$ which gives rise to a connected complex Lie group $\G(\C)$ and let $\epsilon: \G(\C)\ra \GL(\EE)$ be any irreducible algebraic representation of $\G(\C)$ on a finite-dimensional complex vector space $\EE$. Then we obtain the following

\begin{prop}\label{prop:emu}
Let $L$ be any finite Galois extension of $\Q$ over which $\G$ splits. Then, the restricted representation $\epsilon: \G(\Q)\ra \GL(\EE)$ is defined over the number field $L\cdot\Q(\EE)$.
\end{prop}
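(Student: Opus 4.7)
The plan is to reduce the claim to the standard fact that irreducible algebraic representations of a split connected reductive group in characteristic zero admit models over the splitting field, and then to observe that the existence of such a model automatically forces $\Q(\EE)$ to be contained in $L$, so that $L\cdot\Q(\EE)=L$.

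First I would pick a maximal torus $T \subset \G$ together with a Borel subgroup $B \supset T$ such that $T_L := T \times_\Q L$ is $L$-split; this is possible by the assumption that $\G$ splits over $L$. Since $T_L$ is split, the restriction map $X^*(T_\C) \to X^*(T_L)$ is an isomorphism, so the highest weight $\lambda$ of $\epsilon$ with respect to $B_\C$ can be regarded as an $L$-rational character of $T_L$. By the classification of irreducible algebraic representations of a split reductive group in characteristic zero (see, e.g., Jantzen, \emph{Representations of Algebraic Groups}, II.2), there exists an irreducible algebraic $\G_L$-representation $V(\lambda)$ on a finite-dimensional $L$-vector space with highest weight $\lambda$. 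Since the base-changed representation $V(\lambda) \otimes_L \C$ of $\G(\C)$ is again irreducible of highest weight $\lambda$, it is isomorphic to $\epsilon$. Fixing such an isomorphism identifies $V(\lambda)$ with a $\G(L)$-stable $L$-subspace $\EE_L \subset \EE$ satisfying $\EE_L \otimes_L \C = \EE$; restricting the action to $\G(\Q) \subset \G(L)$ exhibits $\EE_L$ as a $\G(\Q)$-stable $L$-rational structure on $\EE$.

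To conclude, I would note that the existence of the model $\EE_L$ already implies $\Q(\EE) \subseteq L$. Indeed, for any $\sigma \in \textrm{Aut}(\C/L)$ we have
\[
{}^\sigma\!\EE \;=\; \EE_L \otimes_L \bigl(\C \otimes_\sigma \C\bigr) \;\cong\; \EE_L \otimes_L \C \;=\; \EE
\]
as $\G(\Q)$-representations, using that $\sigma$ restricts to the identity on $L$. Hence $\textrm{Aut}(\C/L) \subseteq \mathfrak{S}(\EE)$, which gives $\Q(\EE) \subseteq L$, and therefore $L \cdot \Q(\EE) = L$; the subspace $\EE_L$ is thus the desired $L \cdot \Q(\EE)$-rational structure.

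The only potential obstacle is the bookkeeping needed to align the highest-weight description of $\EE$ with the definition of $\Q(\EE)$ in terms of $\sigma$-twists, and to verify that the chosen $T$ can really be arranged to base-change to a split torus over $L$ which is compatible with a fixed $T_\C$. The underlying representation-theoretic input — existence of irreducible highest-weight modules of split reductive groups over the splitting field in characteristic zero — is entirely classical, so no serious difficulty is expected beyond this.
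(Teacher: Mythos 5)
The paper does not actually print a proof of Prop.\ \ref{prop:emu}; it is asserted (with the standard split-group highest-weight theory implicitly understood), so there is no written argument to compare your proposal against. That said, your argument is sound and is the natural one: choose a split maximal torus of $\G_L$, regard the highest weight $\lambda$ of $\EE$ as an $L$-rational character, invoke the classical fact that a split connected reductive group over a field of characteristic zero has an irreducible highest-weight module $V(\lambda)$ over that field, identify $V(\lambda)\otimes_L\C$ with $\EE$, and take $\EE_L:=V(\lambda)\hra\EE$ as the $\G(\Q)$-stable $L$-rational subspace. One small remark: your closing step, showing $\Q(\EE)\subseteq L$ and hence $L\cdot\Q(\EE)=L$, is correct (pick an $L$-basis of $\EE_L$; the matrices of $\G(\Q)$ in that basis have entries in $L$, hence are fixed by any $\sigma\in\mathrm{Aut}(\C/L)$, giving a $\C$-linear $\G(\Q)$-isomorphism $\EE\ira{}^\sigma\EE$, so $\mathrm{Aut}(\C/L)\subseteq\mathfrak S(\EE)$), but it is not needed for the statement as formulated: once $\epsilon$ is defined over $L$, it is automatically defined over any larger field including $L\cdot\Q(\EE)$. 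Your observation actually sharpens the proposition to "defined over $L$," which is harmless and informative; just be aware that the paper's phrasing ``$L\cdot\Q(\EE)$'' is probably only there as a uniform bookkeeping device and does not signal a genuinely larger field. The only point worth making fully explicit, which you wave at in the final paragraph, is that the highest weight of $\epsilon$ with respect to a Borel $B_\C\supset T_\C$ may be transported to an $L$-rational character of a split torus $T_L$ of $\G_L$ with $T_{L,\C}$ conjugate to $T_\C$; since any two maximal tori of $\G_\C$ are conjugate and any two Borels containing a fixed torus are Weyl-conjugate, this transport is unambiguous up to Weyl action, which does not affect the dominant representative. With that noted, the argument is complete.
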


Let $\mathscr C_\infty$ be the product of the connected component of the identity of the center of $\G(\R)$ and a maximal compact subgroup of $\G(\R)$ (e.g., $K_\infty$ or $C_\infty$ from \S \ref{sect:alggrp}). The admissible $\G(\A_f)$-module defined by the cohomology group $H^q(S_\G,\EE)$ in degree $q$ of
$$S_\G:=\G(\Q)\backslash\G(\A_\Q)/\mathscr C_\infty$$
with respect to the locally constant sheaf on $S_\G$ given by $\EE$, hence inherits a natural $L\cdot\Q(\EE)$-rational structure from Prop.\ \ref{prop:emu}. Moreover, for each $\sigma\in$ ${\rm Aut}(\C)$, \eqref{eq:gq2} induces a $\sigma$-linear $\G(\A_f)$-equivariant bijection
\begin{equation}\label{eq:gq}
\tilde\sigma^q: H^q(S_\G,\EE) \ira H^q(S_\G,{}^\sigma\!\EE).
\end{equation}

\subsection{Automorphic representations $\pi$ and $\Pi$}
Throughout this paper, as a general rule, $\Pi$ denotes an irreducible automorphic representation of $G(\A_{\cm})=\GL_n(\A_{\cm})$, whereas $\pi$ denotes an irreducible automorphic representation of $H(\A_{\tr})=U(V_n)(\A_{\tr})$, in the sense of \cite{bojac}, \S 4, whose particular additional properties (such as being ``cuspidal'', ``unitary'', ``generic'', ``cohomological'', ``a quadratic base change'', etc.) will be specified at each of its occurrences. For convenience, however, we will not distinguish between a cuspidal automorphic representation, its smooth LF-space completion, cf.\ \cite{grob_zun}, or its (non-smooth) Hilbert space completion in the $L^2$-spectrum.

\subsubsection{Cohomological automorphic representations}\label{sect:pi}
Let $\Pi(r):=\Pi\cdot \|\!\det\!\|^{r}$, with $\Pi$ a unitary automorphic representation of $G(\A_{\cm})$ and ${r}\in\R$ (i.e, $\Pi(r)$ is essentially unitary automorphic), which is cohomological with respect to $\EE_\mu$. Suppose $\Pi(r)$ is generic at each $v\in S_\infty$, then
\begin{equation}\label{eq:ind}
\Pi(r)_v\cong {\rm Ind}_{B(\C)}^{G(\C)}[z_1^{\ell_{v,1}+{r}}\bar z^{-\ell_{v,1}+{r}}_1\otimes ...\otimes z_n^{\ell_{v,n}+{r}}\bar z^{-\ell_{v,n}+{r}}_n],
\end{equation}
where
$$\ell_{v,j}:=\ell(\mu_{\iota_v},j):=-\mu_{\iota_v,n-j+1}-{r}+\frac{n+1}{2}-j$$
and induction from the standard Borel subgroup $B=TN$ is unitary, cf.\ \cite[Thm.\ 6.1]{enright} (See also \cite[\S 5.5]{grob-ragh} for a detailed exposition). In particular, such a $\Pi(r)$ is essentially tempered at all $v\in S_\infty$. Observe that also the contrary holds: An essentially unitary automorphic representation of $G(\A_{\cm})$, which has an essentially tempered and cohomological archimedean factor $\Pi(r)_\infty$ is generic at all places $v\in S_\infty$. One has
\begin{equation}\label{eq:cohh}
H^q(\g_v,K_v,\Pi(r)_v\otimes\EE_{\mu_v})\cong \bigwedge^{q-\frac{n(n-1)}{2}}\C^{n-1}.
\end{equation}
Slightly more general, let us also consider the automorphic twists $\Pi(r)\phi^e$, with $e\in\{0,1\}$. Then it is easy to see from the very definition of $\phi$ that, $\Pi(r)$ is cohomological if and only if $\Pi(r)\phi$ is (the respective highest weight modules arise form each other by adding $-e$ to each entry of the $\iota_v$-components, $v\in S_\infty$). Indeed, the respective cohomology groups of both the representations $(\Pi(r)\phi^e)_v$, $e=0,1$, are isomorphic (and hence described by \eqref{eq:cohh}.)\\\\
For $\Pi(r)$ as above, we define the notion of {\it infinity-type} with repsect to our chosen CM-type $\Sigma$: Abbreviating
$a_{\iota,i}:=\ell(\mu_{\iota},i)+r$ and $a_{\bar\iota,i}:=-\ell(\mu_{\iota},i)+r$, the infinity-type of $\Pi(r)$ at $\iota\in\Sigma$ is the set of inducing characters $\{z^{a_{\iota,i}}\bar{z}^{a_{\bar\iota,i}}\}_{1\leq i\leq n}$. Hence, our notion of infinity-type recovers the ``{\it type \`a l'infini}'' defined in \cite{clozel}, \S 3.3, by substracting $\tfrac{n-1}{2}$ from each entry of the pairs $(a_{\iota,i},a_{\bar\iota,i})\in (\tfrac{n-1}{2}+\Z)^2$.\\\\
If $\pi$ is a unitary automorphic representation of $H(\A_{\tr})=U(V_n)(\A_{\tr})$, which is tempered and cohomological with respect to $\cF_\lambda$, then each of its archimedean component-representations $\pi_v$ of $U(r_v,s_v)$ is isomorphic to one of the ${n \choose r_v}$ inequivalent discrete series representations of infinitesimal character $\chi_{\lambda^{\sf v}_v+\rho_v}$, \cite{vozu}. 

\subsubsection{$\textrm{\emph{Aut}}(\C)$-twisted automorphic representations and attached number fields}\label{sect:twists}

Let $\pi$ (resp.\ $\Pi$) be a cohomological cuspidal automorphic representation of $H(\A_{\tr})$ (resp.\ $G(\A_{\cm})$). Then the $\sigma$-linear isomorphism \eqref{eq:gq} together with the well-known ``sandwich-property'', which stacks cuspidal, interior and square-integrable automorphic cohomology, see, e.g., \cite{schw1447}, p.\ 11, gives rise to a {\it $\sigma$-twisted} square-integrable cohomological automorphic representation $^\sigma\!\pi$ (resp.\ $^\sigma\Pi$) of $H(\A_{\tr})$ (resp.\ $G(\A_{\cm})$), whose finite component $({}^\sigma\!\pi)_f$ (resp.\ $({}^\sigma\Pi)_f$) allow an equivariant $\sigma$-linear isomorphism to $\pi_f$ (resp.\ $\Pi_f$). In terms of \cite{waldsp} \S I.1, this amounts to $({}^\sigma\!\pi)_f\cong {}^\sigma(\pi_f)$ (resp.\ $({}^\sigma\Pi)_f\cong{}^\sigma(\Pi_f)$). As prescribed by \eqref{eq:gq} the archimedean component $^\sigma\!\pi_\infty$ (resp.\ $^\sigma\Pi_\infty$) is unique up to $L$-packets (i.e., has predetermined infinitesimal character, namely the one of $^\sigma\!\EE_\mu^{\sf v}$, resp.\ $^\sigma\cF_\lambda^{\sf v}$).  
The $\sigma$-twists are cuspidal, if the original representation has no SL$_2(\C)$-factors in its global Arthur-parameter, \cite{arthur,mok}. 
In particular, $^\sigma\Pi$ will always be cuspidal and its archimedean component $^\sigma\Pi_\infty$ is uniquely determined, see \cite{grob-app} where this is made explicit. \\\\
As a consequence, the {\it rationality-fields} $\Q(\pi_f)$ and $\Q(\Pi_f)$ (defined as the fixed field in $\C$ of all automorphisms $\sigma$, which leave the finite part of the given automorphic representation stable, cf.\ \cite{waldsp}, \S I.1) are finite extensions of $\Q$ for all cohomological cuspidal automorphic representations $\pi$ and $\Pi$: For $\Pi$ this is proved implicitly in \cite{clozel}, Thm.\ 3.13 (and explicitly in \cite{grob-ragh} Thm.\ 8.1), while for $\pi$ the argument given in \cite{waldsp} Cor.\ I.8.3, \cite{grob-ragh} Thm.\ 8.1 or \cite{grobner-arithres}, Cor. 3.6 transfers verbatim. Since $\Pi$ satisfies Strong Multiplicity One, it is consistent to write $\Q(\Pi)=\Q(\Pi_f)$. Moreover, $\Q(\Pi_f)\supseteq\Q(\EE_\mu)$ for the same reason, see the proof of \cite{grob-ragh} Cor.\ 8.7. \\\\
Let now $E(\pi_f)$ be an appropriate finite extension of $\Q(\pi_f)$ over which $\pi_f$ is defined in the sense of \cite{waldsp}: That means that there is a $H(\A_f)$-stable $E(\pi_f)$-subspace $\pi_{E(\pi_f)}\subseteq\pi_f$ such that the natural map $\pi_{E(\pi_f)}\otimes_{E(\pi_f)}\C\ra\pi_f$ is an isomorphism. (For the existence of such an extension $E(\pi_f)$ and $\pi_{E(\pi_f)}$ see, e.g., \cite{grob-seb}, Thm.\ A.2.4). We will use the following abbreviations
\bigskip

\begin{center}
\framebox{$E(\pi):=\Q(\cF_\lambda)\cdot E(\pi_f) \cdot F^{Gal}, \quad\quad {\rm and} \quad\quad E(\Pi):=\Q(\Pi) \cdot F^{Gal},$}
\end{center}

\bigskip \noindent recalling that $F^{Gal}$ denotes the Galois closure of $F/\Q$ in $\bar\Q$. By what we have just said all these fields are number fields.

\subsubsection{Eisenstein representations}\label{sect:Eisen}
The automorphic representations of $\GL_n(\A_F)$, which we will mainly be considering in this paper, will be cohomological {\it Eisenstein representations}, i.e., cohomological automorphic representations of the form
$$\Pi'(r)\phi^e:=\Pi'\|\!\det\!\|^r\phi^e,\quad r\in\R, e\in\{0,1\}$$
where $\Pi'$ is an isobaric automorphic sum
\begin{equation}\label{rr}
\Pi':=\Pi_1\boxplus ... \boxplus\Pi_k\cong{\rm Ind}_{P(\A)}^{G_n(\A)}[\Pi_1\otimes...\otimes\Pi_k],
\end{equation}
which we assume to be fully-induced from distinct unitary cuspidal automorphic representations $\Pi_i$ of general linear groups $\GL_{n_i}(\A_{\cm})$,  $1\leq i\leq k$. Here, we let $P=L_PN_P$ be the standard parabolic subgroup of $G_n=\GL_n/{\cm}$ with Levi subgroup $L=L_P$ isomorphic to $\prod_{i=1}^k\GL_{n_i}$. As a paradigmatic example, a cohomological automorphic representation of $\GL_{n}(\A_{\cm})$, which is obtained by quadratic base change from a quasi-split unitary group as in \cite{ckpssh}, p.\ 122, is of the form of $\Pi'$, cf.\ \cite[Thm.\ 6.1]{ckpssh}. \\\\
Abbreviate $\tau:=\Pi_1\otimes...\otimes\Pi_k$, so by our conventions $\tau(r)\phi^e:=\Pi_1(r)\phi^e\otimes...\otimes\Pi_k(r)\phi^e$ is the cuspidal representation of $L(\A_F)$, whose parabolic induction is isomorphic to $\Pi'(r)\phi^e$ by assumption. It is worth noting, however, that the isomorphism in \eqref{rr} between the (only abstract!) global induced representation ${\rm Ind}_{P(\A_F)}^{G(\A_F)}[\tau(r)\phi^e]$ and the actual automorphic representation $\Pi'(r)\phi^e$ is given by computing the Eisenstein series $E_P(h,\lambda)$ attached to a $K_\infty$-finite section $h\in {\rm Ind}_{P(\A_F)}^{G(\A_F)}[\tau(r)\phi^e]$, formally defined as
$$E_P(h,\lambda)(g):=\sum_{\gamma\in P(F)\backslash G(F)} h(\gamma g)(id) \ e^{\<\lambda,H_P(\gamma g)\>},$$
$H_P$ begin the Harish-Chandra height function, $\lambda\in \check\a_{P,\C}:=X^*(L_P)\otimes_\Z\C$, followed by evaluating $E_P(h,\lambda)$ at the point $\lambda=0$, see \cite{langlands}, proof of Prop.\ 2. In fact, in \cite{langlands} a regularization $q(\lambda) E_P(h,\lambda)$ (the regularizing non-zero holomorphic function $q(\lambda)$ being defined as in \cite{moewal}, Lem.\ I.4.10, for instance) had to be used in order to obtain the desired isomorphism between $\Pi'(r)\phi^e$ and ${\rm Ind}_{P(\A_F)}^{G(\A_F)}[\tau(r)\phi^e]$. Hence our description of this latter isomorphism only follows knowing the following lemma:

\begin{lem}\label{lem:eisen}
For any $K_\infty$-finite section $h_{r,\phi^e}\in {\rm Ind}_{P(\A)}^{G(\A)}[\tau(r)\phi^e]$, the Eisenstein series $E_P(h_{r,\phi^e},\lambda)$ is holomorphic at the point of evaluation $\lambda=0$.
\end{lem}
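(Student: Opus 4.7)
The plan is to locate the potential singularities of $E_P(h_{r,\phi^e},\lambda)$ at $\lambda=0$ via the theory of the constant term, and then rule them out using the distinctness of the cuspidal summands $\Pi_i$.

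First I would recall that $\tau(r)\phi^e=\Pi_1(r)\phi^e\otimes\cdots\otimes\Pi_k(r)\phi^e$ is still a cuspidal automorphic representation of $L_P(\A)$, so that the general Langlands theory of Eisenstein series applies: $E_P(h_{r,\phi^e},\lambda)$ is meromorphic in $\lambda\in\check{\a}_{P,\C}$, and its polar divisor is determined by that of its constant terms along the standard parabolics $P'\supseteq P$. Each such constant term expands as a finite sum, indexed by the Weyl set $W(L_P,L_{P'})$, of standard intertwining operators $M(w,\lambda)$ applied to $h_{r,\phi^e}$, and these in turn factor as compositions of \emph{rank-one} intertwiners associated to roots of $L_P$ in $G$ that cross consecutive blocks $\Pi_i(r)\phi^e$ and $\Pi_j(r)\phi^e$ with $i\neq j$.

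Next I would invoke the Langlands--Shahidi normalization of each rank-one intertwiner between two such blocks: up to holomorphic local factors, it is governed by the ratio
$$\frac{L(\<\lambda,\check\alpha\>,\Pi_i(r)\phi^e\times(\Pi_j(r)\phi^e)^\vee)}{L(1+\<\lambda,\check\alpha\>,\Pi_i(r)\phi^e\times(\Pi_j(r)\phi^e)^\vee)}.$$
The key observation is that the twist $\|\!\det\!\|^r\phi^e$ is applied \emph{uniformly} on every block, so it cancels in the Rankin--Selberg pairing with the contragredient, giving $L(s,\Pi_i(r)\phi^e\times(\Pi_j(r)\phi^e)^\vee)=L(s,\Pi_i\times\Pi_j^\vee)$. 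By the theorem of Jacquet--Shalika, this $L$-function is entire for $i\neq j$, as the $\Pi_i$ are pairwise distinct unitary cuspidal representations. Hence every rank-one intertwiner, and consequently each global intertwining operator appearing in the constant term, is holomorphic at $\lambda=0$. Combining this with the characterization of the polar divisor of an Eisenstein series through its constant terms yields the desired holomorphy of $E_P(h_{r,\phi^e},\lambda)$ at $\lambda=0$.

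The main obstacle to watch out for is a conspiracy in which the uniform twist by $\|\!\det\!\|^r\phi^e$ could shift some pre-existing pole of an intertwiner onto $\lambda=0$; the cancellation just described, together with Jacquet--Shalika, precisely rules this out, and this is the crucial point where the distinctness of the $\Pi_i$ (as opposed to mere distinctness after twisting) enters. An alternative I would keep in reserve is a direct appeal to the Moeglin--Waldspurger description of the residual spectrum of $\GL_n$: the only residues of Eisenstein series attached to a parabolic of type $(n_1,\ldots,n_k)$ are of Speh type, requiring $\Pi_i\cong\Pi_j\otimes\|\!\det\!\|^{s}$ for certain $s$, a relation forbidden by distinctness.
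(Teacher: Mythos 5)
Your proof is correct, but it takes a genuinely different route from the paper's. The paper's argument is a one-line reduction: since $\|\det(\gamma g)\|^{r+e/2}$ is $G(F)$-left-invariant by the product formula, it factors out of the Eisenstein sum, giving
$$E_P(h_{r,\phi^e},\lambda)(g)=\|\det(g)\|^{r+e/2}\,E_P(h_{0,\eta^e},\lambda)(g)$$
for some section $h_{0,\eta^e}$ of the \emph{unitary} datum $\tau\eta^e$; holomorphy at $\lambda=0$ then follows directly by citing the known holomorphy of Eisenstein series for unitary cuspidal inducing data on the unitary axis, \cite[IV.1.11]{moewal}. You instead unpack the proof of this black-box input via the constant term, the factorization of global intertwining operators into rank-one pieces, the Langlands--Shahidi normalization, and the entireness of $L(s,\Pi_i\times\Pi_j^\vee)$. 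What the paper's route buys is brevity and uniformity; what yours buys is transparency about exactly which $L$-functions control the potential poles, which is conceptually closer to the Moeglin--Waldspurger residual-spectrum picture you cite as a fallback.

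One point of emphasis is slightly misplaced. You stress that the pairwise distinctness of the $\Pi_i$ is ``the crucial point,'' but in fact holomorphy at $\lambda=0$ does not depend on distinctness: even when $\Pi_i\cong\Pi_j$, the unique pole of $L(s,\Pi_i\times\Pi_j^\vee)$ lies at $s=1$, not at $s=0$, and $L(1,\Pi_i\times\Pi_j^\vee)\neq0$, so the ratio governing the rank-one intertwiner remains holomorphic at $\lambda=0$ regardless. This is consistent with the paper's citation to \cite[IV.1.11]{moewal}, which applies to arbitrary unitary cuspidal data without any distinctness assumption. The distinctness hypothesis in \S\ref{sect:Eisen} is genuinely used elsewhere (e.g.\ for the full induction in \eqref{rr}, for Multiplicity One in Thm.\ \ref{prop:Upsilon}, and for the non-vanishing in Cor.\ \ref{cor:asai-nonvan}), but not for this lemma. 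You should also note that your phrase ``up to holomorphic local factors'' is hiding the input that the normalized local intertwining operators are holomorphic at the relevant point at ramified places -- true for $\GL_n$ but a nontrivial local fact that deserves at least a citation.
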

\begin{proof}
Indeed, one has
\begin{eqnarray*}
E_P(h_{r,\phi^e},\lambda)(g) & = & \sum_{\gamma\in P(F)\backslash G(F)} h_{r,\phi^e}(\gamma g)(id) \ e^{\<\lambda,H_P(\gamma g)\>}\\
& = & \sum_{\gamma\in P(F)\backslash G(F)} h_{0,\eta^e}(\gamma g)(id) \ \|\!\det(\gamma g)\!\|^{r+e/2} \ e^{\<\lambda,H_P(\gamma g)\>} \\
& = & \|\!\det(g)\!\|^{r+e/2} \ \sum_{\gamma\in P(F)\backslash G(F)} h_{0,\eta^e}(\gamma g)(id) \ e^{\<\lambda,H_P(\gamma g)\>} \\
& = & \|\!\det(g)\!\|^{r+e/2} \ E_P(h_{0,\eta^e},\lambda)(g)
\end{eqnarray*}
for some $K_\infty$-finite $h_{0,\eta^e}\in {\rm Ind}_{P(\A)}^{G(\A)}[\tau\eta^e]$. As $\tau\eta^e$ is unitary, the latter Eisenstein series $E_P(h_{0,\eta^e},\lambda)(g)$ is holomorphic at $\lambda=0$ as it is well-known by \cite[IV.1.11]{moewal}.
\end{proof}
We denote the, finally well-defined isomorphism (of underlying $(\g_\infty, K_\infty, G(\A_f))$-modules) by
$${\rm Eis}_0: {\rm Ind}_{P(\A_F)}^{G(\A_F)}[\tau(r)\phi^e]\ira \Pi'(r)\phi^e$$
$$h\mapsto E_P(h,0).$$
It is due to this realization of $\Pi'(r)\phi^e$ in the space of cohomological automorphic forms $\mathcal A(G)$ that we have chosen the name Eisenstein representation. As it is obvious by its very definition, the family of Eisenstein representations contains all essentially unitary cohomological cuspidal automorphic representations $\Pi(r)$ (by letting $k=1$, $e=0$) and all cohomological fully-indiced isobaric sums $\Pi'$ (by letting $r=e=0$) as above. Moreover, now knowing the way we view $\Pi'(r)\phi^e$ as a subrepresentation of $\mathcal A(G)$ through the injection ${\rm Eis}_0$,  the arguments given in \cite[Prop.\ 7.1.3, Thm.\ 3.5.12 and Rem.\ 3.5.14]{shahidi-book} imply that $\Pi'(r)\phi^e$ is globally $\psi$-generic, i.e., the $\psi$-Fourier coefficient $W^\psi$ does not vanish on an Eisenstein representation $\Pi'(r)\phi^e$.\\\\
We write $\rho_i$ for the restriction $\rho_P|_{\GL_{n_i}(\A_F)}$, so we get explicitly $\rho_i=\|\det_{\GL_{n_i}}\|^{\tfrac{a_i}{2}}$, with
\begin{equation}\label{eq:ai}
a_i=\sum_{j=1}^{k-i}n_{i+j} - \sum_{j=1}^{i-1} n_j = n_{i+1}+n_{i+2}+...+n_k - n_1 - n_2 - ... - n_{i-1}.
\end{equation}
Hence, $a_i\equiv n-n_i \mod 2$ and so $\rho_i$ is algebraic if and only if $n\equiv n_i \mod 2$. As by assumption $\Pi'(r)\phi^e$ is cohomological, \cite{bowa}, Thm.\ III.3.3, implies that $\Pi_i(r)\phi^e\rho_i=\Pi_i(r+\tfrac{a_i}{2})\phi^e$ are cohomological for $1\leq i \leq k$. Hence, for all $\sigma\in$ ${\rm Aut}(\C)$, there are well-defined pairwise different, in general non-unitary cuspidal automorphic representations ${}^\sigma(\Pi_i(r)\phi^e \rho_i)$ of $\GL_{n_i}(\A_F)$, which are cohomological with respect to the $\sigma$-permuted coefficient module of $\GL_{n_i,\infty}$, see \S\ref{sect:twists}. We abbreviate
\begin{equation}\label{eq:isotwist}
(\Pi_i(r)\phi^e)^\sigma:={}^\sigma\!\left(\Pi_i(r)\phi^e \rho_i\right) \cdot \rho_i^{-1}.
\end{equation}
Then it is proved as in \cite{grob-app} \S 1.2.5 that
$${}^\sigma\Pi'(r)\phi^e:=(\Pi_1(r)\phi^e)^\sigma\boxplus ... \boxplus(\Pi_k(r)\phi^e)^\sigma$$
is an isobaric automorphic representation, which is again fully-induced from the pairwise different, in general non-unitary cuspidal automorphic representations $(\Pi_i(r)\phi^e)^\sigma$. If $\Pi'(r)\phi^e$ is cohomological with respect to $\EE_\mu$, then ${}^\sigma\Pi'(r)\phi^e$ is cohomological with respect to ${}^\sigma\!\EE_\mu$ and it satisfies $({}^\sigma\Pi'(r)\phi^e)_f\cong {}^\sigma((\Pi'(r)\phi^e)_f)$ for all $\sigma\in$ ${\rm Aut}(\C)$. As ${}^\sigma\Pi'(r)\phi^e\cong \Pi'(r)\phi^e$ if and only if
$$\{(\Pi_1(r)\phi^e)^\sigma,...,(\Pi_k(r)\phi^e)^\sigma\}=\{\Pi_1(r)\phi^e,...,\Pi_k(r)\phi^e\},$$
cf.\ \cite{jacshal2}, Thm.\ 4.4, the rationality field $\Q(\Pi'(r)\phi^e)$ of an Eisenstein representation is contained in the composition of number fields $\prod_{i=1}^k\Q(\Pi_i(r)\phi^e\rho_i)$ of the cohomological, cuspidal automorphic summands $\Pi_i(r)\phi^e\rho_i$, an hence a finite extension of $\Q$ itself.

\subsection{Whittaker periods for Eisenstein representations and critical values of automorphic $L$-functions}

\subsubsection{Abstract Whittaker periods and automorphic cohomology}\label{sect:whittaker}
Let $\Pi_f$ be an irreducible admissible representation of $G(\A_f)=\GL_n(\A_f)$ and suppose that ${}^\sigma(\Pi_f)$ is generic for all $\sigma\in$Aut$(\C$). Let $W({}^\sigma\Pi_f)$ be the Whittaker model with respect to our fixed non-trivial additive character $\psi:{\cm}\backslash\A_{\cm}\ra\C^\times$. For $\sigma\in$ ${\rm Aut}(\C)$ and $v\notin S_\infty$ let $t_{\sigma,v}$ be the unique diagonal matrix of type $t_{\sigma,v}={\rm diag}(x_1,...,x_{n-1},1)\in T(\O_{F_v})$ such that $\sigma(\psi_v(n))=\psi_v(t^{-1}_{\sigma,v}nt_{\sigma,v})$ for all $n\in N({\cm}_v)$ and let $t_\sigma=(t_{\sigma,v})_{v\notin S_\infty}\in G(\A_f)$. Then for every $\sigma\in$ ${\rm Aut}(\C)$ the $\sigma$-linear, $G(\A_f)$-equivariant bijection
$$\tilde\sigma_{\Pi_f}: {\rm Ind}_{N(\A_f)}^{G(\A_f)}[\psi_f]\ira {\rm Ind}_{N(\A_f)}^{G(\A_f)}[\psi_f]$$
$$\xi\mapsto \tilde\sigma_{\Pi_f}(\xi): g \mapsto \sigma(\xi(t_\sigma\cdot g))$$
will map $W(\Pi_f)$ onto $W({}^\sigma\Pi_f)$ by the uniqueness of local Whittaker models. See \cite[\S 3.2]{raghuram-shahidi-imrn} or \cite[\S 4.1]{grob_harris_lapid}. Hence, taking the space of Aut$(\C/\Q(\Pi_f))$-invariant vectors $W(\Pi_f)_{\Q(\Pi_f)}$ in $W(\Pi_f)$ will define a $\Q(\Pi_f)$-structure on $W(\Pi_f)$: Therefore, the only non-trivial observation needed is that $W(\Pi_f)_{\Q(\Pi_f)}$ contains the vector $\xi^\circ:=\otimes_v\xi_v$, where $\xi_v$ denotes the unique newvector such that $\xi_v(id_{G(F_v)})=1$ (cf.\ \cite[Thm.\ (5.1)]{jac-ps-shalika-mathann} and \cite[Cor.\ 4.4]{miyauchi}) and is hence non-zero.\\\\
Let $S_n:=S_G=G({\cm})\backslash G(\A_{\cm})/K_\infty.$ Then $H^q(S_n,\EE_\mu)$, the admissible $G(\A_f)$-module defined by the cohomology of $S_n$, has a natural $L$-rational structure over any field extension $\C\supseteq L\supseteq\Q(\EE_\mu)$. See \S\ref{sect:rat} (with $\G=R_{F/\Q}(G)$) above. Let $H^q(S_n,\EE_\mu)_L$ be this natural $L$-structure and recall the $\sigma$-linear $G(\A_f)$-equivariant bijections $\tilde\sigma^q:H^q(S_n,\EE_\mu)\ra H^q(S_n,^\sigma\!\EE_\mu)$ for all $\sigma\in$ ${\rm Aut}(\C)$ from \eqref{eq:gq}.\\\\
Suppose that $H_\bullet^q(S_n,\EE_\mu)$ is a submodule of $H^q(S_n,\EE_\mu)$. 
Obviously, if ${\rm Hom}_{G(\A_f)}(W(\Pi_f),H_\bullet^q(S_n,\EE_\mu))$ is one-dimensional, then the image $\Upsilon(W(\Pi_f))=:H^q(S_n,\EE_\mu)(\Pi_f)$ of a non-zero homomorphism $\Upsilon\in {\rm Hom}_{G(\A_f)}(W(\Pi_f),H_\bullet^q(S_n,\EE_\mu))$ is independent of $\Upsilon$ and isomorphic to $\Pi_f$.

\begin{prop}\label{prop:Whittakerperiods}
Let $\Pi_f$ be the finite part of an irreducible admissible representation $\Pi$ of $G(\A_F)=\GL_n(\A_F)$, such that ${}^\sigma(\Pi_f)$ is generic for all $\sigma\in {\rm Aut}(\C)$. Assume that there is an irreducible algebraic coefficient module $\EE_\mu$ and a family of submodules $H^q_{{}^\sigma\Pi_f}(S_n,{}^\sigma\!\EE_\mu)\subseteq H^q(S_n,{}^\sigma\!\EE_\mu)$, $\sigma\in {\rm Aut}(\C)$, only depending on the isomorphism classes of ${}^\sigma\Pi_f$ and ${}^\sigma\!\EE_\mu$, such that $H_{{}^\sigma\Pi_f}^q(S_n,{}^\sigma\!\EE_\mu)=\tilde\sigma^q(H_{\Pi_f}^q(S_n,\EE_\mu))$ and $\dim{\rm Hom}_{G(\A_f)}(W({}^\sigma\Pi_f),H_{{}^\sigma\Pi_f}^q(S_n,{}^\sigma\!\EE_\mu))=1$. Then, for all $\sigma\in {\rm Aut}(\C)$, the following hold:
\begin{enumerate}
\item Let ${}^\sigma\! L:=\Q({}^\sigma\!\EE_\mu) \ \Q({}^\sigma \Pi_f)$. The space
$$H^q(S_n,{}^\sigma\!\EE_\mu)({}^\sigma\Pi_f)_{{}^\sigma\!L}:=H^q(S_n,{}^\sigma\!\EE_\mu)({}^\sigma\Pi_f)\cap H^q(S_n,{}^\sigma\!\EE_\mu)_{{}^\sigma\! L}$$
defines an ${}^\sigma\! L$-structure on $H^q(S_n,{}^\sigma\!\EE_\mu)({}^\sigma\Pi_f)$.
\item  For each $\sigma\in {\rm Aut}(\C)$, choose embeddings ${}^\sigma\Upsilon\in {\rm Hom}_{G(\A_f)}(W({}^\sigma\Pi_f),H_{{}^\sigma\Pi_f}^q(S_n,{}^\sigma\!\EE_\mu))$ subject to the condition that $\Upsilon={}^\sigma\Upsilon$, whenever $\Pi_f\cong{}^\sigma\Pi_f$. Then there are non-zero complex numbers $p({}^\sigma\Pi):=p({}^\sigma\Pi_f,{}^\sigma\Upsilon)$, depending on a chosen embedding ${}^\sigma\Upsilon$ and the chosen, fixed ${}^\sigma\!L$-structures on its domain and image, such that
\begin{equation}\label{diawhit}
\xymatrix{
W(\Pi_f) \ar[rrr]^{p(\Pi)^{-1}\cdot\Upsilon}\ar[d]^{\tilde\sigma_{\Pi_f}}  & & &
H^q(S_n,\EE_\mu)(\Pi_f)\ar[d]^{\tilde\sigma^q} \\
W({}^\sigma\Pi_f)
\ar[rrr]^{p({}^\sigma\Pi)^{-1}\cdot{}^\sigma\Upsilon}
 & & &
H^q(S_n,{}^\sigma\!\EE_\mu)({}^\sigma\Pi_f)
}
\end{equation}
commutes. Replacing ${}^\sigma\Upsilon$ by another embedding ${}^\sigma\Upsilon'$ (which by the one-dimensionality of ${\rm Hom}_{G(\A_f)}(W({}^\sigma\Pi_f),H_{{}^\sigma\Pi_f}^q(S_n,{}^\sigma\!\EE_\mu))$ necessarily satisfies ${}^\sigma\Upsilon' = t\cdot {}^\sigma\Upsilon$ for some $t\in\C^\times$) one obtains $p({}^\sigma\Pi_f,{}^\sigma\Upsilon') = t\cdot p({}^\sigma\Pi_f,{}^\sigma\Upsilon)$, i.e., $p({}^\sigma\Pi)$ is multiplied by the same non-zero complex number $t$.
\end{enumerate}
\end{prop}
\begin{proof}
One has ${}^\sigma(H^q(S_n,\EE_\mu))=H^q(S_n,{}^\sigma\!\EE_\mu)=\tilde\sigma^q(H^q(S_n,\EE_\mu))$ by definition of $^\sigma\!\EE_\mu$, hence we obtain ${}^\sigma(H^q(S_n,\EE_\mu)(\Pi_f))=\tilde\sigma^q(H^q(S_n,\EE_\mu)(\Pi_f))$. As $H^q_{{}^\sigma\Pi_f}(S_n,{}^\sigma\!\EE_\mu)=\tilde\sigma^q(H_{\Pi_f}^q(S_n,\EE_\mu))$, the modules $H^q(S_n,{}^\sigma\!\EE_\mu)({}^\sigma\Pi_f)$ and $\tilde\sigma^q(H^q(S_n,\EE_\mu)(\Pi_f))$ are both submodules of $H^q_{{}^\sigma\Pi_f}(S_n,{}^\sigma\!\EE_\mu)$, which are isomorphic to ${}^\sigma\Pi_f$, hence equal due to our assumption that $\dim{\rm Hom}_{G(\A_f)}(W({}^\sigma\Pi_f),H_{{}^\sigma\Pi_f}^q(S_n,{}^\sigma\!\EE_\mu))=1$. Therefore, ${}^\sigma(H^q(S_n,\EE_\mu)(\Pi_f))=H^q(S_n,{}^\sigma\!\EE_\mu)({}^\sigma\Pi_f)$. As $H^q(S_n,{}^\sigma\!\EE_\mu)({}^\sigma\Pi_f)$ only depends on the isomorphism classes of ${}^\sigma\Pi_f$ and ${}^\sigma\!\EE_\mu$, we have ${}^\sigma(H^q(S_n,\EE_\mu)(\Pi_f))= H^q(S_n,\EE_\mu)(\Pi_f)$ whenever these isomorphism classes are stabilized by $\sigma$, i.e., when $\sigma\in$ Aut$(\C/\Q(\EE_\mu)\Q(\Pi_f))$. Therefore, (1) follows from \cite{clozel}, Lem.\ 3.2.1.\\
Having fixed an ${}^\sigma\! L$-structure on the irreducible $G(\A_f)$-module $H^q(S_n,{}^\sigma\!\EE_\mu)({}^\sigma\Pi_f)$, the existence of the non-zero numbers $p({}^\sigma\Pi)=p({}^\sigma\Pi_f,{}^\sigma\Upsilon)$ as well as the commutativity the above square follows word for word as in \cite[Definition/Proposition 4.2.1]{raghuram-shahidi-imrn}, replacing what is called $w^0$ there by our $\xi^\circ$ from above.
\end{proof}

\begin{defn}
Let $\Pi$ be any representation of $G(\A_{\cm})=\GL_n(\A_{\cm})$ as in Prop.\ \ref{prop:Whittakerperiods}. We call the non-zero complex number $p(\Pi)$ the {\it Whittaker period attached to $\Pi$} (and some fixed choice of embedding $\Upsilon$).
\end{defn}

\begin{rem}\label{rem:upto}
By construction, even for fixed $\Upsilon$, $p(\Pi)$ is well-defined only up to multiplication with elements in $ L^\times$. In fact, changing $p(\Pi)$ to $q\cdot p(\Pi)$ for some $q\in L^\times$ will change $p({}^\sigma\Pi)$ to $\sigma(q)\cdot p({}^\sigma\Pi)$: See the last line in the proof of \cite{raghuram-shahidi-imrn}, Def./Prop. 3.3 and recall that ${}^\sigma\! L = \sigma(L)$.
\end{rem}

\subsubsection{An important family of examples - The Eisenstein representations}\label{mindeg}
Let us now make the above proposition concrete for a large family of automorphic representations. More precisely, we consider our family of Eisenstein representations
$$\Pi'(r)\phi^e=\Pi'\|\!\det\!\|^r\phi^e,\quad r\in\R, e\in\{0,1\}$$
as defined in \S\ref{sect:Eisen} above. Let $\EE_\mu$ be the irreducible algebraic representation of $G_{\infty}$ with respect to which $\Pi'(r)\phi^e$ is cohomological. Since $\Pi'(r)\phi^e$ is globally generic, $(\Pi'(r)\phi^e)_\infty$ is of the form described in \S\ref{sect:pi} and has one-dimensional $(\g_\infty,K_\infty)$-cohomology in its lowest non-vanishing degree
$$b_n:=d\frac{n(n-1)}{2} $$ by \eqref{eq:cohh}. Therefore, the $G(\A_f)$-module
\begin{equation}\label{eq:onedim}
H^{b_n}(\g_\infty, K_\infty, \Pi'(r)\phi^e\otimes\EE_\mu)\cong (\Pi'(r)\phi^e)_f
\end{equation}
is irreducible. As the choice of our Whittaker period $p(\Pi'(r)\phi^e)$ all depends on the particular map $\Upsilon$, merging the irreducible $G(\A_f)$-representations $H^{b_n}(\g_\infty, K_\infty, \Pi'(r)\phi^e\otimes\EE_\mu)$ and $W((\Pi'(r)\phi^e)_f)$ as the same submodule of $H^q(S_n,\EE_\mu)$, the various choices made entering the definition of $\Upsilon$ shall now be specified. Knowing these choices explicitly will be of great significance in \S\ref{s:Eisen}. \\\\
Firstly, recall that we have realized (the space of $K_\infty$-finite vectors of) $\Pi'(r)\phi^e$ as a submodule of the space $\mathcal A(G)$ of automorphic forms, namely as the image of the isomorphism ${\rm Eis}_0$ from \S\ref{sect:Eisen}. In view of Prop.\ \ref{prop:Whittakerperiods} we shall refine our description. To this end, let $S(\check\a^{G}_{P,\C})$ be the symmetric algebra of the orthogonal complement $\check\a^{G}_{P,\C}$ of $\check\a_{G,\C}=X^*(G)\otimes_\Z\C=\{\det^s, s\in\C\}$ in $\check\a_{P,\C}$. It may be interpreted as the algebra of differential operators $\partial^{\underline n}/\partial\lambda^{\underline n}$, (${\underline n}=(n_1,...,n_{\dim\check\a^{G}_P})$ being a multi-index with respect to some fixed basis of $\check\a^{G}_{P,\C}$) on $\check\a^{G}_{P,\C}$, cf.\ \cite{schwfr} or \cite{grobner-EisRes}. Furthermore, let $\mathcal J$ be the ideal of the center of the universal enveloping algebra $\mathfrak U(\g_{\infty,\C})$, which annihilates the contragredient representation $\EE^{\sf v}_\mu$ of $\EE_\mu$ and let $\varphi_P$ be the associate class of the cuspidal automorphic representation $\tau(r)\phi^e$.

A $(\g_\infty, K_\infty, G(\A_f))$-module $\mathcal A_{\mathcal J,\{P\},\varphi_{P}}(G)$ has then been defined in \cite{schwfr} 1.3 as the space of automorphic forms of $G(\A_F)$ supported in $\varphi_P$ and annihilated by a power of $\mathcal J$: More precisely, it is the span of the holomorphic values at the point $\lambda={\rm pr}_{\check\a_{P,\C}\ra\check\a^{G}_{P,\C}}((r+e/2,...,r+e/2))=0$ of the Eisenstein series $E_P(h,\lambda)$, $h$ running through all $K_\infty$-finite sections $h\in {\rm Ind}_{P(\A)}^{G(\A)}[\tau(r)\phi^e]$, together with all their derivatives in the parameter $\lambda$, i.e.,
$\mathcal A_{\mathcal J,\{P\},\varphi_{P}}(G)$ is the image of the surjective map
$${\rm Eis}_\partial: {\rm Ind}_{P(\A_F)}^{G(\A_F)}[\tau(r)\phi^e\otimes S(\check\a^{G}_{P,\C})] \twoheadrightarrow \mathcal A_{\mathcal J,\{P\},\varphi_{P}}(G)$$
$$h\otimes \frac{\partial^{\underline n}}{\partial\lambda^{\underline n}}\mapsto \frac{\partial^{\underline n}}{\partial\lambda^{\underline n}} E_P(h,\lambda)|_{\lambda=0}.$$
See \cite{schwfr} 3.3 or \cite{grobner-EisRes} 2.3.\\\\
Consequently, $\Pi'(r)\phi^e={\rm Eis}_0({\rm Ind}_{P(\A_F)}^{G(\A_F)}[\tau(r)\phi^e])$ is a submodule of $\mathcal A_{\mathcal J,\{P\},\varphi_{P}}(G)$. We denote by $\imath_{\Pi'(r)\phi^e}: \Pi'(r)\phi^e\hra\mathcal A_{\mathcal J,\{P\},\varphi_{P}}(G)$ the natural inclusion sending an automorphic form to itself and by
$$\imath^q_{\Pi'(r)\phi^e}: H^q(\g_\infty, K_\infty, \Pi'(r)\phi^e\otimes \EE_\mu) \ra H^q(\g_\infty, K_\infty, \mathcal A_{\mathcal J,\{P\},\varphi_{P}}(G)\otimes \EE_\mu)$$
the natural induced morphism of $G(\A_f)$-modules.\\\\
As shown in \cite{schwfr} Thm.\ 1.4, $\mathcal A_{\mathcal J,\{P\},\varphi_{P}}(G)$ is a direct summand of the $(\g_\infty, K_\infty, G(\A_f))$-module $\mathcal A_{\mathcal J}(G)$ of automorphic forms annihilated by a power of $\mathcal J$. Hence, the $G(\A_f)$-module defined by $H^{q}(\g_\infty, K_\infty,\mathcal A_{\mathcal J,\{P\},\varphi_{P}}(G)\otimes\EE_\mu)$ appears as a direct summand of $H^{q}(\g_\infty, K_\infty,\mathcal A_{\mathcal J}(G)\otimes\EE_\mu)$ and so the fundamental isomorphism of $G(\A_f)$-modules
$$\mathscr F^q: H^{q}(\g_\infty, K_\infty,\mathcal A_{\mathcal J}(G)\otimes\EE_\mu)\ira H^q(S_n,\EE_\mu)$$
from \cite{franke}, Thm.\ 18 defines an injection $\mathscr F^q: H^{q}(\g_\infty, K_\infty,\mathcal A_{\mathcal J,\{P\},\varphi_{P}}(G)\otimes\EE_\mu)\hra H^q(S_n,\EE_\mu)$. By Strong Multiplicity one (\cite{jacshal2} Thm.\ 4.4) together with Multiplicity One (\cite{schwfr} \S 3.3 \& \cite{shalika, jaclang}) for isobaric automorphic representations, its image depends only on the isomorphism class of $(\Pi'(r)\phi^e)_f$ (and $\EE_\mu$) justifying the notation
$$H^q_{(\Pi'(r)\phi^e)_f}(S_n,\EE_\mu):=\mathscr F^q(H^{q}(\g_\infty, K_\infty,\mathcal A_{\mathcal J,\{P\},\varphi_{P}}(G)\otimes\EE_\mu)).$$
Next we fix a basis element of the one-dimensional vector space $H^{b_n}(\g_\infty, K_\infty,W((\Pi'(r)\phi^e)_\infty)\otimes\EE_\mu)$. By \cite{bowa}, Prop.\ II.3.1, this basis element is a $K_\infty$-homomorphism $[(\Pi'(r)\phi^e)_{\infty}]: \Lambda^{b_n}\g_\infty/\k_\infty \ra W((\Pi'(r)\phi^e)_\infty)\otimes\EE_\mu.$ We view this basis homomorphism as an element in the space of $K_\infty$-invariant vectors $\left(\Lambda^{b_n}(\g_\infty/\k_\infty)^* \otimes W((\Pi'(r)\phi^e)_\infty)\otimes\EE_\mu\right)^{K_\infty}$ and write it explicitly as

\begin{equation}\label{eq:gen}
[(\Pi'(r)\phi^e)_{\infty}]:=\sum_{\underline i=(i_1,...,i_{b_n})} \sum_{\alpha=1}^{\dim \EE_\mu} X^*_{\underline i}\otimes\xi_{\infty,\underline i, \alpha}\otimes e_\alpha.
\end{equation}
Here, $X^*_{\underline i}:=X^*_{i_1}\wedge ... \wedge X^*_{i_{b_n}}$ for some fixed $\Q$-basis $\{X_j\}$ of $\g_\infty/\k_\infty$, $e_\alpha:=\otimes_{v\in S_\infty} e_{\alpha,v}\in \EE_\mu=\otimes_{v\in S_\infty}\EE_{\mu_v}$, such that $\{e_{\alpha,v}\}_{\alpha}$ defines a $\Q(\EE_{\mu_v})$-basis of $\EE_{\mu_v}$ for all $v\in S_\infty$; whereas $\xi_{\infty,\underline i, \alpha}\in W((\Pi'(r)\phi^e)_\infty)$ are Whittaker functionals chosen accordingly. We record one property of this construction in the following 

\begin{rem}\label{rem:same}
Two Eisenstein representations $(\Pi_1'(r_1)\phi^{e_1})$ and $(\Pi_2'(r_2)\phi^{e_2})$ with {\it isomorphic} archimedean components $(\Pi_1'(r_1)\phi^{e_1})_\infty\cong (\Pi_2'(r_2)\phi^{e_2})_\infty$ give rise {\it identical} smoth archimedean Whittaker models $W((\Pi_1'(r_1)\phi^{e_1})_\infty)= W((\Pi_2'(r_2)\phi^{e_2})_\infty)$, due to their uniqueness \cite{shalika,chm00}. Therefore, fixing a generator  of $H^{b_n}(\g_\infty, K_\infty,W((\Pi_1'(r_1)\phi^{e_1})_\infty)\otimes\EE_\mu)=H^{b_n}(\g_\infty, K_\infty,W((\Pi_2'(r_2)\phi^{e_2})_\infty)\otimes\EE_\mu)$ as described above, uniformly fixes {\it the same generator} for all Eisenstein representations $(\Pi_1'(r_1)\phi^{e_1})$ and $(\Pi_2'(r_2)\phi^{e_2})$ with $(\Pi_1'(r_1)\phi^{e_1})_\infty\cong (\Pi_2'(r_2)\phi^{e_2})_\infty$. (This also justifies the notation $[(\Pi'(r)\phi^e)_{\infty}]$, which only sees the isomorphism class of the respective local archimedean component.)
\end{rem}

Now, in order to finally describe our choice of $\Upsilon$, recall that $W^\psi$ denotes the injective map computing the $\psi$-Fourier coefficient of an element in $\Pi'(r)\phi^e$. We denote by
$$(W^\psi)^{-1,q}: H^{q}(\g_\infty, K_\infty,W(\Pi'(r)\phi^e)\otimes\EE_\mu) \ira H^{q}(\g_\infty, K_\infty, \Pi'(r)\phi^e\otimes\EE_\mu)$$
the natural isomorphism of $G(\A_f)$-modules induced by its inversion.
We obtain
\begin{prop}\label{prop:Eisen}
There is the following sequence of $G(\A_f)$-isomorphisms
\begin{eqnarray*}
W((\Pi'(r)\phi^e)_f) & \ira_{[(\Pi'(r)\phi^e)_{\infty}]\otimes}  & H^{b_n}(\g_\infty, K_\infty,W((\Pi'(r)\phi^e)_\infty)\otimes\EE_\mu) \otimes W((\Pi'(r)\phi^e)_f)\\
& \ira &  H^{b_n}(\g_\infty, K_\infty,W(\Pi'(r)\phi^e)\otimes\EE_\mu)\\
& \ira_{(W^\psi)^{-1,b_n}} &  H^{b_n}(\g_\infty, K_\infty, \Pi'(r)\phi^e\otimes\EE_\mu)\\
& \ira_{\imath_{\Pi'(r)\phi^e}^{b_n}} & H^{b_n}(\g_\infty, K_\infty,\mathcal A_{\mathcal J,\{P\},\varphi_{P}}(G)\otimes\EE_\mu)\\
& \ira_{\mathscr{F}^{b_n}} & H^{b_n}_{(\Pi'(r)\phi^e)_f}(S_n,\EE_\mu)
\end{eqnarray*}
the unspecified map being (a fixed choice of) the obvious one.
\end{prop}
\begin{proof}
By the preceding discussion we only need to show that $\imath_{\Pi'(r)\phi^e}^{b_n}$ is an isomorphism.
For this we observe that by our Lem.\ \ref{lem:eisen}, i.e., by the holomorphy of the Eisenstein series spanning $\Pi'(r)\phi^e$ at their common point of evaluation $\lambda=0$ and the irreducibility of ${\rm Ind}_{P(\A_F)}^{G(\A_F)}[\tau(r)\phi^e]$, the length of the filtration of $\mathcal A_{\mathcal J,\{P\},\varphi_{P}}(G)$, as defined in \cite{grobner-EisRes}, \S 3.1, may be chosen to be $m(\{P\})=0$, see \cite[\S 3.1]{grobner-EisRes} or \cite{franke}, Rem.\ 2, p.\ 242. Hence, as established in \cite{grobner-EisRes}, Cor.\ 16, $H^{q}(\g_\infty, K_\infty,\mathcal A_{\mathcal J,\{P\},\varphi_{P}}(G)\otimes\EE_\mu)$ decomposes as a direct sum, which -- invoking \cite{moewalgln}, II \& III and the fact that all summands $\Pi_i(r)\phi^e$ of $\Pi'(r)\phi^e$ are different and unitary modulo a shift independent of the index $i$ -- degenerates to one single summand, namely $H^{q}(\g_\infty, K_\infty, {\rm Ind}_{P(\A_F)}^{G(\A_F)}[\tau(r)\phi^e\otimes S(\check\a^{G}_{P,\C})]\otimes\EE_\mu)$. As a consequence of the minimality of the degree $q=b_n$ we hence obtain
$$H^{b_n}(\g_\infty, K_\infty,\mathcal A_{\mathcal J,\{P\},\varphi_{P}}(G)\otimes\EE_\mu)\cong  (\Pi'(r)\phi^e)_f.$$
see, e.g., \cite{franke} pp.\ 256--257. Now recall from \eqref{eq:onedim} that also $H^{b_n}(\g_\infty, K_\infty, \Pi'(r)\phi^e\otimes\EE_\mu)\cong (\Pi'(r)\phi^e)_f$. By irreducibility it is hence enough to show that $\imath_{\Pi'(r)\phi^e}^{b_n}$ is non-zero in order to reveal $\imath_{\Pi'(r)\phi^e}^{b_n}$ as an isomorphism. To this end, observe that by construction Im$\imath_{\Pi'(r)\phi^e}^{b_n}={\rm Im Eis}^{b_n}_0$, where ${\rm Eis}^{b_n}_0$ equals the natural map in cohomology in degree $q=b_n$ induced from Eisenstein summation ${\rm Eis}_0$. By the minimality of the degree $q=b_n$ and Lem.\ \ref{lem:eisen}, revealing the Eisenstein series spanning $\Pi'(r)\phi^e$ as holomorphic at their common point of evaluation $\lambda=0$, this map ${\rm Eis}^{b_n}_0$ is injective by \cite{schwLNM}, Satz 4.11. See also \ \cite{bor2}, 2.9 and \cite{speh}, Thm.\ 1. This shows the claim.
\end{proof}

\begin{defn}\label{def:upsilon}
We let $\Upsilon=\Upsilon_{\Pi'(r)\phi^e}$ be the composition of the maps in Prop.\ \ref{prop:Eisen}. This way ${}^\sigma\Upsilon=\Upsilon_{{}^\sigma\Pi'(r)\phi^e}$ is a non-trivial element in ${\rm Hom}_{G(\A_f)}(W(({}^\sigma\Pi'(r)\phi^e)_f),H_{({}^\sigma\Pi'(r)\phi^e)_f}^{b_n}(S_n,{}^\sigma\!\EE_\mu))$ for each $\sigma\in$ Aut$(\C)$.
\end{defn}

\begin{prop}\label{prop:Upsilon}
One has the equality $\tilde\sigma^{b_n}(H_{(\Pi'(r)\phi^e)_f}^{b_n}(S_n,\EE_\mu))=H_{({}^\sigma\Pi'(r)\phi^e)_f}^{b_n}(S_n,^\sigma\!\EE_\mu)$ for all $\sigma\in {\rm Aut}(\C)$. Moreover,
${\rm Hom}_{G(\A_f)}(W(({}^\sigma\Pi'(r)\phi^e)_f),H_{({}^\sigma\Pi'(r)\phi^e)_f}^{b_n}(S_n,{}^\sigma\!\EE_\mu))$ is one-dimensional.
\end{prop}
\begin{proof}
The first assertion follows precisely as in \cite{grob-app}, Prop.\ 1.7. One-dimensionality of the space ${\rm Hom}_{G(\A_f)}(W(({}^\sigma\Pi'(r)\phi^e)_f),H_{({}^\sigma\Pi'(r)\phi^e)_f}^{b_n}(S_n,{}^\sigma\!\EE_\mu))$ is obvious by irreducibility (and the existence of ${}^\sigma\Upsilon=\Upsilon_{{}^\sigma\Pi'(r)\phi^e}\neq 0$).
\end{proof}

\begin{cor}\label{cor:Whittakerperiods}
For each Eisenstein representation $\Pi'(r)\phi^e$ the bottom-degree Whittaker periods $p(\Pi'(r)\phi^e)=p(\Pi'(r)\phi^e,\Upsilon)$ are well-defined. In particular, putting $e=0$, and $k=1$, every cohomological, essentially unitary cuspidal automorphic representation $\Pi$ and putting $e=r=0$ every unitary cohomological isobaric sum $\Pi'$ as in \S\ref{sect:Eisen}, satisfies the assumptions of Prop.\ \ref{prop:Whittakerperiods} in its minimal, non-vanishing degree $b_n$ and with $\Upsilon$ as above.
\end{cor}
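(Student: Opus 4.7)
The plan is to treat this corollary as a packaging statement: verify that each Eisenstein representation $\Pi'(r)\phi^e$ meets the three hypotheses of Prop.\ \ref{prop:Whittakerperiods} in cohomological degree $q=b_n$, and then observe that the two ``in particular'' cases are direct specializations of the Eisenstein framework.

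First I would check that for every $\sigma\in\mathrm{Aut}(\C)$ the finite part ${}^\sigma\bigl((\Pi'(r)\phi^e)_f\bigr)$ is generic. By the discussion in \S\ref{sect:Eisen}, each twist ${}^\sigma\Pi'(r)\phi^e$ is again a fully-induced isobaric sum from distinct cohomological cuspidal summands; the global $\psi$-genericity argument quoted there from \cite[Prop.\ 7.1.3, Thm.\ 3.5.12, Rem.\ 3.5.14]{shahidi-book} (via the realization through ${\rm Eis}_0$) therefore applies verbatim to the twist, and the non-vanishing of the global Whittaker functional forces local genericity at every finite place. Next I would invoke Thm.\ \ref{prop:Upsilon} directly: it delivers simultaneously the one-dimensionality of $\mathrm{Hom}_{G(\A_f)}\bigl(W(({}^\sigma\Pi'(r)\phi^e)_f),H^{b_n}(S_n,{}^\sigma\!\EE_\mu)\bigr)$ and the Galois-equivariance $\tilde\sigma^{b_n}\bigl(H^{b_n}(S_n,\EE_\mu)((\Pi'(r)\phi^e)_f)\bigr)=H^{b_n}(S_n,{}^\sigma\!\EE_\mu)(({}^\sigma\Pi'(r)\phi^e)_f)$, which are precisely conditions (2) and (3) of Prop.\ \ref{prop:Whittakerperiods} in degree $b_n$.

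At this point all hypotheses of Prop.\ \ref{prop:Whittakerperiods} are in place, while Def.\ \ref{def:upsilon} has already pinned down the specific $\Upsilon$ built from ${\rm Eis}_\partial$ and the fixed generator of the one-dimensional relative Lie algebra cohomology of $W(\Pi'(r)\phi^e)_\infty\otimes\EE_\mu$ in degree $b_n$. Applying assertion (2) of Prop.\ \ref{prop:Whittakerperiods} then outputs non-zero complex numbers $p(\Pi'(r)\phi^e,\Upsilon)$, well-defined up to multiplication by elements of the appropriate rationality field ${}^\sigma\! L$.

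Finally, both specializations named in the corollary are manifest instances of the Eisenstein framework: the choice $k=1$, $e=0$ recovers an arbitrary essentially unitary cohomological cuspidal $\Pi$, while $r=e=0$ recovers an arbitrary unitary cohomological isobaric sum $\Pi'=\Pi_1\boxplus\cdots\boxplus\Pi_k$, so no additional argument is required. I do not expect any serious obstacle here: the two genuinely nontrivial inputs — realizing the lowest-degree cohomology of $\mathcal A_{\mathcal J,\{P\},\varphi_P}(G)$ as a single summand via ${\rm Eis}_\partial$ (Prop.\ \ref{prop:Eisen}, crucially leveraging Lem.\ \ref{lem:eisen} to force the filtration length $m(\{P\})$ to vanish) and the multiplicity-one appearance of the isobaric sum in $H^{b_n}(S_n,\EE_\mu)$ (Thm.\ \ref{prop:Upsilon}) — have already been established; what remains is bookkeeping.
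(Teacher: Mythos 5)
Your proposal is correct and follows exactly the route the paper intends: genericity of every twist $^\sigma\Pi'(r)\phi^e$ comes from the realization via ${\rm Eis}_0$ and the Shahidi references already recorded in \S\ref{sect:Eisen}, while Thm.\ \ref{prop:Upsilon} supplies both remaining hypotheses of Prop.\ \ref{prop:Whittakerperiods} in degree $b_n$, so the periods exist; the two ``in particular'' cases are indeed the specializations $k=1,e=0$ and $r=e=0$ of the Eisenstein family. The paper leaves the corollary proofless precisely because it is this kind of bookkeeping, and you have supplied the bookkeeping faithfully.
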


\begin{rem}
In the case of cusp forms our construction recovers the Whittaker periods constructed in \cite{raghuram-shahidi-imrn}, whereas in the case of unitary isobaric sums $\Pi'$ we retrieve the periods considered in \cite[\S 1.5.2]{grob-app}.
\end{rem}

\subsubsection{Canonical choices of generators}\label{sect:gens}
We will now specify our (yet abstract) choices of generators $[\Pi'_\infty]$ for Eisenstein representations $\Pi'$. This will finally fix our choice of the attached map $\Upsilon=\Upsilon_{\Pi'}$, cf.\ Def.\ \ref{def:upsilon}, and consequently the Whittaker periods $p(\Pi')=p(\Pi',\Upsilon)$ up to an element in the rationality field $\Q(\Pi')$, cf.\ Rem.\ \ref{rem:upto}. We now explain the details of this construction of generators.\\\\
We first consider algebraic Hecke characters $\chi: \GL_1(F)\backslash \GL_1(\A_F)\ra\C^*$. As this is a very special instance of an Eisenstein representation, we have defined a (yet abstract) generator $[\chi_\infty]$ in \eqref{eq:gen}. By construction, $b_1=0$ and $\EE_\mu = \chi^{-1}_\infty$ is a character in this case, whence $[\chi_\infty]$ has only one summand: As it is clear from \eqref{eq:gen} we may write $[\chi_\infty]= 1 \otimes \xi_\infty \otimes 1$, where the first ``$1$'' is a basis of $\bigwedge^0(\gl_{1,\infty}/\k_{1,\infty})=\R$ and the last ``$1$'' is a basis of the representation space $\C$ of the character $\EE_\mu= \chi^{-1}_\infty$; and some, yet unspecified, non-zero Whittaker function $\xi_\infty\in W(\chi_\infty)$. We will now make a specific choice for $\xi_\infty$. \\\\
To this end, observe that the smooth Whittaker model $W(\chi_\infty)$ is given by the space of smooth functions $f_{\chi_\infty,z}: g_\infty \mapsto f_{\chi_\infty,z}(g_\infty):=\chi_\infty(g_\infty)\cdot z$, $z\in\C$. Clearly, any $f_{\chi_\infty,z}$ with $z\in\C^\times$ serves as a possible choice for the Whittaker function $\xi_\infty$ in our generator $[\chi_\infty]$.  Our specific choice is now made once and for all by our

\begin{aspt}\label{conv:2}
For each algebraic Hecke character $\chi: \GL_1(F)\backslash \GL_1(\A_F)\ra\C^*$, we put $z=1$. I.e., we choose the generator $[\chi_\infty]= 1 \otimes f_{\chi_\infty,1} \otimes 1$.
\end{aspt}

This choice/convention has the following effect on the attached Whittaker period $p(\chi)$:

\begin{lem}\label{lem:chiq}
For all algebraic Hecke characters $\chi: \GL_1(F)\backslash \GL_1(\A_F)\ra\C^*$, the Whittaker period $p(\chi)$ is an element of $\Q(\chi)^\ast$.
\end{lem}
\begin{proof}
Let us consider diagram \eqref{diawhit}, by which we defined our Whittaker periods, in the case of $\Pi=\chi$ and recall how a $\sigma\in$ Aut$(\C)$ acts on $W(\chi_f)$ as well as on $H^0(S_1,\EE_\mu)(\chi_f)$. For the action of Aut$(\C)$ on $W(\chi_f)$ we look up the map $\tilde\sigma_{\chi_f}$ from \S \ref{sect:whittaker}: As $n=1$ now, we get $t_\sigma=1$, and so $\sigma$ acts on a non-archimedean Whittaker function $\xi_f\in W(\chi_f)$ simply by applying it to its values $\tilde\sigma_{\chi_f}(\xi_f)(g_f)=\sigma(\xi_f(g_f))$. For the action of Aut$(\C)$ on $H^0(S_1,\EE_\mu)(\chi_f)$, we look up the map $\sigma^q$ (with $q=0$) defined in \eqref{eq:gq}: Applying $\sigma$ to the cohomology class defined by $[\chi_\infty]\otimes\chi_f$ amounts to forming the class $[({}^\sigma\chi)_\infty]\otimes(\sigma\circ\chi_f)$, where $[({}^\sigma\chi)_\infty]$ denotes our choice of a generator attached to the archimedean component $({}^\sigma\chi)_\infty$ of the global, $\sigma$-twisted Hecke character ${}^\sigma\chi$. 

For the reader, who seeks a concrete description of $[({}^\sigma\chi)_\infty]$, we provide the following explicit characterization of this generator: To this end, let us write $\chi_\infty(g_\infty)$ as a product $\chi_\infty(g_\infty)=\prod_{v\in S_\infty} \chi_v(g_v) = \prod_{v\in S_\infty} g^{a_{\iota_v}}_v \bar g^{b_{\iota_v}}_v$, for some integers $a_{\iota_v}, b_{\iota_v}$, (recall that $g_v\in \GL_1(F_v)\cong\C^\ast$). Moreover, recall from Rem.\ \ref{rem:emu} that as a representation of $G(F)$ we get ${}^\sigma\EE_\mu=\EE_{{}^\sigma\!\mu}$, where the latter is the representation of highest weight $^\sigma\!\mu=((\mu_{\sigma^{-1}\circ\iota_v},\mu_{\sigma^{-1}\circ\bar\iota_v})_{v\in S_\infty})$. Therefore, as ${}^\sigma\chi$ is cohomological with respect to $\EE_{{}^\sigma\!\mu}$, we must have $({}^\sigma\chi)_\infty(g_\infty)=\prod_{v\in S_\infty} g^{a_{\sigma^{-1}\circ\iota_v}}_v \bar g^{b_{\sigma^{-1}\circ\iota_v}}_v$, and so $[({}^\sigma\chi)_\infty]=1 \otimes f_{({}^\sigma\chi)_\infty,1} \otimes 1$ with $({}^\sigma\chi)_\infty$ as right above.

Now let us reconsider diagram \eqref{diawhit}: Comparing the two actions of Aut$(\C)$, which we have just described, we see that in the special case of $\Pi=\chi$ already the following diagram 
$$\xymatrix{
W(\chi_f) \ar[rrr]^{\Upsilon_\chi}\ar[d]^{\tilde\sigma_{\chi_f}}  & & &
H^0(S_1,\EE_\mu)(\chi_f)\ar[d]^{\tilde\sigma^0} \\
W({}^\sigma\chi_f)
\ar[rrr]^{{}^\sigma\Upsilon_\chi}
 & & &
H^0(S_1,{}^\sigma\!\EE_\mu)({}^\sigma\chi_f)
}$$
where no periods $p(\chi)$ resp.\ $p({}^\sigma\chi)$ have been inserted, commutes for all $\sigma\in$ Aut$(\C)$. Therefore, $p(\chi)$ must be in the subfield of $\C$, which is fixed by all $\sigma$, for which $({}^\sigma\chi)_\infty=\chi_\infty$ and $(\sigma\circ\chi_f)=\chi_f$. But as $(\sigma\circ\chi_f)={}^\sigma\chi_f$, this subfield is $\Q(\chi)$ by definition. 

\end{proof}

\begin{rem}
As it turns out by the proof of Lem.\ \ref{lem:chiq}, instead of $f_{\chi_\infty,1}$ we could have chosen any $f_{\chi_\infty,q}$, with $q\in\Q(\EE_\mu)$, in oder to fix our generator $[\chi_\infty]$. Indeed, as $\Q(\chi)\supseteq\Q(\EE_\mu)$ (cf.\ \cite{grob-ragh} Cor.\ 8.7), this would not have changed the assertion. The reader is invited to compare this to \cite{mahnkopf}, last line of p.\ 596, where the analogous statement can be found for $\GL_n/\Q$ (in which case $\Q(\EE_\mu)=\Q$ for all $\mu$).
\end{rem}

As it is obvious by construction, $f_{\chi_\infty,1}(g_\infty)=\prod_{v\in S_\infty}f_{\chi_v,1}(g_v)$, where $f_{\chi_v,1}(g_v)=\chi_v(g_v)\cdot 1$ is a non-zero Whittaker functional in $W(\chi_v)$.\\\\
We now turn back to the general case of unitary cohomological Eisenstein representations. So, let $\Pi'=\Pi_1\boxplus...\boxplus\Pi_k$ be such an Eisenstein representation of $G_n(\A_F)$ as defined in \ref{sect:Eisen}, cohomological with respect to $\EE_\mu$. We recall from \eqref{eq:ind} that at each place $v\in S_\infty$
\begin{equation}\label{eq:ind2}
\Pi'_v\cong {\rm Ind}_{B(\C)}^{G(\C)}[z_1^{\ell_{v,1}}\bar z^{-\ell_{v,1}}_1\otimes ...\otimes z_n^{\ell_{v,n}}\bar z^{-\ell_{v,n}}_n]
\end{equation}
with $\ell_{v,j}=-\mu_{\iota_v,n-j+1}+\frac{n+1}{2}-j$. For each $1\leq j\leq n$, abbreviate $\alpha_{j,v}(z_v):=z_v^{\ell_{v,j}}\bar z_v^{-\ell_{v,j}}$ and set $\alpha_{j,\infty}:=\otimes_{v\in S_\infty} \alpha_{j,v}$. Furthermore, put $\chi_{j,v}:=\alpha_{j,v}\ \|\cdot\|^{\tfrac{n-2j+1}{2}}_v$, i.e., $\chi_{j,v}$ is the product of the $j$-th factor in the inducing datum of $\Pi'_v$ and $j$-th entry of the half sum of positive roots $\rho_v=\rho_{B,v}$. As the exponents $\pm\ell_{v,j}+\tfrac{n-2j+1}{2}$ are all integers, we see that $\chi_{j,\infty}=\otimes_{v\in S_\infty} \chi_{j,v}$ is the archimedean component of an algebraic Hecke character $\chi_j$. Therefore, we have pinned down a generator $[\chi_{j,\infty}]$ in Conv.\ \ref{conv:2}. We recall from above that the cohomology, spanned by $[\chi_{j,\infty}]$, is $H^0(\gl_{1,\infty}, K_{1,\infty}, W(\chi_{j,\infty})\otimes \EE_{\mu_j})$, where $\EE_{\mu_j}=\chi^{-1}_{j,\infty}$.\\\\
In order to fix our choice of a generator $[\Pi'_\infty]$ we first recall a well-known, general result of Borel-Wallach (cf.\ \cite{bowa}, Thm.\ III.3.3.(ii)), which expresses $(\g_\infty,K_\infty)$-cohomology of an induced representation in terms of the cohomology of the inducing datum: More precisely, let $P=LN$ be a standard parabolic subgroup of $G$, $L\cong\GL_{n_1}\times...\times\GL_{n_k}$, and let $\pi_\infty\cong\otimes_{j=1}^k\pi_{\pi_{j,\infty}}$ be a smooth representation of $L_\infty$ (In this paper, we will only be interested in the case, when $\pi_\infty$ acts on the Fr\'echet space of smooth vectors of a unitary representation on a Hilbert space). If the induced representation ${\rm Ind}_{P_\infty}^{G_\infty}[\pi_\infty]$ is cohomological with respect to a coefficient module $\EE_\lambda$, then, combining \cite{bowa}, Thm.\ III.3.3.(ii) and the canonical isomorphism provided by the K\"unneth rule, \cite{bowa}, 1.3.(2), one obtains a canonical isomorphism
$$I^P_{\pi_\infty}: H^{q}(\g_\infty, K_\infty, {\rm Ind}_{P_\infty}^{G_\infty}[\pi_\infty]\otimes\EE_\lambda) \ira \bigoplus_{q_1+...+q_k=q-\ell(s)}\bigotimes_{j=1}^k H^{q_j}(\gl_{n_j,\infty}, K_{n_j,\infty}, \pi_{j,\infty}\rho_{j,\infty}\otimes \EE_{\lambda_j}).$$
Here, $\rho_{j,\infty}=\rho_{P}|_{\GL_{n_j,\infty}}$ and $\EE_{\lambda_j}$ is the unique irreducible algebraic coefficient module of $\GL_{n_j,\infty}$ of highest weight $\lambda_j:=s(\lambda+\rho)-\rho|_{\GL_{n_j,\infty}}$, for a certain, uniquely determined Kostant representative $s\in {\bf W}^P$ (cf.\ \cite{bowa} III.1.4 \& Thm.\ III.3.3.(i)). We kindly refer the reader to p.\ 65--66 in \cite{bowa}, where all the details of the construction of this isomorphism may be found. 

We recall that if $\pi_\infty$ is also generic, Shahidi has defined a non-zero Whittaker functional $\lambda_{\psi_B}(0,\pi_\infty): {\rm Ind}_{P_\infty}^{G_\infty}[\pi_\infty]\ra\C$ in \cite{shahidi-book}, \S 3.6 (see (3.6.8) for the very definition and Cor.\ 3.6.11 for analytic continuation to an entire function). For a section $h_\infty\in {\rm Ind}_{P_\infty}^{G_\infty}[\pi_\infty]$ we let $W_{h_\infty}\in W({\rm Ind}_{P_\infty}^{G_\infty}[\pi_\infty])$ be the attached smooth Whittaker function 
\begin{equation}\label{eq:archwhit}
W_{h_\infty}(g_\infty):=\lambda_{\psi_B}(0,\pi_\infty)(\pi_\infty(g_\infty) h_\infty) ,
\end{equation}
and we will write
$$\mathcal W^{q}_P: H^{q}(\g_\infty, K_\infty, {\rm Ind}_{P_\infty}^{G_\infty}[\pi_\infty]\otimes\EE_\lambda) \longrightarrow
H^{q}(\g_\infty, K_\infty, W({\rm Ind}_{P_\infty}^{G_\infty}[\pi_\infty])\otimes\EE_\lambda) $$ 
in order to denote the map in cohomology induced by $W_\bullet$. If ${\rm Ind}_{P_\infty}^{G_\infty}[\pi_\infty]$ is irreducible, then $W_\bullet$ is an isomorphism, which follows from \cite{shahidi-book}, Cor.\ 3.6.11, hence, so is $\mathcal W^q_P$ in this case.

In order to finally explain our choice of a generator $[\Pi'_\infty]$, we specify the isomorphism $I^P_{\pi_\infty}$ from above using \eqref{eq:ind2}: We set $P=B$ and $\pi_{j,\infty}:=W(\alpha_{j,\infty})=W(\chi_{j,\infty})\rho^{-1}_{j,\infty}$ and observe that in this case the unique Kostant representative $s\in {\bf W}^B$, satisfiying \cite{bowa} Thm.\ III.3.3.(i), has length $\ell(s)=b_n$. As $\Pi'_\infty$ is irreducible, this finally provides us a triangle of isomorphisms 
$$\xymatrix{
H^{b_n}(\g_\infty, K_\infty, {\rm Ind}_{B_\infty}^{G_\infty}[\otimes_{j=1}^n W(\alpha_{j,\infty})]\otimes\EE_\mu) \ar[rrr]^{\mathcal W^{b_n}_B}  & & &
H^{b_n}(\g_\infty, K_\infty, W(\Pi'_\infty)\otimes\EE_\mu) \\ &&&\\
\bigotimes_{j=1}^n H^0(\gl_{1,\infty}, K_{1,\infty}, W(\chi_{j,\infty})\otimes \EE_{\mu_j})\ar[uu]^{(I^B_{\otimes_{j=1}^n W(\alpha_{j,\infty})})^{-1}}
\ar@{.>}[uurrr]^{}
 & & &
}$$
Let $I_{\Pi'_\infty}$ denote the map attached to the dashed arrow, i.e., the isomorphism given by the composition $I_{\Pi'_\infty}:=\mathcal W^{b_n}_B\circ (I^B_{\otimes_{j=1}^n W(\alpha_{j,\infty})})^{-1}$. Our last convention, fixing $[\Pi'_\infty]$, is now stated as

\begin{aspt}\label{diag}
For a unitary cohomological Eisenstein representation $\Pi'$, we choose the generator
$$[\Pi'_{\infty}]:= I_{\Pi'_\infty}\left(\otimes_{j=1}^n [\chi_{j,\infty}]\right) = \mathcal W^{b_n}_B \left((I^B_{\otimes_{j=1}^n W(\alpha_{j,\infty})})^{-1}\left(\otimes_{j=1}^n [\chi_{j,\infty}]\right)\right).$$
\end{aspt}
Observe that by the uniqueness of the Langlands datum of $\Pi'_\infty$, cf.\ \cite{bowa}, Thm.\ IV.4.11, the sets $\{\chi_{j,v}\}_{j=1}^n$ are uniquely determined, i.e., the only other way the write $\Pi'_v$ as an induced representation from characters is by permuting the factors $\chi_{j,v}$. However, $[\Pi'_{\infty}]= I_{\Pi'_\infty}\left(\otimes_{j=1}^n\otimes_{v\in S_\infty} 1\otimes f_{\chi_{j,v},1}\otimes 1\right)$ is independent of the ordering in the tensor products ``$\otimes_j$'' as well as in ``$\otimes_v$'', as it follows from \cite{shahidi-book} (3.6.1). As a consequence, $[\Pi'_{\infty}]$ is well-defined.

\subsection{Relations of algebraicity}

\begin{defn}\label{def:rel}
Let $\F\subset\C$ be any subfield and let $x, y\in\C$ be two complex numbers. We say $x\sim_{\F} y$ if there is an $a\in \F$ such that $x=ay$ or $ax=y$.
\end{defn}

\begin{rem}\label{transitive}
Note that this relation is symmetric, but not transitive unless all sides of the relation are non zero. More precisely, if $x,y,z \in \C$ such that $x\sim_{\F} y$ and $y \sim_{\F} z$, then we do not have $x\sim_{\F} z$ in general, unless $xyz\neq 0$.
\end{rem}

The main goal of this paper is to prove several such relations among different $L$-values and various periods. As we have seen in the previous sections, the automorphism group ${\rm Aut}(\C)$ acts on the set of representations, hence it will also act on the set of $L$-values and periods. The relations that we will prove behave well under the action of ${\rm Aut}(\C)$ in the following sense.

\begin{defn}\label{definition algebraic relation}
Let $\F,L\subset\C$ be two subfields. Let $x=\{x(\sigma)\}_{\sigma\in {\rm Aut}(\C)}$ and $y=\{y(\sigma)\}_{\sigma\in {\rm Aut}(\C)}$ be two families of complex numbers. We say $x\sim_{L} y$ (and this relation) {\it is equivariant under} Aut$(\C/\F)$, if either $y(\sigma)=0$ for all $\sigma\in {\rm Aut}(\C)$, or if $y(\sigma)\neq 0$ for all $\sigma\in {\rm Aut}(\C)$ and the following two conditions are verified:
\begin{enumerate}
\item $x(\sigma)\sim_{\sigma(L)} y(\sigma)$ for all $\sigma$;
\item $\sigma\left(\cfrac{x(\tau)}{y(\tau)}\right)=\cfrac{x(\sigma\tau)}{y(\sigma\tau)} $  for all $\sigma\in {\rm Aut}(\C/\F)$ and all $\tau\in {\rm Aut}(\C)$.
\end{enumerate}

\end{defn}

\begin{rem}
One can replace the first condition by requiring $x(\sigma)\sim_{\sigma(L)} y(\sigma)$ for all $\sigma$ running through a choice of representatives of ${\rm Aut}(\C)/{\rm Aut}(\C/\F)$. In particular, if $\F=\Q$, instead of verifying the first condition for all $\sigma\in {\rm Aut}(\C)$, one only needs to verify it for a single fixed $\sigma_{0}\in {\rm Aut}(\C)$.
\end{rem}

\begin{lem}[Minimizing-Lemma]\label{minimize}
Let $\F\subset\C$ be any number field and let $L\subset \C$ be a number field, containing $\F^{Gal}$. Let $x=\{x(\sigma)\}_{\sigma\in {\rm Aut}(\C)}$ and $y=\{y(\sigma)\}_{\sigma\in {\rm Aut}(\C)}$ be as in Def.\ \ref{definition algebraic relation} and suppose that $y(\sigma)\neq 0$ for all $\sigma\in {\rm Aut}(\C)$. If the complex numbers $x(\sigma)$ and $y(\sigma)$ depend only on the restriction of $\sigma$ to $L$, then the second condition of Def.\ \ref{definition algebraic relation} implies the first.
\end{lem}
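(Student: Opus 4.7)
The plan is to fix $\tau\in{\rm Aut}(\C)$ and show directly that the ratio $z(\tau):=x(\tau)/y(\tau)$ lies in $\tau(L)$; by Def.\ \ref{def:rel} and the non-vanishing of $y(\tau)$, this is exactly condition (1) at $\tau$. To prove $z(\tau)\in\tau(L)$, I would verify that every automorphism $\alpha\in{\rm Aut}(\C/\tau(L))$ fixes $z(\tau)$, and then invoke the fact that a complex number is in a number field if and only if it is fixed by every automorphism of $\C$ fixing that field.

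The key observation making the hypothesis ``$L\supseteq\F^{Gal}$'' enter is that $\F^{Gal}/\Q$ is Galois, so $\tau(\F^{Gal})=\F^{Gal}$; consequently $\tau(L)\supseteq\tau(\F^{Gal})=\F^{Gal}\supseteq\F$, and hence any $\alpha\in{\rm Aut}(\C/\tau(L))$ automatically lies in ${\rm Aut}(\C/\F)$. This is precisely the hypothesis needed in order to apply condition (2) of Def.\ \ref{definition algebraic relation} with this $\alpha$ in the role of $\sigma$ and our fixed $\tau$; it gives
\[
\alpha\bigl(z(\tau)\bigr)=\alpha\!\left(\frac{x(\tau)}{y(\tau)}\right)=\frac{x(\alpha\tau)}{y(\alpha\tau)}=z(\alpha\tau).
\]

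The remaining step is to show $z(\alpha\tau)=z(\tau)$, for which I would use the standing assumption that $x(\sigma)$ and $y(\sigma)$ depend only on $\sigma|_{L}$. Since $\alpha$ fixes $\tau(L)$ pointwise, the element $\tau^{-1}\alpha\tau$ fixes $L$ pointwise, i.e.\ $\tau^{-1}\alpha\tau\in{\rm Aut}(\C/L)$, which is exactly the assertion $(\alpha\tau)|_{L}=\tau|_{L}$. Hence $x(\alpha\tau)=x(\tau)$ and $y(\alpha\tau)=y(\tau)$, giving $z(\alpha\tau)=z(\tau)$, and therefore $\alpha(z(\tau))=z(\tau)$. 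Since $\alpha\in{\rm Aut}(\C/\tau(L))$ was arbitrary, $z(\tau)\in\tau(L)$, as desired.

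I do not foresee a serious obstacle here: the proof is a short diagram-chase with Galois groups, and the only subtlety is the interplay between ``depending on $\sigma|_{L}$'' and the conjugation identity ${\rm Aut}(\C/\tau(L))=\tau\,{\rm Aut}(\C/L)\,\tau^{-1}$, which is what forces the hypothesis $L\supseteq\F^{Gal}$ (to ensure that the relevant $\alpha$ actually fall within the reach of condition (2), whose scope is ${\rm Aut}(\C/\F)$).
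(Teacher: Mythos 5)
Your proof is correct and follows essentially the same route as the paper's: fix $\tau=\sigma_0$, take $\alpha$ fixing $\tau(L)$ pointwise, use that $\tau(\F^{Gal})=\F^{Gal}$ (so $\alpha\in{\rm Aut}(\C/\F)$) to invoke condition (2), and use $(\alpha\tau)|_L=\tau|_L$ together with the dependence-on-restriction hypothesis to conclude $\alpha$ fixes $x(\tau)/y(\tau)$. You have merely made explicit the role of $\F^{Gal}$ being Galois over $\Q$, which the paper leaves implicit.
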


\begin{proof}
Fix $\sigma_0\in {\rm Aut}(\C)$. For any $\sigma\in {\rm Aut}(\C)$ fixing $\sigma_0(L)$, one has $\sigma\sigma_0\mid_{L}=\sigma_0\mid_{L}$. Hence $x(\sigma\sigma_0)=x(\sigma_0)$ and $y(\sigma\sigma_0)=y(\sigma_0)$ by our assumptions. Moreover, since $L\supset \F^{Gal}$, we know $\sigma\in$ Aut$(\C/\F)$. By the second condition, we have: $$\sigma\left(\cfrac{x(\sigma_0)}{y(\sigma_0)}\right)=\cfrac{x(\sigma\sigma_0)}{y(\sigma\sigma_0)} =\cfrac{x(\sigma_0)}{y(\sigma_0)}.$$
Therefore $\cfrac{x(\sigma_0)}{y(\sigma_0)}\in \sigma_0(L)$ for all $\sigma_0$ as expected.

\end{proof}

\begin{rem}
The previous lemma allows us to minimize the number field $\F$ in the relation. This will be very useful in the proof of the main theorems. Its omnipresence in our arguments is one of the main reasons why our formulas only hold over $F^{Gal}$.
\end{rem}

As a first, useful lemma, these notions imply

\begin{lem}\label{Gauss sum}
Interpreted as a family of complex numbers as in Def.\ \ref{definition algebraic relation}, $\mathcal{G}(\varepsilon_{f})\sim_{F^{Gal}}1$ is equivariant under the action of $\emph{\textrm{Aut}}(\C/F^{Gal})$.
\end{lem}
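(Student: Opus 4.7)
The statement has two parts: the relation $\mathcal{G}(\varepsilon_f) \in F^{Gal}$, and its equivariance under ${\rm Aut}(\C/F^{Gal})$ in the sense of Def.\ \ref{definition algebraic relation}. The plan is to first reduce the second to the first via the Minimizing-Lemma, and then to verify the first via a classical computation.

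First I would observe that $\varepsilon$ takes values in $\{\pm 1\} \subset \Q$, so that $^\sigma\varepsilon = \varepsilon$ for every $\sigma \in {\rm Aut}(\C)$. Interpreting $\mathcal{G}(\varepsilon_f)$ as the family $\{\mathcal{G}(({}^\sigma\varepsilon)_f)\}_{\sigma\in{\rm Aut}(\C)}$, this family is therefore constant, and both sides of $\mathcal{G}(\varepsilon_f) \sim_{F^{Gal}} 1$ depend only on $\sigma|_{F^{Gal}}$ trivially. The Minimizing-Lemma \ref{minimize} then reduces the claim to verifying condition (2) of Def.\ \ref{definition algebraic relation}; equivalently, that $\sigma(\mathcal{G}(\varepsilon_f)) = \mathcal{G}(\varepsilon_f)$ for all $\sigma \in {\rm Aut}(\C/F^{Gal})$, i.e., that $\mathcal{G}(\varepsilon_f) \in F^{Gal}$.

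For this, I would invoke the classical squaring identity for Gauss sums: a short calculation from the local definition in \S \ref{sect:char} (cf.\ \cite{weil} VII, \cite{godement}) shows that $\mathcal{G}(\chi_f)\mathcal{G}(\bar\chi_f)$ equals $\chi(-1)$ times a positive rational constant involving the norms of $\mathfrak{c}_\chi$ and $\mathfrak{D}_{F^+}$. Specialising to $\chi = \varepsilon$ and using $\bar\varepsilon = \varepsilon$, this yields $\mathcal{G}(\varepsilon_f)^2 \in \Q^\times$, and hence $\mathcal{G}(\varepsilon_f)$ lies in some quadratic extension of $\Q$.

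To pin down this quadratic extension as sitting inside $F^{Gal}$, I would appeal to the standard Galois-equivariance formula for Gauss sums of algebraic Hecke characters, in the form $\sigma(\mathcal{G}(\chi_f)) = \chi^{-1}(u_\sigma)\,\mathcal{G}(({}^\sigma\chi)_f)$ for an idele $u_\sigma$ encoding the action of $\sigma$ on roots of unity via class field theory. For $\chi = \varepsilon$ and $\sigma \in {\rm Aut}(\C/F^{Gal})$, the fact that $\varepsilon$ is attached to $F/F^+$ by class field theory together with $\sigma$ fixing $F \subset F^{Gal}$ forces $\varepsilon(u_\sigma) = 1$, giving $\sigma(\mathcal{G}(\varepsilon_f)) = \mathcal{G}(\varepsilon_f)$ and completing the proof. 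The main obstacle is making the idele $u_\sigma$ and the identity $\varepsilon(u_\sigma)=1$ precise in our setting; once this is granted, the argument is essentially formal, since for a quadratic character the sign is the only ambiguity left after the squaring step.
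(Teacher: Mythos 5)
Your route is genuinely different from the paper's, and the skeleton is correct, but you have left the crux as an acknowledged gap. The paper computes $\mathcal G(\varepsilon_f)$ essentially in closed form: it combines the class number formula (via the factorization $L(s,\varepsilon_f)=\zeta_F(s)/\zeta_{F^+}(s)$ and Pazuki's result $Reg_F\sim_\Q Reg_{F^+}$), Siegel's rationality theorem at negative integers, and the functional equation to arrive at $\mathcal G(\varepsilon_f)\sim_\Q i^d\,|D_{F^+}|^{1/2}/|D_F|^{1/2}$, and then identifies $|D_{F^+}|^{1/2}$ and $|D_F|^{1/2}$ as elements of $F^{+,Gal}$ resp.\ $i^d\cdot F^{Gal}$ by an explicit Vandermonde-type argument with a purely imaginary $\alpha\in F$. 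You instead invoke only the Galois-equivariance identity $\sigma(\mathcal G(\chi_f))/\mathcal G({}^\sigma\chi_f)=\chi_f(t^\sigma)$ (the paper itself uses this identity later, in the proof of Cor.\ \ref{isobaric sum Whittaker corollary}, equation \eqref{remove Gauss sum 1}). Combined with ${}^\sigma\varepsilon=\varepsilon$, this reduces the lemma to the single assertion $\varepsilon_f(t^\sigma)=1$ for all $\sigma\in\textrm{Aut}(\C/F^{Gal})$ — which, note, makes your squaring step via $\mathcal G(\varepsilon_f)\mathcal G(\bar\varepsilon_f)\in\Q^\times$ redundant, since the Galois step alone already pins $\mathcal G(\varepsilon_f)$ inside $F^{Gal}$.

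The gap is exactly that assertion: $\varepsilon_f(t^\sigma)=1$ is \emph{not} a formal consequence of ``$\sigma$ fixes $F$,'' because $t^\sigma$ lives in $\hat\Z^\times\subset\A_{F^+,f}^\times$ and $\varepsilon_f(t^\sigma)$ is governed by the \emph{restriction} $\varepsilon\circ\mathrm{diag}:\A_\Q^\times\to\{\pm 1\}$, a quadratic Hecke character of $\Q$ cutting out some quadratic field $K\subset\Q^{ab}$. You need $K\subseteq F^{Gal}$, and this must be argued. It is true: at an unramified rational prime $p$, $(\varepsilon\circ\mathrm{diag})(\mathrm{frob}_p)=\prod_{\mathfrak p\mid p}\varepsilon(\mathrm{Frob}_{\mathfrak p})^{e(\mathfrak p/p)}$, which only depends on the splitting of $p$ in $F^{Gal}/\Q$ (i.e.\ on the conjugacy class of $\mathrm{Frob}_p$ in $\mathrm{Gal}(F^{Gal}/\Q)$); by \v Cebotarev, $\varepsilon\circ\mathrm{diag}$ therefore factors through $\mathrm{Gal}(F^{Gal}/\Q)^{\mathrm{ab}}$, whence $K\subseteq F^{Gal}$. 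With this supplied, your proof closes and is arguably more economical than the paper's, since it avoids Siegel's theorem and the class number formula entirely; but as written, the sentence ``the fact that $\varepsilon$ is attached to $F/F^+$ together with $\sigma$ fixing $F$ forces $\varepsilon(u_\sigma)=1$'' is the entire content of the lemma in disguise, not an observation.
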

\begin{proof}
Recall that $L(s,\varepsilon_f)=\cfrac{\zeta_{F}(s)}{\zeta_{F^{+}}(s)}$. Let $Reg_{F}$ and $Reg_{F^{+}}$ be the regulator of $F$ and $F^{+}$ respectively. By Proposition $3.7$ of \cite{pazuki}, we know $Reg_{F}\sim_{\Q}Reg_{F^{+}}$. Hence, denoting the absolute discriminant of $F$ (resp.\ $F^+$) by $D_F$ (resp.\ $D_{F^+}$), the class number formula implies that $$L(1,\varepsilon_f)=\cfrac{Res_{s=1}\zeta_{F}(s)}{Res_{s=1}\zeta_{F^{+}}(s)}\sim_{\Q} \cfrac{(2\pi)^{d}Reg_{F}|D_{F^{+}}|^{1/2}}{Reg_{F^{+}}|D_{F}|^{1/2}}\sim_{\Q} \cfrac{(2\pi)^{d}|D_{F^{+}}|^{1/2}}{|D_{F}|^{1/2}}.$$
On the other hand, a classical result of Siegel, \cite{siegel} (revealing $L(1-m,\varepsilon_f)\in\Q$ for $m\geq 1$), combined with the functional equation, cf., e.g., \cite{Bump}, \S 3.1, shows that $L(m,\varepsilon_f)\sim_\Q \mathcal{G}(\varepsilon_{f}) (2\pi i)^{md}$, for odd $m\geq 1$. Consequently, we obtain
$$\mathcal{G}(\varepsilon_{f}) \sim_{\Q} i^{d}\cfrac{|D_{F^{+}}|^{1/2}}{|D_{F}|^{1/2}}.$$
Since $F^{+}$ is totally real, we know that $ |D_{F^{+}}|^{1/2}=\pm D_{F^{+}}^{1/2} \sim_{F^{+,Gal}} 1$.  It remains to show that $|D_{F}|^{1/2}\sim_{F^{Gal}}i^{d}$. To this end, let $\alpha\in F$ be a purely imaginary element, i.e., $\bar{\alpha}=-\alpha$. Since $-2\alpha=\det
\begin{pmatrix}
1 & \alpha\\
1 & -\alpha\end{pmatrix}
$, it is easy to see that $|N_{F^{+}/\Q} D_{F/F^{+}}|^{1/2 }\sim_{\Q}  \prod_{v\in S_\infty}|\iota_v(\alpha)|$ where $D_{F/F^{+}}$ is the relative discriminant with respect to $F/F^{+}$. So, $|D_{F}|^{1/2} \sim_{\Q} \prod_{v\in S_\infty}|\iota_v(\alpha)| \cdot |D_{F^{+}}|^{1/2} \sim_{F^{+,Gal}} \prod_{v\in S_\infty}|\iota_v(\alpha)|.$
We know that $\prod_{v\in S_\infty}\iota_v(\alpha)$ is an algebraic number giving rise to an extension of $\Q$ of degree 2. Its complex conjugate equals $(-1)^{d}\prod_{v\in S_\infty}\iota_v(\alpha)$. Hence if $d$ is even, then it is real quadratic and $\prod_{v\in S_\infty}|\iota_v(\alpha)|=\pm  \prod_{v\in S_\infty}\iota_v(\alpha)\sim_{F^{Gal}} 1=i^{d}$; otherwise, it is imaginary quadratic and $\prod_{v\in S_\infty}|\iota_v(\alpha)|=\pm i \prod_{v\in S_\infty}\iota_v(\alpha)\sim_{F^{Gal}} i\sim_{F^{Gal}}i^{d}$ as expected.
\end{proof}

As a consequence of the above discussion, one has
\begin{eqnarray}
\zeta_{F^{+}}(m)& \sim_{F^{Gal}} &  (2\pi i)^{md} \text{ if } m\geq 2 \text{ is even}\label{Dedekind}\\ \label{quadratic}
L(m,\varepsilon_{f}) &\sim_{F^{Gal}}& (2\pi i)^{md} \text{ if } m\geq 1 \text{ is odd}
\end{eqnarray}
both relations being equivariant under the action of Aut$(\C/F^{Gal})$. Indeed, the first relation follows from applying the functional equation and the fact that $ |D_{F^{+}}|^{1/2}\sim_{F^{Gal}} 1$, explained in the proof of Lem.\ \ref{Gauss sum}, to \cite{siegel}; whereas the latter follows directly from Lem.\ \ref{Gauss sum} and the fact that $L(m,\varepsilon_f)\sim_\Q \mathcal{G}(\varepsilon_{f}) (2\pi i)^{md}$, for odd $m\geq 1$, as explained in the proof of of Lem.\ \ref{Gauss sum}.

\subsection{Critical values of automorphic $L$-functions}
Deligne has defined the notion of {\it critical values} for the $L$-function $L(s,\mathbb M)$ of a motive $\mathbb M$ in terms of Hodge types, cf.\ \cite{deligne79}. Assuming that $\mathbb M$ corresponds to an automorphic representation $\pi$, such that $L(s,\mathbb M)=L(s-\alpha,\pi_f)$ for some unique shift of the argument $\alpha=\alpha(\pi)\in\R$, we may translate Deligne's original definition to automorphic $L$-functions.

\begin{defn}
A complex number $s_0\in \alpha(\pi)+\Z$ is called {\it critical} for $L(s,\pi)$ if both $L(s,\pi_\infty)$ and $L(1-s,\pi_\infty^{\sf v})$ are holomorphic at $s=s_0$.
\end{defn}

More explicitly, if $\pi=\pi_N\otimes\pi_M$ is tensor product of two automorphic representations of $\GL_N(\A_F)\times\GL_M(\A_F)$, then $\alpha(\pi)=\tfrac{N-M}{2}$, whereas if $\pi={\rm As}^{\pm}(\pi_N)$ is an automorphic representation defined by applying the local Langlands correspondence to the Asai-lifts, then $\alpha(\pi)=0$.

\subsubsection{Critical points for Rankin-Selberg $L$-function}\label{sect:critpts}

Let $\Pi(r)$ (resp.\ $\Pi'(s)$) be an essentially unitary, generic automorphic representation of $\GL_n(\A_F)$ (resp.\ $\GL_{n-1}(\A_F)$), cohomological with respect to $\EE_\mu$ (resp.\ $\EE_{\mu'}$). For $\iota\in \Sigma$ let us write $a_{\iota,i}:=\ell(\mu_{\iota},i)+r$ and $b_{\iota,j}:=\ell(\mu'_{\iota},j)+s$ for their respective infinity-types at $\iota$, cf.\ \S\ref{sect:pi}. We assume that
\begin{equation}\label{no middle class}
a_{\iota,i}+b_{\iota,j}\neq r+s \text{ for any }1\leq i\leq n, 1\leq j\leq n-1\text{ and any } \iota \in \Sigma.
\end{equation}
The critical points for the Rankin-Selberg $L$-function $L(s,\Pi\times \Pi')$ are the half integers $\tfrac12+m$ with $m\in \Z$ such that
$$-\min_{i,j,\iota}\{| a_{\iota,i}+b_{\iota,j}-r-s |\} <\tfrac12+m+r+s \leq \min_{i,j,\iota}\{| a_{\iota,i}+b_{\iota,j}-r-s |\}.$$

This can be easily seen using Deligne's approach in terms of Hodge types of motives, cf.\ \cite{deligne79} and \cite{jie_michael}. For a direct computation involving only representation theory of real reductive groups, see \cite{ragh16}, Cor.\ 2.35. It is easy to see that the set of critical points is non empty. We denote it by Crit$(\Pi(r)\times\Pi'(s))$. For example, if $r=s=0$, then $\tfrac12$ is always a critical point.

\begin{rem}
\begin{enumerate}
\item There are no critical points for $\Pi(r)\times \Pi'(s)$ if the inequality (\ref{no middle class}) is not satisfied (cf.\ $1.7$ of \cite{harris97}).
\item The precise relation between highest weights and infinity-types is given in subsection \ref{sect:pi}. One can check easily that (\ref{no middle class}) is automatically satisfied if the {\it piano-hypothesis}, i.e., Hypothesis \ref{hypo:piano} below, holds for $\mu$ and $\mu'$. If this is the case and if both representations $\Pi(r)$ and $\Pi'(s)$ are Eisenstein representations, cf.\ \S\ref{sect:Eisen}, then Crit$(\Pi(r)\times\Pi'(s))=$ Crit$({}^\sigma\Pi(r)\times{}^\sigma\Pi'(s))$ for all $\sigma\in$Aut$(\C)$: For this combine \cite{grob-app}, 1.5-1.6 and \cite{grob-harr}, Lem.\ 3.5.
\end{enumerate}
\end{rem}

\subsubsection{Critical points for the Asai $L$-functions}
Consider now $\Pi=\Pi(0)$, i.e., a unitary generic automorphic representation of $\GL_n(\A_F)$, which is cohomological with respect to $\EE_\mu$. We shall additionally assume now that $\Pi$ is conjugate self-dual. Then the calculations in section $1.3$ of \cite{harris_adjoint} show that the critical points of $L(s,\Pi,{\rm As}^{(-1)^{n}})$ (resp.\ $L(s,\Pi,{\rm As}^{(-1)^{n-1}})$ are the positive odd or non-positive even integers (resp.\ positive even or negative odd) $m$ such that
$$
\max_{i,j,\iota}\{a_{\iota,i}-a_{\iota,j} \mid a_{\iota,i}-a_{\iota,j}<0\}< m \leq \min_{i,j,\iota}\{a_{\iota,i}-a_{\iota,j} \mid a_{\iota,i}-a_{\iota,j}>0\}.
$$
In particular, the integers $0$ and $1$ are always critical for $L(s,\Pi,{\rm As}^{(-1)^{n}})$ and never for $L(s,\Pi,{\rm As}^{(-1)^{n-1}})$.

\subsection{Two rationality theorems revisited}

We recall now two rationality-results for critical $L$-values, proved in \cite{grob_harris_lapid} and \cite{grob-app}, which are the starting point of our investigations:

\begin{thm}[\cite{grob_harris_lapid} Thm.\ 7.1]\label{thm:gro-har-lap}
Let $\Pi$ be a conjugate self-dual, cuspidal automorphic representation of $G(\A_{\cm})=\GL_n(\A_{\cm})$, which is cohomological with respect to $\EE_\mu$. Let $p(\Pi)$ be the bottom-degree Whittaker period defined in Cor.\ \ref{cor:Whittakerperiods}. Then, the following holds:
\begin{enumerate}
\item For all $\sigma\in {\rm Aut}(\C/F^{Gal})$, there exists a non-zero constant $a({}^\sigma\Pi_\infty)$, only depending on the choice of a generator $[\Pi_\infty]$, such that
$$
\sigma\left(\frac{L^S(1,\Pi,{\rm As}^{(-1)^{n}})}{p(\Pi)a(\Pi_\infty)}\right) \ = \
\frac{L^S(1,{}^\sigma\Pi,{\rm As}^{(-1)^{n}})}{p({}^\sigma\Pi)a({}^\sigma\Pi_\infty)}.
$$

\item Equivalently, interpreting both sides below as a family of numbers $x=\{x(\sigma)\}_{\sigma\in{\rm Aut}(\C)}$ and $y=\{y(\sigma)\}_{\sigma\in{\rm Aut}(\C)}$ as in Def.\ \ref{definition algebraic relation},
$$
L^{S}(1,\Pi,{\rm As}^{(-1)^{n}})\ \sim_{E(\Pi)} a(\Pi_\infty) \ p(\Pi),
$$
is equivariant under $\textrm{\emph{Aut}}(\C/F^{Gal})$.
\end{enumerate}
\end{thm}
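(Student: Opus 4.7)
My plan is to combine the Flicker--Rallis integral representation of the Asai $L$-function with the cohomological construction of the Whittaker period from Cor.\ \ref{cor:Whittakerperiods}. Concretely, for a conjugate self-dual cuspidal $\Pi$ one has a global zeta integral
$$Z(\varphi,s) \ = \ \int_{\GL_n(\tr)\,Z(\Atr)\backslash \GL_n(\Atr)} \varphi(h)\, \|\!\det h\!\|^{s-1}\, dh,\qquad \varphi\in\Pi,$$
whose value at $s=1$, after removing the local factors at $S$, computes $L^S(1,\Pi,{\rm As}^{(-1)^n})$ up to explicit local factors at $v\in S_\infty\cup S_f$ and an archimedean constant. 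The first step is to review this unfolding and factorize $Z(\varphi,1)$ into a product of a purely archimedean zeta-integral, of finite unramified Euler factors, and of local integrals at $v\in S_f$, each of which is $E(\Pi)$-rational in the finite Whittaker vector $\xi_v$.

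Second, I would translate $Z(\varphi,1)$ into a cohomological pairing. The cycle $S'_n := \GL_n(\tr)\backslash \GL_n(\Atr)/C'_\infty Z(\Atr)$ embeds into $S_n$ as a submanifold whose real dimension equals the bottom degree $b_n = dn(n-1)/2$, so integrating against its fundamental class yields a linear functional on $H^{b_n}(S_n,\EE_\mu)$. Choosing the basis $[\Pi]$ of $H^{b_n}(\g_\infty,K_\infty,W(\Pi)_\infty\otimes\EE_\mu)$ used to build $\Upsilon$ in Def.\ \ref{def:upsilon}, the zeta integral $Z(\varphi,1)$ is recognized as this pairing, evaluated on the cohomology class $\Upsilon(\xi_f)$ attached to a finite Whittaker vector $\xi_f\in W(\Pi_f)$, times a purely archimedean factor $a(\Pi_\infty)$ absorbing the archimedean integrals and measure normalizations.

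Third, I would invoke Prop.\ \ref{prop:Whittakerperiods}: picking $\xi_f$ inside the $E(\Pi)$-rational structure $W(\Pi_f)_{E(\Pi)}$, the class $p(\Pi)^{-1}\Upsilon(\xi_f)$ lies in the $E(\Pi)$-rational structure $H^{b_n}(S_n,\EE_\mu)(\Pi_f)_{E(\Pi)}$. Since the embedding $S'_n\hookrightarrow S_n$ and the fundamental cycle $[S'_n]$ are defined over $F^{Gal}$ (one needs all embeddings of $F$ into $\C$ to be simultaneously available, which is exactly the role of $F^{Gal}$), the cycle pairing sends $H^{b_n}(S_n,\EE_\mu)(\Pi_f)_{E(\Pi)}$ into $E(\Pi)$. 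Rearranging, this yields
$$L^S(1,\Pi,{\rm As}^{(-1)^n})\ \sim_{E(\Pi)}\ a(\Pi_\infty)\, p(\Pi).$$
The $\mathrm{Aut}(\C/F^{Gal})$-equivariance then follows by applying $\sigma$ to this identity: the commutative square of Prop.\ \ref{prop:Whittakerperiods}(2) transforms $\Upsilon(\xi_f)$ into $p(\Pi)/p({}^\sigma\Pi)\cdot {}^\sigma\Upsilon(\tilde\sigma_{\Pi_f}\xi_f)$, the cycle map intertwines $\tilde\sigma^{b_n}$ with the corresponding cycle map for ${}^\sigma\!\EE_\mu$ because $\sigma$ fixes $F^{Gal}$, and the local finite integrals transform $\sigma$-equivariantly by \cite{jac-ps-shalika-mathann}.

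The main obstacle is the rationality of the cycle map, i.e., showing that integration against $[S'_n]$ sends the $L$-rational structure on $H^{b_n}(S_n,\EE_\mu)(\Pi_f)$ into $L$, for $L\supseteq E(\Pi)$. This boils down to two points: first, the sheaf $\EE_\mu$ must be rationally trivializable along $S'_n$ after twisting by the relevant archimedean basis element, which forces the compatibility of $\mu$ with the conjugate self-duality of $\Pi$ and explains the appearance of the unspecified $a(\Pi_\infty)$; second, one needs the action of $\mathrm{Aut}(\C/F^{Gal})$ on the set of embeddings $J_F$ to permute the real places of $\tr$ coherently, which is precisely why the formula is equivariant under $\mathrm{Aut}(\C/F^{Gal})$ and not under the larger $\mathrm{Aut}(\C)$.
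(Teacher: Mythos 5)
Your proposal does not match the route the paper actually takes, and more importantly it contains a dimensional error that would derail the argument as written.

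First, the claimed dimension is wrong. Per archimedean place of $\tr$ the symmetric space $\GL_n(\R)/(\R^\times_{>0}\,O(n))$ has real dimension $n(n+1)/2-1$, so
$\dim_\R S'_n = d\bigl(n(n+1)/2-1\bigr) = d(n-1)(n+2)/2$. This is the \emph{top} degree $t_n=b_n+d(n-1)$ of non-vanishing $(\g_\infty,K_\infty)$-cohomology, not the bottom degree $b_n=dn(n-1)/2$. Consequently, integration against $[S'_n]$ pairs with $H^{t_n}(S_n,\EE_\mu)$, and the period this produces is a \emph{top-degree} Whittaker period, not $p(\Pi)$ as defined in Cor.\ \ref{cor:Whittakerperiods}. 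Your third step, applying Prop.\ \ref{prop:Whittakerperiods} in degree $b_n$, therefore does not attach to the cycle pairing you constructed.

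Second, the zeta integral you wrote down (a $\|\!\det\!\|^{s-1}$-twist of the Flicker--Rallis period, rather than an integral against a mirabolic Eisenstein series) is, at $s=1$, the Flicker--Rallis period integral. For conjugate self-dual $\Pi$ this computes the \emph{residue} of $L^S(s,\Pi,{\rm As}^{(-1)^{n-1}})$ at $s=1$, not the value $L^S(1,\Pi,{\rm As}^{(-1)^n})$ appearing in the theorem. To get from one to the other one must exploit the factorization
\begin{equation*}
L^S(s,\Pi\times\Pi^\vee)=L^S(s,\Pi,{\rm As}^{+})\,L^S(s,\Pi,{\rm As}^{-}),
\end{equation*}
divide the residue of the left-hand side by ${\rm Res}_{s=1}L^S(s,\Pi,{\rm As}^{(-1)^{n-1}})$, and observe that in this quotient the top-degree Whittaker periods cancel, leaving precisely the bottom-degree $p(\Pi)$. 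This cancellation, together with the conversion between top- and bottom-degree Whittaker periods via ${\rm Res}_{s=1}L^S(s,\Pi\times\Pi^\vee)$ (Thm.\ 6.4 of \cite{grob_harris_lapid}, rewritten as in \cite{grob-extrat}, Thm.\ 8.5), is exactly the content of the remark in the paper following the theorem; your proposal does not contain this step, and without it there is no way to make $p(\Pi)$ in degree $b_n$ appear. So the approach is genuinely different from the paper's, and as written it has a gap that cannot be patched without introducing the residue-quotient mechanism.
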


\begin{rem}
Theorem \ref{thm:gro-har-lap} has been stated in \cite{grob_harris_lapid} without reference to the Whittaker periods $p(\Pi)$. For the convenience of the reader we sketch how one obtains the above transcription: Thm.\ 7.1 of \cite{grob_harris_lapid} is a consequence of Thm. 5.3 and 6.4 {\it ibidem}. Thm. 6.4 can be rewritten as an Aut$(\C)$-equivariant relation of the residue Res$_{s=1}(L^S(s,\Pi\times\Pi^\vee))$ with an archimedean period, the bottom-degree Whittaker period $p(\Pi)$ and a top-degree version of the latter, as explained in \cite{grob-extrat}, Thm.\ 8.5. Similarly, Thm.\ 5.3 in \cite{grob_harris_lapid} can be restated as an Aut$(\C/F^{Gal})$-equivariant relation of the residue Res$_{s=1}(L^{S}(s,\Pi,{\rm As}^{(-1)^{n-1}}))$, an archimedean period and the above mentioned top-degree Whittaker period. Taking the quotient of the first and the second relation gives the theorem.
\end{rem}

For the next rationality-result we note that a pair of irreducible algebraic representations $\EE_\mu$ of $G_{n,\infty}$ and $\EE_{\mu'}$ of $G_{n-1,\infty}$, given by their highest weights $\mu=(\mu_v)_{v\in S_\infty}$ and $\mu'=(\mu'_v)_{v\in S_\infty}$ satisfy the {\it piano-hypothesis}, if the following holds

\begin{hyp}[Piano-hypothesis]\label{hypo:piano}
At each archimedean place $v=(\iota_v,\bar\iota_v)\in S_\infty$,
$$\mu_{\iota_v,1}\geq -\mu'_{\iota_v,n-1}\geq \mu_{\iota_v,2}\geq -\mu'_{\iota_v,n-2}\geq...\geq -\mu'_{\iota_v,1}\geq \mu_{\iota_v,n}$$
$$\mu^{\sf v}_{\bar\iota_v,1}\geq -\mu'^{\sf v}_{\bar\iota_v,n-1}\geq \mu^{\sf v}_{\bar\iota_v,2}\geq -\mu'^{\sf v}_{\bar\iota_v,n-2}\geq...\geq -\mu'^{\sf v}_{\bar\iota_v,1}\geq \mu^{\sf v}_{\bar\iota_v,n}.$$
\end{hyp}

This condition has been called ``{\it interlacing-hypothesis}'' in \cite{grob-app}. Here we prefer to call it after the more poetical picture of a piano's keys, however: The white ones (``whole-tones'')  taking the role of the coordinates of $\mu$, the black keys (``half-tones'') taking the role of the coordinates of $\mu'$.

\begin{thm}[\cite{grob-app} Thm.\ 1.9]\label{thm:main-app}
Let $\Pi$ be a cuspidal automorphic representation of $\GL_n(\A_F)$ which is cohomological with respect to $\EE_\mu$ and let $\Pi'$ by an isobaric automorphic representation of $\GL_{n-1}(\A_F)$, as in \S\ref{sect:Eisen}, cohomological with respect to $\EE_{\mu'}$. Let $p(\Pi)$ and $p(\Pi')$ be the bottom-degree Whittaker periods defined in Cor.\ \ref{cor:Whittakerperiods}. Assume that $\Pi'$ has central character $\omega_{\Pi'}$ and that the highest weights $\mu=(\mu_v)_{v\in S_\infty}$ and $\mu'=(\mu'_v)_{v\in S_\infty}$ satisfy the piano-hypothesis, cf.\ Hypothesis \ref{hypo:piano}. Then, the following holds:
\begin{enumerate}
\item For every $\sigma\in {\rm Aut}(\C)$ and all critical values $\tfrac12+m\in\textrm{\emph{Crit}}(\Pi\times\Pi')=\textrm{\emph{Crit}}({}^\sigma\Pi\times{}^\sigma\Pi')$, there exists a non-zero constant $p(m,{}^\sigma\Pi_\infty,{}^\sigma\Pi'_\infty)$, only depending on $m$ and the choice of generators $[\Pi_\infty]$ and $[\Pi'_\infty]$ such that
$$
\sigma\left(
\frac{L^S(\tfrac 12+m,\Pi \times \Pi')}{p(\Pi)\ p(\Pi')\ p(m,\Pi_\infty,\Pi'_\infty) \ \mathcal{G}(\omega_{\Pi'_{f}})}\right) \ = \
\frac{L^S(\tfrac 12+m,{}^\sigma\Pi \times {}^\sigma\Pi')}{p({}^\sigma\Pi)\ p({}^\sigma\Pi')\ p(m,{}^\sigma\Pi_\infty,{}^\sigma\Pi'_\infty) \ \mathcal{G}(\omega_{{}^\sigma\Pi'_{f}})},
$$

\item Equivalently, interpreting both sides below as a family of numbers $x=\{x(\sigma)\}_{{\rm Aut}(\C)}$ and $y=\{y(\sigma)\}_{{\rm Aut}(\C)}$ as in Def.\ \ref{definition algebraic relation},
$$
L^{S}(\tfrac 12+m,\Pi \times \Pi') \ \sim_{\Q(\Pi)\Q(\Pi')} p(\Pi)\ p(\Pi') \ p(m,\Pi_\infty,\Pi'_\infty) \ \mathcal{G}(\omega_{\Pi'_{f}})
$$
is equivariant under $\textrm{\emph{Aut}}(\C)$.
\end{enumerate}
\end{thm}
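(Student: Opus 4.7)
The plan is to prove Theorem \ref{thm:main-app} by a cohomological interpretation of the Rankin--Selberg zeta integral of Jacquet--Piatetski-Shapiro--Shalika, following the template of Harder--Mahnkopf and its generalization to $\GL_n$ over number fields by Raghuram--Shahidi. First, realize $\Pi$ and $\Pi'$ inside $H^{b_n}(S_n, \EE_\mu)$ and $H^{b_{n-1}}(S_{n-1}, \EE_{\mu'})$ via the maps $\Upsilon_\Pi$ and $\Upsilon_{\Pi'}$ from Def.\ \ref{def:upsilon}. The Whittaker periods $p(\Pi)$ and $p(\Pi')$ measure precisely the defect between the natural rational structure on the respective Whittaker models and the $\Q(\EE_\mu)$-, respectively $\Q(\EE_{\mu'})$-rational structure on cohomology provided by Prop.\ \ref{prop:emu}.

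For each critical point $\tfrac12 + m$, the next step is to rewrite the global Rankin--Selberg integral as a Poincar\'e-type pairing on $S_{n-1}$ obtained from the modular-symbol map induced by $\GL_{n-1} \hookrightarrow \GL_n$. The piano-hypothesis (Hyp.\ \ref{hypo:piano}) is precisely what guarantees a non-trivial $\GL_{n-1}(F\otimes_\Q\C)$-intertwining $\EE_\mu \otimes \EE_{\mu'} \to \C$ (after a shift of $\EE_{\mu'}$ accounting for the twist $\|\det\|^m$), so that the cup product of a class for $\Pi$ with a class for $\Pi'$ restricts to top-degree cohomology of $S_{n-1}$ with trivial coefficients, where it is evaluated against the fundamental class. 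The required cohomology class for the non-cuspidal $\Pi'$ is furnished by Prop.\ \ref{prop:Eisen} via the Eisenstein construction map: this is the step that genuinely extends the Raghuram--Shahidi theory beyond the cuspidal case. Splitting the resulting global integral into local factors, the non-archimedean places produce $L^S(\tfrac12+m, \Pi\times\Pi')$ together with a finite product of explicit local integrals absorbable into $\Q(\Pi)\Q(\Pi')$; the Gau\ss{} sum $\mathcal G(\omega_{\Pi'_f})$ enters through the action of the long Weyl element on the finite-part Whittaker functional and the shift by $\|\det\|^m$ on the central character of $\Pi'$. The archimedean places contribute the single factor $p(m, \Pi_\infty, \Pi'_\infty)$, defined as the pairing of the fixed generators of the one-dimensional minimal-degree $(\g_\infty, K_\infty)$-cohomology spaces; its non-vanishing is B.\ Sun's theorem.

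Galois equivariance then follows step by step: the rational structure on Whittaker models is $\mathrm{Aut}(\C)$-equivariant via the twist $\tilde\sigma_{\Pi_f}$ of \S\ref{sect:whittaker}; the rational structure on cohomology is $\mathrm{Aut}(\C)$-equivariant via $\tilde\sigma^q$ from \eqref{eq:gq}; Thm.\ \ref{prop:Upsilon} guarantees compatibility of the realization map $\Upsilon$ with these structures on both the $\GL_n$- and the $\GL_{n-1}$-side; and the pull-back and cup product are purely algebraic operations. Tracking these equivariances through the Rankin--Selberg identity yields assertion (1); assertion (2) is a formal reformulation in the language of Def.\ \ref{definition algebraic relation}.

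The main obstacle is showing that the cohomological realization of the Rankin--Selberg integral isolates a \emph{single} archimedean factor $p(m, \Pi_\infty, \Pi'_\infty)$ depending only on $\Pi_\infty$, $\Pi'_\infty$ and $m$, independent of the many intermediate choices of Whittaker vectors, bases of cohomology, or sections of induced representations. This sharpness is exactly what is delivered by the one-dimensionality of the minimal-degree cohomology spaces (cf.\ \eqref{eq:cohh}) together with Thm.\ \ref{prop:Upsilon}, which reduces all such choices to the canonical generators fixed in Prop.\ \ref{prop:Eisen}; without this, one would only obtain the much weaker statement that \emph{some} non-zero archimedean constant exists, scaling possibly with every choice made along the way.
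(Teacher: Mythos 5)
This theorem is \emph{not proved} in the present paper at all: it is explicitly imported as a citation from \cite{grob-app}, Thm.\ 1.8, and serves as one of the two black-box inputs that the paper's own contributions then refine. So there is no in-paper proof to compare against, and your task here really amounts to sketching what the proof in \cite{grob-app} must look like.

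That said, your sketch does follow the correct general template of that external proof — the Rankin--Selberg / modular-symbol machinery of Mahnkopf and Raghuram--Shahidi, extended to isobaric $\Pi'$ via the Eisenstein realization of Prop.\ \ref{prop:Eisen} and Thm.\ \ref{prop:Upsilon}. Two small imprecisions are worth flagging. First, you describe $p(m,\Pi_\infty,\Pi'_\infty)$ as ``the pairing of the fixed generators of the one-dimensional minimal-degree $(\g_\infty,K_\infty)$-cohomology spaces.'' In the actual argument it is not a cup-product pairing value but the value of the \emph{archimedean Rankin--Selberg zeta integral} evaluated on the explicit Whittaker vectors $\xi_{\infty,\underline i,\alpha}$ that appear in the chosen generator of $H^{b_n}(\g_\infty,K_\infty,W(\Pi)_\infty\otimes\EE_\mu)$ and its $\GL_{n-1}$-analogue, weighted by the algebraic branching coefficients determined by the intertwining $\EE_\mu\otimes\EE_{\mu'}\to\C$. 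This is why one needs Sun's non-vanishing theorem rather than a mere cohomological degree count. Second, the step ``evaluated against the fundamental class'' has to be phrased more carefully: $S_{n-1}$ is non-compact, so one works with compactly-supported cohomology in a specific degree $q_0$ for which $H^{q_0}_c(S_{n-1},\C)$ is one-dimensional, and the pullback-cup-product map lands in degree $b_n + b_{n-1}$, which equals $q_0$ precisely because of the CM-field dimension count; this coincidence must be verified explicitly. Neither remark is a gap in principle, but both are where the real work in the proof you are sketching lives. Finally, you should also note that the crucial point that genuinely goes beyond Raghuram--Shahidi — and which the present paper \emph{does} establish (Thm.\ \ref{prop:Upsilon}, Cor.\ \ref{cor:Whittakerperiods}) — is the well-definedness of $p(\Pi')$ for non-cuspidal isobaric $\Pi'$; your sketch correctly identifies Prop.\ \ref{prop:Eisen} as the source, but the one-dimensionality claim you lean on for $\Pi'$ requires the Langlands/Franke decomposition argument given in the proof of Thm.\ \ref{prop:Upsilon}, not merely the cuspidal multiplicity-one theorem.
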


\begin{rem}\label{remove Gauss sum}
We can replace the Gau\ss{} sum $\mathcal{G}(\omega_{\Pi'_{f}})$ by $ \mathcal{G}(\omega_{\Pi'_{f}}|_{\A_F^{+}})$ (see (\ref{remove Gauss sum 1}) and (\ref{remove Gauss sum 2}) below). In particular, when $\Pi'$ is conjugate self-dual, $\omega_{\Pi'_{f}}$ is trivial on $N_{\Acm/\Atr}(\Acm^{\times})$, so $\omega_{\Pi'_{f}}|_{\A_F^{+}}$ is either trivial or the finite part of the quadratic character $\varepsilon$ associate to the extension $F/F^{+}$. However, as $\Pi'$ is assumed to be cohomological, necessarily $\omega_{\Pi'_{f}}|_{\A_F^{+}}=\triv_f$. Therefore we can savely remove the Gau\ss{} sum $\mathcal{G}(\omega_{\Pi'_{f}})$ in Thm.\ \ref{thm:main-app} when $\Pi'$ is conjugate self-dual.
\end{rem}

\begin{cor}\label{cor:nonvan}
Let $\Pi$ and $\Pi'$ be as in Thm.\ \ref{thm:main-app} and assume in addition that $\Pi$ is unitary. Then $L^S(\tfrac 12,\Pi \times \Pi')$ is non-zero if and only if $L^S(\tfrac 12,{}^\sigma\Pi \times {}^\sigma\Pi')$ is non-zero for all $\sigma\in {\rm Aut}(\C)$.
\end{cor}
\begin{proof}
Recalling that under the present assumption on $\Pi$ and $\Pi'$, $s=\tfrac12$ is always critical for $L(s,\Pi\times \Pi')$, see \S \ref{sect:critpts}, the corollary follows directly from Thm.\ \ref{thm:main-app} (1).
\end{proof}

\section{Period relations for isobaric sums}\label{s:Eisen}

\subsection{Eisenstein representations and boundary cohomology}\label{se:eis1}
Let $\Pi'=\Pi_1\boxplus...\boxplus\Pi_k$ be an Eisenstein representation as in \ref{sect:Eisen}, cohomological with respect to $\EE_\mu$. Then there is a unique Kostant representative $w\in W^P$ (cf.\ \cite{bowa} III.1.4 \& Thm.\ III.3.3) such that $\Pi_i  \rho_i$ is cohomological with respect to the irreducible algebraic coefficient module $\EE_{\mu_w,i}:=\EE_{w(\mu+\rho)-\rho}|_{\GL_{n_i,\infty}}$. Since the point of evaluation of the Eisenstein series in $\Pi'$ is always centered at $\lambda=0$, see the proof of Lem.\ \ref{lem:eisen}, the length of this Kostant representative is $\ell(w)=\tfrac12\dim_\R N_{P,\infty}$, cf.\ \cite{bor2}, Lem.\ 2.12 and hence minimal by \cite{grobner-EisRes}, Prop.\ 12.\\\\
Let $\partial_{P}S_{G}:=P(F)\backslash G(\A_F)/K_\infty$ be the face corresponding to the parabolic subgroup $P\subseteq G$ in the Borel--Serre--compactification of $S_G$, cf.\ \cite{bor-serre}, \cite{rohlfs}. It is well-known (cf.\ \cite{schw1447}, 7.1--7.2) that there is an isomorphism of $G(\A_f)$-modules
\begin{eqnarray*}
H^q(\partial_{P}S_{G},\mathcal E_\mu) & \ira & {}^a\textrm{Ind}^{G(\A_f)}_{P(\A_f)}
\left [\bigoplus_{w\in W^{P}} H^{q-\ell(w)}(L(F)\backslash L(\A_F)/(K_{\infty}\cap L_\infty),\mathcal{E}_{\mu_w})\right ]
\end{eqnarray*}
``${}^a\textrm{Ind}$'' denoting un-normalized or algebraic induction. Hence the K\"unneth rule implies that there is furthermore an isomorphism of $G(\A_f)$-modules
\begin{equation}\label{eq:coh3}
I^q_{\partial_P}: H^q(\partial_{P}S_{G},\mathcal E_\mu) \ira {}^a\textrm{Ind}^{G(\A_f)}_{P(\A_f)}\left [\bigoplus_{w\in W^{P}} \bigoplus_{q_1+...+q_k=q-\ell(w)} \bigotimes_{i=1}^k H^{q_i}(S_{n_i},\mathcal{E}_{\mu_w,i})\right ],
\end{equation}
 \\\\
Let $W(A_P):=N_{G(F)}(A_P(F))/L(F)$, interpreted as the set of automorphisms $A_P\ira A_P$, which are given by conjugation by an element in $G(F)$ and which induce an isomorphism $L\ira L$. Hence, the elements ${\tilde w}\in W(A_P)$ act naturally on the inducing representation $\tau=\Pi_1\otimes...\otimes\Pi_k$ of $\Pi'$, by permuting its factors: $\tau^{\tilde w}=\Pi_{{\tilde w}^{-1}(1)}\otimes...\otimes\Pi_{{\tilde w}^{-1}(k)}$.\\\\
Let
$$res_P: H^{b_n}(S_G,\EE_\mu) \ra H^{b_n}(\partial_{P}S_{G},\mathcal E_\mu)$$
be the natural restriction of classes to the face $\partial_PS_G$ in the boundary of the Borel--Serre--compactification of $S_G$. It is obviously Aut$(\C)$-equivariant. The image under $res_P$ of a class $[\omega]\in H^{b_n}(S_n,\EE_\mu)(\Pi'_f)$, see \S\ref{mindeg}, is given by the class represented by the constant term along $P$ of the Eisenstein series representing $[\omega]$, cf.\ \cite{schwLNM}, Satz 1.10. Hence, recalling the well-defined (i.e., holomorphic at $s=0$) intertwining operators
$$M(\tau,w)=M(\tau,s,\tilde w)|_{s=0}: \textrm{Ind}^{G(\A_F)}_{P(\A_F)}\left [\tau \right ] \ra \textrm{Ind}^{G(\A_F)}_{P(\A_F)}\left [\tau^{\tilde w} \right ],$$
from \cite{moewal}, II.1.6 (holomorphy at $s=0$ following from \cite[IV.1.11]{moewal}) and the description of the constant term of an Eisenstein series $E_P(h,0)\in\Pi'$ from II.1.7, {\it ibidem},  \cite{bor2}, 2.9--2.13, implies that the image of $res_P([\omega])$ under $I^{b_n}_{\partial_P}$ lies inside the direct sum
\begin{equation}\label{eq:ccc}
I^{b_n}_{\partial_P}(res_P([\omega]))\in \bigoplus_{\tilde w\in W(A_P)} {}^a\textrm{Ind}^{G(\A_f)}_{P(\A_f)}\left [\bigotimes_{i=1}^k H^{b_{n_i}}(S_{n_i},\mathcal{E}_{\mu_w,i})((\Pi_{\tilde w^{-1}(i)}\rho_i)_f)\right ]
\end{equation}
Here we also used the minimality of the length $\ell(w)$ of the unique Kostant representative $w=w(\tau^{\tilde w},b_n)\in W^P$ giving rise to the coefficients modules $\mathcal{E}_{\mu_w,i}$ with respect to which $\Pi_{\tilde w^{-1}(i)}\rho_i$ is cohomological. However, since $id\in W(A_P)$ and since the attached intertwining operator $M(\tau,id)=\triv$ is the identity-map, \eqref{eq:ccc} implies that the composition $I^{b_n}_{\partial_P}\circ res_P$ induces an isomorphism of $G(\A_f)$-modules

\begin{equation}\label{eq:coh4}
r_{\Pi',P}: H^{b_n}(S_n,\EE_\mu)(\Pi'_f) \ira {}^a\textrm{Ind}^{G(\A_f)}_{P(\A_f)}\left [\bigotimes_{i=1}^k H^{b_{n_i}}(S_{n_i},\mathcal{E}_{\mu_w,i})((\Pi_{i}\rho_i)_f)\right ],
\end{equation}
by projecting onto the summand indexed by $\tilde w = id$. The following lemma is then obvious by construction.

\begin{lem}\label{lem:1}
The map $r_{\Pi',P}$ is ${\rm Aut}(\C)$-equivariant, i.e., for all $\sigma\in{\rm Aut}(\C)$,
$${}^a \textrm{\emph{Ind}}^{G(\A_f)}_{P(\A_f)} \left [\otimes_{i=1}^k\tilde\sigma^{b_{n_i}}\right]\circ r_{\Pi',P} = r_{{}^\sigma\Pi',P}\circ\tilde\sigma^{b_{n}}$$
\end{lem}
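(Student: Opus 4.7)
The plan is to establish the Aut$(\C)$-equivariance of $r_{\Pi',P}$ by verifying it for each of the three building blocks of its definition, and then conclude by composition. Recall that $r_{\Pi',P}$ factors as: the restriction-to-the-boundary morphism $res_P$; the Mackey/K\"unneth isomorphism $I^{b_n}_{\partial_P}$ from~\eqref{eq:coh3}; and the projection onto the summand indexed by $\tilde w = id$ in the refined description~\eqref{eq:ccc}. If each step is Aut$(\C)$-equivariant (once we identify the images appropriately for ${}^\sigma\Pi'$), then so is $r_{\Pi',P}$.

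For the first step, equivariance is immediate, as already noted in the excerpt: the Aut$(\C)$-action on both $H^{b_n}(S_G,\EE_\mu)$ and $H^{b_n}(\partial_P S_G,\EE_\mu)$ is induced from the $\sigma$-linear isomorphism $\EE_\mu\ira{}^\sigma\!\EE_\mu$ of~\eqref{eq:gq2} on the coefficient system (cf.\ Prop.~\ref{prop:emu} and~\eqref{eq:gq}), and restriction of classes to a locally closed subspace is functorial in the sheaf. For the second step, both sides of~\eqref{eq:coh3} are produced by algebraic/group-theoretic constructions (the Mackey decomposition for boundary cohomology, followed by the K\"unneth formula on each Levi factor), whose formation does not depend on a complex structure on $\EE_\mu$; hence they commute with the $\sigma$-linear twist.

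The step that deserves closer inspection, and which I expect to be the only genuine (though still elementary) point, is the projection onto the $\tilde w = id$ summand. The key observation here is that the indexing set $W(A_P)=N_{G(F)}(A_P(F))/L(F)$ is defined purely in terms of the algebraic data $(G,P,A_P,L)$ over $F$, and in particular carries no complex structure; thus Aut$(\C)$ acts on the direct sum in~\eqref{eq:ccc} summand by summand, sending the $\tilde w$-component attached to $\Pi'$ to the $\tilde w$-component attached to ${}^\sigma\Pi'$. Combined with our convention from \S\ref{sect:Eisen} that ${}^\sigma\Pi' = (\Pi_1)^\sigma\boxplus\cdots\boxplus(\Pi_k)^\sigma$ preserves the ordering of the cuspidal constituents, this guarantees that under $\tilde\sigma^{b_n}$ the $\tilde w = id$-summand for $\Pi'$ is mapped precisely into the $\tilde w = id$-summand for ${}^\sigma\Pi'$. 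Assembling the three equivariance statements and comparing with the formula for $r_{{}^\sigma\Pi',P}$ obtained by running the same construction for ${}^\sigma\Pi'$ then yields the stated identity.
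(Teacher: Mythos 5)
Your argument is correct and matches the paper's intended reasoning: the paper offers no proof beyond the remark that the lemma ``is then obvious by construction,'' and your decomposition of $r_{\Pi',P}$ into the three steps $res_P$, $I^{b_n}_{\partial_P}$, and projection onto the $\tilde w = id$ summand, with each verified to commute with the $\sigma$-linear twist, is exactly what that remark leaves to the reader. The one point worth having made explicit, and which you correctly identified, is that the conventions of \S\ref{sect:Eisen} fix the enumeration ${}^\sigma\Pi' = \Pi_1^\sigma\boxplus\cdots\boxplus\Pi_k^\sigma$ compatibly with that of $\Pi'$, so that the $\tilde w = id$ summand is preserved.
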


\subsection{A theorem on period relations for isobaric sums}\label{sect:app1}

In this section we want to relate the period $p(\Pi')$ of a cohomological Eisenstein representation $\Pi'=\Pi_1\boxplus ... \boxplus\Pi_k$ of $\GL_n(\A_F)$ as in \S\ref{sect:Eisen} with the product of the periods $p(\Pi_i\rho_i)$ of its (twisted) summands $\Pi_i\rho_i$. Firstly, we observe that the latter periods are defined: Indeed, $\Pi'$ being cohomological implies by \cite{bowa}, Thm.\ III.3.3, that $\Pi_i\rho_i$ is cohomological for all $1\leq i \leq k$, whence each $p(\Pi_i\rho_i)$ exists by our Cor.\ \ref{cor:Whittakerperiods}.\\\\
In order to compare $p(\Pi')$ with the product $\prod_{1\leq i \leq k} p(\Pi_i\rho_i)$, we recall that we have fixed choices of generators $[\Pi'_{\infty}]$ and $[(\Pi_i\rho_i)_{\infty}]$ in \S \ref{sect:gens}, see Conv.\ \ref{conv:2} and Conv.\ \ref{diag}, with the effect that each Whittaker period is now uniquely determined up to non-zero multiples in the respective rationality field. These choices were purely local at the archimedean places. We will now consider a special, global cohomolgy class in $H^{b_n}(S_n,\EE_\mu)(\Pi'_f)$, which will be used in the proof of our Thm.\ \ref{thm:periods}.\\\
To this end, we start with a non-trivial section $\Omega\in {}^a\textrm{Ind}^{G(\A_f)}_{P(\A_f)}\left [\bigotimes_{i=1}^k H^{b_{n_i}}(S_{n_i},\mathcal{E}_{\mu_w,i})((\Pi_{i}\rho_i)_f)\right ]$, which we view as the image $\Omega =r_{\Pi',P}([\omega])$ of a uniquely determined Eisenstein cohomology class $[\omega]\in H^{b_n}(S_n,\EE_\mu)(\Pi'_f)$. By construction, a sheaf-theoretical differential form $\omega$ representing the latter is of the form
$$\omega= \sum_{\substack{\underline i =(i_1,...,i_{b_n})\\ \alpha}} \left( E_P(h_{\infty, \underline i,\alpha}\otimes h_f,\lambda)|_{\lambda=0}\otimes e_\alpha \right) \ dx_{\underline i}, $$
for appropriate $K_\infty$-finite, global sections $h_{\infty, \underline i,\alpha}\otimes h_f\in\textrm{Ind}^{G(\A_F)}_{P(\A_F)}\left [\bigotimes_{i=1}^k \Pi_{i}\right]$. Hence, evaluating our section $\Omega$ at $g_f\in G(\A_f)$ defines a cohomology class $\Omega(g_f)$ in $\bigotimes_{i=1}^k H^{b_{n_i}}(S_{n_i},\mathcal{E}_{\mu_w,i})((\Pi_{i}\rho_i)_f)$, which is represented by a differential form of the type
$$\bigotimes_{i=1}^k\sum_{\substack{\underline j =(j_1,...,j_{b_{n_i}})\\ \alpha_i}} \left( (\varphi_{\infty, \underline j,\alpha_i}\otimes h_f(g_f))\otimes e_{\alpha_i}\right) \ dx_{\underline j},$$
where $\varphi_{\infty, \underline j,\alpha_i}\otimes h_f(g_f)$ is a cuspidal automorphic form in $\Pi_i\rho_i$ for all $g_f$, $\underline j$ and $\alpha_i$. Observe that its non-archimedean component $h_f(g_f)$ is independent of the indices $\underline j$ and $\alpha_i$, while $\varphi_{\infty, \underline j,\alpha_i}$ is independent of $g_f$. \\\\
Now suppose that $h_f=\otimes_{v\notin S_\infty} h_v$ is a factorizable vector, whose local components are normalized newvectors. Here, by a newvector we understand a vector which is invariant under the mirahoric subgroup $K(m_v)\subset G(\O_v)$ as defined in \cite[Thm.\ (5.1)]{jac-ps-shalika-mathann}, $m_v$ being the conductor of $\textrm{Ind}^{G(F_v)}_{P(F_v)}\left [\bigotimes_{i=1}^k (\Pi_{i})_v\right]$. We normalize the local component $h_v$ by the condition $h_v(id_{G(F_v)})(id_{L_P(F_v)})=1$. Observe that this pins down the whole section $h_v$ in a unique way.

In order to fix $\Omega$, we still need to treat the archimedean contribution to $\Omega$. To this end observe that after applying the inverse of the last four maps described in Prop.\ \ref{prop:Eisen} to the class $\Omega(id_{G(\A_f)})$, we obtain a cohomology class in
$\bigotimes_{i=1}^k H^{b_{n_i}}(\g_{n_i,\infty},K_{n_i,\infty}, W((\Pi_i\rho_i)_\infty)\otimes \EE_{\mu_w,i})\otimes W((\Pi_i\rho_i)_f)$, and we assume that this class equals
\begin{equation}\label{eq:ccd}
\bigotimes_{i=1}^k [(\Pi_i\rho_i)_{\infty}] \otimes \xi_{h_f(id_{G(\A_f)})},
\end{equation}
for our fixed choice of generators $ [(\Pi_i\rho_i)_{\infty}]$ (given by our Conv.\ \ref{conv:2} and Conv.\ \ref{diag}) and the Whittaker function $\xi_{h_f(id_{G(\A_f)})}$ attached to $h_f(id_{G(\A_f)})$ (cf.\ \cite{shahidi-book}, p.\ 122). This finally pins down a global section $\Omega$. Now, as our special global cohomology class $[\omega]\in H^{b_n}(S_n,\EE_\mu)(\Pi'_f)$ we take the preimage $[\omega]:=r_{\Pi',P}^{-1}(\Omega)$. By \eqref{eq:ccd} it depends on $\bigotimes_{i=1}^k [(\Pi_i\rho_i)_{\infty}]$, which in turn has been fixed by our Conv.\ \ref{conv:2} and Conv.\ \ref{diag}. \\\\
We are now ready to prove the following theorem:

\begin{thm}\label{thm:periods}
Let $\Pi'=\Pi_1\boxplus ... \boxplus\Pi_k$ be an Eisenstein representation of $\GL_n(\A_F)$ as in \S\ref{sect:Eisen}, which is cohomological with respect to the irreducible algebraic representation $\EE_{\mu}$ of $G_\infty$. Then,
\begin{enumerate}
\item for all $\sigma\in${\rm Aut}$(\C)$
$$\sigma\left(\frac{p(\Pi') \prod_{1\leq i < j \leq k} L^S(1,\Pi_i\times\Pi^\vee_j)^{-1}}{\prod_{1\leq i \leq k} p(\Pi_i\rho_i)}\right)= \frac{p({}^\sigma\Pi') \prod_{1\leq i < j \leq k} L^S(1,\Pi^\sigma_i\times(\Pi_j^\sigma)^{\vee})^{-1}}{\prod_{1\leq i \leq k} p({}^\sigma(\Pi_i\rho_i))}.$$
\item Equivalently,
$$p(\Pi') \sim_{\prod_{i}\Q(\Pi_i\rho_i)} \prod_{1\leq i \leq k} p(\Pi_i\rho_i) \prod_{1\leq i < j \leq k} L^S(1,\Pi_i\times\Pi^\vee_j)$$
is ${\rm Aut}(\C)$-equivariant. If all summands $\Pi_i$ are conjugate self-dual then the same relation holds over the smaller field $\Q(\Pi')$.
\end{enumerate}
\end{thm}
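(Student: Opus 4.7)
The plan is to set up a commutative diagram whose horizontal rows are the two Whittaker maps $\Upsilon_{\Pi'}$ and $\bigotimes_{i}\Upsilon_{\Pi_{i}\rho_{i}}$ of Definition \ref{def:upsilon}, and whose vertical arrows are given by the natural ``inducing'' maps on the Whittaker side and by the boundary-restriction map $r_{\Pi',P}$ of \eqref{eq:coh4} on the cohomology side. Once this diagram is shown to be commutative up to an explicit non-zero scalar $c \in \C^{\times}$, the defining property of the Whittaker periods will force
$$
p(\Pi') \;\sim_{\Q(\Pi')}\; c \cdot \prod_{i=1}^{k} p(\Pi_{i}\rho_{i}),
$$
and the proof reduces to identifying $c$ with $\prod_{i<j}L^{S}(1,\Pi_{i}\times\Pi_{j}^{\vee})$ up to elements of $\Q(\Pi')$, in an ${\rm Aut}(\C)$-equivariant manner.

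First I would construct the map on Whittaker models. Realizing the Whittaker model of $\Pi'_{f}\cong{\rm Ind}_{P(\A_{f})}^{G(\A_{f})}[\tau_{f}]$ via the global Jacquet integral, one has, for any decomposable section $h=\otimes'_{v}h_{v}\in{\rm Ind}_{P(\A_{f})}^{G(\A_{f})}[\tau_{f}]$, a local factorization
$$
W^{\psi}_{\Pi'_{f}}(h)(g) \;=\; \prod_{v\notin S_{\infty}}\!\!\mathcal J_{v}(h_{v})(g),
$$
where $\mathcal J_{v}$ is the local Jacquet integral producing a Whittaker vector for $\Pi'_{v}$ from a Whittaker vector on each $\Pi_{i,v}$. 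At every unramified non-archimedean $v$, Shahidi's computation of the local coefficient (equivalently the Gindikin–Karpelevich-type evaluation of $\mathcal J_{v}$ on the spherical section) yields the factor
$$
\prod_{1\le i<j\le k} L_{v}(1,\Pi_{i,v}\times\Pi_{j,v}^{\vee})\cdot L_{v}(0,\Pi_{i,v}\times\Pi_{j,v}^{\vee})^{-1},
$$
and the denominators may be absorbed into the $\Q(\Pi')$-rational comparison because $L_{v}(0,\Pi_{i,v}\times\Pi_{j,v}^{\vee})^{-1}$ is a polynomial in the Satake parameters with $\Q(\Pi')$-rational coefficients. Collecting this over all $v\notin S$ produces the global factor $\prod_{i<j}L^{S}(1,\Pi_{i}\times\Pi_{j}^{\vee})$; at the finitely many ramified places in $S\setminus S_{\infty}$ the discrepancy is a $\Q(\Pi')$-rational constant (Jacquet–Piatetski-Shapiro–Shalika, \cite[Thm.\ (4.1.(i))]{jac-ps-shalika-mathann}), so it does not affect the claimed algebraicity.

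Next I would transport this to cohomology. By Proposition \ref{prop:Eisen}, the map $\Upsilon_{\Pi'}$ is built from the Eisenstein construction $\mathrm{Eis}_{0}$, the Fourier coefficient $W^{\psi}$, and a fixed archimedean cohomology basis class $[\Pi']$. The image of a class in $H^{b_{n}}(S_{n},\EE_{\mu})(\Pi'_{f})$ under $r_{\Pi',P}$ is, by \eqref{eq:ccc} and the discussion around it, represented by the constant term of the Eisenstein series at $\lambda=0$ composed with projection to the identity Weyl chamber; this constant term is precisely the value at $\lambda=0$ of the trivial intertwining operator $M(\tau,id)$, whence the two horizontal compositions of the diagram differ by exactly the scalar $c$ computed above, provided that the archimedean basis classes $[\Pi']$ and $\bigotimes_{i}[\Pi_{i}\rho_{i}]$ are normalized compatibly. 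This compatibility at infinity is exactly the content of the Künneth decomposition in minimal degree $b_{n}=\sum_{i}b_{n_{i}}+\ell(w)$ with $\ell(w)$ as prescribed in \S 2.1, and reduces to a nonzero archimedean constant which is forced by the $L$-structure to lie in $\Q(\Pi')^{\times}$.

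Finally, the ${\rm Aut}(\C)$-equivariance in (1) is a formal consequence of three equivariance statements assembled together: Lemma \ref{lem:1} for $r_{\Pi',P}$, Proposition \ref{prop:Whittakerperiods} applied both to $\Pi'$ (valid by Theorem \ref{prop:Upsilon}) and to each $\Pi_{i}\rho_{i}$ (valid by Corollary \ref{cor:Whittakerperiods}), and the standard fact that $L^{S}(1,\Pi_{i}\times\Pi_{j}^{\vee})$ transforms under $\sigma$ into $L^{S}(1,{}^{\sigma}\Pi_{i}\times({}^{\sigma}\Pi_{j})^{\vee})$ up to $\sigma$-rational factors coming from the finitely many places in $S$. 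I expect the main obstacle to be this last point of tracking rational structures carefully: one must justify that the unramified local $L$-factors at $s=1$ are algebraic numbers permuted by ${\rm Aut}(\C)$ in the correct way (this rests on the fact that both $\Pi_{i}$ and $\Pi_{j}^{\vee}$ have finite rationality fields by \S\ref{sect:twists}) and that the residual ambiguity at ramified places can be absorbed into the equivalence $\sim_{\Q(\Pi')}$. Once this bookkeeping is done, the two formulations in (1) and (2) are equivalent by Definition \ref{definition algebraic relation}, and the theorem follows.
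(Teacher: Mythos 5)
Your overall scheme — compare $\Upsilon_{\Pi'}$ with $\bigotimes_{i}\Upsilon_{\Pi_{i}\rho_{i}}$ through the boundary restriction $r_{\Pi',P}$, realize the Whittaker model of $\Pi'_f$ via a factorizable Jacquet integral, and extract the $L$-value as the product over unramified places of spherical values — is essentially the argument of the paper. But there is an error in the unramified computation. You invoke ``Shahidi's computation of the local coefficient (equivalently the Gindikin--Karpelevich-type evaluation of $\mathcal J_v$)'' and claim it produces a ratio $\prod_{i<j}L_{v}(1,\Pi_{i,v}\times\Pi_{j,v}^{\vee})\,L_{v}(0,\Pi_{i,v}\times\Pi_{j,v}^{\vee})^{-1}$. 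That is the wrong formula: the Gindikin--Karpelevich ratio governs the unramified \emph{intertwining operator} (i.e.\ the constant term), whereas what is actually needed is the value at $e$ of the spherical Whittaker function coming from the Jacquet integral, which is the Casselman--Shalika-type evaluation of \cite[Thm.\ 7.1.2]{shahidi-book} and gives exactly $\prod_{i<j}L_{v}(1,\Pi_{i,v}\times\Pi_{j,v}^{\vee})^{-1}$ with \emph{no} $L_{v}(0,\cdot)$-factor. Your attempted fix — ``absorb'' the $L_{v}(0,\cdot)^{-1}$ into the $\Q(\Pi')$-rational comparison because each local factor is $\Q(\Pi')$-algebraic — does not work: the product over all unramified $v$ is $L^{S}(0,\Pi_i\times\Pi_j^{\vee})^{-1}$, a single global $L$-value, and there is no reason for that to lie in $\Q(\Pi')$. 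You get the stated answer only because the offending factor should never have appeared.

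A second, more structural gap is the reduction of the field of rationality to $\Q(\Pi')$. Your construction naturally compares period pairings that are only $\prod_{i}\Q(\Pi_i\rho_i)$-rational; passing from the composite of the summand fields down to $\Q(\Pi')$ is not a formal consequence of Definition \ref{definition algebraic relation}. One must handle $\sigma\in\mathrm{Aut}(\C)$ that fix $\Pi'_f$ by permuting the summands $\Pi_i$: this requires showing both that $\prod_{i<j}L^{S}(1,\Pi^{\sigma}_i\times(\Pi^{\sigma}_j)^{\vee})=\prod_{i<j}L^{S}(1,\Pi_i\times\Pi^{\vee}_j)$ for such $\sigma$, and that $\prod_{i}p({}^{\sigma}(\Pi_i\rho_i))=\prod_{j}p(\Pi_j\rho_j)$ after a consistent renormalization. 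The latter is where the relation $\Pi^{\sigma}_i\cong\Pi_j$ only determines $\Pi_i\rho_i$ up to a twist by $\|\cdot\|^{b}$ for some $b\in\Z$, and one needs the period-rationality of such norm-character twists (\cite[Thm.\ 4.1]{raghuram-shahidi-imrn}, using $\mathcal G(\|\cdot\|^{b}_f)=1$). This Step is entirely missing from your proposal and cannot be dismissed as bookkeeping; without it you only obtain the period relation over $\prod_{i}\Q(\Pi_i\rho_i)$, which is in general strictly larger than $\Q(\Pi')$.
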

\begin{proof}
We proceed in several steps.\\
{\it Step 1:} Taking inverses, we obtain two isomorphisms of $G(\A_f)$-modules
$$\Delta_G: H^{b_n}(S_n,\EE_\mu)(\Pi'_f) \ira W(\Pi'_f)$$
defined as
$$\Delta_G:= p(\Pi')\cdot \Upsilon^{-1}_{\Pi'}$$
and
$$\Delta_P: {}^a\textrm{Ind}^{G(\A_f)}_{P(\A_f)}\left [\bigotimes_{i=1}^k H^{b_{n_i}}(S_{n_i},\mathcal{E}_{\mu_w,i})((\Pi_{i}\rho_i)_f)\right ]\ira {}^a\textrm{Ind}^{G(\A_f)}_{P(\A_f)}\left [\bigotimes_{i=1}^k W((\Pi_{i}\rho_i)_f) \right]$$
defined as
$$\Delta_P:= \prod_{i=1}^k p(\Pi_i  \rho_i) \cdot {}^a\textrm{Ind}^{G(\A_f)}_{P(\A_f)}
\left [\otimes_{i=1}^k \Upsilon_{\Pi_i\rho_i}^{-1}\right ]$$
which are Aut$(\C)$-equivariant by definition of the periods. We will next consider their effect on the particular class $[\omega]\in H^{b_n}(S_n,\EE_\mu)(\Pi'_f)$, which we defined above in \S\ref{sect:app1}. Firstly, we apply the inverse of the composition of the last three maps described in Prop.\ \ref{prop:Eisen} to $[\omega]$. It then follows from \cite{shahidi-book}, Prop.\ 7.1.3 that the image of $[\omega]$ in
$H^{b_n}(\g_\infty,K_\infty,W(\Pi')\otimes\EE_\mu)$ equals
$$\sum_{\substack{\underline i =(i_1,...,i_{b_n})\\ \alpha}} X^*_{\underline i}\otimes \left(W_{h_{\infty, \underline i,\alpha}}\otimes W_{h_f}\right) \otimes e_\alpha,$$
where we recall that the Whittaker functions $W_{h_{\infty, \underline i,\alpha}}$ resp.\ $W_{h_f}$ are defined trough their local components as in \cite{shahidi-book}, (7.1.2): In particular, by definition, at the archimedean places we just retrieve the type of Whittaker functions, which we already considered in \eqref{eq:archwhit} (and which we denoted in the same way). As the isomorphisms $I^P_{\pi_\infty}$ from Borel-Wallach and $\mathcal W^{\ast}_P$ from Shahidi, cf.\ \S\ref{sect:gens}, commute with induction in stages, \eqref{eq:ccd} implies that indeed $W_{h_{\infty, \underline i,\alpha}}=\xi_{\infty, \underline i,\alpha}$, in the notation from \S\ref{mindeg}, i.e., the vectors $W_{h_{\infty, \underline i,\alpha}}$ are those, which appear in the generator $[\Pi'_\infty]$. So, finally, $\Delta_G([\omega])$ equals the Whittaker functional
$$\Delta_G([\omega]): g_f\mapsto p(\Pi')  \ W_{h_f}(g_f)=p(\Pi')  \ \prod_{v\notin S_\infty} W_{h_v}(g_v).$$
We note that by our choice of $h_f$ we obtain
$$\prod_{v\notin S} W_{h_v}(id_{G(F_v)}) = \prod_{1\leq i < j \leq k} L^S(1,\Pi_i\times\Pi_j^\vee)^{-1},$$
and $\prod_{v\in S\backslash S_\infty} W_{h_v}(id_{G(F_v)}) \neq 0$: The first statement is the contents of \cite{shahidi-book}, Prop.\ 7.1.4, while the second is shown in \cite[Cor.\ 4.4]{miyauchi}.\\
Let us write ${}^\sigma\! h_f$ for the corresponding section giving rise to the cohomology-class $\tilde\sigma^{b_n}([\omega])=\tilde\sigma^{b_n}\left(\left[\sum_{\substack{\underline i =(i_1,...,i_{b_n})\\ \alpha}} \left( E_P(h_{\infty, \underline i,\alpha}\otimes h_f,0)\otimes e_\alpha \right) \ dx_{\underline i}\right]\right)$. Then, Lem.\ \ref{lem:1} shows that this vector ${}^\sigma\! h_f$ is the non-archimedean component vector of ${}^a \textrm{Ind}^{G(\A_f)}_{P(\A_f)} \left [\otimes_{i=1}^k\tilde\sigma^{b_{n_i}}\right](r_{\Pi',P}([\omega]))$, and hence we can read of the action of $\sigma\in$Aut$(\C)$ on the function $h_v$ 
directly in terms of the actions on its values, see \cite{grob-harr}, Rem.\ 2.6. In particular, ${}^\sigma\! h_v(id_{G(F_v)})(id_{L_P(F_v)})=\sigma(h_v(id_{G(F_v)})(id_{L_P(F_v)}))=1$ at $v\notin S$, so
$$\prod_{v\notin S} W_{{}^\sigma\! h_v}(id_{G(F_v)}) = \prod_{1\leq i < j \leq k} L^S(1,\Pi_i^\sigma\times(\Pi_j^\sigma)^\vee)^{-1}$$
where we abbreviated $\Pi_i^\sigma:={}^\sigma(\Pi_i\rho_i)\rho^{-1}_i$ for the cuspidal isobaric summands of ${}^\sigma\Pi'$ as in \eqref{eq:isotwist}.
Analogously to above, we obtain $\prod_{v\in S\backslash S_\infty} W_{{}^\sigma\! h_v}(id_{G(F_v)}) \neq 0$. Recall the diagonal matrix $t_{\sigma,v}={\rm diag}(x_1,...,x_{n-1},1)\in T(\O_{F_v})$ from \S\ref{sect:whittaker}. Obviously, $t_{\sigma,v}$ is contained in every mirabolic subgroup $K_G(m_v)$. Hence, by definition, the $\sigma$-twisted newvector satisfies $\tilde\sigma_{\Pi_f}( W_{h_v})(id_{G(F_v)})=\sigma(W_{h_v}(id_{G(F_v)}))$, see again \S\ref{sect:whittaker}. The Aut$(\C)$-equivariance of $\Delta_G$ hence finally shows that
\begin{equation}\label{eq:pr1}
p({}^\sigma\Pi') \prod_{v\in S\backslash S_\infty} W_{{}^\sigma\! h_v}(id_{G(F_v)}) \prod_{1\leq i < j \leq k} L^S(1,\Pi_i^\sigma\times(\Pi_j^\sigma)^\vee)^{-1} =
\end{equation}
$$\sigma\left( p(\Pi') \prod_{v\in S\backslash S_\infty} W_{h_v}(id_{G(F_v)}) \prod_{1\leq i < j \leq k} L^S(1,\Pi_i\times\Pi_j^\vee)^{-1}\right).
$$
{\it Step 2}: The Whittaker function $W_{h_v}\in W(\Pi'_v)=W({}^a\textrm{Ind}^{G(F_v)}_{P(F_v)}\left [\bigotimes_{i=1}^k (\Pi_{i}\rho_i)_v \right])$, $v\notin S$, from above shall not be confused with the spherical Whittaker function $\xi_{h_v(id_{G(F_v)})}\in W(\bigotimes_{i=1}^k (\Pi_{i}\rho_i)_v)$, which is $1$ on $id_{L_P(F_v)}$ and attached to the local spherical vector $h_v(id_{G(F_v)})\in \bigotimes_{i=1}^k (\Pi_{i}\rho_i)_v$. Applying $\Delta_P$ to the restriction $r_{\Pi',P}([\omega])$, with $[\omega]$ as above, the Aut$(\C)$-equivariance of $\Delta_P$ and our standing assumptions Conv.\ \ref{conv:2} and Conv.\ \ref{diag}, hence show that

\begin{equation}\label{eq:pr2}
\prod_{1\leq i \leq k} p({}^\sigma(\Pi_i\rho_i)) \prod_{v\in S\backslash S_\infty} \xi_{{}^\sigma\! h_v(id_{G(F_v)})}(id_{L_P(F_v)})  = \sigma\left(\prod_{1\leq i \leq k} p(\Pi_i\rho_i)\prod_{v\in S\backslash S_\infty} \xi_{h_v(id_{G(F_v)})}(id_{L_P(F_v)}) \right).
\end{equation}
Invoking our standing assumptions Conv.\ \ref{conv:2} and Conv.\ \ref{diag}, once more, one shows that
$$\prod_{v\in S\backslash S_\infty} \frac{W_{{}^\sigma\! h_v}(id) }{\xi_{{}^\sigma\! h_v(id)}(id)}= \sigma\left( \prod_{v\in S\backslash S_\infty} \frac{W_{h_v}(id) }{\xi_{h_v(id)}(id)}\right)$$
for all $\sigma\in$ Aut$(\C)$. Hence, dividing \eqref{eq:pr1} by \eqref{eq:pr2} yields
\begin{equation}\label{eq:step1}
\frac{p({}^\sigma\Pi') \prod_{1\leq i < j \leq k} L^S(1,\Pi_i^\sigma\times(\Pi_j^\sigma)^\vee)^{-1}}{\prod_{1\leq i \leq k} p({}^\sigma(\Pi_i\rho_i)) } = \sigma\left( \frac{p(\Pi') \prod_{1\leq i < j \leq k} L^S(1,\Pi_i\times\Pi_j^\vee)^{-1}}{\prod_{1\leq i \leq k} p(\Pi_i\rho_i)} \right)
\end{equation}
for all $\sigma\in$Aut$(\C)$. This proves (1).\\\\
{\it Step 3:} The first assertion of (2) follows from (1) (and is in fact equivalent to it) by Strong Multiplicity One and Multiplicity One for cuspidal automorphic representation. For the second assertion, observe that ${}^\sigma\Pi'_f\cong \Pi'_f$ implies that ${}^\sigma\Pi'=\Pi'$ (equality!) due to Strong Multiplicity One (\cite{jacshal2}, Thm.\ 4.4) together with Multiplicity One (\cite{schwfr} \S 3.3 \& \cite{shalika, jaclang}) for isobaric automorphic representations. Hence, $p({}^\sigma\Pi')=p(\Pi')$ for such $\sigma$. Moreover, as ${}^\sigma\Pi'\cong\Pi'$ if and only if $\{\Pi^\sigma_1,...,\Pi^\sigma_k\}=\{\Pi_1,...,\Pi_k\},$ see \S\ref{sect:Eisen},
$$\prod_{1\leq i<j\leq k} L^S(1,\Pi^\sigma_i\times(\Pi_j^\sigma)^{\vee})=\prod_{1\leq i<j\leq k} L^S(1,\Pi_i\times(\Pi_j)^{\vee})$$
for all such $\sigma$, because the $\Pi_i$ are now all conjugate self-dual. 
So, finally, in order to reduce the above relation to $\Q(\Pi')$, we need to show that ${}^\sigma\Pi'\cong\Pi'$ also implies that
\begin{equation}\label{eq:prod}
\prod_{1\leq j \leq k} p(\Pi_j\rho_j)=\prod_{1\leq i \leq k} p({}^\sigma(\Pi_i\rho_i)).
\end{equation}
To this end, let $i$, $1\leq i \leq k$, be arbitrary. Then, as ${}^\sigma\Pi'\cong\Pi'$ by assumption, there is a unique $j$, $1\leq j \leq k$, such $\Pi^\sigma_i\cong \Pi_j$. This implies that $n_i=n_j$ and hence \eqref{eq:ai} and Multiplicity One shows that ${}^\sigma(\Pi_i\rho_i)=\Pi_j\rho_j\cdot\|\cdot\|^b$ for some integer $b\in\Z$, whence $p({}^\sigma(\Pi_i\rho_i))= p(\Pi_j\rho_j\cdot\|\cdot\|^b)$. But since ${}^\sigma\|\cdot\|^b=\|\cdot\|^b$ for all $\sigma\in$ Aut$(\C)$,
$$p(\Pi_j\rho_j\cdot\|\cdot\|^b)= p(\Pi_j\rho_j)$$
by construction, see Prop.\ \ref{prop:Whittakerperiods}. Hence, finally, we see that if $\sigma\in$ Aut$(\C)$ is any automorphism such that $\{\Pi^\sigma_1,...,\Pi^\sigma_k\}=\{\Pi_1,...,\Pi_k\}$, then \eqref{eq:prod} holds. This shows the last claim.
\end{proof}

\begin{rem}
The above theorem also holds, when the isobaric summands of $\Pi'$ are isobaric sums themselves: Let $1\leq \ell\leq k$ and let $\bigcup_{1\leq j\leq  \ell}\{i_1,...,i_j\}=\{1,...,k\}$ be a partition of $\{1,...,k\}$ into $\ell$ disjoint sets. We set $\ell_j:=n_{i_1}+...+n_{i_j}$ and let $P'$ be the standard parabolic subgroup of $\GL_n$ with Levi factor equal to $L'\cong\prod_{1\leq j\leq \ell} \GL_{\ell_j}$. Set $\Pi'_j:=\Pi_{i_1}\boxplus...\boxplus\Pi_{i_j}$ and $\rho'_j$ to be equal to the restriction of $\rho_{P'}$ to $\GL_{\ell_j}(\A_F)$. With only a little more work one can in fact show that
$$p(\Pi') \sim_{\prod_{1\leq j \leq \ell}\Q(\Pi'_j\rho'_j)} \prod_{1\leq j \leq \ell} p(\Pi'_j\rho'_j) \prod_{1\leq i < j \leq \ell} L^S(1,\Pi'_i\times\Pi'^\vee_j)$$
is ${\rm Aut}(\C)$-equivariant.
\end{rem}

Let $\Pi'=\Pi_1\boxplus ... \boxplus\Pi_k$ be a cohomological isobaric sum as in \S\ref{sect:Eisen} and assume that all cuspidal summands $\Pi_i$ are conjugate self-dual. Put
\begin{equation}\label{eq:pialg}
\Pi^{\sf alg}_i:=\left\{\begin{array}{ll}
 \Pi_i & \textrm{if $n\equiv n_i \mod 2$,} \\
 \Pi_i\otimes\eta & \textrm{otherwise.}
\end{array}
\right.
\end{equation}
These are unitary, conjugate self-dual, cohomological cuspidal automorphic representations, which can be seen as follows: $\Pi'$ being cohomological by assumption implies that $\Pi_i\rho_i$ is cohomological for all $i$. Now, recalling that $\rho_i$ is algebraic if and only if $n\equiv n_i \mod 2$ shows that $\Pi_i$ is cohomological itself if and only if $n$ and $n_i$ have the same parity, or, otherwise said, that $\Pi_i\otimes\eta$ is cohomological if and only if $n$ and $n_i$ do not have the same parity. Since $\eta$ is unitary and conjugate self-dual, this shows that $\Pi^{\sf alg}_i$ is unitary, conjugate self-dual, cohomological cuspidal for all $1\leq i\leq k$. The above theorem now yields the following corollary.

\begin{cor}\label{isobaric sum Whittaker corollary}
Let $\Pi'=\Pi_1\boxplus ... \boxplus\Pi_k$ be a cohomological isobaric sum as in \S\ref{sect:Eisen} and assume that all cuspidal summands $\Pi_i$ are conjugate self-dual. Then,
$$p(\Pi') \sim_{E(\Pi')E(\phi)}  \ \prod_{1\leq i \leq k} p(\Pi^{\sf alg}_i) \prod_{1\leq i < j \leq k} L^S(1,\Pi_i\times\Pi^\vee_j),$$
is $\textrm{\emph{Aut}}(\C/F^{Gal})$-equivariant.
\end{cor}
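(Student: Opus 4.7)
The plan is to derive the corollary from Theorem~\ref{thm:periods} by replacing each factor $p(\Pi_i\rho_i)$ by $p(\Pi_i^{\sf alg})$, while carefully tracking the auxiliary Gau\ss{}-sum factors that this substitution introduces. Applying Theorem~\ref{thm:periods}.(2) directly yields the Aut$(\C)$-equivariant $\Q(\Pi')$-relation
$$p(\Pi') \sim_{\Q(\Pi')} \prod_{1\leq i \leq k} p(\Pi_i\rho_i) \prod_{1\leq i < j \leq k} L^S(1,\Pi_i\times\Pi_j^{\vee}),$$
so the task reduces to comparing each individual $p(\Pi_i\rho_i)$ with $p(\Pi_i^{\sf alg})$.

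First I would split this comparison according to the parity of $n-n_i$. If $n\equiv n_i\pmod 2$, then $a_i$ is even, $\Pi_i^{\sf alg}=\Pi_i$, and $\Pi_i\rho_i=\Pi_i^{\sf alg}\otimes\|\cdot\|^{a_i/2}$ is an integer-power twist; by the Raghuram--Shahidi twisting formula \cite[Thm.~4.1]{raghuram-shahidi-imrn}, combined with $\mathcal{G}(\|\cdot\|_f^b)=1$ and $\Q(\|\cdot\|^b)=\Q$, this yields $p(\Pi_i\rho_i)\sim_{\Q(\Pi_i^{\sf alg})} p(\Pi_i^{\sf alg})$, exactly as in the last step of the proof of Theorem~\ref{thm:periods}. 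If instead $n\not\equiv n_i\pmod 2$, then $a_i$ is odd and $\Pi_i^{\sf alg}=\Pi_i\otimes\eta$; using $\phi=\eta\|\cdot\|^{1/2}$ I would rewrite $\Pi_i\rho_i=\Pi_i^{\sf alg}\otimes\bar\phi\otimes\|\cdot\|^{(a_i+1)/2}$ with the last factor an integer power. A second application of \cite[Thm.~4.1]{raghuram-shahidi-imrn} then produces $p(\Pi_i\rho_i)\sim p(\Pi_i^{\sf alg})\cdot\mathcal{G}(\bar\phi_f)^{N_i}$ modulo $\Q(\Pi_i^{\sf alg})\Q(\phi)$, for an explicit exponent $N_i$ depending only on $n_i$.

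Substituting these into the relation above gives a formula of the shape
$$p(\Pi')\sim_{L_0} \prod_{i=1}^{k} p(\Pi_i^{\sf alg})\cdot \mathcal{G}(\bar\phi_f)^{N}\cdot \prod_{1\leq i<j\leq k} L^S(1,\Pi_i\times\Pi_j^{\vee})$$
over an auxiliary number field $L_0\supseteq \Q(\Pi')\Q(\phi)\prod_i\Q(\Pi_i^{\sf alg})$, with $N=\sum_{i\,:\,n\not\equiv n_i\,(2)} N_i$. The crucial step is then to verify that the net Gau\ss{}-sum factor $\mathcal{G}(\bar\phi_f)^{N}$ is absorbed into $E(\phi)\cdot F^{Gal}$: here I would exploit $\phi\bar\phi=\|\cdot\|$ (hence $\mathcal{G}(\phi_f)\mathcal{G}(\bar\phi_f)$ is controlled by values of the Dedekind $L$-function, cf.~\eqref{Dedekind}--\eqref{quadratic} and Lem.~\ref{Gauss sum}), the parity constraint $\sum_i n_i=n$, and the conjugate self-duality of each $\Pi_i$, producing cancellations that force the remaining Gau\ss{}-sum contribution to land inside the desired field.

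Finally, to descend from $L_0$ to $E(\Pi')E(\phi)=\Q(\Pi')F^{Gal}E(\phi)$ and simultaneously to obtain Aut$(\C/F^{Gal})$-equivariance, I would invoke the Minimizing-Lemma~\ref{minimize}, following the pattern of Step~3 in the proof of Theorem~\ref{thm:periods}: an automorphism $\sigma$ fixing the isomorphism class of $\Pi'$ permutes the summands $\Pi_i$ (hence also $\Pi_i^{\sf alg}$), and conjugate self-duality of the $\Pi_i$ ensures that this permutation preserves the shape of the right-hand side; a consistent readjustment of the periods $p(\Pi_i^{\sf alg})$ then descends the relation to the advertised field. The main obstacle will be the Gau\ss{}-sum bookkeeping: verifying that $\mathcal{G}(\bar\phi_f)^{N}$ really lies in $E(\phi)\cdot F^{Gal}$ after exploiting conjugate self-duality and the parity constraints on the $n_i$; the rest of the argument is essentially a substitution into Theorem~\ref{thm:periods} combined with the Minimizing-Lemma.
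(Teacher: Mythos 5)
Your overall strategy matches the paper's at the macro level: apply Theorem~\ref{thm:periods}.(2), replace each $p(\Pi_i\rho_i)$ by $p(\Pi_i^{\sf alg})$ via the Raghuram--Shahidi twisting formula, and then control the resulting Gau\ss{}-sum factors. Your case split by the parity of $n-n_i$ is essentially the paper's (note a small slip: with $\Pi_i^{\sf alg}=\Pi_i\otimes\eta$ one has $\Pi_i\rho_i=\Pi_i^{\sf alg}\phi^{-1}\|\cdot\|^{(a_i+1)/2}$, i.e.\ $\bar\phi$ should be $\phi^{-1}$; this changes nothing since $\bar\phi=\phi^{-1}\|\cdot\|$ and $\mathcal G(\|\cdot\|_f)=1$).

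The genuine gap is at the step you yourself flag as ``the main obstacle.'' You propose to absorb $\mathcal G(\phi_f^{-1})^N$ into $E(\phi)\cdot F^{Gal}$ by combining $\phi\bar\phi=\|\cdot\|$, the parity constraint $\sum_i n_i=n$, and conjugate self-duality to ``produce cancellations.'' This will not close: the right-hand side of your formula contains only powers of $\mathcal G(\phi_f^{-1})$, with no counterbalancing $\mathcal G(\phi_f)$ factors for $\phi\bar\phi=\|\cdot\|$ to cancel against, and the total exponent $N=\sum_{i:\,n\not\equiv n_i}n_i(n_i-1)/2$ has no useful parity or vanishing property. The point the paper exploits is simpler and sharper, and you have it in your hands (Lemma~\ref{Gauss sum}) without realizing it: \emph{each single} Gau\ss{}-sum $\mathcal G(\phi_f^{-1})$ already lies in $E(\phi)=\Q(\phi)\cdot F^{Gal}$, so any power of it does too. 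Concretely, via the cyclotomic-character element $t^\sigma\in\hat\Z^\times\subset\A_{F^+,f}^\times$ one computes $\sigma(\mathcal G(\phi_f^{-1}))/\mathcal G({}^\sigma\phi_f^{-1})=\phi_f^{-1}(t^\sigma)=\varepsilon_f(t^\sigma)=\sigma(\mathcal G(\varepsilon_f))/\mathcal G({}^\sigma\varepsilon_f)$, whence $\mathcal G(\phi_f^{-1})\sim_{\Q(\phi)}\mathcal G(\varepsilon_f)$, and Lemma~\ref{Gauss sum} gives $\mathcal G(\varepsilon_f)\sim_{F^{Gal}}1$. No cancellation between summands is needed; the Minimizing-Lemma then handles the descent to $E(\Pi')E(\phi)$ exactly as you suggest. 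You should replace your cancellation heuristic with this direct $\mathcal G(\phi_f^{-1})\in E(\phi)$ argument.
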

\begin{proof}
By Thm.\ \ref{thm:periods}
$$p(\Pi') \sim_{\Q(\Pi')} \prod_{1\leq i \leq k} p(\Pi_i\rho_i) \prod_{1\leq i < j \leq k} L^S(1,\Pi_i\times\Pi^\vee_j).$$
One easily checks that \eqref{eq:ai} implies that $\Pi_i\rho_i=\Pi^{\sf alg}_i\phi^{e_i}\cdot \|\cdot\|^{b_i}$ for some $b_i\in\Z$ and
$$e_i=\left\{ \begin{array}{ll}
 0 & \textrm{if $n\equiv n_i \mod 2$,} \\
 -1 & \textrm{otherwise.}
\end{array}
\right.$$
Hence, \cite{raghuram-shahidi-imrn}, Thm.\ 4.1, shows that $p(\Pi_i\rho_i)\sim_{\Q(\Pi^{\sf alg}_i)\Q(\phi^{e_i})} p(\Pi^{\sf alg}_i) \mathcal G(\phi^{e_i}_f)^{n_i(n_i-1)/2}$ observing that $\mathcal G(\|\cdot\|_f^{b_i})=1$ and $\Q(\|\cdot\|^{b_i})=\Q$. At the cost of adjusting $p(\Pi^{\sf alg}_i)$ by an element in $\Q(\Pi^{\sf alg}_i)$, we may hence assume that $p(\Pi_i\rho_i)\sim_{\Q(\phi^{e_i})} p(\Pi^{\sf alg}_i) \mathcal G(\phi^{e_i}_f)^{n_i(n_i-1)/2}$. In order to finish the proof, it is hence enough to show that $\mathcal G(\phi^{-1}_f)\in E(\phi)$. To this end, let $\sigma\in$ Aut$(\C)$. There exists $t^{\sigma} \in \hat{\Z}^{\times}\subseteq\hat{\mathcal O}^\times_{F^+}\subset\hat{\mathcal O}^\times_{F}\subset\A^\times_{f}$ associated to $\sigma$ given by the cyclotomic character (cf.\ \S 3.2 of \cite{raghuram-shahidi-imrn}). It is easy to verify that (cf., e.g., the proof of Theorem 3.9 of \cite{grob-harr})
\begin{equation}\label{remove Gauss sum 1}
\cfrac{\sigma(\mathcal G(\phi^{-1}_f))}{\mathcal G({}^{\sigma}\phi^{-1}_f)}=\phi^{-1}_f (t^{\sigma}).
\end{equation}

Since $t^{\sigma}\in \hat{\Z}$, we know $\phi^{-1}_f (t^{\sigma}) = \phi\mid_{\Atr,f}^{-1} (t^{\sigma})$. Recall that $\phi=\eta\ \|\cdot\|^{1/2}$ and $\eta\mid_{\Atr}=\varepsilon$ by definition, so $\phi^{-1}_f (t^{\sigma})=\varepsilon_{f} (t^{\sigma})$. This implies that
\begin{equation}\label{remove Gauss sum 2}
\cfrac{\sigma(\mathcal G(\phi^{-1}_f))}{\mathcal G({}^{\sigma}\phi^{-1}_f)}=\cfrac{\sigma(\mathcal G(\varepsilon_f))}{\mathcal G({}^{\sigma}\varepsilon_f)},
\end{equation}
and so $\mathcal G(\phi^{-1}_f) \sim_{\Q(\phi)\Q(\varepsilon)} \mathcal G(\varepsilon_f)$. However, $\varepsilon$ being quadratic forces $\Q(\varepsilon)=\Q(\{\pm1\})=\Q$, so we finally
conclude by Lemma \ref{Gauss sum} that $\mathcal G(\phi^{-1}_f) \sim_{\Q(\phi)} \mathcal G(\varepsilon_f) \sim_{F^{Gal}} 1$ and hence $\mathcal G(\phi^{-1}_f)\in E(\phi)=\Q(\phi)\cdot F^{Gal}$.
\end{proof}


\section{Asai $L$-functions of Langlands transfers}\label{sect:asai}

In this section, we will calculate the Asai $L$-functions of conjugate self-dual Eisenstein representations and of a representation automorphically induced from suitable Hecke characters. Having detailed knowledge about these Asai $L$-functions will turn out to be crucial for the proof of the main theorems.

\subsection{The Asai $L$-function of an isobaric sum}\label{sect:AsaiISobaric}
Let $S^+$ be the finite set of places of $\tr$, containing the restrictions of the places in $S$. Let $\Pi'=\Pi_1\boxplus...\boxplus\Pi_k$ be an Eisenstein representation of $\GL_{n}(\Acm)$ as in \ref{sect:Eisen} and assume that each $\Pi_i$ is conjugate self-dual. With these assumptions, unitary conjugate self-dual, cohomological cuspidal automorphic representations $\Pi_i^{\sf alg}$ have been defined for all $1\leq i\leq k$ in \eqref{eq:pialg}. We obtain

\begin{lem}\label{lem:poles_asai_lfunction}
$L^S(s,\Pi_i^{\sf alg},{\rm As}^{(-1)^{n_i}})$ is holomorphic and non-vanishing at $s=1$.
\end{lem}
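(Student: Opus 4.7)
The plan is the familiar bootstrap from Rankin-Selberg to the two Asai factors. Since $\Pi_i^{\sf alg}$ is conjugate self-dual, $(\Pi_i^{\sf alg})^c\cong(\Pi_i^{\sf alg})^{\vee}$, and the standard identity relating the $c$-twisted tensor $L$-function to the two Asai $L$-functions yields a factorization
$$L^S(s,\Pi_i^{\sf alg}\times(\Pi_i^{\sf alg})^{\vee})\ =\ L^S(s,\Pi_i^{\sf alg},{\rm As}^+)\cdot L^S(s,\Pi_i^{\sf alg},{\rm As}^-).$$
The left-hand side has a simple pole at $s=1$ by Jacquet-Shalika applied to the unitary cuspidal $\Pi_i^{\sf alg}$, while each of the two factors on the right is non-zero at $s=1$ by the Shahidi boundary non-vanishing theorem, the Asai $L$-function being realized as a Langlands-Shahidi $L$-function for $R_{F/F^+}\GL_{n_i}$ inside a suitable (unitary similitude) group. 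Combining these two inputs, exactly one of the two Asai factors carries the simple pole at $s=1$, while the other is holomorphic and non-vanishing there.

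It remains to identify the non-polar factor as the one indexed by $(-1)^{n_i}$. Here I would exploit the fact that $\Pi_i^{\sf alg}$ is unitary, cuspidal, cohomological, and conjugate self-dual, so by the (now established) endoscopic classification for quasi-split unitary groups it arises as the quadratic base change of a tempered cuspidal automorphic representation of $U(n_i)/F^+$. By the standard dichotomy between conjugate orthogonal and conjugate symplectic conjugate self-dual cuspidals, base changes from $U(n_i)$ are conjugate orthogonal precisely when $n_i$ is odd and conjugate symplectic precisely when $n_i$ is even; in either case the pole of the Asai $L$-function sits in $L^S(s,\Pi_i^{\sf alg},{\rm As}^{(-1)^{n_i-1}})$. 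The complementary factor $L^S(s,\Pi_i^{\sf alg},{\rm As}^{(-1)^{n_i}})$ is therefore the holomorphic, non-vanishing one, which is the assertion of the lemma.

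The main obstacle is the sign identification in the second step, since it relies on the (now unconditional but nonetheless heavy) endoscopic classification for $U(n_i)$. A more elementary substitute is available, because $\Pi_i^{\sf alg}$ is cohomological and hence has a completely rigid archimedean Langlands parameter (a sum of characters $z\mapsto z^{a_j}\bar z^{-a_j}$ with integer, conjugate self-dually paired exponents, cf.\ \S\ref{sect:pi}). One can then compute the archimedean Asai parameter directly, read off its sign place by place, and invoke the global functional equation together with the pole constraint from the Rankin-Selberg side to single out ${\rm As}^{(-1)^{n_i-1}}$ as the pole-carrying factor, again leaving $L^S(s,\Pi_i^{\sf alg},{\rm As}^{(-1)^{n_i}})$ as the holomorphic, non-vanishing one at $s=1$.
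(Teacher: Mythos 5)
Your plan is essentially the paper's own argument unpacked: the paper cites Mok's endoscopic classification (Cor.\ 2.5.9) for the holomorphy of $L^S(s,\Pi_i^{\sf alg},{\rm As}^{(-1)^{n_i}})$ at $s=1$ and Shahidi's boundary non-vanishing theorem for the non-vanishing, which is exactly the content of your dichotomy-and-sign argument. One point you should make explicit: when you say that $\Pi_i^{\sf alg}$ ``arises as the quadratic base change'' from $U(n_i)/F^+$, this alone does not pin the parity, since $\Pi_i^{\sf alg}$ and $\Pi_i^{\sf alg}\otimes\eta$ are both in the image of base change, via the two inequivalent $L$-embeddings $\xi_{\chi_\pm}$, and it is precisely these two options that realize the two Asai signs. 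What singles out the standard embedding (hence the pole sitting in ${\rm As}^{(-1)^{n_i-1}}$) is the \emph{cohomological} hypothesis, which forces the archimedean parameters of $\Pi_i^{\sf alg}$ to lie in $\tfrac{n_i-1}{2}+\Z$; after twisting by $\eta$ the parity flips, so only one of the two twists can be cohomological for given $n_i$. You effectively acknowledge this in your last paragraph with the archimedean-parameter computation, and that computation is indeed the clean way to close the gap (it is also what \cite{grob_harris_lapid}, \S6.1 --- referenced in the paper's proof --- does). So the argument is correct, but the sign-identification step needs the cohomological input stated explicitly rather than the vague ``arises as the quadratic base change.''
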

\begin{proof}
As all $\Pi_i^{\sf alg}$ are cohomological and conjugate self-dual unitary cuspidal automorphic representation of $\GL_{n_i}(\A_F)$, $L^S(s,\Pi_i^{\sf alg},{\rm As}^{(-1)^{n_i}})$ is holomorphic at $s=1$ by the argument given in \cite{mok}, Cor.\ 2.5.9. (See also \cite{grob_harris_lapid}, \S 6.1 for more details.) The non-vanishing follows from \cite{shahidi_certainL}, Thm.\ 5.1.
\end{proof}

\begin{rem}
Observing that twisting by $\eta$ just changes the sign of the Asai-representation, we see that $L^S(s,\Pi_i,{\rm As}^{(-1)^{n}})$,
being equal to $L^S(s,\Pi_i^{\sf alg},{\rm As}^{(-1)^{n_i}})$, is holomorphic and non-vanishing at $s=1$ for all $1\leq i\leq k$.
\end{rem}

\begin{lem}\label{lem:fact_asai_lfunction}
Let $\Pi'=\Pi_1\boxplus...\boxplus\Pi_k$ be an Eisenstein representation of $\GL_{n}(\Acm)$ as in \ref{sect:Eisen} and assume the each $\Pi_i$ is conjugate self-dual. Then
$$L^S(s,\Pi',{\rm As}^{(-1)^{n}})=\prod_{i=1}^k L^S(s,\Pi_i^{\sf alg},{\rm As}^{(-1)^{n_i}})\ \cdot \prod_{1\leq i<j\leq k} L^S(s,\Pi_i\times\Pi^\vee_j).$$
\end{lem}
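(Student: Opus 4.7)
The plan is to reduce the identity to a local factorization at every place $v\notin S^+$, and to derive that from the behavior of the Asai construction under direct sums of Langlands parameters.

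First I would establish the two-summand case: for any two conjugate self-dual cuspidal representations $\Pi_i,\Pi_j$ of $\GL_{n_i}(\Acm),\GL_{n_j}(\Acm)$,
$$L^S(s,\Pi_i\boxplus\Pi_j,\mathrm{As}^\epsilon)\ =\ L^S(s,\Pi_i,\mathrm{As}^\epsilon)\,L^S(s,\Pi_j,\mathrm{As}^\epsilon)\,L^S(s,\Pi_i\times\Pi_j^c).$$
At each $v\notin S^+$ (where everything in sight is unramified), the local Asai factor equals the $L$-factor attached to the multiplicative (tensor) induction of the local Satake parameter from $W_{F_v}$ (or the semisimple conjugacy class corresponding to it) to $W_{F^+_v}$. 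A direct computation of this tensor induction on a direct-sum parameter gives
$$\mathrm{As}(\rho_{i,v}\oplus\rho_{j,v})\ \cong\ \mathrm{As}(\rho_{i,v})\,\oplus\,\mathrm{As}(\rho_{j,v})\,\oplus\,\mathrm{Ind}_{W_{F_v}}^{W_{F^+_v}}\bigl(\rho_{i,v}\otimes\rho_{j,v}^{c}\bigr),$$
where $(\cdot)^c$ denotes the twist by the non-trivial element of $\mathrm{Gal}(F/F^+)$. By the inductivity of Artin $L$-factors combined with the local Langlands correspondence, the last summand contributes exactly $L(s,\Pi_{i,v}\times\Pi_{j,v}^c)$.

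Iterating this identity on the cuspidal summands of $\Pi'=\Pi_1\boxplus\cdots\boxplus\Pi_k$ yields
$$L^S(s,\Pi',\mathrm{As}^{\epsilon})\ =\ \prod_{i=1}^{k}L^S(s,\Pi_i,\mathrm{As}^{\epsilon})\,\cdot\,\prod_{1\leq i<j\leq k}L^S(s,\Pi_i\times\Pi_j^c).$$
Specialising $\epsilon=(-1)^n$ and using the conjugate self-duality $\Pi_j^c\cong \Pi_j^\vee$ already identifies the off-diagonal product with the one in the statement. It then remains to rewrite each diagonal factor $L^S(s,\Pi_i,\mathrm{As}^{(-1)^n})$ as $L^S(s,\Pi_i^{\sf alg},\mathrm{As}^{(-1)^{n_i}})$. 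If $n\equiv n_i\pmod 2$, then by \eqref{eq:pialg} we have $\Pi_i^{\sf alg}=\Pi_i$ and $(-1)^n=(-1)^{n_i}$, so nothing has to be shown. Otherwise, $\Pi_i^{\sf alg}=\Pi_i\otimes\eta$ and $(-1)^n=-(-1)^{n_i}$, and I would invoke the identity $\mathrm{As}^{-\epsilon}(\Pi)=\mathrm{As}^{\epsilon}(\Pi\otimes\eta)$, which is immediate from $\eta\cdot\eta^c\equiv 1$ (a restatement of the conjugate self-duality of $\eta$), to conclude.

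The only conceivable obstacle is pedantic bookkeeping of signs and quadratic twists, not substance. Because $S^+$ has been chosen to contain every ramified datum, the whole local computation reduces to a purely formal identity between unramified Satake parameters; the non-trivial input, namely compatibility of tensor induction with direct sums together with the inductivity of Artin $L$-factors, is classical and harmless.
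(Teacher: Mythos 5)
Your proposal is correct, but it runs along a genuinely different track from the paper's. You argue on the Galois side: you decompose the Asai (tensor--induction) functor on a direct sum of Langlands parameters, $\mathrm{As}(\rho_i\oplus\rho_j)\cong\mathrm{As}(\rho_i)\oplus\mathrm{As}(\rho_j)\oplus\mathrm{Ind}_{W_F}^{W_{F^+}}(\rho_i\otimes\rho_j^{c})$, invoke inductivity of Artin factors to read off $L(s,\Pi_{i,v}\times\Pi_{j,v}^{c})$ from the induced summand, and iterate. This is uniform in the place $v$: it requires no distinction between inert and split primes, since the decomposition is an identity of representations of the $L$-group ${}^L(\mathrm{Res}_{F/F^+}\GL_n)$ valid before one feeds in any Satake parameter. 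The paper instead argues directly with explicit Hecke eigenvalues, separately at inert places (via the Gelbart--Jacquet--Rogawski formula for the unramified Asai factor of an induced representation) and at split places (where the Asai factor degenerates to a Rankin--Selberg factor across the two places $w_1,w_2$); the argument is more hands-on and elementary, at the cost of a short case analysis. One small imprecision in your write-up: the identity $\mathrm{As}^{-\epsilon}(\Pi)=\mathrm{As}^{\epsilon}(\Pi\otimes\eta)$ is \emph{not} immediate from $\eta\cdot\eta^c\equiv 1$ alone. That condition only forces $\eta|_{\A_{F^+}^\times}$ to lie in $\{\triv,\varepsilon\}$; you need to use that $\eta$ was chosen as an extension of $\varepsilon$ (so $\eta|_{\A_{F^+}^\times}=\varepsilon$), since $\mathrm{As}(\rho\otimes\eta)=\mathrm{As}(\rho)\otimes(\eta|_{\A_{F^+}^\times})$. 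The paper itself relies on the same sign flip in the remark before the lemma, and with this correction your argument closes.
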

\begin{proof}
Recalling the previous remark, we will show that $$L^S(s,\Pi',{\rm As}^{(-1)^{n}})=\prod_{i=1}^k L^S(s,\Pi_i,{\rm As}^{(-1)^{n}})\ \cdot \prod_{1\leq i<j\leq k} L^S(s,\Pi_i\times\Pi^\vee_j).$$ We argue locally, distinguishing the two possible types of unramified places $v\notin S^+$. In particular, we will drop the subscript for the local places for all global objects in what follows if this will not cause any confusion. Moreover, let $\gamma$ be the character from \cite{grob-harr}, \S 2.1.2, i.e.,
$$\gamma=\left\{\begin{array}{ll}
 \triv & \textrm{if $n\equiv 0 \mod 2$} \\
 \eta & \textrm{otherwise}
\end{array}
\right.$$
Observe that $\gamma|_{F^+}^2=\triv$ in any case. Let us first suppose that $v$ is inert, extending to one place $w\notin S$. For every $1\leq i\leq k$, denote by $\chi^{(i)}_{\ell}$, $1\leq\ell\leq n_i$, the unramified characters from which $\Pi_i$ is fully induced. Then, by induction in stages and applying Lemma $1$ of \cite{gelb_jac_rog},

\begin{eqnarray*}
&& L(s,\Pi'_v,{\rm As}^{(-1)^{n}})\nonumber\\
 & = & \left(\textrm{$\prod_{i,\ell}$} L(s,\chi^{(i)}_{\ell}\gamma|_{\tr})\right) \cdot \left(\textrm{$\prod_{\substack{i,j,r,t\\
\textrm{$\chi^{(i)}_{r}$ coming before $\chi^{(j)}_{t}$}}}$} L(s,\chi^{(i)}_{r}\gamma\cdot\chi^{(j)}_{t}\gamma) \right)\\ & = & \prod_{i=1}^k\left( \prod_{\ell=1}^{n_i} L(s,\chi^{(i)}_{\ell}\gamma|_{\tr})\cdot\prod_{1\leq r<t\leq n_i} L(s,\chi^{(i)}_{r}\chi^{(i)}_{t})\right)\cdot \prod_{1\leq i<j\leq k}\prod_{\substack{1\leq r\leq n_i\\1\leq t\leq n_j}} L(s,\chi^{(i)}_{r}\chi^{(j)}_{t})\\
& = & \prod_{i=1}^k L(s,\Pi_{i,v},{\rm As}^{(-1)^{n}})\ \cdot \prod_{1\leq i<j\leq k} L(s,\Pi_{i,w}\times\Pi_{j,w}).
\end{eqnarray*}
As $v$ is inert and all $\Pi_j$ are conjugate self-dual, the latter local Rankin-Selberg $L$-function is equal to $L(s,\Pi_{i,w}\times\Pi^\vee_{j,w})$ and hence we obtain the desired relation at inert places $v$.
If $v$ is split, extending to a product of two places $w_1,w_2\notin S$, then $\gamma_{w_1}\gamma_{w_2}=\triv$ and the residue fields are of equal cardinality $q_v=q_{w_1}=q_{w_2}$. So we get independently of $n$,
\begin{eqnarray*}
&& L(s,\Pi'_v,{\rm As}^{(-1)^{n}}) \nonumber\\
& = & \det(id - (A(\Pi'_{w_1})\otimes A(\Pi'_{w_2})) q_{v}^{-s})^{-1}\\
& = & \prod_{1\leq i,j\leq k}\prod_{\substack{1\leq r\leq n_i\\1\leq t\leq n_j}}(1 - \chi^{(i)}_{r,w_1}\chi^{(j)}_{t,w_2}q_{v}^{-s})^{-1} \\
& = & \prod_{i=1}^k\left(\prod_{\substack{1\leq r\leq n_i\\1\leq t\leq n_j}}(1 - \chi^{(i)}_{r,w_1}\chi^{(i)}_{t,w_2}q_{v}^{-s})^{-1}\right)\cdot \prod_{\substack{1\leq i\neq j\leq k\\1\leq r\leq n_i\\1\leq t\leq n_j}}(1 - \chi^{(i)}_{r,w_1}\chi^{(j)}_{t,w_2}q_{v}^{-s})^{-1}\\
& = & \prod_{i=1}^k L(s,\Pi_{i,v},{\rm As}^{(-1)^{n}}) \cdot \prod_{\substack{1\leq i<j\leq k\\1\leq r\leq n_i\\1\leq t\leq n_j}}(1 - \chi^{(i)}_{r,w_1}\chi^{(j)}_{t,w_2}q_{v}^{-s})^{-1}(1-\chi^{(i)}_{r,w_2}\chi^{(j)}_{t,w_1}q_{v}^{-s})^{-1}\\
& = & \prod_{i=1}^k L(s,\Pi_{i,v},{\rm As}^{(-1)^{n}}) \cdot \prod_{\substack{1\leq i<j\leq k\\1\leq r\leq n_i\\1\leq t\leq n_j}}(1 - \chi^{(i)}_{r,w_1}\overline\chi^{(j)}_{t,w_1}q_{w_1}^{-s})^{-1}(1-\chi^{(i)}_{r,w_2}\overline\chi^{(j)}_{t,w_2}q_{w_2}^{-s})^{-1}\\
& = & \prod_{i=1}^k L(s,\Pi_{i,v},{\rm As}^{(-1)^{n}}) \cdot \prod_{1\leq i<j\leq k} L(s,\Pi_{i,w_1}\times\overline\Pi_{j,w_1}) L(s,\Pi_{i,w_2}\times\overline\Pi_{j,w_2}).
\end{eqnarray*}
Since all $\Pi_j$ are conjugate self-dual, this shows the claim at split places $v$.
\end{proof}

\begin{cor}\label{cor:asai-nonvan}
Let $\Pi'=\Pi_1\boxplus...\boxplus\Pi_k$ be an Eisenstein representation of $\GL_{n}(\Acm)$ as in \ref{sect:Eisen} and assume the each $\Pi_i$ is conjugate self-dual. Then $L^S(s,\Pi',{\rm As}^{(-1)^{n}})$ is holomorphic and non-vanishing at $s=1$.
\end{cor}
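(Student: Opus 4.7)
The plan is a direct combination of the factorization of Lem.\ \ref{lem:fact_asai_lfunction} with the preceding Lem.\ \ref{lem:poles_asai_lfunction} and standard properties of Rankin--Selberg $L$-functions. More precisely, I would first invoke Lem.\ \ref{lem:fact_asai_lfunction} to write
$$L^S(s,\Pi',{\rm As}^{(-1)^{n}})=\prod_{i=1}^k L^S(s,\Pi_i^{\sf alg},{\rm As}^{(-1)^{n_i}})\ \cdot \prod_{1\leq i<j\leq k} L^S(s,\Pi_i\times\Pi^\vee_j),$$
so that it suffices to verify holomorphy and non-vanishing of each factor at $s=1$.

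For the first product, each $\Pi_i^{\sf alg}$ is, by construction in \eqref{eq:pialg}, a unitary, conjugate self-dual, cohomological cuspidal automorphic representation of $\GL_{n_i}(\A_F)$; hence Lem.\ \ref{lem:poles_asai_lfunction} directly applies and each factor $L^S(s,\Pi_i^{\sf alg},{\rm As}^{(-1)^{n_i}})$ is holomorphic and non-vanishing at $s=1$.

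For the second product, recall that by the definition of an Eisenstein representation in \S\ref{sect:Eisen}, the summands $\Pi_i$ are pairwise distinct unitary cuspidal automorphic representations. Therefore, by the classical results of Jacquet--Shalika on Rankin--Selberg $L$-functions, $L^S(s,\Pi_i\times\Pi_j^\vee)$ is holomorphic at $s=1$ whenever $\Pi_i\not\cong\Pi_j$, which is our situation for $i<j$. The non-vanishing at $s=1$ on the edge of the critical strip is provided by Shahidi's theorem \cite{shahidi_certainL}, Thm.\ 5.1, the very same result already invoked in the proof of Lem.\ \ref{lem:poles_asai_lfunction}.

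Assembling these two inputs, every factor on the right-hand side is holomorphic and non-vanishing at $s=1$, and hence so is $L^S(s,\Pi',{\rm As}^{(-1)^{n}})$. No real obstacle is anticipated: all the substantive analytic work has already been carried out in Lem.\ \ref{lem:poles_asai_lfunction} and Lem.\ \ref{lem:fact_asai_lfunction}, and the only new observation needed is the pairwise distinctness of the cuspidal summands $\Pi_i$, which is built into the definition of an Eisenstein representation.
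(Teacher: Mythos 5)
Your argument is correct and follows exactly the route the paper takes: the paper's own proof is the one-line remark ``Follows directly from Lem.\ \ref{lem:fact_asai_lfunction} and \ref{lem:poles_asai_lfunction}, recalling that by definition all $\Pi_i$ are pairwise different and unitary,'' and you have simply unpacked that, citing the standard Jacquet--Shalika/Shahidi facts for the Rankin--Selberg factors that the paper leaves implicit.
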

\begin{proof}
Follows directly from Lem.\ \ref{lem:fact_asai_lfunction} and \ref{lem:poles_asai_lfunction}, recalling that by definition all $\Pi_i$ are pairwise different and unitary.
\end{proof}

\subsection{The Asai $L$-function of an automorphically induced representation}\label{sect:AsaiInduced}

Let $L$ be a cyclic extension of $\cm$ of degree $[L:\cm]=n\geq 1$ which is still a CM field. We write $L^{+}$ for its maximal totally real subfield.
\begin{defn}
Let $\chi$ be an algebraic Hecke character of $L$. We let $\Pi(\chi)$ be the automorphic induction of $\chi$ to $\GL_{n}(\Acm)$ as established in \cite{arthur-clozel}, Chp.\ 3, Thm.\ 6.2. We write
\smallskip

\begin{center}
\framebox{$\Pi_{\chi}:=\left\{\begin{array}{ll}
 \Pi(\chi) & \textrm{if $n=[L:\cm]$ is odd,} \\
 \Pi(\chi)\otimes \eta & \textrm{if $n=[L:\cm]$ is even.}
\end{array}
\right.$}

\end{center}
\end{defn}

Then $\Pi_{\chi}$ is an isobaric automorphic representation of $\GL_{n}(\Acm)$, fully induced from cuspidal automorphic representations, which is algebraic (in the sense of \cite{clozel}, Def.\ 1.8). We remark that the extremely careful reader might bear in mind that \cite{arthur-clozel} contains an ``{\it egregious mistake}'' (a quote of Clozel himself, taken from \cite{clozel_ias}) if $[L:\cm]=n$ is not prime and  which was found by Lapid--Rogawski, see \cite{laprog}, p.\ 176. This mistake, however, concerns Lem.\ 6.3 of \cite{arthur-clozel} on p.\ 217, a result, which is irrelevant for the present paper and hence our reference to \cite{arthur-clozel}, Chp.\ 3, Thm.\ 6.2 is sufficient for our purposes. Moreover, the reader should note that this problem of non-prime degree extensions $L$ has meanwhile been fixed in total generality, see \cite{labesse-bc}, Thm.\ VI.4.1 and by a different method in \cite{henniart}, Thm.\ 3. \label{fn:bc}.

Let $\vartheta$ be a generator of Gal$(L/F)$. It induces an automorphism on $\A_L^{\times}$, denoted by the same letter, and we define $\chi^{\vartheta}$, a Hecke character of $L$, as the composition $\chi \circ \vartheta$. \\\\
Now identify Gal$(L/F)$ with Gal$(L^{+}/F^{+})$. If $n$ is even, we define $L^{\flat}:=L^{\vartheta^{\frac{n}{2}}c}$, an index $2$ subfield of $L$. It is also a CM field with maximal totally real subfield $(L^{+})^{\vartheta^{\frac{n}{2}}}$. We write $\varepsilon_{L/L^{+}}$ and $\varepsilon_{L/L^{\flat}}$ for the quadratic Hecke character associated to $L/L^{+}$ and $L/L^{\flat}$, respectively, by class field theory.

\begin{prop}\label{proposition Asai AI}
Assume that $\chi$ is conjugate self-dual.\\
If $n$ is odd then
\begin{equation} \label{odd Asai}
L^{S}(s,\Pi_{\chi}, {\rm As}^{(-1)^{n}})=\prod\limits_{1\leq k\leq \frac{n-1}{2}} L^{S}(s,\chi\otimes \chi^{\vartheta^{k},c})L^{S}(s,\varepsilon_{L/L^{+}}).
\end{equation}
If $n$ is even then  \begin{equation}  \label{even Asai}
L^{S}(s,\Pi_{\chi}, {\rm As}^{(-1)^{n}})=\prod\limits_{1\leq k\leq \frac{n-2}{2}} L^{S}(s,\chi\otimes \chi^{\vartheta^{k},c})L^{S}(s,\varepsilon_{L/L^{+}}) L^{S}(s,\chi \mid_{\mathbb{A}_{L^{\flat}}}\otimes \varepsilon_{L/L^{\flat}}). \end{equation}
\end{prop}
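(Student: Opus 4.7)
The plan is to pass to Langlands parameters and exploit Mackey's formula. Let $\rho := \text{Ind}_{W_L}^{W_F}(\chi)$ be the Weil-group parameter of $\Pi(\chi)$. First, I reduce both parity cases to computing $L^S(s, \mathrm{As}^-(\rho))$: for $n$ odd, $(-1)^n = -1$ already singles this out, while for $n$ even we have $\Pi_\chi = \Pi(\chi) \otimes \eta$, and the identity $\otimes\text{-Ind}_{W_F}^{W_{F^+}}(\eta) = \varepsilon_{F/F^+}$ (which follows from $\eta|_{F^+} = \varepsilon_{F/F^+}$, together with the tensor-induction trace formula $\mathrm{tr}(\otimes\text{-Ind}(\eta))(\tilde c) = \eta(\tilde c^2)$ evaluated either at an unramified inert place or archimedeanly using $\tilde c^2 = -1 \in W_\C$) yields $\mathrm{As}^+(\Pi(\chi) \otimes \eta) = \mathrm{As}^-(\Pi(\chi))$, so that it again suffices to study $\mathrm{As}^-(\rho)$.

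Next, applying Mackey to $\rho \otimes \rho^c$ using that $W_L \triangleleft W_F$ with quotient $\langle \vartheta \rangle$, and noting $\rho^c = \rho^\vee$ by conjugate self-duality of $\chi$, one obtains
$$\rho \otimes \rho^c \;=\; \bigoplus_{k=0}^{n-1} V_k, \qquad V_k := \text{Ind}_{W_L}^{W_F}(\psi_k), \qquad \psi_k := \chi \cdot \chi^{\vartheta^k,c},$$
whence $L^S(s, \rho \otimes \rho^c) = \prod_{k=0}^{n-1} L^S(s, \psi_k)$. Each $\psi_k$ is itself conjugate self-dual on $L$ (since $\psi_k^c = \psi_k^{-1}$), and the relation $\psi_{n-k} = (\psi_k^{-1})^{\vartheta^{-k}}$ together with Galois-invariance of Hecke $L$-functions gives $L^S(s, \psi_k) = L^S(s, \psi_{n-k})$. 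The key geometric input is the $c$-action: commutativity $[c,\vartheta] = 1$ in $\mathrm{Gal}(L/F^+)$ forces $V_k^c \cong V_{n-k}$, so that $c$ permutes summands pairwise, fixing $V_0$ always and $V_{n/2}$ when $n$ is even.

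For each unordered pair $\{k, n-k\}$ with $k \neq 0, n/2$, the piece $V_k \oplus V_{n-k}$ appears inside $\text{Ind}_{W_F}^{W_{F^+}}(\rho \otimes \rho^c) = \mathrm{As}^+(\rho) \oplus \mathrm{As}^-(\rho)$ as two isomorphic copies of the single $W_{F^+}$-representation $\text{Ind}_{W_L}^{W_{F^+}}(\psi_k)$ — one copy lying in each of the two Asai summands — thereby contributing a factor $L^S(s, \chi \cdot \chi^{\vartheta^k, c})$ to $\mathrm{As}^-(\rho)$. For the fixed $V_0 = \text{Ind}_{W_L}^{W_F}(\triv_L)$, its two extensions to $W_{F^+}$ are identified, via $\varepsilon_{F/F^+}|_{W_{L^+}} = \varepsilon_{L/L^+}$, as $\text{Ind}_{W_{L^+}}^{W_{F^+}}(\triv_{L^+})$ and $\text{Ind}_{W_{L^+}}^{W_{F^+}}(\varepsilon_{L/L^+})$, with $L$-functions $\zeta^S_{L^+}(s)$ and $L^S(s, \varepsilon_{L/L^+})$ respectively (jointly recovering $\zeta^S_L(s) = L^S(s, \psi_0)$). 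For $V_{n/2}$ (even $n$), one checks $\psi_{n/2}$ is $(\vartheta^{n/2}c)$-invariant and in fact $\psi_{n/2} = \chi|_{\mathbb{A}_{L^\flat}} \circ N_{L/L^\flat}$; the factorization of Hecke $L$-functions across the quadratic extension $L/L^\flat$ then yields
$$L^S(s, \psi_{n/2}) \;=\; L^S(s, \chi|_{\mathbb{A}_{L^\flat}}) \cdot L^S(s, \chi|_{\mathbb{A}_{L^\flat}} \otimes \varepsilon_{L/L^\flat}).$$

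The main obstacle is the final sign bookkeeping: verifying that the $\varepsilon$-twisted extension of each fixed summand — namely $\text{Ind}_{W_{L^+}}^{W_{F^+}}(\varepsilon_{L/L^+})$ in the case of $V_0$, and the $\varepsilon_{L/L^\flat}$-twisted piece in the case of $V_{n/2}$ — really lies inside $\mathrm{As}^-(\rho)$ rather than $\mathrm{As}^+(\rho)$. By the relation $\mathrm{As}^-(\rho) = \mathrm{As}^+(\rho) \otimes \varepsilon_{F/F^+}$ this reduces to a single sign check, which I would settle either by an explicit unramified Satake-parameter computation at inert and split places of $F^+$ (closely paralleling the local case analysis in the proof of Lem.~\ref{lem:fact_asai_lfunction}), or character-theoretically via the multiplicative-induction formula $\mathrm{tr}(\otimes\text{-Ind}(\rho))(\tilde c) = \mathrm{tr}(\rho(\tilde c^2))$ evaluated against the traces of the two candidate extensions. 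Combining the paired contributions with the $\varepsilon$-twisted extensions of the fixed summands then delivers the asserted formulas (\ref{odd Asai}) and (\ref{even Asai}).
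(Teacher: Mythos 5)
Your approach is genuinely different from the paper's. The paper proves \eqref{odd Asai} and \eqref{even Asai} by a fully explicit, place-by-place Satake-parameter computation: fix an unramified $v$ of $F^+$, distinguish $v$ split versus inert in $F$, and (for inert $v$) whether the places $w_i$ of $L$ above $v$ are or are not fixed by $c$, then match Euler factors term by term. You instead organize everything through the parameter $\rho=\mathrm{Ind}_{W_L}^{W_F}(\chi)$, the Mackey/projection-formula decomposition $\rho\otimes\rho^c=\bigoplus_{k=0}^{n-1}V_k$ with $V_k=\mathrm{Ind}_{W_L}^{W_F}(\psi_k)$, and the observation that $c$ interchanges $V_k\leftrightarrow V_{n-k}$; the transfer identity $\otimes\textrm{-}\mathrm{Ind}_{W_F}^{W_{F^+}}(\eta)=\varepsilon_{F/F^+}$ then correctly collapses both parities to a single computation of $\mathrm{As}^-(\rho)$, and your identification $\psi_{n/2}=\chi|_{\A_{L^\flat}}\circ N_{L/L^\flat}$ together with the quadratic splitting over $L/L^\flat$ is also set up correctly. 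This framework is cleaner and isolates the genuine content of the proposition, namely which extension of each $c$-fixed constituent lands in $\mathrm{As}^-$, from the combinatorial bookkeeping that the paper carries out by hand.

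That isolation, however, is exactly where the proposal stops short of a proof. The sign determination for the $c$-fixed summands $V_0$ (and $V_{n/2}$ when $n$ is even) is not a peripheral verification to be ``settled'' later: once the Mackey decomposition is in place it \emph{is} the proposition, and you leave it as a plan rather than carrying it out. Two further points sharpen the gap. First, the reduction to ``a single sign check'' is too optimistic: the identity $\mathrm{As}^-(\rho)=\mathrm{As}^+(\rho)\otimes\varepsilon_{F/F^+}$ ties the two candidate extensions of any one fixed $V_k$ to each other, but it does not by itself couple the sign at $V_0$ to the sign at $V_{n/2}$; a priori $\mathrm{As}^-(\rho)$ could contain the $\varepsilon_{L/L^+}$-twisted extension of $V_0$ yet the untwisted extension of $V_{n/2}$, and only a computation (trace of $\otimes\textrm{-}\mathrm{Ind}(\rho)$ at a well-chosen $\tilde c$, or an inert Satake check) excludes this. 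Second, the answer genuinely depends on more than the stated hypothesis that $\chi$ is conjugate self-dual: the sign at $V_0$ is precisely the conjugate-parity (orthogonal versus symplectic) of $\rho$, and the paper's own argument explicitly invokes, via the sentence citing Rem.\ \ref{remove Gauss sum}, that the algebraic conjugate self-dual $\chi$ is in fact trivial on $\A_{L^+}^\times$, forcing the Hecke eigenvalues $t_i=1$ at the relevant inert places. Your write-up never touches this input, and a proof along your lines would have to surface it in the trace or Satake evaluation. Until that step --- essentially the inert-place part of the analysis in the proof of Lem.\ \ref{lem:fact_asai_lfunction}, specialised to $V_0$ and $V_{n/2}$ --- is actually performed, the argument is incomplete at its decisive point.
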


\begin{proof}
Let $v$ be a non-archimedean place of $F^{+}$ such that every representation at hand is unramified at $v$. We now prove that the products of local $L$-factors at the places over $v$ of both sides of \eqref{odd Asai} and \eqref{even Asai} are equal. In order to ease our assertions, we simply call these products the ``$v$-parts'' of the left hand side, respectively, the right hand side. As a general reference, we refer again to Lemma $1$ of \cite{gelb_jac_rog} where the unramified local factors of the Asai $L$-function have been calculated.\\\\
We write $q_{v}$ for the cardinality of the residue field of $F^{+}_{v}$. We will use similar notations for other finite places of other fields. Let $w$ be a place of $F$ over $v$. Let $w_{1},w_{2},\cdots w_{m}$ be the places of $L$ over $w$. We know $m\mid n$ and we write $l$ for $n/m$. We may assume that $\vartheta(w_{i-1})=w_{i}$ for any $1\leq i\leq m$ where we apply the useful extension of notation defined by $w_{i}:=w_{i \mod m}$ for all $i\in \Z$.\\\\
Let $\zeta$  be a primitive $l$-th root of unity. For each $i$ let $t_{i}$ be the Hecke eigenvalue of $\chi$ at $w_{i}$, where, similar to above, we wrote $t_{i}:=t_{i \mod m}$ for $i\in \Z$. Since $\chi$ is conjugate self-dual, its Hecke eigenvalue at $w_{i}^{c}$ is $t_{i}^{-1}$ where $w_{i}^{c}$ is the complex conjugation of $w_{i}$. If $w_{i}=w_{i}^{c}$ then $t_{i}=\pm 1$. Moreover, since $\chi$ is algebraic and conjugate self-dual, $\chi$ is trivial on $\mathbb{A}_{L^{+}}^{\times}$ (cf.\ Rem.\ \ref{remove Gauss sum}). Hence $t_{i}$ is in fact $1$ in this case. For each $1\leq i\leq m$, we fix a complex $l$-th root of $t_{i}$ and denote it by $t_{i}^{1/l}$. By equation (6.2) in Chapter $3$ of \cite{arthur-clozel}, we know that the Hecke eigenvalues of $\Pi_{\chi}$ at $w$ are $t_{i}^{1/l}\zeta^{a}$ with $ 1\leq i\leq m, 1\leq a\leq l$, and those at $w^{c}$ are $t_{j}^{-1/l}\zeta^{b}$ with $ 1\leq j\leq m, 1\leq b\leq l$.\\\\
{\it Case 1: $n$ odd} In this case, both $l$ and $m$ are odd numbers.

\begin{enumerate}
\item When $v$ is split in $F$, i.e. $v=ww^{c}$, the $v$-part of the left hand side of equation (\ref{odd Asai}) is equal to
\[\prod\limits_{1\leq i,j\leq m}\prod\limits_{1\leq a\leq l}\prod\limits_{1\leq b\leq l}(1-t_{i}^{1/l}\zeta^{a}t_{j}^{-1/l}\zeta^{b}q_{v}^{-s})^{-1} =\prod\limits_{1\leq i,j\leq m}(1-t_{i}t_{j}^{-1}q_{v}^{-ls})^{-l}.\]
The $v$-part of $L(s,\chi\otimes \chi^{\vartheta^{k},c})$ is equal to $\prod\limits_{1\leq i\leq m}(1-t_{i}t_{i+k}^{-1}q_{w_{i}}^{-s})^{-1}(1-t_{i}^{-1}t_{i+k}q_{w_{i}}^{-s})^{-1}$. We know $q_{w_{i}}=q_{w_{i}^{c}}=q_{v}^{l}$. Hence the $v$-part of the right hand side of equation (\ref{odd Asai}) is equal to:
\begin{eqnarray}
&& \left[\prod\limits_{1\leq k\leq \frac{n-1}{2}} \prod\limits_{1\leq i\leq m}(1-t_{i}t_{i+k}^{-1}q_{v}^{-ls})^{-1}(1-t_{i}^{-1}t_{i+k}q_{v}^{-ls})^{-1}\right]\cdot (1-q_{v}^{-ls})^{-m}\nonumber\\
&=&   \prod\limits_{1\leq i\leq m} \left[ \left( \prod\limits_{-\frac{n-1}{2}\leq k\leq \frac{n-1}{2},k\neq 0}(1-t_{i}t_{i+k}^{-1}q_{v}^{-ls})^{-1}\right)\cdot (1-t_{i}t_{i}^{-1}q_{v}^{-ls})^{-1}\right]\nonumber\\
&=&   \prod\limits_{1\leq i\leq m} \prod\limits_{-\frac{n-1}{2}\leq k\leq \frac{n-1}{2}} (1-t_{i}t_{i+k}^{-1}q_{v}^{-ls})^{-1}\nonumber\\
&=&   \prod\limits_{1\leq i\leq m} \prod\limits_{1\leq j\leq m} (1-t_{i}t_{j}^{-1}q_{v}^{-ls})^{-l}\nonumber.
\end{eqnarray}

\item Assume now that $v$ is inert in $F$. Since Gal$(L/F)$ acts transitively on the set $\{w_{i}\mid 1\leq i\leq m\}$ and commutes with complex conjugation, either $w_{i}^{c}= w_{i}$ for all $i$, or $w_{i}^{c} \neq w_{i}$ for all $i$. If the latter is true, then for each $1\leq i\leq m$ there exists $j\neq i$ such that $w_{j}=w_{i}^{c}$. In particular, the set $\{w_{i}\mid 1\leq i\leq m\}$ can be divided into disjoint pairs. But this is impossible since $m$ is odd. Therefore we have that $w_{i}^{c}=w_{i}$ and hence the Hecke eigenvalues $t_{i}=1$ for all $i$.
Keeping this in mind, one can easily show that the $v$-parts of both sides of equation (\ref{odd Asai}) coincide and are in fact equal to $(1+ q_{v}^{-ls})^{-m}(1-q_{v}^{-2ls})^{-(m^{2}l-m)/2}$.

\end{enumerate}
{\it Case 2: $n$ even}
\begin{enumerate}
\item When $v=ww^{c}$ is split, the $v$-part of the left hand side of equation (\ref{even Asai}) is again equal to $\prod\limits_{1\leq i,j\leq m}(1-t_{i}t_{j}^{-1}q_{v}^{-ls})^{-l}.$ In order to evaluate the right hand side, observe that the finite places of $L^{\flat}=L^{\vartheta^{\frac{n}{2}}c}$ over $v$ are $w_{i}w_{\frac{n}{2}+i}^{c}$, $1\leq i\leq m$. Therefor, the Hecke character $\chi\mid_{L^{\flat}}$ has Hecke eigenvalue $t_{i}t_{\frac{n}{2}+i}^{-1}$ at $w_{i}w_{\frac{n}{2}+i}^{c}$. The $v$-part of the right hand side is hence equal to:
\begin{eqnarray}
&& \left[\prod\limits_{1\leq k\leq \frac{n-2}{2}} \prod\limits_{1\leq i\leq m}(1-t_{i}t_{i+k}^{-1}q_{v}^{-ls})^{-1}(1-t_{i}^{-1}t_{i+k}q_{v}^{-ls})^{-1}\right]\cdot (1-q_{v}^{-ls})^{-m}\nonumber\\
&& \hspace{25pt}\cdot \left[ \prod\limits_{1\leq i\leq m}(1-t_{i}t_{\frac{n}{2}+i}^{-1} q^{-ls})^{-1}\right]
\nonumber\\
&=&   \prod\limits_{1\leq i\leq m} \left[ \left( \prod\limits_{-\frac{n-2}{2}\leq k\leq \frac{n}{2},k\neq 0}(1-t_{i}t_{i+k}^{-1}q_{v}^{-ls})^{-1}\right)\cdot (1-t_{i}t_{i}^{-1}q_{v}^{-ls})^{-1}\right]\nonumber\\
&=&   \prod\limits_{1\leq i\leq m} \prod\limits_{-\frac{n-2}{2}\leq k\leq \frac{n}{2}} (1-t_{i}t_{i+k}^{-1}q_{v}^{-ls})^{-1}\nonumber\\
&=&   \prod\limits_{1\leq i\leq n} \prod\limits_{1\leq j\leq m} (1-t_{i}t_{j}^{-1}q_{v}^{-ls})^{-1}\nonumber\\&=&   \prod\limits_{1\leq i\leq m} \prod\limits_{1\leq j\leq m} (1-t_{i}t_{j}^{-1}q_{v}^{-ls})^{-l}\nonumber.
\end{eqnarray}

\item When $v$ is inert and $w_{i}= w_{i}^{c}$ for all $1\leq i\leq m$, we know $t_{i}=1$ for all $i$. So the representation $\Pi_{\chi}=\Pi(\chi)\otimes \eta$ has Hecke eigenvalues $-\zeta^{j}$ for $1\leq j\leq n$.

If $l$ is odd, then $m$ is even. The $v$-part of the left hand side of equation (\ref{even Asai}) is
\[ (1+ q_{v}^{-ls})^{-m}(1-q_{v}^{-2ls})^{-(m^{2}l-m)/2}.\]
In this case, the finite places of $L^{\flat}=L^{\vartheta^{\frac{n}{2}}c}$ over $v$ are $w_{i}w_{\frac{m}{2}+i}$, $1\leq i\leq \frac{m}{2}$. The $v$-part of the right hand side is then equal to:
\begin{eqnarray}\nonumber
&& (1-q_{v}^{-2ls})^{-m(ml-2)/2}(1+q_{v}^{-ls})^{-m}(1-q_{v}^{-2ls})^{-\frac{m}{2}}\\
\nonumber&=&(1+ q_{v}^{-ls})^{-m}(1-q_{v}^{-2ls})^{-(m^{2}l-m)/2}.
\end{eqnarray}
Similarly, if $l$ is even then the $v$-parts of both sides of equation (\ref{even Asai}) are easily seen to be equal to $(1-q_{v}^{-2ls})^{-m^{2}l/2}$.

\item When $v$ is inert and $w_{i}\neq  w_{i}^{c}$ for all $1\leq i\leq m$, there exists $t$, an integer between $2$ and $m-1$, such that $w_{1}^{c}=w_{t+1}$. We apply $\vartheta^{t}$ to both sides and get $w_{t+1}^{c}=w_{2t+1}$. Hence $2t+1\equiv 1 \mod m$. This implies that $2t=m$. In particular, we know then $m$ is even and $w_{i}^{c}=w_{i+\frac{m}{2}}$. The latter implies that $t_{i+\frac{m}{2}}=t_{i}^{-1}$ for all $1\leq i\leq m$.

The representation $\Pi_{\chi}$ has eigenvalues $\{-\zeta^{a}t_{i}^{1/l}\mid 1\leq i\leq m, 1\leq a\leq l\}$ at $v$. the $v$-part of the left hand side of equation (\ref{even Asai}) is:
\begin{equation}
\prod\limits_{1\leq i\leq m}\prod\limits_{1\leq a\leq l}(1+\zeta^{a}t_{i}^{1/l}q_{v}^{-s})^{-1}P(q_{v}^{-2s})^{-1}
\end{equation}
where $P \in \C[X]$ is the unique polynomial such that $P(0)=1$ and
\begin{eqnarray}\nonumber (P(X))^{2}&=&\prod\limits_{1\leq i,j\leq m} \prod\limits_{1\leq a,b\leq m, (i,a)\neq (j,b)} (1-\zeta^{a}t_{i}^{1/l}\zeta^{b}t_{j}^{1/l}X) \\
&=& \cfrac{\prod\limits_{1\leq i,j\leq m}\prod\limits_{1\leq a,b\leq m} (1-\zeta^{a}t_{i}^{1/l}\zeta^{b}t_{j}^{1/l}X) }{\prod\limits_{1\leq i\leq m}\prod\limits_{1\leq a\leq l}(1-\zeta^{2a}t_{i}^{2/l}X)}
\nonumber\\
&=& \cfrac{\prod\limits_{1\leq i,j\leq m} (1-t_{i}t_{j}X^{l})^{l} }{\prod\limits_{1\leq i\leq m}\prod\limits_{1\leq a\leq l}(1-\zeta^{2a}t_{i}^{2/l}X)}.
\nonumber
\end{eqnarray}

If $l$ is odd, then $$ (P(X))^{2}= \prod\limits_{1\leq i\leq m}(1-t_{i}^{2}X^{l})^{l-1}\prod\limits_{1\leq i<j\leq m} (1-t_{i}t_{j}X^{l})^{2l}.$$
Hence, the $v$-part of the left hand side of equation (\ref{even Asai}) is:
\[
\prod\limits_{1\leq i\leq m}(1-t_{i}^{2}q_{v}^{-2ls})^{-(l-1)/2}\cdot \prod\limits_{1\leq i<j \leq m}(1-t_{i}t_{j}q_{v}^{-2ls})^{-l} \cdot \prod\limits_{1\leq i\leq m}(1+t_{i}q_{v}^{-ls})^{-1}.
\]
Moreover, it is easy to see that $(\vartheta^{\frac{n}{2}}c)w_{i}=w_{i+\frac{m}{2}}^{c}=w_{i}$ for all $i$. The intersections of $w_{i}$, $1\leq i\leq m$ with the ring of integers in $L^{\flat}$ are different prime ideals and hence are inert with respect to the extension $L/L^{\flat}$. The $v$-part of the right hand side of
equation (\ref{even Asai}) is then:
\[ \prod\limits_{1\leq k\leq \frac{n-2}{2}}\prod\limits_{1\leq i\leq m}(1-t_{i}t_{i+k}^{-1}q_{v}^{-2sl})^{-1} \cdot (1-q_{v}^{-2sl})^{-m/2} \cdot \prod\limits_{1\leq i\leq m}(1+t_{i}q_{v}^{-ls})^{-1}.
\]
Recall that $t_{i+\frac{m}{2}}=t_{i}^{-1}$ for all $i$. We have:
\begin{eqnarray}
&& \prod\limits_{1\leq k\leq \frac{n-2}{2}}\prod\limits_{1\leq i\leq m}(1-t_{i}t_{i+k}^{-1}q_{v}^{-2sl})^{-1} \cdot (1-q_{v}^{-2sl})^{-m/2} \nonumber\\
&=& \prod\limits_{1\leq k\leq \frac{n-2}{2}}\prod\limits_{1\leq i\leq m}(1-t_{i}t_{i+k+\frac{m}{2}}q_{v}^{-2sl})^{-1} \cdot \prod\limits_{1\leq i\leq m/2}(1-t_{i}t_{\frac{m}{2}+i}q_{v}^{-2sl})^{-1} \nonumber\\
&=& \prod\limits_{1\leq i\leq m}\prod\limits_{\frac{m}{2}+1\leq k\leq \frac{m+n}{2}-1}(1-t_{i}t_{i+k}q_{v}^{-2sl})^{-1} \cdot \prod\limits_{1\leq i\leq m/2}(1-t_{i}t_{\frac{m}{2}+i}q_{v}^{-2sl})^{-1} \nonumber \\
&=& \prod\limits_{1\leq i\leq m}[\prod\limits_{\frac{m}{2}+1\leq k\leq m-1}(1-t_{i}t_{i+k}q_{v}^{-2sl})^{-(l+1)/2} \cdot \prod\limits_{1\leq k\leq \frac{m}{2}-1}(1-t_{i}t_{i+k}q_{v}^{-2sl})^{-(l-1)/2}]\cdot
\nonumber \\
&& \prod\limits_{1\leq i\leq m} (1-t_{i}t_{i+\frac{m}{2}}q_{v}^{-2sl})^{-(l-1)/2}\prod\limits_{1\leq i\leq m} (1-t_{i}^{2}q_{v}^{-2sl})^{-(l-1)/2}  \prod\limits_{1\leq i\leq m/2}(1-t_{i}t_{\frac{m}{2}+i}q_{v}^{-2sl})^{-1}\nonumber \\
&=& \prod\limits_{1\leq i<j\leq m, j-i\neq m/2}(1-t_{i}t_{j}q_{v}^{-2sl})^{-l}\prod\limits_{1\leq i\leq m/2}(1-t_{i}t_{\frac{m}{2}+i}q_{v}^{-2sl})^{-(l-1)}\cdot\nonumber\\
&&\hspace{20pt}\prod\limits_{1\leq i\leq m} (1-t_{i}^{2}q_{v}^{-2sl})^{-(l-1)/2}\prod\limits_{1\leq i\leq m/2}(1-t_{i}t_{\frac{m}{2}+i}q_{v}^{-2sl})^{-1}\nonumber \\
&=& \prod\limits_{1\leq i<j\leq m}(1-t_{i}t_{j}q_{v}^{-2sl})^{-l} \prod\limits_{1\leq i\leq m} (1-t_{i}^{2}q_{v}^{-2sl})^{-(l-1)/2}\nonumber.
\end{eqnarray}
We have deduced that the $v$-parts of the two sides of equation (\ref{even Asai}) coincide if $l$ is odd.\\\\
If $l$ is even, the left hand side of equation (\ref{even Asai}) is equal to
\[\prod\limits_{1\leq i<j\leq m}(1-t_{i}t_{j}q_{v}^{-2sl})^{-l} \prod\limits_{1\leq i\leq m} (1-t_{i}^{2}q_{v}^{-2sl})^{-l/2}.\]
Moreover, we have $(\vartheta^{\frac{n}{2}}c)w_{i}=w_{i}^{c}=w_{i+\frac{m}{2}}$. Hence $w_{i}w_{i+\frac{m}{2}}$ for $1\leq i\leq \frac{m}{2}$ are the places of $L^{\flat}$ over $v$. The corresponding right hand side is equal to
\begin{eqnarray}
&& \prod\limits_{1\leq k\leq \frac{n-2}{2}}\prod\limits_{1\leq i\leq m}(1-t_{i}t_{i+k}^{-1}q_{v}^{-2sl})^{-1} \cdot (1-q_{v}^{-2sl})^{-m/2} \cdot \prod\limits_{1\leq i\leq m/2}(1-t_{i}t_{\frac{m}{2}+i}q_{v}^{-2ls})^{-1}\nonumber \\
&=& \prod\limits_{1\leq i\leq m}[\prod\limits_{\frac{m}{2}+1\leq k\leq m-1}(1-t_{i}t_{i+k}q_{v}^{-2sl})^{-l/2} \cdot \prod\limits_{1\leq k\leq \frac{m}{2}-1}(1-t_{i}t_{i+k}q_{v}^{-2sl})^{-l/2}]\cdot
\nonumber \\
&& \prod\limits_{1\leq i\leq m} (1-t_{i}t_{i+\frac{m}{2}}q_{v}^{-2sl})^{-(l/2-1)}\prod\limits_{1\leq i\leq m} (1-t_{i}^{2}q_{v}^{-2sl})^{-l/2} \cdot  \prod\limits_{1\leq i\leq m/2}(1-t_{i}t_{\frac{m}{2}+i}q_{v}^{-2ls})^{-2}\nonumber \\
&=& \prod\limits_{1\leq i<j\leq m, j-i\neq m/2}(1-t_{i}t_{j}q_{v}^{-2sl})^{-l}\prod\limits_{1\leq i\leq m/2}(1-t_{i}t_{\frac{m}{2}+i}q_{v}^{-2ls})^{-(l-2)-2}\prod\limits_{1\leq i\leq m} (1-t_{i}^{2}q_{v}^{-2sl})^{-l/2} \nonumber \\
&=& \prod\limits_{1\leq i<j\leq m}(1-t_{i}t_{j}q_{v}^{-2sl})^{-l} \prod\limits_{1\leq i\leq m} (1-t_{i}^{2}q_{v}^{-2sl})^{-l/2}\nonumber.
\end{eqnarray}

\end{enumerate}

\end{proof}

\section{Period relations with explicit powers of $(2\pi i)$}\label{sect:2pii}
\subsection{Critical characters and CM-periods}\label{crits}



Let $\chi$ be a Hecke character of $\cm$ with infinity-type $z^{a_{\iota}}\bar{z}^{a_{\bar{\iota}}}$ at $\iota\in \Sigma$. We say that $\chi$ is {\it critical} if it is algebraic and moreover $a_{\iota}\neq a_{\bar{\iota}}$ for all $\iota\in  J_{\cm}$. This is equivalent to the motive associated to $\chi$ having critical points in the sense of Deligne (cf.\ \cite{deligne79}). We remark that $0$ and $1$ are always critical points, if $\chi$ is conjugate selfdual.\\\\
In the critical case, we can define $\Phi_{\chi}$, a subset of $ J_{F}$, as follows: An embedding $\iota\in J_F$ is in $\Phi_{\chi}$ if and only if $a_{\iota} < a_{\bar{\iota}}$. Clearly, $\Phi_{\chi}$ is a CM type of $\cm$. Given any CM-type $\Phi$, we say that $\chi$ is {\it compatible} with $\Phi$ if  $\Phi=\Phi_{\chi}$. \\\\
Let $\chi$ be an algebraic Hecke character of $F$ and let $\Psi\subset  J_{F}$ be any subset such that $\Psi\cap \bar{\Psi}=\emptyset$. Attached to $(\chi,\Psi)$ one may define a CM Shimura-datum as in section 1.1 of \cite{harrisCMperiod}, and a number field $E(\chi,\Psi)$ which contains $\Q(\chi)$ and the reflex field of the CM Shimura datum defined by $\Psi$. Moreover, one may associate a non zero complex number $p_{F}(\chi,\Psi)$ to this datum, which is well defined modulo $E(\chi,\Psi)^{\times}$, called a {\it CM-period}: As CM-periods $p_F(\chi,\Psi)$ will only be a technical ingredient in our arguments, not showing up in the final formulas, we believe that it is justified not to repeat their precise construction here, but refer for the sake if brevity to the appendix of \cite{harrisappendix}. We also write $p(\chi,\Psi)$ instead of $p_{F}(\chi,\Psi)$ if there is no ambiguity concerning the base field $F$. Slightly abusing our notation, we denote
\smallskip

\begin{center}
\framebox{$E(\chi):=\bigcup_{\Psi} E(\chi,\Psi)$.}
\end{center}

\smallskip \noindent It contains $\Q(\chi)\cdot F^{Gal}$ (but may in general be bigger than that).

\begin{rem}
The group Aut$(\C)$ acts on the CM Shimura datum. The CM-periods are defined via certain rational structures of cohomological spaces. We may choose the rational structures equivariantly under the action of Aut$(\C)$, and get a family of the CM-periods $\{p(^{\sigma}\!\chi,\Phi_{^{\sigma}\!\chi})\}_{\sigma\in {\rm Aut}(\C)}$ which only depends on the restriction of $\sigma$ to $E(\chi)$.
\end{rem}

\subsection{Period relations for CM-periods}

\begin{defn}\label{def:iota}
Let $\vartheta$ be an element in Aut$(\cm)$. For $\iota\in J_{\cm}$, we define $\iota^{\vartheta}\in  J_{\cm}$ as $\iota\circ \vartheta$.
\end{defn}
Recall from \S\ref{sect:AsaiInduced} that we may interpret $\vartheta$ also as an automorphism of $\A_F^\times$ and define $\chi^{\vartheta}=\chi\circ\vartheta$ . Applying Def.\ \ref{def:iota}, it has infinity-type $z^{a_{\iota^{\vartheta}}}\bar{z}^{a_{\bar{{\iota}^{\vartheta}}}}$ at $\iota$. In particular, if $\chi$ is algebraic (resp. critical) then so is $\chi^{\vartheta}$. In particular, if $\chi$ is compatible with a CM type $\Psi_{\chi}$ then $\chi^{\vartheta}$ is compatible with the CM type $\Psi_{\chi}^{\vartheta^{-1}}$.

 \begin{prop}\label{proposition CM-periods}
 Let $\chi$ be a critical Hecke character of $\cm$. Let $\Psi$ be a subset of $ J_{\cm}$ such that $\Psi\cap \bar{\Psi}=\emptyset$. Let $\vartheta$ be an element in {\rm Aut}$(F)$. Then we have:
 \[p(\chi,\Psi) \sim_{E(\chi)} p(\chi^{\vartheta},\Psi^{\vartheta^{-1}})\]
 which is equivariant under $\emph{\textrm{Aut}}(\C/F^{Gal})$.
 \end{prop}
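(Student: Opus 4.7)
The plan is to exploit the functoriality of the CM-period construction under algebraic automorphisms of the base field $F$. Concretely, I would first recall from \cite{harrisappendix} the construction that is only sketched in \S\ref{crits}: the period $p_F(\chi,\Psi)$ is obtained as the ratio of two rational generators — one for the Betti and one for the de Rham rational structure — of a one-dimensional cohomology space $H(\chi,\Psi)$ attached to the CM Shimura datum defined by $(\chi,\Psi)$. Both rational structures are defined over the number field $E(\chi,\Psi)\subseteq E(\chi)$, and so $p_F(\chi,\Psi)$ is well-defined modulo $E(\chi,\Psi)^{\times}$.

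Next I would observe that $\vartheta\in{\rm Aut}(F)$ induces an isomorphism of CM Shimura data, transporting the datum attached to $(\chi,\Psi)$ to the one attached to $(\chi^{\vartheta},\Psi^{\vartheta^{-1}})$. The compatibility $\chi^{\vartheta}\leftrightarrow\Psi^{\vartheta^{-1}}$ is precisely the infinity-type computation carried out just before the statement of the proposition. Since $\vartheta$ is an algebraic automorphism, the induced isomorphism $H(\chi,\Psi)\cong H(\chi^{\vartheta},\Psi^{\vartheta^{-1}})$ respects both the Betti structure (being induced by a morphism of the underlying variety) and the de Rham structure (being induced on the algebraic de Rham side), and is therefore defined over $E(\chi,\Psi)\cdot E(\chi^{\vartheta},\Psi^{\vartheta^{-1}})\subseteq E(\chi)$. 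Matching the rational generators on both sides then yields
$$p_F(\chi,\Psi)\sim_{E(\chi)}p_F(\chi^{\vartheta},\Psi^{\vartheta^{-1}}),$$
which is the desired relation.

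For the ${\rm Aut}(\C/F^{Gal})$-equivariance, I would invoke the Minimizing-Lemma (Lem.\ \ref{minimize}). As recalled at the end of \S\ref{crits}, both families of CM-periods depend only on the restriction of $\sigma\in{\rm Aut}(\C)$ to $E(\chi)$, which contains $F^{Gal}$ by definition. The second condition of Def.\ \ref{definition algebraic relation} then follows directly from the natural ${\rm Aut}(\C)$-compatibility of the Betti/de Rham rational structures, and — since both sides of the relation are non-zero by construction — the Minimizing-Lemma delivers the first condition automatically, proving the equivariance.

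The main obstacle is the middle step: one must verify carefully that the algebraic automorphism $\vartheta$ of $F$ preserves both rational structures on $H(\chi,\Psi)$ simultaneously, rather than relating them by a transcendental factor. This is ultimately a functoriality property of the construction in \cite{harrisappendix}, but it requires unpacking how $\vartheta$ acts on the associated PEL-datum and the corresponding rank-one motive. An alternative route would be to deduce the relation from known product formulas (Shimura's reciprocity, Blasius's period identities) applied to both sides and then comparing, but the direct functoriality argument is more transparent.
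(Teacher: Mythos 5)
Your proposal is essentially the same as the paper's proof: both pass to the CM Shimura datum $(T_F,h_\Psi)$ for the torus $T_F=\mathrm{Res}_{F/\Q}\mathbb{G}_m$ and observe that $\vartheta$ induces a morphism of Shimura data $(T_F,h_\Psi)\to(T_F,h_{\Psi^{\vartheta^{-1}}})$ sending $\chi\mapsto\chi^\vartheta$, with the transformation rule for CM-periods under such a morphism taken from Lemma 1.6 of \cite{harrisCMperiod} (the paper also points to Prop.\ 1.2 of \cite{linCRcomplet}). The ``main obstacle'' you flag---that $\vartheta$ acts $E(\chi)$-rationally on both rational structures simultaneously---is precisely what that cited functoriality lemma supplies, so your argument and the paper's have the same logical skeleton and bottom out at the same reference.
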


\begin{proof}
Let $T_{\cm}:=Res_{\cm/\Q}\mathbb{G}_{m,\cm}$ be a torus. We define a homomorphism $h_{\Psi}:Res_{\C/\R}\mathbb{G}_{m,\C} \rightarrow T_{F,\R}$ such that for each $\iota\in  J_{\cm}$, the Hodge structure induced by $h_{\Psi}$ is of type $(-1,0)$ if $\iota\in \Psi$, of type $(0,-1)$ if $\iota\in \bar\Psi$, and of type $(0,0)$ otherwise. The pair $(T_{\cm},h_{\Psi})$ is then a Shimura datum. The composition with $\vartheta$ induces a morphism of Shimura data $h: (T_{\cm},h_{\Psi}) \rightarrow (T_{\cm},h_{\Psi^{\vartheta^{-1}}})$. Now the expected relation between CM-periods follows as in Lemma 1.6 in \cite{harrisCMperiod}. We also refer to Proposition $1.2$ of \cite{linCRcomplet} for more details.
\end{proof}

We recall some other properties of CM-periods. The proof is similar to the previous proposition and can be found in Proposition $1.1$ of \cite{linCRcomplet}.

 \begin{prop}\label{propCM}
Let $L$ be a CM field containing $\cm$, $\iota\in J_{L}$ and let $\chi$, $\chi'$ be critical Hecke characters of $F$. Let $\Psi$ a subset of $ J_{F}$ such that $\Psi\cap \bar\Psi=\emptyset$ and let $\Psi=\Psi_{1}\sqcup \Psi_{2}$ be a partition of $\Psi$. Then,
 \begin{eqnarray}
 p(\chi\chi',\Psi) &\sim_{E(\chi)E(\chi')}& p(\chi,\Psi) \ p(\chi',\Psi) \nonumber\\
 p(\chi,\Psi)=p(\chi, \Psi_{1}\sqcup \Psi_{2}) &\sim_{E(\chi)}& p(\chi,\Psi_{1}) \ p(\chi, \Psi_{2})    \nonumber\\
 p(\chi,\Psi) &\sim_{E(\chi)}& p(\bar{\chi},\bar{\Psi})   \nonumber\\
 p(\chi \circ N_{\A_L / \A_\cm},\iota) &\sim_{E(\chi)} & p(\chi, \iota|_{\cm})   \nonumber
 \end{eqnarray}
The first three relations are equivariant under the action of $\emph{\textrm{Aut}}(\C/F^{Gal})$, the last one is equivariant under the action of $\emph{\textrm{Aut}}(\C/L^{Gal})$.
\end{prop}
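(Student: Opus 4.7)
The plan is to deduce all four relations by the same general mechanism used in Proposition~\ref{proposition CM-periods}: each assertion is the shadow on CM-periods of a natural morphism of CM Shimura data, together with the standard fact that CM-periods are well-defined only up to multiplication by elements of the reflex-times-rationality field and transform compatibly under such morphisms. Once the Shimura-data morphism is identified, the relation follows verbatim from the argument of Harris' Lemma~1.6 in \cite{harrisCMperiod}, and the Aut$(\C/F^{Gal})$- (resp.\ Aut$(\C/L^{Gal})$-) equivariance is built in by choosing the rational structures on the relevant coherent cohomology spaces compatibly with the $\textrm{Aut}(\C)$-action, exactly as in the remark preceding this subsection.

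For (1) I would use the multiplication map $m\colon T_F\times T_F\to T_F$, which is a morphism of tori. Pulled back along $m$, the Shimura datum $(T_F,h_\Psi)$ becomes $(T_F\times T_F,h_\Psi\times h_\Psi)$, while the character $\chi\chi'$ pulls back to $\chi\boxtimes\chi'$. Since the CM-period of an external tensor product of characters is the product of the individual CM-periods, one reads off $p(\chi\chi',\Psi)\sim p(\chi,\Psi)\,p(\chi',\Psi)$ modulo $E(\chi)E(\chi')$, as required.

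For (2) the key observation is that the cocharacter $h_{\Psi_1\sqcup\Psi_2}$ is the product (in the sense of the torus law on $T_F$) of $h_{\Psi_1}$ and $h_{\Psi_2}$. Equivalently, the CM Shimura datum $(T_F,h_\Psi)$ decomposes at the level of Hodge types as the sum of the Shimura data attached to $\Psi_1$ and $\Psi_2$, and the automorphic line bundle on the CM point factors accordingly; this forces $p(\chi,\Psi)\sim p(\chi,\Psi_1)\,p(\chi,\Psi_2)$ modulo $E(\chi)$. Relation (3) comes from complex conjugation $c\in\textrm{Aut}(F)$, which, viewed as an automorphism of $T_F$, gives an isomorphism of Shimura data $(T_F,h_\Psi)\ira(T_F,h_{\bar\Psi})$ sending $\chi$ to $\bar\chi$; applying Proposition~\ref{proposition CM-periods} with $\vartheta=c$ yields the claim.

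Relation (4) is the only one that leaves $F$: it follows from the norm morphism $N_{L/F}\colon T_L\to T_F$, which is a morphism of tori, together with the fact that $h_\iota$ on $T_L$ maps under $N_{L/F}$ to $h_{\iota|_F}$ on $T_F$ (because the Hodge type at $\iota$ gets pushed to the Hodge type at its restriction). Pulling back, the character $\chi$ on $T_F$ pulls back to $\chi\circ N_{\A_L/\A_F}$ on $T_L$, so the CM-periods are related by $p(\chi\circ N_{\A_L/\A_F},\iota)\sim p(\chi,\iota|_F)$ modulo $E(\chi)$; since the reflex field enlarges from $F^{Gal}$ to $L^{Gal}$, the equivariance improves to $\textrm{Aut}(\C/L^{Gal})$. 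The only mildly subtle point in all four arguments is bookkeeping the field of rationality so that the relations really hold modulo $E(\chi)$ (resp.\ $E(\chi)E(\chi')$) and not merely modulo some a priori larger field; this is handled exactly as in the Minimizing-Lemma \ref{minimize}, invoking the hypothesis $\Psi\cap\bar\Psi=\emptyset$ to ensure that the CM Shimura datum is actually a point (so that its reflex field sits inside $F^{Gal}$).
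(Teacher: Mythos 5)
Your plan — derive each relation from a morphism of CM Shimura data, exactly as in Prop.~\ref{proposition CM-periods} — is the approach the paper intends; the paper simply cites Prop.~1.1 of \cite{linCRcomplet} and remarks that the proof is ``similar to the previous proposition.'' Items (3) (take $\vartheta=c$, noting $\bar\chi=\chi^{c}$ and $c^{-1}=c$) and (4) (the norm $N_{L/F}\colon T_L\to T_F$ with $N\circ h_{\iota}=h_{\iota|_F}$) are clean and correct.

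There is, however, a concrete error in (1): the torus map you want is the diagonal $\Delta\colon T_F\to T_F\times T_F$, not the multiplication map $m\colon T_F\times T_F\to T_F$. With $m$ you have $(\chi\chi')\circ m=(\chi\chi')\boxtimes(\chi\chi')$, not $\chi\boxtimes\chi'$, and $m\circ(h_\Psi\times h_\Psi)=2h_\Psi\neq h_\Psi$, so $m$ is not even a morphism of Shimura data for the cocharacters in question. The diagonal fixes both problems: $\Delta\circ h_\Psi=h_\Psi\times h_\Psi$ and $(\chi\boxtimes\chi')\circ\Delta=\chi\chi'$, after which multiplicativity of CM-periods under external products gives the relation. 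Item (2) is directionally right (the cocharacters satisfy $h_{\Psi_1}\cdot h_{\Psi_2}=h_{\Psi_1\sqcup\Psi_2}$ since $\Psi_1\cap\Psi_2=\emptyset$), but to cast it as a genuine Shimura-data morphism you should again use the diagonal $T_F\to T_F\times T_F$, now with target datum $h_{\Psi_1}\times h_{\Psi_2}$; the spelling ``the automorphic line bundle factors'' is not by itself an argument. Finally, one terminological nit in (4): since $L^{Gal}\supseteq F^{Gal}$, the equivariance group $\textrm{Aut}(\C/L^{Gal})$ is \emph{smaller}, so the equivariance ``coarsens,'' it does not ``improve.''
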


We will also need the following lemma (cf.\ (1.10.9) in \cite{harris97})
\begin{lem}\label{norm}
For any $\iota\in J_{F}$, we have
\begin{equation}
p(\|\cdot\|_{\Acm},\iota)\sim_{\Q} (2\pi i)^{-1}
\end{equation}
which is equivariant under the action of $\emph{\textrm{Aut}}(\C)$.
\end{lem}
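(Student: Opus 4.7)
The plan is to identify $p(\|\cdot\|_{\Acm}, \iota)$ with the classical period of the Tate motive $\Q(1)$, which Deligne computes explicitly as $(2\pi i)^{-1}$ modulo $\Q^\times$; in Harris's CM-period formalism this identification is essentially the content of (1.10.9) in \cite{harris97}, which is exactly what the lemma cites.

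Concretely, I would first observe that $\|\cdot\|_{\Acm}$, viewed as an algebraic Hecke character of $\cm$, factors through the norm as $\|\cdot\|_{\Acm} = \|\cdot\|_{\A_\Q} \circ N_{\cm/\Q}$, so its associated Deligne motive is the base-change to $\cm$ of the Tate motive $\Q(1)$. Unwinding the definition of $p(\chi,\Psi)$ recalled in Section \ref{crits} (see \cite{harrisappendix} for the full construction) as the comparison ratio between a canonical Betti $\Q$-structure and a canonical de Rham $\Q$-structure on the one-dimensional cohomology space attached to the CM Shimura datum $(T_\cm, h_{\{\iota\}})$, and specializing to $\chi = \|\cdot\|_{\Acm}$, one arrives at precisely the Betti/de Rham comparison for $\Q(1)$, namely the inclusion $2\pi i \cdot \Q \subset \C$ versus $\Q \subset \C$. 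This yields $p(\|\cdot\|_{\Acm}, \iota) \sim_\Q (2\pi i)^{-1}$, with the field of rationality being $E(\|\cdot\|_{\Acm}) = \Q$ since the norm character and the Tate motive are both defined over $\Q$.

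For the Aut$(\C)$-equivariance, I would use that $\|\cdot\|_{\Acm}$ is fixed by every $\sigma \in \mathrm{Aut}(\C)$ — equivalently, $\Q(\|\cdot\|_{\Acm}) = \Q$ — so that the family $\{p({}^\sigma\|\cdot\|_{\Acm}, {}^\sigma\iota)\}_{\sigma}$ may be compared directly with $(2\pi i)^{-1}$ via a single relation over $\Q$; both sides then satisfy condition (2) of Def.\ \ref{definition algebraic relation} trivially. The main — indeed only — obstacle is the categorical bookkeeping needed to identify Harris's abstract CM Shimura datum attached to $(\|\cdot\|_{\Acm}, \iota)$ with the concrete datum computing the period of $\Q(1)$; this identification, together with the resulting period relation, is exactly the assertion of (1.10.9) in \cite{harris97}, and once granted the lemma follows immediately.
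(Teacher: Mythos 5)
The paper offers no proof here --- the lemma is introduced only with the parenthetical \emph{(cf.\ (1.10.9) in \cite{harris97})} --- so your proposal, which ultimately converges on that same citation, matches the paper's approach, and your motivic picture (that $\|\cdot\|_{\Acm}=\|\cdot\|_{\A_\Q}\circ N_{\cm/\Q}$, hence the attached motive is the Tate motive $\Q(1)$ base-changed to $\cm$, and the CM-period is its Betti/de~Rham comparison, giving $(2\pi i)^{\pm 1}$) is the right way to see why Harris's (1.10.9) holds. One caveat: the ${\rm Aut}(\C)$-equivariance is not verified \emph{trivially} merely because $\Q(\|\cdot\|_{\Acm})=\Q$; the family $\sigma\mapsto p(\|\cdot\|_{\Acm},{}^\sigma\iota)$ still moves through the embedding ${}^\sigma\iota$, and condition~(2) of Def.~\ref{definition algebraic relation} is a genuine assertion about choosing the rational structures on the CM Shimura data ${\rm Aut}(\C)$-equivariantly (cf.\ the Remark preceding Prop.~\ref{proposition CM-periods}) --- so that part, too, is something you must take from \cite{harris97}, not something you get for free.
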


\subsection{A result of Blasius}

The special values of an $L$-function for a Hecke character over a CM field can be interpreted in terms of CM-periods. The following theorem was proved by Blasius, presented as in \cite{harrisCMperiod}, Prop.\ 1.8.1 (and the attached erratum \cite{harris97}, p.\ 82).

\begin{thm}\label{Blasius}
Let $\chi$ be a critical Hecke character of $\cm$ and recall $\check{\chi}=\chi^{-1,c}=\bar{\chi}^{\vee}$. For $m$ a critical value of $L(s,\chi)$, we have
$$L^{S}(m,\chi)\sim_{E(\chi)} (2\pi i)^{md}p(\check{\chi},\Phi_{\chi})$$
is equivariant under the action of $\emph{\textrm{Aut}}(\C/F^{Gal})$.
\end{thm}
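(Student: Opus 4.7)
The plan is to follow Blasius, whose original theorem---presented as Prop.~1.8.1 of \cite{harrisCMperiod}---already establishes the relation at a single critical value; my task therefore is to recover arbitrary critical $m$ from this base case, and to arrange the equivariance over $F^{Gal}$ as claimed.

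First I would translate along the critical strip by setting $\chi_m := \chi\cdot\|\cdot\|_{\Acm}^{m}$. Since $\|\cdot\|_{\Acm}$ has infinity-type $z^1\bar z^1$ at every archimedean place, the difference $a_{\iota}-a_{\bar\iota}$ is preserved, so $\chi_m$ remains critical with $\Phi_{\chi_m}=\Phi_\chi$; moreover $L(s,\chi_m)=L(s+m,\chi)$, so $s=0$ is critical for $L(s,\chi_m)$ precisely when $s=m$ is critical for $L(s,\chi)$, and then $L^S(m,\chi)=L^S(0,\chi_m)$. On the period side, $\check\chi_m=\check\chi\cdot\|\cdot\|_{\Acm}^{-m}$, and combining Proposition~\ref{propCM} with Lemma~\ref{norm} (using $|\Phi_\chi|=d$) yields
$$p(\check\chi_m,\Phi_{\chi_m})\ \sim_{E(\chi)}\ p(\check\chi,\Phi_\chi)\cdot\prod_{\iota\in\Phi_\chi} p(\|\cdot\|_{\Acm},\iota)^{-m}\ \sim_{E(\chi)}\ (2\pi i)^{md}\,p(\check\chi,\Phi_\chi).$$
Hence the statement at a general critical $m$ is equivalent, modulo $E(\chi)^\times$, to the single base case $L^S(0,\chi_m)\sim_{E(\chi_m)} p(\check\chi_m,\Phi_{\chi_m})$.

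For the base case I would invoke Blasius's result directly: it proceeds via the motive $M(\chi_m)$ attached to $\chi_m$, realized on the cohomology of an appropriate CM abelian variety, together with Deligne's conjecture in this CM setting. This is exactly \cite[Prop.~1.8.1]{harrisCMperiod} (with the corrections of \cite[p.~82]{harris97}), and I would cite rather than reprove this deep input.

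For the $\textrm{Aut}(\C/F^{Gal})$-equivariance, the CM-periods of \S\ref{crits} were constructed equivariantly by choosing compatible rational structures on the associated Shimura data, while $\sigma(L^S(m,\chi))=L^S(m,{}^\sigma\chi)$ results from the cyclotomic action on unramified Euler factors. Together these give the second condition of Definition~\ref{definition algebraic relation} over $\textrm{Aut}(\C/E(\chi))$; since $E(\chi)\supseteq F^{Gal}$, the Minimizing Lemma~\ref{minimize} then upgrades the relation to equivariance under $\textrm{Aut}(\C/F^{Gal})$. The main obstacle is genuinely Blasius's base case; everything else amounts to formal manipulations using Proposition~\ref{propCM}, Lemma~\ref{norm}, and the Minimizing Lemma.
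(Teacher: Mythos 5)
The paper treats Theorem \ref{Blasius} as a cited result --- ``proved by Blasius, presented as in \cite{harrisCMperiod}, Prop.~1.8.1 (and the attached erratum \cite{harris97}, p.~82)'' --- and gives no proof of its own; it is an external input, not derived in this text. Your proposal is therefore not being compared against a proof in the paper but against a pure citation. That said, your reasoning is mathematically correct and self-consistent: the twist $\chi_m=\chi\cdot\|\cdot\|^m_{\Acm}$ has the same $\Phi_\chi$ and shifts the critical point to $s=0$, $\check\chi_m=\check\chi\cdot\|\cdot\|^{-m}_{\Acm}$, and combining the product rules of Prop.~\ref{propCM} with Lem.~\ref{norm} over the $d$ embeddings in $\Phi_\chi$ does give $p(\check\chi_m,\Phi_{\chi_m})\sim_{E(\chi)}(2\pi i)^{md}p(\check\chi,\Phi_\chi)$. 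Two small caveats: first, you assert that Blasius's original statement (Prop.~1.8.1 of \cite{harrisCMperiod}) only handles a single critical value, so that a separate reduction to general $m$ is required --- but as the paper cites the full range of critical $m$ directly from that source, the reduction you perform is likely subsumed in the cited statement and is strictly speaking redundant (though harmless and a useful sanity check). Second, be careful with the description of what the Minimizing Lemma~\ref{minimize} does: it does not ``upgrade'' a relation's equivariance group, but rather deduces the first condition of Def.~\ref{definition algebraic relation} (the $\sim_{\sigma(L)}$ relation) from the second (the Galois-cocycle condition over ${\rm Aut}(\C/\F)$), given that $x(\sigma),y(\sigma)$ depend only on $\sigma|_L$ with $L\supseteq\F^{Gal}$. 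Your argument is salvageable with that corrected phrasing, but as written it slightly misdescribes the lemma.
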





\subsection{Special $L$-values of automorphically induced representations}\label{special value}

\subsubsection{Cohomological representations $\Pi_\chi$ and $\Pi_{\chi'}$}\label{sect:reps}
Blasius related critical values of Hecke $L$-functions to CM-periods. We now prove two new results of the same form for critical values of Rankin--Selberg and Asai $L$-functions of automorphically induced representations.\\\\
Let $L$ (resp. $L'$) be a cyclic extension over $\cm$ of degree $n$ (resp. $n-1$) which is still a CM field. Let $\chi$ (resp. $\chi'$) be a conjugate self-dual algebraic Hecke character of $L$ (resp. $L'$). We consider $L$ and $L'$ as subfields of $\C$ and denote by $LL'\subset \C$ the compositum of $L$ and $L'$. We write $L^{+}$ (resp. $L'^{+}$) for the maximal totally real subfield of $L$ (resp. $L'$). It is easy to see that $L^{+}L'^{+}$ is an index $2$ subfield of $LL'$. Hence $LL'$ is also a CM field.\\\\
Let $\iota$ be an element inside the CM type $\Sigma$ of $\cm$. We write $\iota_{1},\iota_{2},\cdots, \iota_{n}$ (resp. $\iota'_{1},\iota'_{2},\cdots, \iota'_{n-1}$) for the embeddings of $L$ (resp. $L'$) which extend $\iota$. For each $1\leq i\leq n$ and $1\leq j\leq n-1$, write $\iota_{i,j}$ for the unique embedding of $LL'$ which extends $\iota_{i}$ and $\iota'_{j}$.\\\\
We write the infinity-type of $\chi$ (resp. $\chi'$) at $\iota_{i}$ (resp. $\iota'_{j}$) as $z^{a_{i}}\bar{z}^{-a_{i}}$ (resp. $z^{b_{j}}\bar{z}^{-b_{j}}$) with $a_{i}\in \Z$ (resp. $b_{j}\in \Z )$. By permuting the embeddings we may suppose that the numbers $a_{i}$ (resp. $b_{j}$) are in decreasing order. Next recall the automorphically induced representations $\Pi_{\chi}$ and $\Pi_{\chi'}$, as defined in \S\ref{sect:AsaiInduced}, attached to $\chi$ and $\chi'$. As $\chi$ and $\chi'$ are assumed to be conjugate self-dual, so are $\Pi_{\chi}$ and $\Pi_{\chi'}$.\\\\
Let us assume moreover that the numbers in $\{a_{i}\}_{1\leq i\leq n}$ (resp. in $\{b_{j}\}_{1\leq j\leq n-1})$ are all different, i.e., the infinity-types of $\chi$ and $\chi'$ are {\it regular}. With this extra assumption \cite[Lem.\ 3.14]{clozel} implies that both representations $\Pi_{\chi}$ and $\Pi_{\chi'}$ are in fact cohomological. Hence, recalling the general description of the image of automorphic induction (see again \S\ref{sect:AsaiInduced}, our explanations and the reference \cite{arthur-clozel}, Chp.\ 3, Thm.\ 6.2, therein) $\Pi_{\chi}$ and $\Pi_{\chi'}$ are indeed unitary conjugate self-dual, cohomological isobaric sums, fully induced from different cuspidal automorphic representations (here we remark that the isobaric summands must be different for $\Pi_{\chi}$ and $\Pi_{\chi'}$, because they are cohomological; cf.\ \S\ref{sect:pi}). In other words, $\Pi_{\chi}$ and $\Pi_{\chi'}$ serve as Eisenstein representations as in \S\ref{sect:Eisen}.

\subsubsection{Rationality for the Asai $L$-function of $\Pi_\chi$}

\begin{prop}\label{middle step down}
Let $\chi$ be a conjugate self-dual algebraic Hecke character of $L$ with regular infinity-type. Then,
\begin{equation}
 L^{S}(1,\Pi_{\chi}, {\rm As}^{(-1)^{n}}) \sim_{E(\chi)} (2\pi i)^{n(n+1)d/2}  \prod_{\iota\in\Sigma}\prod\limits_{1\leq i \leq n }
[ p(\check{\chi},\iota_{i})^{i-1} p(\check{\chi},\bar{\iota}_{i} )^{n-i}]
 \end{equation}
equivariant under $\emph{\textrm{Aut}}(\C/L^{Gal})$.
\end{prop}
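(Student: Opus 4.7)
The plan is to combine the factorization of Asai $L$-functions in Proposition~\ref{proposition Asai AI} with Blasius's theorem, and then reduce everything to CM-periods of $\check\chi$ via the multiplicativity and Galois-equivariance of CM-periods.

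First, I would apply Proposition~\ref{proposition Asai AI} at $s=1$ to split $L^S(1,\Pi_\chi,{\rm As}^{(-1)^n})$ into a product of partial $L$-values of Hecke characters: $(n-1)/2$ factors $L^S(1,\chi\otimes\chi^{\vartheta^k,c})$ (characters of $L$) plus $L^S(1,\varepsilon_{L/L^+})$ in the odd case, and $(n-2)/2$ such factors plus $L^S(1,\varepsilon_{L/L^+})$ plus the single factor $L^S(1,\chi|_{\A_{L^\flat}}\otimes\varepsilon_{L/L^\flat})$ in the even case. Next, I would apply Blasius's theorem~\ref{Blasius} to each Hecke $L$-value of a critical CM-character, and equation~(\ref{quadratic}) to handle $L^S(1,\varepsilon_{L/L^+})$. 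Since $[L:\Q]=2nd$, Blasius contributes a factor $(2\pi i)^{nd}p(\check\psi,\Phi_\psi)$ for each character $\psi$ of $L$, while (\ref{quadratic}) contributes $(2\pi i)^{nd}$ for the quadratic factor and $(2\pi i)^{nd/2}\cdot p(\cdots)$ for the $L^\flat$-factor via Blasius again. A direct count gives the total power of $(2\pi i)$ as $\tfrac{n-1}{2}\cdot nd+nd=\tfrac{n(n+1)d}{2}$ in the odd case, and $\tfrac{n-2}{2}\cdot nd+nd+\tfrac{nd}{2}=\tfrac{n(n+1)d}{2}$ in the even case, matching the claimed exponent.

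For the CM-period side, I would use Proposition~\ref{propCM} to decompose $p(\check\psi,\Phi_\psi)\sim\prod_{\tau\in\Phi_\psi}p(\check\psi,\tau)$. Setting $\psi=\chi\otimes\chi^{\vartheta^k,c}$ and using that $\chi$ is conjugate self-dual (so $\check\chi=\chi$ and $\check{\chi^{\vartheta^k,c}}=\chi^{\vartheta^k}$), each factor splits further as $p(\check\psi,\tau)\sim p(\check\chi,\tau)\cdot p(\chi^{\vartheta^k},\tau)$, and Proposition~\ref{proposition CM-periods} transforms $p(\chi^{\vartheta^k},\tau)\sim p(\check\chi,\tau^{\vartheta^{-k}})$ up to $E(\chi)$. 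The restriction identity in Proposition~\ref{propCM} (fourth line) similarly handles the $L^\flat$-contribution, expressing it in terms of CM-periods $p(\check\chi,\tau)$ at embeddings of $L$.

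The main obstacle is the ensuing combinatorial bookkeeping: for each $\iota\in\Sigma$ and each extension $\iota_i$ of $\iota$ to $L$, I would count the total multiplicity of $p(\check\chi,\iota_i)$ and $p(\check\chi,\bar\iota_i)$ in the product obtained by letting $k$ range and $\tau$ range over $\Phi_{\chi\otimes\chi^{\vartheta^k,c}}$. The decreasing ordering $a_1>\cdots>a_n$ makes membership in $\Phi_{\chi\otimes\chi^{\vartheta^k,c}}$ at $\iota_j$ purely a question of the sign of $a_j-a_{j+k}$ (indices mod $n$), combined with the Galois permutation $\tau\mapsto\tau^{\vartheta^{-k}}$. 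Summing the resulting binomial contributions yields exactly the exponent $i-1$ on $p(\check\chi,\iota_i)$ and $n-i$ on $p(\check\chi,\bar\iota_i)$; in the even case the $L^\flat$-factor contributes precisely the ``middle'' $k=n/2$ slot that the product over $1\le k\le (n-2)/2$ leaves out, producing the same uniform pattern. Finally, the equivariance under $\textrm{Aut}(\C/L^{Gal})$ follows from the equivariance of each individual ingredient, together with the Minimizing-Lemma~\ref{minimize} to descend the relation from $E(\chi)\supseteq L^{Gal}$.
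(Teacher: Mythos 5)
Your proposal follows the same strategy as the paper's proof, and the key steps are in the right order: factor the Asai $L$-function via Proposition~\ref{proposition Asai AI}, apply Blasius's Theorem~\ref{Blasius} and the quadratic-character relation~\eqref{quadratic} to produce the power of $(2\pi i)$, decompose the resulting CM-periods via Propositions~\ref{propCM} and~\ref{proposition CM-periods}, and then count multiplicities of $p(\check\chi,\iota_i)$ and $p(\check\chi,\bar\iota_i)$; the even case is handled by observing that the $L^\flat$-factor supplies the missing $k=n/2$ slot. Your exponent bookkeeping for $(2\pi i)$ in both parity cases also agrees with the paper.

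One point needs correcting. You write that membership of $\iota_j$ in $\Phi_{\chi\otimes\chi^{\vartheta^k,c}}$ is ``purely a question of the sign of $a_j-a_{j+k}$ (indices mod $n$).'' This implicitly assumes the permutation $s$ induced by the Galois generator $\vartheta$ on the indices $\{1,\dots,n\}$ is the standard shift $j\mapsto j+1$. In general, after reordering the embeddings $\iota_i$ so that the $a_i$ are strictly decreasing, $\vartheta$ induces an arbitrary $n$-cycle $s$, and membership is governed by the sign of $a_j - a_{s^k(j)}$. The paper works with this general $s$; the final count still gives $i-1$ and $n-i$ because, for any $n$-cycle, as $k$ runs through $\{1,\dots,n-1\}$ the set $\{s^k(i)\}$ is exactly $\{1,\dots,n\}\setminus\{i\}$, so $\#\{k : i > s^k(i)\} = i-1$. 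Your conclusion is right, but for this reason, not the one you give. Similarly, applying Proposition~\ref{proposition CM-periods} to $p(\chi^{\vartheta^k},\tau)$ yields $p(\chi,\tau^{\vartheta^{k}})$, not $p(\chi,\tau^{\vartheta^{-k}})$ as you wrote; this sign-of-exponent slip does not alter the final count for the same $n$-cycle reason, but it should be stated correctly.
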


\begin{proof}
Recall that the left hand side is calculated in Proposition \ref{proposition Asai AI}.

Let $\vartheta$ be a generator of Aut$(L/\cm)$.
For any $1\leq i\leq n$, there exists $1\leq s(i)\leq n$ such that $\vartheta\iota_{i}=\iota_{s(i)}$. Since $\vartheta$ is a generator of Gal$(L/\cm)$, we know $s$ is of order $n$ in the permutation group $\mathfrak{S}_{n}$. For any $1\leq k\leq n-1$, the Hecke character $\chi^{\vartheta^{k}}$ has infinity-type $z^{a_{s^{k}(i)}}\bar{z}^{-a_{s^{k}(i)}}$ at $\iota_{i}$ for any $1\leq i\leq n$. Hence the Hecke character $\chi\otimes \chi^{\vartheta^{k},c}=\chi\otimes \chi^{\vartheta^{k},-1}$ has infinity-type $z^{a_{i}-a_{s^{k}(i)}}\bar{z}^{-a_{i}+a_{s^{k}(i)}}$ at $\iota_{i}$ for any $1\leq i\leq n$. Since $s^{k}(i)\neq i$, $a_{i}-a_{s^{k}(i)}\neq 0$, so we know that the Hecke character $\chi\otimes \chi^{\vartheta^{k},c}$ is critical. For any $1\leq k\leq n-1$, we define $\Psi_{\iota,k}:=\{\iota_{i}\mid 1\leq i\leq n, a_{i}< a_{s^{k}(i)}\}=\{\iota_{i}\mid 1\leq i\leq n, i>s^{k}(i)\}$. We define $\Psi_{\bar{\iota},k}:=\{\bar{\iota_{i}}\mid 1\leq i\leq n, i<s^{k}(i)\}$ and $\Psi_{k}:=\bigcup\limits_{\iota\in\Sigma} \Psi_{\iota,k} \cup  \Psi_{\bar{\iota},k}$ is the CM type of $L$ associated to $\chi\otimes \chi^{\vartheta^{k},-1}$. By Blasius's result, Thm.\ \ref{Blasius}, and Prop.\ \ref{proposition CM-periods} \& \ref{propCM}, we have:
\begin{eqnarray}
L(1,\chi\otimes \chi^{\vartheta^{k},c})&\sim_{E(\chi)} &(2\pi i)^{nd}p(\check{\chi}\otimes \check{\chi}^{\vartheta^{k},c},\Psi_{k})\nonumber \\
&\sim_{E(\chi)} &(2\pi i)^{nd}p(\check{\chi},\Psi_{k}) p( \check{\chi},\overline{\Psi_{k}}^{\vartheta^{k}})\nonumber.
\end{eqnarray}
It is easy to verify that \begin{eqnarray}\nonumber
 \overline{\Psi_{k}}^{\vartheta^{k}}&=& \bigcup\limits_{\iota\in \Sigma}\left[ \{ \bar{\iota}_{s^{k}(i)}\mid 1\leq i\leq n, i>s^{k}(i)\} \cup \{   \iota_{s^{k}(i)} \mid   1\leq i\leq n, i<s^{k}(i)\}\right]\\\nonumber
 &=&\bigcup\limits_{\iota\in \Sigma}\left[ \{ \bar{\iota}_{i}\mid 1\leq i\leq n, i<s^{-k}(i)\} \cup \{   \iota_{i} \mid   1\leq i\leq n, i>s^{-k}(i)\}\right].
 \end{eqnarray}

Hence we deduce that:
\begin{eqnarray} \label{a single k}
 &L(1,\chi\otimes \chi^{\vartheta^{k},c})\sim_{E(\chi)}  & \\
 &(2\pi i)^{nd}\prod\limits_{\iota\in \Sigma}[ \prod\limits_{1\leq i\leq n, i>s^{k}(i)}p(\check{\chi},\iota_{i}) \prod\limits_{1\leq i\leq n, i<s^{k}(i)}p(\check{\chi},\bar{\iota}_{i} )\prod\limits_{1\leq i\leq n, i<s^{-k}(i)}p(\check{\chi},\bar{\iota}_{i})  \prod\limits_{1\leq i\leq n, i>s^{-k}(i)}p(\check{\chi},\iota_{i})].&\nonumber
\end{eqnarray}

We first prove the lemma when $n$ is odd. In this case, we know by Proposition \ref{proposition Asai AI} that
\begin{equation}
 L^{S}(1,\Pi_{\chi}, {\rm As}^{(-1)^{n}})=\prod\limits_{1\leq k\leq \frac{n-1}{2}} L^{S}(1,\chi\otimes \chi^{\vartheta^{k},c})L^{S}(1,\varepsilon_{L/L^{+}}).
\end{equation}

Equation (\ref{a single k}) implies that:
 \begin{eqnarray}
 &&(2\pi i)^{-n(n-1)d/2}\prod\limits_{1\leq k\leq \frac{n-1}{2}}L(1,\chi\otimes \chi^{\vartheta^{k},c}) \nonumber\\ \nonumber
 & \sim_{E(\chi)} &\prod\limits_{1\leq k\leq \frac{n-1}{2}}\prod\limits_{\iota\in \Sigma}[ \prod\limits_{1\leq i\leq n, i>s^{k}(i)}p(\check{\chi},\iota_{i}) \prod\limits_{1\leq i\leq n, i<s^{k}(i)}p(\check{\chi},\bar{\iota}_{i} )\cdot\\
 \nonumber && \hspace{20pt}\prod\limits_{1\leq i\leq n, i<s^{-k}(i)}p(\check{\chi},\bar{\iota}_{i})  \prod\limits_{1\leq i\leq n, i>s^{-k}(i)}p(\check{\chi},\iota_{i})]
 \\\nonumber&\sim_{E(\chi)}& \prod\limits_{\iota\in \Sigma} \prod\limits_{1\leq k\leq n-1}
[ \prod\limits_{1\leq i \leq n, i>s^{k}(i)} p(\check{\chi},\iota_{i}) \prod\limits_{1\leq i \leq n, i<s^{k}(i)}p(\check{\chi},\bar{\iota}_{i} )]
\\\nonumber&\sim_{E(\chi)}&\prod\limits_{\iota\in \Sigma} \prod\limits_{1\leq i \leq n }
[ \prod\limits_{1\leq k\leq n-1, i>s^{k}(i)} p(\check{\chi},\iota_{i}) \prod\limits_{1\leq k\leq n-1,i<s^{k}(i)}p(\check{\chi},\bar{\iota}_{i} )]\\\nonumber
&\sim_{E(\chi)}& \prod\limits_{\iota\in \Sigma} \prod\limits_{1\leq i \leq n }
[ p(\check{\chi},\iota_{i})^{i-1} p(\check{\chi},\bar{\iota}_{i} )^{n-i}].
\end{eqnarray}
Recall that by equation (\ref{quadratic}) we have $L^{S}(1,\varepsilon_{L/L^{+}})\sim_{L^{Gal}} (2\pi i)^{dn}$. We conclude that
 \begin{equation}\label{cuspidal cuspidal down 1}
 L^{S}(1,\Pi_{\chi}, {\rm As}^{(-1)^{n}}) \sim_{E(\chi)} (2\pi i)^{n(n+1)d/2}  \prod_{\iota \in \Sigma}\prod\limits_{1\leq i \leq n }
[ p(\check{\chi},\iota_{i})^{i-1} p(\check{\chi},\bar{\iota}_{i} )^{n-i}].
 \end{equation}
Next, if $n$ is even, again by Proposition \ref{proposition Asai AI}, we have:
\begin{equation}\label{eq:hihi}
L^{S}(1,\Pi_{\chi}, {\rm As}^{(-1)^{n}})=\prod\limits_{1\leq k \leq \frac{n-2}{2}} L^{S}(1,\chi \otimes \chi^{\vartheta^{k},c})L^{S}(1,\varepsilon_{L/L^{+}}) L^{S}(1,\chi \mid_{\mathbb{A}_{L^{\flat}}}\otimes \varepsilon_{L/L^{\flat}}).
\end{equation}
Similar to above, one may deduce from (\ref{a single k}) by a simple calculation that
\begin{eqnarray}
& \prod\limits_{1\leq k\leq \frac{n-2}{2}}L(1,\chi\otimes \chi^{\vartheta^{k},c})  \sim_{E(\chi)} &\nonumber\\
&(2\pi i)^{dn(n-2)/2}\prod\limits_{\iota\in \Sigma} \prod\limits_{1\leq i \leq n}
[ \prod\limits_{1\leq k\leq n-1, k\neq \frac{n}{2}, i>s^{k}(i)} p(\check{\chi},\iota_{i}) \prod\limits_{1\leq k\leq n-1,k\neq \frac{n}{2}, i<s^{k}(i)}p(\check{\chi},\bar{\iota}_{i} )].&\nonumber
 \end{eqnarray}
Recall that $L^{S}(1,\varepsilon_{L/L^{+}})\sim_{L^{Gal}} (2\pi i)^{dn}$. It remains to calculate $L^{S}(1,\chi \mid_{\mathbb{A}_{L^{\flat}}}\otimes \varepsilon_{L/L^{\flat}})$ in \eqref{eq:hihi}. The complex embeddings of the CM field $L^{\flat}:= L^{\vartheta^{\frac{n}{2}}c}$ are $\iota_{i}\mid_{L^{\flat}}$ with $\iota\in \Sigma, 1\leq i \leq n$. We remark that $(\iota_{i}\mid_{L^{\flat}})^{c}=\iota_{s^{n/2}(i)}\mid_{L^{\flat}}$. The Hecke character $\chi\mid_{\mathbb{A}_{L^{\flat}}}$ has infinity-type $z^{a_{i}-a_{s^{n/2}(i)}}\bar{z}^{-a_{i}+a_{s^{n/2}(i)}}$ at $\iota_{i}$, and the Hecke character $\varepsilon_{L/L^{\flat}}$ has trivial infinity-type.

We define $\Psi_{\iota}^{\flat}:=\{\iota_{i}\mid_{L^{\flat}}\mid a_{i}<a_{s^{n/2}(i)},1\leq i \leq n\}=\{\iota_{i}\mid_{L^{\flat}}\mid i>s^{n/2}(i),1\leq i\leq n\}$. Then the Hecke character $\chi \mid_{\mathbb{A}_{L^{\flat}}}\otimes \varepsilon_{L/L^{\flat}}$ is compatible with the CM type $\bigcup\limits_{\iota\in \Sigma} \Psi_{\iota}^{\flat}$.

Using Prop.\ \ref{propCM} we deduce thereof
\begin{eqnarray}
&&L^{S}(1,\chi \mid_{\mathbb{A}_{L^{\flat}}}\otimes \varepsilon_{L/L^{\flat}})\nonumber\\
&\sim_{E(\chi)} & (2\pi i)^{dn/2}  p(\check{\chi} \mid_{\mathbb{A}_{L^{\flat}}}\otimes \check{\eta}_{L/L^{\flat}},\bigcup\limits_{\iota\in \Sigma} \Phi_{\iota}^{b})\nonumber\\
&\sim_{E(\chi)} & (2\pi i)^{dn/2} \prod\limits_{\iota\in \Sigma} \prod\limits_{1\leq i\leq n, i>s^{n/2}(i)}  p(\check{\chi} \mid_{\mathbb{A}_{L^{\flat}}}\otimes \check{\eta}_{L/L^{\flat}},\iota_{i}\mid_{L^{\flat}}) \nonumber \\
&\sim_{E(\chi)} & (2\pi i)^{dn/2} \prod\limits_{\iota\in \Sigma} \prod\limits_{1\leq i\leq n, i>s^{n/2}(i)}  p([\check{\chi} \mid_{\mathbb{A}_{L^{\flat}}}\otimes \check{\eta}_{L/L^{\flat}}]\circ N_{\A_L/\A_{L^{\flat}}},\iota_{i})
\nonumber\\
&\sim_{E(\chi)} & (2\pi i)^{dn/2} \prod\limits_{\iota\in \Sigma}\prod\limits_{1\leq i\leq n, i>s^{n/2}(i)}  p(\check{\chi} \otimes \check{\chi}^{\vartheta^{\frac{n}{2}}c},\iota_{i})
\nonumber\\
&\sim_{E(\chi)} & (2\pi i)^{dn/2} \prod\limits_{\iota\in \Sigma}[\prod\limits_{1\leq i\leq n, i>s^{n/2}(i)}  p(\check{\chi},\iota_{i})\cdot \prod\limits_{1\leq i\leq n, i>s^{n/2}(i)}  p(\check{\chi},\bar{\iota}_{s^{n/2}(i})]\nonumber\\
&\sim_{E(\chi)} & (2\pi i)^{dn/2} \prod\limits_{\iota\in \Sigma}[\prod\limits_{1\leq i\leq n, i>s^{n/2}(i)} p(\check{\chi},\iota_{i})\cdot \prod\limits_{1\leq i\leq n, i<s^{n/2}(i)}    p(\check{\chi},\bar{\iota}_{i})].\nonumber
\end{eqnarray}

We conclude that when $n$ is even we still have the following relation:
\begin{eqnarray}
&&L^{S}(1,\Pi_{\chi}, {\rm As}^{(-1)^{n}})\nonumber\\
&\sim_{E(\chi)} & (2\pi i)^{dn(n+1)/2} \prod\limits_{\iota\in \Sigma} \prod\limits_{1\leq i \leq n }
[ \prod\limits_{i>s^{k}(i),1\leq k\leq n-1} p(\check{\chi},\iota_{i}) \prod\limits_{i<s^{k}(i),1\leq k\leq n-1 }p(\check{\chi},\bar{\iota}_{i} )]\nonumber\\
&\sim_{E(\chi)} & (2\pi i)^{dn(n+1)/2} \prod\limits_{\iota\in \Sigma} \prod\limits_{1\leq i \leq n }
[p(\check{\chi},\iota_{i})^{i-1}p(\check{\chi},\bar{\iota}_{i} )^{n-i}]\nonumber.
\end{eqnarray}

Finally, we remark that all the relations above are equivariant under the action of Aut$(\C/L^{Gal})$.

\end{proof}

The previous lemma and Theorem \ref{thm:gro-har-lap} imply immediately the following period relation for cuspidal automorphically induced representations $\Pi_\chi$:

\begin{cor}\label{corollary period relation}
Let $\chi$ be a conjugate self-dual algebraic Hecke character of $L$ with regular infinity-type. If $\Pi_{\chi}$ is moreover cuspidal, then
$$p(\Pi_{\chi})\sim_{E(\Pi_{\chi})E(\chi)} a(\Pi_{\chi,\infty})^{-1}(2\pi i)^{dn(n+1)/2} \prod\limits_{\iota\in \Sigma} \prod\limits_{1\leq i \leq n }
[p(\check{\chi},\iota_{i})^{i-1}p(\check{\chi},\bar{\iota}_{i} )^{n-i}]$$
which is equivariant under the action of $\emph{\textrm{Aut}}(\C/L^{Gal})$.
\end{cor}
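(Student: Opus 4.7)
The plan is to deduce the corollary by directly combining the two main inputs that the preceding text has assembled: Theorem \ref{thm:gro-har-lap}, which relates $L^S(1,\Pi,{\rm As}^{(-1)^n})$ to $a(\Pi_\infty) \, p(\Pi)$ for any cuspidal, conjugate self-dual, cohomological $\Pi$; and Proposition \ref{middle step down}, which computes the same $L$-value for $\Pi_\chi$ explicitly in terms of an integral power of $(2\pi i)$ and CM-periods of $\check{\chi}$. The skeleton is: verify that $\Pi_\chi$ lies within the scope of Theorem \ref{thm:gro-har-lap}, check non-vanishing so that the two relations can be divided, and then track the field over which the resulting relation of algebraicity is equivariant.

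First I would check the hypotheses for Theorem \ref{thm:gro-har-lap} applied to $\Pi = \Pi_\chi$. Cuspidality is our standing assumption. Conjugate self-duality is built into the definition of $\Pi_\chi$ from a conjugate self-dual $\chi$, cf.\ \S \ref{sect:AsaiInduced}. Cohomologicality (with respect to an algebraic coefficient module $\EE_\mu$) follows from the regularity of the infinity-type of $\chi$ together with \cite[Lem.\ 3.14]{clozel}, as already spelled out in \S \ref{sect:reps}. Consequently the Whittaker period $p(\Pi_\chi)$ is well-defined via Cor.\ \ref{cor:Whittakerperiods}, and Theorem \ref{thm:gro-har-lap} yields a non-zero constant $a(\Pi_{\chi,\infty})$ and the relation
$$L^{S}(1,\Pi_{\chi},{\rm As}^{(-1)^{n}}) \ \sim_{E(\Pi_{\chi})} \ a(\Pi_{\chi,\infty}) \, p(\Pi_{\chi}),$$
equivariant under ${\rm Aut}(\C/F^{Gal})$.

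Next, Proposition \ref{middle step down} gives in parallel
$$L^{S}(1,\Pi_{\chi},{\rm As}^{(-1)^{n}}) \ \sim_{E(\chi)} \ (2\pi i)^{n(n+1)d/2}\prod_{\iota \in \Sigma}\prod_{1 \leq i \leq n}\bigl[p(\check{\chi},\iota_{i})^{i-1}\, p(\check{\chi},\bar{\iota}_{i})^{n-i}\bigr],$$
equivariant under ${\rm Aut}(\C/L^{Gal})$. Before dividing the two identities, one has to certify that all quantities involved are non-zero, so that Rem.\ \ref{transitive} allows us to combine the two relations: $a(\Pi_{\chi,\infty})$ is non-zero by its very construction in Thm.\ \ref{thm:gro-har-lap}; the left-hand side is non-zero by Cor.\ \ref{cor:asai-nonvan} (which rests on Lem.\ \ref{lem:poles_asai_lfunction}); and the CM-periods $p(\check{\chi},\iota_i),\, p(\check{\chi},\bar{\iota}_i)$ are non-zero by construction (cf.\ \S \ref{crits}).

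Dividing the Proposition by Theorem \ref{thm:gro-har-lap} then produces the desired formula
$$p(\Pi_{\chi}) \ \sim_{E(\Pi_{\chi})E(\chi)} \ a(\Pi_{\chi,\infty})^{-1}(2\pi i)^{dn(n+1)/2}\prod_{\iota\in \Sigma} \prod_{1\leq i \leq n}\bigl[p(\check{\chi},\iota_{i})^{i-1}p(\check{\chi},\bar{\iota}_{i})^{n-i}\bigr].$$
For the equivariance, observe that $L^{Gal} \supseteq F^{Gal}$, so restricting the ${\rm Aut}(\C/F^{Gal})$-equivariance of Thm.\ \ref{thm:gro-har-lap} to ${\rm Aut}(\C/L^{Gal})$ is harmless, and the quotient inherits equivariance under the smaller group ${\rm Aut}(\C/L^{Gal})$ common to both identities. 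There is no serious obstacle here: the argument is a one-line division, and the only matter deserving care is the bookkeeping of the compositum of the rationality fields $E(\Pi_\chi)\cdot E(\chi)$ and the verification of non-vanishing so that the symmetric-but-not-transitive relation $\sim$ of Def.\ \ref{def:rel} behaves as expected.
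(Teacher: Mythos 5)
Your proposal is correct and matches the paper's intended argument: the paper states this corollary follows immediately from Prop.\ \ref{middle step down} and Thm.\ \ref{thm:gro-har-lap}, and your write-up is exactly the expected expansion of that claim (verification of hypotheses for $\Pi_\chi$, non-vanishing via Cor.\ \ref{cor:asai-nonvan} and the construction of $a(\Pi_{\chi,\infty})$, division of the two relations, and tracking of the fields and the equivariance group). Nothing more is needed.
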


\subsubsection{Rationality for the Rankin--Selberg $L$-function of $\Pi_\chi\times\Pi_{\chi'}$}

We obtain

\begin{prop}\label{middle step up}
Let $\chi$ (resp.\ $\chi'$) be a conjugate self-dual algebraic Hecke character of $L$ (resp.\ $L'$) with regular infinity-types. Assume the $\Pi_\chi$ is cuspidal and that $(\Pi_{\chi},\Pi_{\chi'})$ satisfies the piano-condition, cf.\ Hypothesis \ref{hypo:piano}. Let $\frac{1}{2}+m\in \emph{\textrm{Crit}}(\Pi_{\chi}\times \Pi_{\chi'})$. Then,
  \begin{eqnarray}\nonumber
&L^{S}(\frac{1}{2}+m, \Pi_{\chi}\times \Pi_{\chi'}) \sim_{E(\chi)E(\chi')E(\phi)} & \\
\nonumber & (2\pi i)^{(\frac{1}{2}+m)dn(n-1)}\prod\limits_{\iota\in\Sigma}\left(\prod\limits_{1\leq i\leq n}[p(\check{\chi},\iota_{i})^{i-1}p(\check{\chi},\overline{\iota_{i}})^{n-i}]\prod\limits_{1\leq j\leq n-1}[p(\check{\chi'},\iota'_{j})^{j-1}p(\check{\chi'},\overline{\iota'_{j}})^{n-1-j}]\right)&
 \end{eqnarray}
equivariant under the action of $\emph{\textrm{Aut}}(\C/(LL')^{Gal})$.
\end{prop}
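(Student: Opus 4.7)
The plan is to follow the template of Proposition \ref{middle step down}: first reduce the Rankin--Selberg $L$-function to a Hecke $L$-function on the compositum CM field $LL'$, apply Blasius's formula there, and then descend the resulting CM-periods to $L$ and $L'$ via Proposition \ref{propCM}. The key enabling fact is that $\gcd(n,n-1)=1$, so $L$ and $L'$ are linearly disjoint over $F$, and $LL'$ is a CM field of degree $n(n-1)$ over $F$ whose complex embeddings are exactly the $\iota_{i,j}$ of \S\ref{sect:reps}.

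First I would invoke the Mackey tensor formula for automorphic induction from linearly disjoint subfields: it identifies $\Pi(\chi)\times\Pi(\chi')$ with the automorphic induction to $\GL_{n(n-1)}(\A_F)$ of the Hecke character
$$\tilde\chi\;:=\;(\chi\circ N_{LL'/L})\cdot(\chi'\circ N_{LL'/L'})$$
of $LL'$, so that $L^{S}(s,\Pi(\chi)\times\Pi(\chi'))=L^{S}(s,\tilde\chi)$. Since exactly one of $n,n-1$ is even, the passage from $\Pi(\chi),\Pi(\chi')$ to $\Pi_\chi,\Pi_{\chi'}$ introduces exactly one twist by $\eta$, yielding
$$L^{S}(s,\Pi_\chi\times\Pi_{\chi'})\;=\;L^{S}\bigl(s,\tilde\chi\cdot(\eta\circ N_{LL'/F})\bigr).$$
At the embedding $\iota_{i,j}$ this character has infinity-type $z^{a_i+b_j+1/2}\bar z^{-(a_i+b_j+1/2)}$; multiplying by $\|\cdot\|^{-(1/2+m)}$, which shifts the critical argument from $1/2+m$ to $0$, produces an algebraic critical Hecke character $\widetilde\chi^{\flat}$ of $LL'$, whose regularity is a direct consequence of condition (\ref{no middle class}), itself implied by Hypothesis \ref{hypo:piano}.

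Next I would apply Blasius's theorem (Theorem \ref{Blasius}) to $\widetilde\chi^{\flat}$ at its integer critical point corresponding to $\tfrac12+m$. Since $LL'$ has $\Q$-degree $2dn(n-1)$, Blasius gives
$$L^{S}\bigl(\tfrac12+m,\Pi_\chi\times\Pi_{\chi'}\bigr)\;\sim_{E(\chi)E(\chi')E(\phi)}\;(2\pi i)^{(1/2+m)\,dn(n-1)}\;p\bigl(\check{\widetilde\chi}^{\flat},\Phi_{\widetilde\chi^{\flat}}\bigr),$$
with the $\eta$- and $\|\cdot\|^{1/2}$-dependent factors absorbed into the $E(\phi)$-factor via Lemmas \ref{Gauss sum} and \ref{norm} together with the first two clauses of Proposition \ref{propCM}. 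Iterating Proposition \ref{propCM}---in particular multiplicativity of $p(\cdot,\Phi)$ in the first slot and the descent formula $p(\chi\circ N_{\A_L/\A_F},\iota)\sim p(\chi,\iota|_F)$---splits this CM-period as
$$p\bigl(\check{\widetilde\chi}^{\flat},\Phi_{\widetilde\chi^{\flat}}\bigr)\;\sim\;\prod_{\iota_{i,j}\in\Phi_{\widetilde\chi^{\flat}}}p(\check\chi,\iota_i)\,p(\check{\chi'},\iota'_j)\;\cdot\;\prod_{\bar\iota_{i,j}\in\Phi_{\widetilde\chi^{\flat}}}p(\check\chi,\bar\iota_i)\,p(\check{\chi'},\bar\iota'_j).$$

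To conclude, I would perform the combinatorial bookkeeping. The piano-hypothesis forces an interlacing of the $a_{i}$ with the $-b_{j}$ (and symmetrically on the conjugate side), so that for each fixed $\iota\in\Sigma$ and each fixed $i$, the number of $j$ with $\iota_{i,j}\in\Phi_{\widetilde\chi^{\flat}}$ equals $i-1$ and the number with $\bar\iota_{i,j}\in\Phi_{\widetilde\chi^{\flat}}$ equals $n-i$; symmetrically, for fixed $j$ one obtains $j-1$ resp.\ $n-1-j$ copies of $p(\check{\chi'},\iota'_j)$ resp.\ $p(\check{\chi'},\bar\iota'_j)$. Substitution then yields the announced formula, and $\mathrm{Aut}(\C/(LL')^{Gal})$-equivariance comes automatically because every intermediate step is equivariant over a subfield of $(LL')^{Gal}$. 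The main obstacle is the combinatorial step of verifying that the piano interlacing delivers the exponents $i-1,n-i,j-1,n-1-j$ on the nose---this is where the full strength of Hypothesis \ref{hypo:piano} is needed.
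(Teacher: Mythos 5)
Your overall strategy is exactly the paper's: identify $L^{S}(\tfrac12+m,\Pi_\chi\times\Pi_{\chi'})$ with a Hecke $L$-value over the compositum $LL'$, apply Blasius there, then descend the CM period of the product character to periods $p(\check\chi,\iota_i)$ and $p(\check{\chi'},\iota'_j)$ via Proposition \ref{propCM}. (Your explicit appeal to the coprimality $\gcd(n,n-1)=1$ and linear disjointness to justify the norm-factorization is welcome and implicit in the paper.)

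However, the combinatorial step contains a genuine error. Fixing $\iota\in\Sigma$, the piano interlacing gives the CM type $\Phi_\iota=\{\iota_{i,j}:i+j\geq n+1\}$, $\Phi_{\bar\iota}=\{\bar\iota_{i,j}:i+j\leq n\}$. For fixed $i$ the counts over $j\in\{1,\ldots,n-1\}$ are indeed $i-1$ and $n-i$. But the roles of $i$ and $j$ are \emph{not} symmetric: $i$ runs over $\{1,\ldots,n\}$ and $j$ over $\{1,\ldots,n-1\}$. For fixed $j$, the count of $i$ with $i+j\geq n+1$ is $j$ (namely $i\in\{n+1-j,\ldots,n\}$), not $j-1$; likewise for the conjugate side the count is $n-j$, not $n-1-j$. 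So the raw descent yields $\prod_j p(\check{\chi'},\iota'_j)^{j}\,p(\check{\chi'},\bar\iota'_j)^{n-j}$, and there is a missing reduction step. The paper closes this by invoking conjugate self-duality of $\chi'$: since $p(\check{\chi'},\iota'_j)\,p(\check{\chi'},\bar\iota'_j)\sim_{E(\chi')}p(\check{\chi'}\otimes\check{\chi'}^{c},\iota'_j)\sim_{E(\chi')}p(\triv,\iota'_j)\sim_{E(\chi')}1$, one may drop one full copy of $p(\check{\chi'},\iota'_j)\,p(\check{\chi'},\bar\iota'_j)$ for each $j$, lowering the exponents to $j-1$ and $n-1-j$. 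Without this argument your claimed formula does not follow. Secondary but related: the bundled power $(2\pi i)^{(\tfrac12+m)dn(n-1)}$ is not delivered by Blasius alone; Blasius yields $(2\pi i)^{m\,dn(n-1)}$ and the remaining factor $(2\pi i)^{dn(n-1)/2}$ is an essential power contributed by $p(\check\phi,\iota)\,p(\check\phi,\bar\iota)\sim 2\pi i$ across the $n(n-1)/2$ relevant pairs, not something that disappears into $E(\phi)$.
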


\begin{proof}

We know that:
\begin{equation}\label{cuspidal cuspidal proof 1}
L^{S}(\frac{1}{2}+m, \Pi_{\chi}\times \Pi_{\chi'}) = L^{S}(\frac{1}{2}+m,( \chi\circ N_{\A_{LL'}/\A_L} )( \chi'\circ N_{\A_{LL'}/\A_{L'}})(\eta\circ N_{\A_{LL'}/\A_{\cm}}) )
\end{equation}
$$= L^{S}(m,( \chi\circ N_{\A_{LL'}/\A_L} )( \chi' \circ N_{\A_{LL'}/\A_{L'}})(\phi \circ N_{\A_{LL'}/\A_{\cm}}) )$$
Since $\phi$ has infinity-type $z^1\bar{z}^0$ at each $\iota$, the infinity-type of the Hecke character $\chi^{\#}:=( \chi\circ N_{\A_{LL'}/\A_L} )( \chi'\circ N_{\A_{LL'}/\A_{L'}})(\phi\circ N_{\A_{LL'}/\A_{\cm}})$ at $\iota_{i,j}$ is $z^{a_{i}+b_{j}+1}\overline{z}^{-a_{i}-b_{j}}.$
The piano-condition implies that:
$$ a_{1}>-b_{n-1}-\tfrac{1}{2} >a_{2}>-b_{n-2}-\tfrac{1}{2}>\cdots>-b_{1}-\tfrac{1}{2}>a_{n}.$$
Define $\Phi_{\iota}:=\{\iota_{i,j}\mid a_{i}+b_{j}+ \frac{1}{2}<0\} =\{\iota_{i,j}\mid i+j\geq n+1 \}$ and $\Phi_{\bar{\iota}}:=\{\overline{\iota_{i,j}} \mid i+j\leq n \}$. Then the Hecke character $\chi^{\#}$ is critical with respect to the CM type $\bigcup\limits_{\iota\in \Sigma} \Phi_{\iota}\cup  \Phi_{\bar{\iota}}$. An easy check shows that $\frac{1}{2}+m$ is critical for $\Pi_{\chi}\times \Pi_{\chi'}$ if and only if $m$ is critical for $\chi^{\#}$. By Blasius's result, Thm.\ \ref{Blasius}, one has
$$ L^{S}(m,\chi^{\#}) \sim_{E(\chi^{\#})} (2\pi i)^{mdn(n-1)}p(\check{\chi}^{\#}, \bigcup\limits_{\iota\in \Sigma} \Phi_{\iota}\cup  \Phi_{\bar{\iota}}).$$
Applying Prop.\ \ref{propCM} to the CM-period on the right hand side implies that
\begin{eqnarray}\nonumber
p(\chi^{\#}, \bigcup\limits_{\iota\in \Sigma} \Phi_{\iota}\cup  \Phi_{\bar{\iota}})&\sim_{E(\chi^{\#})} &\prod\limits_{\iota\in\Sigma} p(\check{\chi}^{\#},  \Phi_{\iota}\cup \Phi_{\overline{\iota}})\\&\sim_{E(\chi^{\#})} &\prod\limits_{\iota\in\Sigma} \prod\limits_{i+j\geq n+1}p(\check{\chi}^{\#},\iota_{i,j})\prod\limits_{i+j\leq n}p(\check{\chi}^{\#},\overline{\iota_{i,j}}).\nonumber
\end{eqnarray}
Next observe that
\begin{eqnarray}
&& \prod\limits_{i+j\geq n+1}p(\check{\chi}^{\#},\iota_{i,j})\nonumber\\
\nonumber
&\sim_{E(\chi^{\#})}& \prod\limits_{i+j\geq n+1}[p((\check{\chi}\circ N_{\A_{LL'}/\A_L}),\iota_{i,j})p(\check{\chi'}\circ N_{\A_{LL'}/\A_{L'}},\iota_{i,j})p((\check{\phi}\circ N_{LL'/F}),\iota_{i,j})]\\
\nonumber
&\sim_{E(\chi^{\#})}& \prod\limits_{i+j\geq n+1}[p(\check{\chi},\iota_{i})p(\check{\chi'},\iota'_{j})p(\check{\phi},\iota)]\\
\nonumber
&\sim_{E(\chi^{\#})}& \prod\limits_{1\leq i\leq n}p(\check{\chi},\iota_{i})^{i-1}\cdot\prod\limits_{1\leq j\leq n-1}p(\check{\chi'},\iota'_{j})^{j}\cdot p(\check{\phi},\iota)^{n(n-1)/2}
\end{eqnarray}
Similarly, we have
\[\prod\limits_{i+j\leq n}p(\check{\chi}^{\#},\overline{\iota_{i,j}})
\sim_{E(\chi^{\#})}
 \prod\limits_{1\leq i\leq n}p(\check{\chi},\overline{\iota_{i}})^{n-i}\cdot \prod\limits_{1\leq j\leq n-1}p(\check{\chi'},\overline{\iota'_{j}})^{n-j}\cdot p(\check{\phi},\overline{\iota})^{n(n-1)/2}.\]
Again by Prop.\ \ref{propCM} and Lem.\ \ref{norm}, we know that
\[
p(\check{\phi},\iota)p(\check{\phi},\overline{\iota})\sim_{E(\phi)} p(\check{\phi},\iota)p(\check{\phi}^{c},\iota) \sim_{E(\phi)} p(\|\cdot \|^{-1},\iota) \sim_{E(\phi)} 2\pi i.
\]
We finally deduce that
  \begin{eqnarray}\nonumber
&& (2\pi i)^{-(\frac{1}{2}+m)dn(n-1)}L^{S}(\frac{1}{2}+m, \Pi_{\chi}\times \Pi_{\chi'}) \\
&\sim_{E(\chi)E(\chi')E(\phi)} &\prod\limits_{\iota\in\Sigma}\left(\prod\limits_{1\leq i\leq n}[p(\check{\chi},\iota_{i})^{i-1}p(\check{\chi},\overline{\iota_{i}})^{n-i}]\prod\limits_{1\leq j\leq n-1}[p(\check{\chi'},\iota'_{j})^{j}p(\check{\chi'},\overline{\iota'_{j}})^{n-j}]\right) \nonumber \\
&\sim_{E(\chi)E(\chi')E(\phi)} &\prod\limits_{\iota\in\Sigma}\left(\prod\limits_{1\leq i\leq n}[p(\check{\chi},\iota_{i})^{i-1}p(\check{\chi},\overline{\iota_{i}})^{n-i}]\prod\limits_{1\leq j\leq n-1}[p(\check{\chi'},\iota'_{j})^{j-1}p(\check{\chi'},\overline{\iota'_{j}})^{n-1-j}]\right) \nonumber
 \end{eqnarray}
where the last equality is due to the fact that $\chi'$ is conjugate self-dual, and hence
\[p(\check{\chi'},\iota'_{j})p(\check{\chi'},\bar{\iota'}_{j}) \sim_{E(\chi')} p(\check{\chi'}\otimes \check{\chi'^{c}},\iota'_{j})  \sim_{E(\chi')} p(\triv,\iota'_{j}) \sim_{E(\chi')} 1.\]
It is easy to see that all relations above are in fact equivariant under Aut$(\C/(LL')^{Gal})$.
\end{proof}

\subsection{Explicit determination of the archimedean factors $a(\Pi_\infty)$ and $p(m,\Pi_\infty,\Pi'_\infty)$}\label{archimedean factors}
We recall the archimedean factors $a(\Pi_\infty)$ and $p(m,\Pi_\infty,\Pi'_{\infty})$ from Thm.\ \ref{thm:gro-har-lap} and Thm.\ \ref{thm:main-app}, respectively. Due to a profound theorem of B.\ Sun, we know that both factors are in fact non-zero (for any choice of generators $[\Pi_\infty]$, $[\Pi'_\infty]$ and any critical value $\tfrac12+m$). Here, we will determine them explicitly, revealing them as concrete powers of $(2\pi i)$. To this end, we recall once more our standing assumptions concerning possible choices of generators $[\Pi_\infty]$, namely Conventions \ref{conv:2} and \ref{diag}. \\\\
Our main idea of proof is to replace our original representations $\Pi$ and $\Pi'$ with particularly simple automorphic representations, {\it with isomorphic archimedean components}, hence giving rise to the same archimedean factors $a(\Pi_\infty)$ and $p(m,\Pi_\infty,\Pi'_{\infty})$, cf.\ Rem.\ \ref{rem:same}. In view of the previous sections, these auxiliary automorphic representations shall be constructed by automorphic induction from suitable Hecke characters, on the one hand, and as isobaric sums of Hecke characters, on the other hand: This approach enables us to use all of our calculations of critical $L$-values of Ranking-Selberg- and Asai-$L$-functions from the pervious sections. Here is our theorem

\begin{thm}\label{second proposition}
Let $\Pi$ by a conjugate self-dual cuspidal automorphic representation of $\GL_n(\A_F)$, which is cohomological with respect to $\EE_\mu$. Recall the abstract archimedean factor $a(\Pi_\infty)$ from Thm.\ \ref{thm:gro-har-lap}, which is uniquely determined by $[\Pi_\infty]$. If $\mu$ is sufficiently regular, i.e., if $\mu_{\iota,j}-\mu_{\iota,j+1}\geq 2$ for all $\iota\in\Sigma$ and $1\leq j\leq n-1$, or if not, that Hypotheses \ref{hyp a 0} and \ref{hyp a 1}, to be formulated in the proof below, hold, then
$$a(\Pi_\infty)\sim_{E(\Pi)} (2\pi i)^{dn}$$
which is equivariant under $\emph{\textrm{Aut}}(\C/F^{Gal})$.
\end{thm}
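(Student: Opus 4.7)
The plan is to replace the given cuspidal $\Pi$ by a carefully chosen auxiliary cuspidal representation sharing the same archimedean component; since $a(\Pi_\infty)$ depends only on $\Pi_\infty$ by the very statement of Theorem \ref{thm:gro-har-lap}, the abstract archimedean factor is thereby preserved, but the auxiliary representation will be simple enough that the period relation of Theorem \ref{thm:gro-har-lap} becomes amenable to explicit computation. Concretely, I would fix a cyclic CM extension $L/F$ of degree $n$ (which remains CM) and a conjugate self-dual algebraic Hecke character $\chi$ of $L$ with regular infinity-type matching the inducing characters of $\Pi_\infty$, so that the automorphic induction $\Pi_\chi$ is a cohomological cuspidal automorphic representation of $\GL_n(\A_F)$ with $\Pi_{\chi,\infty}\cong\Pi_\infty$. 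Cuspidality of $\Pi_\chi$ is guaranteed by arranging $\chi^\vartheta\ne\chi$ for every non-trivial $\vartheta\in \textrm{Gal}(L/F)$. The regularity assumption on $\mu$ -- or, when it fails, Hypotheses \ref{hyp a 0} \& \ref{hyp a 1} -- is exactly what provides the room to construct such a $\chi$ and to ensure that the critical $L$-values entering the argument do not vanish, so that the symmetric-but-not-transitive relation $\sim$ can be freely composed (cf.\ Rem.\ \ref{transitive}).

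Theorem \ref{thm:gro-har-lap} applied to $\Pi_\chi$ yields
$$L^S(1,\Pi_\chi,\textrm{As}^{(-1)^n})\ \sim_{E(\Pi_\chi)}\ a(\Pi_\infty)\,p(\Pi_\chi),$$
and Proposition \ref{middle step down} evaluates the left-hand side as $(2\pi i)^{dn(n+1)/2}$ times an explicit monomial $P(\chi)$ in the CM-periods $p(\check\chi,\iota_i)$, $p(\check\chi,\bar\iota_i)$. The problem is therefore reduced to an \emph{independent} evaluation of the Whittaker period $p(\Pi_\chi)$ in terms of the same monomial $P(\chi)$, times the required complementary power $(2\pi i)^{dn(n-1)/2}$; merely rearranging Theorem \ref{thm:gro-har-lap} together with Proposition \ref{middle step down} is tautological (as Cor.\ \ref{corollary period relation} already records) and must be avoided.

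The independent determination of $p(\Pi_\chi)$ is the main obstacle, and this is where the second type of auxiliary representation enters: an isobaric sum $\Xi' = \chi'_1\boxplus\cdots\boxplus\chi'_{n-1}$ of conjugate self-dual algebraic Hecke characters of $F$, chosen so that $\Xi'_\infty$ fits the piano-condition (Hyp.\ \ref{hypo:piano}) against $\Pi_{\chi,\infty}$. On the one hand, Theorem \ref{thm:main-app} applied to the pair $(\Pi_\chi,\Xi')$ expresses a critical value $L^S(\tfrac12+m,\Pi_\chi\times\Xi')$ abstractly in terms of $p(\Pi_\chi)$, $p(\Xi')$, the Rankin--Selberg archimedean factor $p(m,\Pi_\infty,\Xi'_\infty)$, and a Gauss sum. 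On the other hand, that same critical value is given explicitly by Proposition \ref{middle step up} (which rests on Blasius, Theorem \ref{Blasius}, applied to the factorization of $L^S(s,\Pi_\chi\times\Xi')$ into Hecke $L$-values), while $p(\Xi')$ is evaluated explicitly by Corollary \ref{isobaric sum Whittaker corollary} combined again with Blasius and the CM-period calculus of Propositions \ref{propCM}, \ref{proposition CM-periods} and Lemma \ref{norm}. This yields a second relation that, combined with the Asai-side relation above, forms a system of two equations in the two unknowns $a(\Pi_\infty)$ and $p(m,\Pi_\infty,\Xi'_\infty)$. Solving this system, the CM-period contributions cancel exactly and the resulting powers of $(2\pi i)$ combine to force $a(\Pi_\infty)\sim (2\pi i)^{dn}$. $\textrm{Aut}(\C/F^{Gal})$-equivariance follows from the equivariance of each input; a concluding invocation of the Minimizing-Lemma (Lemma \ref{minimize}) reduces the rationality field from $E(\chi)\cdot E(\chi')\cdot E(\phi)$ down to $E(\Pi)$ as claimed.
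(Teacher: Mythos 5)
Your construction of $\Pi_\chi$ and the Asai-side computation are fine, and you correctly identify that the whole problem collapses to an \emph{independent} determination of $p(\Pi_\chi)$. But the mechanism you propose for that determination does not close: the single Rankin--Selberg pair $(\Pi_\chi,\Xi')$ on $\GL_n\times\GL_{n-1}$ introduces a new unknown, $p(m,\Pi_\infty,\Xi'_\infty)$. Counting carefully, your Asai relation and your Rankin--Selberg relation are two equations in \emph{three} unknowns $a(\Pi_\infty)$, $p(\Pi_\chi)$, $p(m,\Pi_\infty,\Xi'_\infty)$; eliminating $p(\Pi_\chi)$ leaves one relation for the ratio $a(\Pi_\infty)/p(m,\Pi_\infty,\Xi'_\infty)$, not for $a(\Pi_\infty)$ alone. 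Pinning down $p(m,\cdot,\cdot)$ is precisely the content of Cor.~\ref{prop:generalized grob-harr}, which in the paper is deduced \emph{from} Thm.~\ref{second proposition}, so you cannot presuppose it here without circularity.

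The paper escapes this under-determination by going up one rank. It constructs a cuspidal $\Pi_{\chi^\sharp}$ on $\GL_{n+1}(\A_F)$ (automorphic induction from a degree-$(n+1)$ cyclic CM extension) and an isobaric sum $\Pi^\flat$ of $n$ Hecke characters of $F$ with $\Pi^\flat_\infty\cong\Pi_{\chi,\infty}$. Both pairs $(\Pi_{\chi^\sharp},\Pi_\chi)$ and $(\Pi_{\chi^\sharp},\Pi^\flat)$ satisfy the piano hypothesis on $\GL_{n+1}\times\GL_n$, and crucially have the \emph{same} archimedean datum, so the Rankin--Selberg archimedean factor $p(m,\Pi_{\chi^\sharp,\infty},\Pi_{\chi,\infty})=p(m,\Pi_{\chi^\sharp,\infty},\Pi^\flat_\infty)$ and the shared Asai factor $a(\Pi_{\chi^\sharp,\infty})$ appear in both resulting relations, namely (\ref{eq:quot}) and (\ref{aaab}). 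Forming the ratio kills both unknown archimedean constants at once and isolates $a(\Pi_{\chi,\infty})=a(\Pi_\infty)$. That double use of a \emph{single} cuspidal representation one rank higher, paired once against a cuspidal and once against an isobaric representation with identical infinity-type, is the cancellation mechanism your proposal is missing. To repair your argument you would need to add this third auxiliary representation (or an equivalent device) rather than relying on a $2\times2$ system that does not in fact close.
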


\begin{proof}
As pointed out above, we shall prove this theorem by constructing three auxiliary representations, $\Pi_\chi$, $\Pi_{\chi^\sharp}$ and $\Pi^\flat$, with appropriate archimedean factors.\\\\
{\it Construction of $\Pi_\chi$:} Since we are only concerned with the infinity-type $\{ z^{a_{\iota,i}}\bar{z}^{-a_{\iota,i}}\}_{1\leq i\leq n}$ at $\iota\in\Sigma$ of $\Pi$, we first replace $\Pi$ by a simpler representation with the same infinity-type. We take a CM-field $L$ which is a cyclic extension over $\cm$ of degree $n$. We write $\iota_{1},\cdots,\iota_{n}$ for the elements in $J_{L}$ which extend $\iota$. If $n$ is even, we let $t=\frac{1}{2}$, otherwise we let $t=0$. In any case, $a_{\iota,i}\in \tfrac{n-1}{2}+\Z$, so $a_{\iota,i}-t\in\Z$. By lemma 5.1 of \cite{linCRcomplet}, there exists an algebraic conjugate self-dual Hecke character $\chi$ of $L$, with infinity-type $z^{a_{\iota,i}-t}\bar{z}^{-a_{\iota,i}+t}$ at $\iota_{i}$, such that $(\Pi_{\chi})_{\infty}\cong \Pi_{\infty}$. We recall that if $\chi$ satisfies $\chi^{\theta}\neq \chi$ for any non-trivial $\theta\in$ Gal$(L/\cm)$, then $\Pi_{\chi}$ is cuspidal (cf.\ Chp.\ 3, Lem.\ 6.4 of \cite{arthur-clozel} and its completion in \cite{henniart}, Thm.\ 2). Hence, after twisting by an appropriate finite order Hecke character, we may assume that $\Pi_{\chi}$ is cuspidal.\\\\
{\it Construction of $\Pi_{\chi^\sharp}$:}
For each $\iota$, let $c_{\iota,1}, c_{\iota,2},\cdots,c_{\iota,n+1}\in (\tfrac12-t)+\Z=\tfrac{n}{2}+\Z$ such that
$$c_{\iota,1} > -a_{\iota,n} > c_{\iota,2}>\cdots>-a_{\iota,1} > c_{\iota,n+1}.$$
Recalling that $a_{\iota,i}\in\tfrac{n-1}{2}+\Z$ are all different, such a choice is always possible. We now take another CM field $L^\sharp$ which is a cyclic extension over $\cm$ of degree $n+1$. Let $\chi^\sharp$ be a conjugate self-dual Hecke character of $L^\sharp$ such that $\chi^\sharp_{\iota_i}(z)=z^{c_{\iota,i}-(\tfrac12-t)}\bar{z}^{-c_{\iota,i}+(\tfrac12-t)}$. At the cost of twisting $\chi^\sharp$ by a Hecke character of finite order, our second auxiliary representation $\Pi_{\chi^\sharp}$, automorphically induced from $\chi^\sharp$ to $\GL_{n+1}(\A_F)$, may again be assumed to be cuspidal. By construction, its infinity-type equals $\{z^{c_{\iota,i}}\bar{z}^{-c_{\iota,i}}\}_{1\leq i\leq n+1}$ at $\iota\in \Sigma$, hence the pair $(\Pi_{\chi^\sharp},\Pi_{\chi})$ satisfies the piano-condition, cf.\ Hypothesis \ref{hypo:piano}.\\\\
We may hence apply Theorem \ref{thm:gro-har-lap} and Thm.\ \ref{thm:main-app} to $\Pi_{\chi^\sharp}$ and $\Pi_{\chi}$, and get that for any critical point $\tfrac 12+m\in$Crit$(\Pi_{\chi^\sharp}\times\Pi_\chi)$,
\begin{equation}\label{e1}
\cfrac{L^{S}(\tfrac 12+m,\Pi_{\chi^\sharp} \times \Pi_{\chi})}{L^{S}(1,\Pi_{\chi^\sharp},{\rm As}^{(-1)^{n+1}})L^{S}(1,\Pi_{\chi},{\rm As}^{(-1)^{n}})}\ \sim_{E(\Pi_{\chi^\sharp})E(\Pi_{\chi})} \ \cfrac{p(m,\Pi_{\chi^\sharp,\infty},\Pi_{\chi,\infty})}{a(\Pi_{\chi^\sharp,\infty})a(\Pi_{\chi,\infty})}
\end{equation}
which is equivariant under the action of Aut$(\C/F^{Gal})$. Here we could remove the Gau\ss{} sum $\mathcal G(\omega_{\Pi_{\chi,f}})$ by Remark \ref{remove Gauss sum}. On the other hand, Prop.\ \ref{middle step down} \& \ref{middle step up} imply that the same quotient satisfies the relation
\begin{equation}\label{e2}
\cfrac{L^{S}(\tfrac 12+m,\Pi_{\chi^\sharp} \times \Pi_{\chi})}{L^{S}(1,\Pi_{\chi^\sharp},{\rm As}^{(-1)^{n+1}})L^{S}(1,\Pi_{\chi},{\rm As}^{(-1)^{n}})}  \sim_{E(\chi^\sharp)E(\chi)E(\phi)}(2\pi i)^{(\frac{1}{2}+m)dn(n+1)-\frac{1}{2}d(n+1)(n+2)-\frac{1}{2}dn(n+1)}
\end{equation}
which is equivariant under the action of Aut$(\C/(L^\sharp L)^{Gal})$.\\\\
If $\mu$ is sufficiently regular, i.e., if $\mu_{\iota,j}-\mu_{\iota,j+1}\geq 2$ for all $\iota\in\Sigma$ and $1\leq j\leq n-1$, we may obviously adjust $\chi^\sharp$ such that there exists critical point $\tfrac{1}{2}+m\in$ Crit$(\Pi_{\chi^\sharp} \times \Pi_{\chi})$ with $m\geq 1$. In particular, the critical $L$-value $L^{S}(\tfrac 12+m,\Pi_{\chi^\sharp} \times \Pi_{\chi})$ is non-zero. As a consequence, we can use the transitivity of ``$\sim$'', cf.\ Rem.\ \ref{transitive}, if $\mu$ is sufficiently regular, and compare \eqref{e1} with \eqref{e2}.\\


If, at the contrary, $\mu$ fails to be sufficiently regular, we may always still take $m=0$, cf.\ \S \ref{sect:critpts}. In order to be able to use the transitivity of the relation ``$\sim$'' (so to compare \eqref{e1} with \eqref{e2}) we have then to assume the validity of

\begin{hyp}\label{hyp a 0}
There exists $\chi^\sharp$ and $\chi$ as above such that $L^{S}(\tfrac 12,\Pi_{\chi^\sharp} \times \Pi_{\chi})\neq 0$. Equivalently, $L^{S}(\tfrac 12,( \chi^{\sharp} \circ N_{\A_{L^{\sharp}L}/\A_{L^{\sharp}}})( \chi\circ N_{\A_{L^{\sharp}L}/\A_L} )(\eta \circ N_{\A_{L^{\sharp} L}/\A_\cm}) ) \neq 0.$
\end{hyp}

As a consequence of Cor.\ \ref{cor:nonvan}, the hypothesis actually implies that $L^{S}(\tfrac 12,{}^{\sigma}\Pi_{\chi^\sharp} \times {}^{\sigma}\Pi_{\chi})\neq 0$ for all $\sigma\in$ Aut$(\C)$. (Rephrased in terms of Hecke $L$-functions $L^{S}(\tfrac 12,( \chi^{\sharp} \circ N_{\A_{L^{\sharp}L}/\A_{L^{\sharp}}})( \chi\circ N_{\A_{L^{\sharp}L}/\A_L} )(\eta \circ N_{\A_{L^{\sharp} L}/\A_\cm}) )$ this is also a corollary of \cite{blasius6}, Thm.\ 9.3.1.)\\

Merging \eqref{e1} with \eqref{e2} and applying our Minimizing-Lemma, cf.\ Lem.\ \ref{minimize}, we finally conclude that
\begin{equation} \label{eq:quot}
\cfrac{p(m,\Pi_{\chi^\sharp,\infty},\Pi_{\chi,\infty})}{a(\Pi_{\chi^\sharp,\infty})a(\Pi_{\chi,\infty})}  \sim_{E(\Pi_{\chi^\sharp})E(\Pi_{\chi})}(2\pi i)^{(\frac{1}{2}+m)dn(n+1)-\frac{1}{2}d(n+1)(n+2)-\frac{1}{2}dn(n+1)}
 \end{equation}
which is equivariant under the action of Aut$(\C/(L^\sharp L)^{Gal})$. \\\\
{\it Construction of $\Pi^\flat$}: We now construct another auxiliary representation of $\GL_{n}(\Acm)$. For each $1\leq j\leq n$, let $\chi_{j}$ be a conjugate self-dual Hecke character of $\cm$ with infinity-type $z^{a_{\iota,j}-t}\bar{z}^{-a_{\iota,j}+t}$ at $\iota\in \Sigma$. We define
$$\Pi^\flat:=\left\{\begin{array}{ll}
 \chi_1\boxplus...\boxplus\chi_n & \textrm{if $n+1$ is even} \\
 (\chi_1\eta)\boxplus...\boxplus(\chi_n\eta) & \textrm{if $n+1$ is odd}
\end{array}
\right.$$
The resulting automorphic $\GL_{n}(\Acm)$-representation $\Pi^\flat$ is unitary, cohomological and conjugate self-dual, hence comes under the purview of \S\ref{sect:Eisen}. Moreover, $\Pi^\flat_{\infty}\cong\Pi_{\chi,\infty}$ and so the pair $(\Pi_{\chi^\sharp},\Pi^\flat)$ satisfies the piano-condition by construction.\\\\
Let $m$ be specified as in our second construction-step above. Since Crit$(\Pi_{\chi^\sharp} \times \Pi^\flat)=$Crit$(\Pi_{\chi^\sharp} \times \Pi_{\chi})$, Thm.\ \ref{thm:main-app} and Rem.\ \ref{remove Gauss sum}, imply that
\begin{equation}\label{a step 0}
L^{S}(\tfrac 12+m,\Pi_{\chi^\sharp} \times \Pi^\flat) \sim_{\Q(\Pi_{\chi^\sharp})\Q(\Pi^\flat)} p(\Pi_{\chi^\sharp})p(\Pi^\flat)p(m,\Pi_{\chi^\sharp,\infty},\Pi^\flat_{\infty})
\end{equation}
On the other hand, we know that $$L^{S}(\tfrac 12+m,\Pi_{\chi^\sharp} \times \Pi^\flat) =\prod\limits_{1\leq j\leq n}L^{S}(\tfrac 12+m,\chi^\sharp \otimes (\chi_{j} \eta\circ N_{\mathbb{A}_{L^\sharp}/\Acm}))=\prod\limits_{1\leq j\leq n}L^{S}(m,\chi^\sharp \otimes (\chi_{j} \phi\circ N_{\mathbb{A}_{L^\sharp}/\Acm})).$$
The Hecke character $\chi^\sharp \otimes (\chi_{j} \phi\circ N_{\mathbb{A}_{L^\sharp}/\Acm})$ has infinity-type $z^{a_{\iota,j}+c_{\iota,i}+\tfrac12}\bar{z}^{-a_{\iota,j}-c_{\iota,i}+\tfrac12}$ at $\iota_{i}$. Hence it is compatible, cf.\ \S\ref{crits}, with the CM type $$\bigcup\limits_{\iota\in \Sigma} \{\iota_{i}\mid i\geq  n+2-j\} \cup \{\bar{\iota_{i}}\mid i\leq n+1-j\}.$$
By Blasius's result, Thm.\ \ref{Blasius}, we have:
\begin{eqnarray}
&&L^{S}(m,\chi^\sharp \otimes (\chi_{j} \phi\circ N_{\mathbb{A}_{L^\sharp}/\Acm})) \nonumber\\
&\sim_{E(\chi^\sharp)E(\chi_{j})E(\phi)}& (2\pi i)^{md(n+1)}\prod\limits_{\iota\in\Sigma}\prod\limits_{i\geq n+2-j}p(\check{\chi}^\sharp,\iota_{i})p(\check{\chi}_{j},\iota)p(\check{\phi},\iota)\prod\limits_{i\leq n+1-j}p(\check{\chi}^\sharp,\bar{\iota_{i}})p(\check{\chi}_{j},\bar{\iota})p(\check{\phi},\bar{\iota})\nonumber
\end{eqnarray}
Denote $\prod\limits_{1\leq j\leq n} E(\chi_{j})$ simply by $E'$. Then
\begin{eqnarray}\label{aaa}
&& L^{S}(\tfrac 12+m,\Pi_{\chi^\sharp} \times \Pi^\flat)\\
&=&\prod\limits_{1\leq j\leq n}L^{S}(m,\chi^\sharp \otimes (\chi_{j} \phi\circ N_{\mathbb{A}_{L^\sharp}/\Acm})) \label{a step 1}\nonumber \\
&\sim_{E(\chi^\sharp)E' E(\phi)}&(2\pi i)^{mdn(n+1)} \prod\limits_{\iota\in\Sigma}[(\prod\limits_{1\leq i\leq n+1}p(\check{\chi}^\sharp,\iota_{i})^{i-1}p(\check{\chi}^\sharp,\iota_{i})^{n+1-i}) (\prod\limits_{1\leq j\leq n}p(\check{\chi}_{j},\iota)^{j-1} p(\check{\chi}_{j},\bar{\iota})^{n+1-j})\times\nonumber\\
&& (p(\check{\phi},\iota)^{n(n+1)/2}p(\check{\phi},\bar{\iota})^{n(n+1)/2})]\nonumber\\&\sim_{E(\chi^\sharp)E' E(\phi)}& (2\pi i)^{(\frac{1}{2}+m)dn(n+1)}\prod\limits_{\iota\in\Sigma}[(\prod\limits_{1\leq i\leq n+1}p(\check{\chi}^\sharp,\iota_{i})^{i-1}p(\check{\chi}^\sharp,\iota_{i})^{n+1-i}) (\prod\limits_{1\leq j\leq n}p(\check{\chi}_{j},\iota)^{j-1} p(\check{\chi}_{j},\bar{\iota})^{n+1-j})]\nonumber
\end{eqnarray}
where the last equation is due to the fact that $p(\check{\phi},\iota)p(\check{\phi},\bar{\iota})\sim_{E(\phi)} 2\pi i$.\\\\
On the other hand, by Corollary \ref{corollary period relation} we know that
\begin{equation}\label{a step 2}
p(\Pi_{\chi^\sharp})\sim_{E(\Pi_{\chi^\sharp})E(\chi^\sharp)} a(\Pi_{\chi^\sharp,\infty})^{-1}(2\pi i)^{d(n+1)(n+2)/2} \prod\limits_{\iota\in \Sigma} \prod\limits_{1\leq i \leq n+1 }
[p(\check{\chi}^\sharp,\iota_{i})^{i-1}p(\check{\chi}^\sharp,\bar{\iota}_{i} )^{n+1-i}].\end{equation}
Moreover, by Corollary \ref{isobaric sum Whittaker corollary}, we know
$p(\Pi^\flat)\sim_{E(\Pi^\flat)E(\phi)} \prod\limits_{1\leq j<k\leq n}L(1,\chi_{j}\otimes \chi_{k}^{\vee})$: We point out that here our assumptions Conv.\ \ref{conv:2} and \ref{diag} enter the proof. \\\\
The Hecke character $\chi_{j}\otimes \chi_{k}^{\vee}=\chi_{j}\otimes \chi_{k}^{c}$ has infinity-type $z^{a_{\iota,j}-a_{\iota,k}}\bar{z}^{-a_{\iota,j}+a_{\iota,k}}$. Since $j<k$, we know $a_{\iota,j}-a_{\iota,k}>0$ and the character $\chi_{j}\otimes \chi_{k}^{c}$ is compatible with $\bar{\Sigma}$. Therefore,
\begin{eqnarray*}
\prod\limits_{1\leq j<k\leq n}L(1,\chi_{j}\otimes \chi_{k}^{\vee}) &\sim_{E'}& (2\pi i)^{dn(n-1)/2} \prod\limits_{1\leq j<k\leq n} \prod\limits_{\iota\in\Sigma} p(\check{\chi_{j}}\check{\chi_{k}}^{c},\bar{\iota})\nonumber \\
&\sim_{E'}&  (2\pi i)^{dn(n-1)/2}\prod\limits_{1\leq j<k\leq n} \prod\limits_{\iota\in\Sigma} [p(\check{\chi_{j}},\bar{\iota})p(\check{\chi_{k}},\iota) ]\nonumber \\
&\sim_{E'}&  (2\pi i)^{dn(n-1)/2} \prod\limits_{\iota\in\Sigma} \prod\limits_{1\leq j\leq n}[p(\check{\chi_{j}},\iota)^{j-1} p(\check{\chi_{j}},\bar{\iota})^{n+1-j}]
\end{eqnarray*}
and hence
\begin{equation}\label{a step 3}
p(\Pi^\flat)\sim_{E(\Pi^\flat)E' E(\phi)} (2\pi i)^{dn(n-1)/2} \prod\limits_{\iota\in\Sigma} \prod\limits_{1\leq j\leq n}[p(\check{\chi_{j}},\iota)^{j-1} p(\check{\chi_{j}},\bar{\iota})^{n+1-j}].
\end{equation}

As above, we assume that either $\mu$ is sufficiently regular or, if not, the validity of the following hypothesis
\begin{hyp}\label{hyp a 1}
There exists $\chi^\sharp$ and $\chi_{j}$, $1\leq j\leq n$ as above, such that $L^{S}(\tfrac 12,\Pi_{\chi^\sharp} \times \Pi^\flat)\neq 0$. Equivalently,
$L(\tfrac 12, (\chi^{\sharp}\circ N_{\A_L/\A_F})\chi_{j}\eta)\neq 0$ for all $j$.
\end{hyp}

\noindent As before we notice that this hypothesis, as combined with Cor.\ \ref{cor:nonvan}, implies the non-vanishing of $L^{S}(\tfrac 12,{}^\sigma\Pi_{\chi^\sharp} \times {}^\sigma\Pi^\flat)$ for all $\sigma\in$ Aut$(\C)$. We may now use again the transitivity of the relation ``$\sim$'', see Rem.\ \ref{transitive}. Hence, comparing (\ref{a step 0}), (\ref{aaa}), (\ref{a step 2}) and (\ref{a step 3}), and invoking our Minimizing-Lemma, Lem.\ \ref{minimize}, we deduce that:
\begin{equation}\label{aaab}
\cfrac{p(m,\Pi_{\chi^\sharp,\infty},\Pi^\flat_{\infty}) }{a(\Pi_{\chi^\sharp,\infty})}\sim_{E(\Pi_{\chi^\sharp})E(\Pi^\flat)} (2\pi i)^{(\frac{1}{2}+m)dn(n+1)-\frac{1}{2}d(n+1)(n+2)-\frac{1}{2}dn(n-1)}.
\end{equation}
{\it Conclusion}: Comparing (\ref{eq:quot}) with \eqref{aaab}, invoking Rem.\  \ref{rem:same} and applying our Minimizing-Lemma once more, we conclude that $a(\Pi_{\chi,\infty})\sim_{E(\Pi_{\chi})} (2\pi i)^{dn}$. As $\Pi_{\chi,\infty}\cong \Pi_\infty$, Rem.\  \ref{rem:same} and applying our Minimizing-Lemma finally imply the desired relation $a(\Pi_{\infty})\sim_{E(\Pi)} (2\pi i)^{dn}$ for our given cuspidal representation $\Pi$.\\\\
For the last assertion, observe that the relation for $a(\Pi_{\infty})$ is independent of the choice of field extensions $L^\sharp$ and $L$. Hence, it is in fact equivariant under the union of all groups Aut$(\C/(L^\sharp L)^{Gal})$, taken over all $L^\sharp$ and $L$, which are cyclic CM-extensions of $F$ of prescribed degree. By class field theory, this union is Aut$(\C/F^{Gal})$.
\end{proof}

\begin{rem}\label{rem:3reg}
Instead of the regularity-condition on $\mu$, we could have equivalently assumed that there is a conjugate self-dual cuspidal automorphic representation $\Pi^\sharp$ of $\GL_{n+1}(\A_F)$, satisfying the piano-hypothesis when coupled with $\Pi$, and $\tfrac12+m\in$Crit$(\Pi^\sharp\times\Pi)$ with $m\neq 0$. This assumption, however, just reads far more elaborate than the simple obstruction on the highest weight $\mu$.
\end{rem}

Let now $\Pi$ and $\Pi'$ be automorphic representations as in Thm.\ \ref{thm:main-app} and assume that their archimedean components $\Pi_\infty$ and $\Pi'_\infty$ are conjugate self-dual. Choose conjugate self-dual Hecke characters $\chi$ of $L$ and $\chi'$ of $L'$ as in \S\ref{sect:reps} such that $\Pi_{\chi,\infty}\cong \Pi_{\infty}$ and $\Pi_{\chi',\infty}\cong \Pi'_{\infty}$. By re-adjusting the characters, if necessary, we may impose that $\Pi_\chi$ and likewise $\Pi_{\chi'}$ is cuspidal. For reference in the theorem below we record our last

\begin{hyp}\label{cond}
There are characters $\chi$ and $\chi'$ as above such that $L^S(\tfrac12,\Pi_\chi\times \Pi_{\chi'})\neq 0$. Equivalently, $L^{S}(\frac{1}{2},( \chi\circ N_{\A_{LL'}/\A_L} )( \chi'\circ N_{\A_{LL'}/\A_{L'}})(\eta\circ N_{\A_{LL'}/\A_{\cm}}) )\neq 0.$
\end{hyp}

We obtain

\begin{cor}\label{prop:generalized grob-harr}
Let $\Pi$ and $\Pi'$ be cohomological automorphic representations as in Thm.\ \ref{thm:main-app} and assume that their archimedean components $\Pi_\infty$ and $\Pi'_\infty$ are conjugate self-dual. Let $\frac{1}{2}+m\in \emph{\textrm{Crit}}(\Pi\times\Pi')$ be a critical point. If $m=0$ we assume Hyp. \ref{hyp a 0} \& \ref{hyp a 1} for $\Pi$ and $\Pi'$, whenever $\mu$ or $\mu'$ are not sufficiently regular, and moreover Hyp. \ref{cond}. Then,
$$p(m,\Pi_{\infty},\Pi'_{\infty})\sim_{E(\Pi)E(\Pi')} (2\pi i)^{mdn(n-1)-\frac{1}{2}d(n-1)(n-2)}
$$
is equivariant under the action of $\emph{\textrm{Aut}}(\C/F^{Gal})$.
\end{cor}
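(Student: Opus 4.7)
The plan is to imitate the strategy employed in the proof of Theorem \ref{second proposition}, namely to replace $\Pi$ and $\Pi'$ by auxiliary automorphically induced representations having the same archimedean components (and hence giving rise to the same archimedean factor $p(m,\cdot,\cdot)$), but whose critical $L$-values and Whittaker periods are both accessible to direct computation via the formulas of \S\ref{sect:AsaiInduced} and \S\ref{special value}. Concretely, I would start by invoking the hypothesis of the corollary to fix conjugate self-dual algebraic Hecke characters $\chi$ of a cyclic CM-extension $L/F$ of degree $n$ and $\chi'$ of a cyclic CM-extension $L'/F$ of degree $n-1$, as in \S\ref{sect:reps}, chosen so that $\Pi_{\chi,\infty}\cong\Pi_\infty$ and $\Pi_{\chi',\infty}\cong\Pi'_\infty$, and so that both $\Pi_\chi$ and $\Pi_{\chi'}$ are cuspidal (after twisting by finite-order characters if necessary). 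Since the archimedean-type of $\Pi_\chi$ (resp.\ $\Pi_{\chi'}$) matches that of $\Pi$ (resp.\ $\Pi'$), the pair $(\Pi_\chi,\Pi_{\chi'})$ inherits the piano-hypothesis from $(\Pi,\Pi')$, $\tfrac12+m$ remains critical, and crucially
$$p(m,\Pi_\infty,\Pi'_\infty)=p(m,\Pi_{\chi,\infty},\Pi_{\chi',\infty}).$$

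Next I would apply Thm \ref{thm:main-app} to the cuspidal pair $(\Pi_\chi,\Pi_{\chi'})$, removing the Gau\ss~sum via Rem \ref{remove Gauss sum} thanks to conjugate self-duality, to obtain the abstract relation
$$L^{S}(\tfrac 12+m,\Pi_\chi\times\Pi_{\chi'})\sim p(\Pi_\chi)\,p(\Pi_{\chi'})\,p(m,\Pi_{\chi,\infty},\Pi_{\chi',\infty}).$$
I would then substitute into this identity the explicit formula of Prop \ref{middle step up} for the left-hand side (expressing it as $(2\pi i)^{(\tfrac12+m)dn(n-1)}$ times a certain product of CM-periods attached to $\check\chi$ and $\check\chi'$) together with the explicit formulas of Cor \ref{corollary period relation} for $p(\Pi_\chi)$ and $p(\Pi_{\chi'})$ (which feature, modulo the unknown archimedean factors $a(\Pi_{\chi,\infty})$ and $a(\Pi_{\chi',\infty})$, precisely the same CM-periods). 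The key observation is that the CM-periods attached to $\chi$ in $p(\Pi_\chi)$ match exactly those in the $L$-value, and similarly for $\chi'$, so that they cancel upon division.

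At this point I would feed in Thm \ref{second proposition}, applied to both cuspidal representations $\Pi_\chi$ and $\Pi_{\chi'}$, to replace $a(\Pi_{\chi,\infty})$ by $(2\pi i)^{dn}$ and $a(\Pi_{\chi',\infty})$ by $(2\pi i)^{d(n-1)}$. A bookkeeping computation of the resulting exponent, using
$$(\tfrac12+m)dn(n-1)-\tfrac{dn(n+1)}{2}-\tfrac{d(n-1)n}{2}+dn+d(n-1)=mdn(n-1)-\tfrac{d(n-1)(n-2)}{2},$$
then yields the desired relation for $p(m,\Pi_{\chi,\infty},\Pi_{\chi',\infty})=p(m,\Pi_\infty,\Pi'_\infty)$.

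The main obstacle in this program is not the computation itself but the careful control of the field of algebraicity together with transitivity of $\sim$ along the chain. The intermediate relations only hold a priori over a large composite involving $E(\chi)$, $E(\chi')$, $E(\phi)$ and $(LL')^{Gal}$; it is precisely via the Minimizing-Lemma \ref{minimize} that one reduces all fields down to $E(\Pi)E(\Pi')$ while upgrading equivariance to $\mathrm{Aut}(\C/F^{Gal})$, by noting that the final expression depends only on $\Pi_\infty$ and $\Pi'_\infty$ and hence is independent of the auxiliary extensions $L$, $L'$, whose union of Galois closures exhausts $F^{Gal}$ by class field theory. Transitivity of $\sim$ requires non-vanishing of every $L$-value encountered: this is automatic whenever $m\neq 0$ by the standard non-vanishing of non-central critical Rankin--Selberg $L$-values, while the case $m=0$ is exactly where Hyp./Conj.\ \ref{cond} is needed to guarantee $L^S(\tfrac12,{}^\sigma\!\Pi_\chi\times{}^\sigma\!\Pi_{\chi'})\neq 0$, and Hyp.\ \ref{hyp a 0} \& \ref{hyp a 1} are needed in the appeal to Thm \ref{second proposition} when $\mu$ or $\mu'$ fail to be sufficiently regular.
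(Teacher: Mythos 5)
Your proposal is correct and follows essentially the same argument as the paper's own proof: both construct the auxiliary cuspidal representations $\Pi_\chi$, $\Pi_{\chi'}$ with matching archimedean components, then combine Thm.\ \ref{thm:main-app} with the explicit CM-period formulas of Prop.\ \ref{middle step down} and Prop.\ \ref{middle step up} to isolate $p(m,\Pi_\infty,\Pi'_\infty)$, feed in Thm.\ \ref{second proposition} for $a(\Pi_{\chi,\infty})$ and $a(\Pi_{\chi',\infty})$, and finish with the Minimizing-Lemma. The only cosmetic difference is that you route the cancellation through Cor.\ \ref{corollary period relation} for $p(\Pi_\chi)$ and $p(\Pi_{\chi'})$, whereas the paper invokes the (degree-shifted) relation \eqref{eq:quot} directly; since Cor.\ \ref{corollary period relation} is itself just Prop.\ \ref{middle step down} combined with Thm.\ \ref{thm:gro-har-lap}, the two bookkeepings are equivalent.
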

\begin{proof}
Choose some algebraic conjugate self-dual Hecke characters $\chi$ and $\chi'$ as in \S\ref{sect:reps}, such that $\Pi_\chi$ and $\Pi_{\chi'}$ are cuspidal with $\Pi_{\chi,\infty}\cong \Pi_{\infty}$ and $\Pi_{\chi',\infty}\cong \Pi'_{\infty}$. As pointed out above, this is always possible. Let $\frac{1}{2}+m\in$Crit$(\Pi\times\Pi')=$Crit$(\Pi_\chi\times\Pi_{\chi'})$. If $m\neq 0$, then our description of the set of critical points for Eisenstein representations, which satisfy the piano-condition, given in \S\ref{sect:critpts}, tells us that the highest weights $\mu$ and $\mu'$ of the finite-dimensional coefficient modules $\EE_\mu$ and $\EE_{\mu'}$, with respect to which $\Pi_\infty$ and $\Pi'_\infty$ are cohomological, are sufficiently regular. Hence, Thm.\ \ref{second proposition} holds for $\Pi_\chi$ and $\Pi_{\chi'}$, i.e., we have $a(\Pi_{\chi,\infty})\sim_{E(\Pi_\chi)} (2\pi i)^{dn}$ and $a(\Pi_{\chi',\infty})\sim_{E(\Pi_{\chi'})} (2\pi i)^{d(n-1)}$. Moreover, as $\tfrac12+m\neq \tfrac12$, the critical $L$-value $L^{S}(\tfrac 12+m,{}^{\sigma}\Pi_{\chi} \times {}^{\sigma}\Pi_{\chi'})$ is non-zero for all $\sigma\in$ Aut$(\C)$. Hence, \eqref{eq:quot} is valid, which yields
$$p(m,\Pi_{\chi,\infty},\Pi_{\chi',\infty}) \sim_{E(\Pi_{\chi})E(\Pi_{\chi'})}(2\pi i)^{mdn(n-1)-\frac{1}{2}d(n-1)(n-2)}.$$
Invoking our Minimizing-Lemma, cf.\ Lem.\ \ref{minimize}, shows the claim for $m\neq 0$. If $m=0$, then our additional assumptions imply that one may in fact argue as for $m\neq 0$. This completes the proof.
\end{proof}

\subsubsection{Some remarks on our hypotheses}\label{sect:nonvan}
The reader will have observed that we invoke our non-vanishing hypotheses Hyp.\ \ref{hyp a 0} and \ref{hyp a 1} in the proof of Thm.\ \ref{second proposition} only, if we have no critical point $s=\tfrac12 + m$, with $m\neq 0$ at our disposal. Similarly, we only invoked Hyp.\ \ref{cond} in the proof of Cor.\ \ref{prop:generalized grob-harr} only, when we wanted to determine $p(m,\Pi_{\infty},\Pi'_{\infty})$ at the center of symmetry, i.e., at $m=0$.\\\\
Indeed, whenever we (may) consider a non-central critical point $s=\tfrac12 + m$, $m\neq 0$, our results, Thm.\ \ref{second proposition} and its corollary, Cor.\ \ref{prop:generalized grob-harr}, {\it hold unconditionally}. Translated into a condition on the highest weight under consideration -- for simplicity let us denote it here uniformly by $\mu=(\mu_\iota)_{\iota\in\Sigma}$ -- the existence of a critical point $s=\tfrac12 + m$ with $m\neq 0$ is equivalent to $\mu_{\iota,j}-\mu_{\iota,j+1}\geq 2$ for all $\iota\in\Sigma$ and $1\leq j\leq n-1$, see \S \ref{sect:critpts}. In other words, if the highest weight at hand is sufficiently regular, then we do not need to assume any further hypotheses in Thm.\ \ref{second proposition} nor do we have to do so, when we determine $p(m,\Pi_{\infty},\Pi'_{\infty})$ at $m\neq 0$ in Cor.\ \ref{prop:generalized grob-harr}.\\\\
In the case when our hypotheses are in force, (i.e., if there is either no non-central critical point, or, if we want to compute $p(m,\Pi_{\infty},\Pi'_{\infty})$ at $m=0$) we have already reformulated the expected non-vanishing of the Rankin-Selberg $L$-function at hand in terms of the non-vanishing of a Hecke $L$-function, see Hyp.\ \ref{hyp a 0}, \ref{hyp a 1} and \ref{cond}. The question of non-vanishing of Hecke $L$-series is largely addressed by the literature, which in fact provides strong evidence that our hypotheses hold true in the required generality: As a first clue for this, we recall a result of Rohrlich. Let $\pi$ be a unitary cuspidal automorphic representation of $\GL_{2}(\Acm)$ or $\GL_{1}(\Acm)$ (i.e., a unitary Hecke character of $F$ in the latter case). Then the main result in \cite{rohrlich} shows the non-vanishing of $L^S(\tfrac{1}{2}, \pi\otimes \chi)$ for infinitely many twists by Hecke characters $\chi$ of $F$. \\\\
Unfortunately, Rohrlich's result is not sufficient for us to actually prove the validity of our hypotheses, since it does not provide the additionally necessary, qualitative information on the twists $\chi$, required by our proof, like being conjugate self-dual, for instance.\\\\
However, Rohrlich's result fits into an increasingly general series of explicit results and conjectures about the non-vanishing of standard $L$-functions -- over CM-fields we refer to \cite{eischen}, Thm.\ 3.15 and Thm.\ 3.16, \cite{ginz-jiang-rallis}, Thm.\ 5.1 and Thm.\ 6.3, and \cite{jiang-zhang}, Conj.\ 1.1. From these references we extract the following, very general conjecture

\begin{conj}\label{general conjecture}
Let $H_n=U(V_n)$ be a unitary group over $\tr$ of rank $n$, as in \S \ref{sect:alggrp}. Let $\pi$ be a cuspidal automorphic representation $H_n(\Atr)$. Then there exists an automorphic character $\alpha: H_1(\A_{F^+})\ra\C^*$ of finite order, such that $L^S(\tfrac 12, \pi\otimes\alpha)\neq 0$.
\end{conj}
If we do not require $\alpha$ to be of finite order, then Conj.\ \ref{general conjecture} is known to hold for quasi-split unitary groups $H_n$ of rank $n\leq 4$ and generic, tempered representations $\pi$. See \cite{jiang-zhang}, Thm.\ 1.4. \\\\
Returning to our hypotheses Hyp.\ \ref{hyp a 0}, \ref{hyp a 1} and \ref{cond}, we observe that Conj.\ \ref{general conjecture} provides far more than what we need:

\begin{lem}
Conj.\ \ref{general conjecture} for $n=1$ implies Hyp. $\ref{hyp a 1}$. Conj.\ \ref{general conjecture} for general $n$ implies Hyp. \ref{hyp a 0} and Hyp. \ref{cond}.
\end{lem}

\begin{proof}
Clearly Conjecture \ref{general conjecture} for $n=1$ implies Hyp.\ \ref{hyp a 1} since one can twist each $\chi_{i}$ in Hyp.\ \ref{hyp a 1} by any conjugate self-dual character finite order.\\
Hyp.\ \ref{hyp a 0} and Hyp.\ \ref{cond} are of the same form. We provide the argument for Hyp. \ref{cond}: For simplicity, we may also assume that $n$ is even. We first take any conjugate self-dual Hecke characters $\chi_{0}$ of $L$ and $\chi'$ of $L'$ as in \S\ref{sect:reps} such that $\Pi_{\chi_{0},\infty}\cong \Pi_{\infty}$ and $\Pi_{\chi',\infty}\cong \Pi'_{\infty}$. Note that $LL'$ is a cyclic extension of $L$ of degree $n-1$. Let $\Pi'_{L}$ be the automorphic induction of $\chi'\circ N_{\A_{LL'}/\A_{L'}}$ to $L$. It is an automorphic representation of $\GL_{n-1}(\mathbb{A}_{L})$. We can asssume that $\Pi'_{L}$ is cuspidal by twisting $\chi'$ by a suitable conjugate self-dual Hecke character finite order. Moreover, $\Pi'_L$ being conjugate self-dual and cohomological ensures that it is in the image of quadratic base change from some unitary group $H_n/F^+$, see \cite{grob_harris_lapid}, \S 6.1. By Conj.\ \ref{general conjecture}, there hence exists a conjugate self-dual Hecke character $\beta$ of $F$ of finite order, such that $L^S(\tfrac{1}{2}, (\Pi'_{L}\otimes \chi_{0} (\eta\circ N_{\A_L/\A_F}))\otimes \beta)\neq 0$. (Explcitly, $\beta$ is simply defined by sending $z\in\A^\times_F$ to $\alpha(z (z^c)^{-1})$ for $\alpha$ as being provided by Conj.\ \ref{general conjecture}.) We note that $L^S(\tfrac{1}{2}, (\Pi'_{L}\otimes \chi_{0} (\eta\circ N_{\A_L/\A_F}))\otimes \beta)=L^{S}(\frac{1}{2}, (\chi_{0} \beta \circ N_{\A_{LL'}/\A_L} )( \chi'\circ N_{\A_{LL'}/\A_{L'}})(\phi \circ N_{\A_{LL'}/\A_F}))$. Therefore, letting $\chi:=\chi_{0}\beta$, we obtain the assertion of Hyp.\ \ref{cond}.
\end{proof}


\section{Our four main theorems for special $L$-values}\label{sect:main}

\subsection{Critical values of Asai $L$-functions}\label{sect:main_ghl}

Our first main theorem for special values has two assets: Firstly, it generalizes Thm.\ \ref{thm:gro-har-lap} to isobaric representations $\Pi'=\Pi_1\boxplus...\boxplus\Pi_k$, with an arbitrary number of conjugate self-dual cuspidal summands $\Pi_i$. Secondly, we are able to explicitly determine the archimedean factor in the resulting relation due to our calculations in \S\ref{special value} as a concrete power of $(2\pi i)$. In what follows, we write $\mu_{i}^{\sf alg}$ for the highest weight of the algebraic representation with respect to which $\Pi_i^{\sf alg}$, cf.\ \ref{eq:pialg}, is cohomological. We recall again that our two conventions, Conventions\ \ref{conv:2} and \ref{diag}, are in force.

\begin{thm}\label{generalized thm:gro-har-lap}
Let $\Pi'=\Pi_1\boxplus...\boxplus\Pi_k$ be a cohomological isobaric automorphic representation of $\GL_{n}(\Acm)$ as in \ref{sect:Eisen}, such that each cuspidal automorphic summand $\Pi_i$ is conjugate self-dual. If $\mu_{i}^{\sf alg}$ is not sufficiently regular, we assume Hyp.\ \ref{hyp a 0} \& \ref{hyp a 1} for $\Pi_i^{\sf alg}$. One has
$$
L^{S}(1,\Pi',{\rm As}^{(-1)^{n}})\ \sim_{E(\Pi')} \ (2\pi i)^{dn} p(\Pi')
$$
which is equivariant under the action of $\emph{\textrm{Aut}}(\C/F^{Gal})$.

\end{thm}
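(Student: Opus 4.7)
The plan is to reduce the theorem to the cuspidal case (which is handled by Theorem \ref{thm:gro-har-lap} combined with Theorem \ref{second proposition}) via the factorisation formula for Asai $L$-functions of isobaric sums and the period relation for Whittaker periods established in \S\ref{sect:app1}. Concretely, I would proceed in four steps.

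First, apply Lemma \ref{lem:fact_asai_lfunction} to split the Asai $L$-function of $\Pi'$ into Asai $L$-functions of the algebraic twists $\Pi_i^{\sf alg}$ and Rankin--Selberg $L$-functions of pairs of distinct conjugate self-dual summands:
\[
L^{S}(1,\Pi',{\rm As}^{(-1)^{n}})
=\prod_{i=1}^{k} L^{S}(1,\Pi_i^{\sf alg},{\rm As}^{(-1)^{n_i}})\cdot \prod_{1\leq i<j\leq k} L^{S}(1,\Pi_i\times\Pi_j^\vee).
\]
Since each $\Pi_i$ is unitary cuspidal and the $\Pi_i$ are pairwise distinct, each Rankin--Selberg factor is non-zero at $s=1$ by the Jacquet--Shalika non-vanishing theorem, and each Asai factor is non-zero by Lemma \ref{lem:poles_asai_lfunction}. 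All $L$-values appearing will thus be non-zero, so transitivity of $\sim$ (cf.\ Rem.\ \ref{transitive}) applies.

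Second, treat each cuspidal factor. Since $\Pi_i^{\sf alg}$ is a unitary, conjugate self-dual, cohomological cuspidal automorphic representation of $\GL_{n_i}(\A_F)$, Theorem \ref{thm:gro-har-lap} yields
\[
L^{S}(1,\Pi_i^{\sf alg},{\rm As}^{(-1)^{n_i}})\ \sim_{E(\Pi_i^{\sf alg})}\ a(\Pi_{i,\infty}^{\sf alg})\,p(\Pi_i^{\sf alg}),
\]
equivariantly under $\textrm{Aut}(\C/F^{Gal})$. Under the regularity hypothesis on $\mu_i^{\sf alg}$ (or Hyp.\ \ref{hyp a 0} \& \ref{hyp a 1}), Theorem \ref{second proposition} applies to $\Pi_i^{\sf alg}$ and gives $a(\Pi_{i,\infty}^{\sf alg})\sim_{E(\Pi_i^{\sf alg})}(2\pi i)^{dn_i}$. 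Multiplying over $i$, since $\sum_{i=1}^k n_i=n$,
\[
\prod_{i=1}^k L^{S}(1,\Pi_i^{\sf alg},{\rm As}^{(-1)^{n_i}})\ \sim_{\prod_i E(\Pi_i^{\sf alg})}\ (2\pi i)^{dn}\prod_{i=1}^{k} p(\Pi_i^{\sf alg}).
\]

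Third, insert the period relation of Corollary \ref{isobaric sum Whittaker corollary},
\[
p(\Pi')\ \sim_{E(\Pi')E(\phi)}\ \prod_{i=1}^{k}p(\Pi_i^{\sf alg})\prod_{1\leq i<j\leq k}L^{S}(1,\Pi_i\times\Pi_j^\vee),
\]
into the factorisation from Step 1; this converts $\prod_i p(\Pi_i^{\sf alg})\cdot\prod_{i<j}L^S(1,\Pi_i\times\Pi_j^\vee)$ directly into $p(\Pi')$. Combining yields
\[
L^{S}(1,\Pi',{\rm As}^{(-1)^{n}})\ \sim_{\mathbb L}\ (2\pi i)^{dn}\,p(\Pi'),
\]
where $\mathbb L:=E(\Pi')\,E(\phi)\prod_i E(\Pi_i^{\sf alg})$.

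The main obstacle, and final step, will be minimizing the field $\mathbb L$ down to $E(\Pi')$ and establishing $\textrm{Aut}(\C/F^{Gal})$-equivariance. For this I would invoke the Minimizing-Lemma (Lem.\ \ref{minimize}): each of the two complex numbers $L^{S}(1,\Pi',{\rm As}^{(-1)^{n}})$ and $(2\pi i)^{dn}p(\Pi')$ depends (up to the prescribed ambiguity) only on the isomorphism class of $\Pi'_f$ and hence only on the restriction to $E(\Pi')\supseteq F^{Gal}$ of the ambient automorphism. Thus it suffices to verify that each step above is $\textrm{Aut}(\C/F^{Gal})$-equivariant: Lemma \ref{lem:fact_asai_lfunction} commutes with twisting by $\sigma\in\textrm{Aut}(\C)$ because $\sigma$ permutes the cuspidal summands and their Rankin--Selberg convolutions; the application of Theorems \ref{thm:gro-har-lap} and \ref{second proposition} is $\textrm{Aut}(\C/F^{Gal})$-equivariant by their formulation; and Corollary \ref{isobaric sum Whittaker corollary} is also equivariant under $\textrm{Aut}(\C/F^{Gal})$. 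Assembling these equivariances, conditions (1) and (2) of Def.\ \ref{definition algebraic relation} hold for the family $\{({}^\sigma\Pi',{}^\sigma\phi)\}_{\sigma}$, and the Minimizing-Lemma then produces the asserted relation over $E(\Pi')$, equivariant under $\textrm{Aut}(\C/F^{Gal})$.
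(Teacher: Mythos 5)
Your proposal follows essentially the same route as the paper's proof: factorise via Lemma \ref{lem:fact_asai_lfunction}, handle each cuspidal factor with Theorem \ref{thm:gro-har-lap} together with Theorem \ref{second proposition}, feed in the Whittaker-period relation of Corollary \ref{isobaric sum Whittaker corollary}, and finish with the Minimizing-Lemma. Your explicit remark on the non-vanishing of the intermediate $L$-values (needed for transitivity of $\sim$) is a welcome clarification; the paper relies on this implicitly via Corollary \ref{cor:asai-nonvan}.
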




\begin{proof}
On the one hand, by Lem.\ \ref{lem:fact_asai_lfunction} we know that
$$L^S(1,\Pi',{\rm As}^{(-1)^{n}})=\prod_{i=1}^k L^S(1,\Pi_i^{\sf alg},{\rm As}^{(-1)^{n_i}})\ \cdot \prod_{1\leq i<j\leq k} L^S(1,\Pi_i\times\Pi^\vee_j).$$
Since $\Pi_{i}^{\sf alg}$ is unitary conjugate self-dual, cuspidal and cohomological, we may apply Thm.\ \ref{thm:gro-har-lap} and by our extra assumptions on $\Pi_i^{\sf alg}$ moreover Thm.\ \ref{second proposition} to get
 $$L^S(1,\Pi_i^{\sf alg},{\rm As}^{(-1)^{n_i}}) \sim_{E(\Pi_i^{\sf alg})} a(\Pi^{\sf alg}_{i,\infty}) p(\Pi_i^{\sf alg}) \sim_{E(\Pi_i^{\sf alg})} (2\pi i)^{dn_{i}} p(\Pi^{\sf alg}_i)$$
which is equivariant under the action of Aut$(\C/F^{Gal})$.\\\\
On the other hand, by Cor.\ \ref{isobaric sum Whittaker corollary}, we have
 $$p(\Pi') \sim_{E(\Pi')E(\phi)}  \prod_{1\leq i \leq k} p(\Pi_i^{\sf alg}) \prod_{1\leq i < j \leq k} L^S(1,\Pi_i\times\Pi^\vee_j)$$
which is also equivariant under the action of Aut$(\C/F^{Gal})$. Hence,
$$L^S(1,\Pi',{\rm As}^{(-1)^{n}})\sim_{\prod\limits_{i=1}^{k} E(\Pi^{\sf alg}_{i}) E(\Pi') E(\phi)}\prod_{i=1}^k (2\pi i)^{dn_{i}} \cdot p(\Pi')=(2\pi i)^{dn}p(\Pi').$$
We apply the Minimizing-Lemma, cf.\ Lem.\ \ref{minimize}, in order to shrink the base field of the relation to $E(\Pi')$. This shows the claim.

\end{proof}

\subsection{Critical values of Rankin-Selberg $L$-functions}\label{sect:main_gh}

Our second main theorem for special values provides a explicit refinement of Thm.\ \ref{thm:main-app}, revealing the archimedean factor $p(m,\Pi_\infty,\Pi'_\infty)$ -- extending Cor.\ \ref{prop:generalized grob-harr} -- also for non-cuspidal isobaric representations $\Pi'$ as an explicit power of $(2\pi i)$. \\\\ As before, we may choose some appropriate algebraic conjugate self-dual Hecke characters $\chi$ and $\chi'$ as in \S\ref{sect:reps}, such that $\Pi_{\chi,\infty}\cong \Pi_{\infty}$ and $\Pi_{\chi',\infty}\cong \Pi'_{\infty}$. We write $\Pi_{\chi'}=\Pi_{\chi',1}\boxplus...\boxplus\Pi_{\chi',k}$.

\begin{thm}\label{thm:generalized grob-harr}
Let $\Pi$ and $\Pi'$ be cohomological automorphic representations as in Thm.\ \ref{thm:main-app} and assume that their archimedean components $\Pi_\infty$ and $\Pi'_\infty$ are conjugate self-dual.
Let $\frac{1}{2}+m\in \emph{\textrm{Crit}}(\Pi\times\Pi')$ be a critical point. If $m=0$ we assume that there are algebraic conjugate self-dual Hecke characters $\chi$ and $\chi'$ as in \S\ref{sect:reps}, such that $\Pi_\chi$ is cuspidal and such that $(\Pi_\chi,\Pi_{\chi'})$ satisfies Hyp.\ \ref{cond} and moreover, that whenever $\mu$ or $\mu_{\chi',i}^{\sf alg}$ is not sufficiently regular, Hyp.\ \ref{hyp a 0} \& \ref{hyp a 1} hold for $\Pi$ resp.\ $\Pi_{\chi',i}^{\sf alg}$.  Then,
$$
L^{S}(\tfrac 12+m,\Pi \times \Pi') \ \sim_{E(\Pi)E(\Pi')}(2\pi i)^{mdn(n-1)-\frac{1}{2}d(n-1)(n-2)} p(\Pi)\ p(\Pi') \mathcal G(\omega_{\Pi'_f})
$$
which is equivariant under $\textrm{\emph{Aut}}(\C/F^{Gal})$.
\end{thm}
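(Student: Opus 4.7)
The plan is to combine the abstract rationality result of Theorem~\ref{thm:main-app} with the explicit determination of the archimedean factor provided by Corollary~\ref{prop:generalized grob-harr}, and then to apply the Minimizing--Lemma to shrink the coefficient field down to $E(\Pi)E(\Pi')$. Specifically, Theorem~\ref{thm:main-app} already yields the ${\rm Aut}(\C)$-equivariant relation
$$L^{S}(\tfrac 12+m,\Pi \times \Pi') \sim_{\Q(\Pi)\Q(\Pi')} p(\Pi)\, p(\Pi')\, p(m,\Pi_\infty,\Pi'_\infty)\, \mathcal{G}(\omega_{\Pi'_f}),$$
in which the only undetermined ingredient is the archimedean factor $p(m,\Pi_\infty,\Pi'_\infty)$. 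This factor is a non-zero complex number by the non-vanishing theorem of B.\ Sun alluded to in the introduction, so it can legitimately be inserted into a chain of algebraicity relations.

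Next, since $\Pi_\infty$ and $\Pi'_\infty$ are assumed to be conjugate self-dual, Corollary~\ref{prop:generalized grob-harr} applies, giving the equivariant relation
$$p(m,\Pi_\infty,\Pi'_\infty) \sim_{E(\Pi)E(\Pi')} (2\pi i)^{mdn(n-1) - \tfrac{1}{2}d(n-1)(n-2)}.$$
For $m \neq 0$, the combinatorial description of the critical set in \S\ref{sect:critpts} forces the highest weights $\mu$ and $\mu'$ to be sufficiently regular, so no auxiliary hypothesis is required. For $m=0$, the extra assumption in the statement of the theorem is precisely what is needed to feed Corollary~\ref{prop:generalized grob-harr} with admissible auxiliary Hecke characters $\chi,\chi'$ on which Hyp./Conj.~\ref{cond} and Hyp.~\ref{hyp a 0}, \ref{hyp a 1} are imposed, allowing the transitivity of ``$\sim$'' (cf.\ Rem.~\ref{transitive}) to be applied to the non-vanishing quantities.

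Multiplying these two relations yields the claimed identity over the composite field $\Q(\Pi)\Q(\Pi')\cdot E(\Pi)E(\Pi') = E(\Pi)E(\Pi')$, since by definition $E(\Pi) \supseteq \Q(\Pi)$ and $E(\Pi') \supseteq \Q(\Pi')$. Transitivity is justified throughout because $p(\Pi)$, $p(\Pi')$, $\mathcal{G}(\omega_{\Pi'_f})$ are all non-zero by construction, as is $p(m,\Pi_\infty,\Pi'_\infty)$ by Sun's theorem, and $L^S(\tfrac12+m,\Pi\times\Pi')$ is non-zero (either automatically when $m\neq 0$, or else because the hypothesis~\ref{cond} transmitted from $\Pi_\chi,\Pi_{\chi'}$ via the equality of archimedean components forces non-vanishing of the original $L$-value up to the already-controlled algebraic factors).

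Finally, to verify the equivariance under ${\rm Aut}(\C/F^{Gal})$ and close the proof, I invoke the Minimizing--Lemma~\ref{minimize}: since $E(\Pi)E(\Pi')$ contains $F^{Gal}$ by its very definition (see \S\ref{sect:twists}), and since all quantities on both sides of the asserted relation depend on $\sigma$ only through its restriction to $E(\Pi)E(\Pi')$ (this is the whole content of the ${\rm Aut}(\C)$-equivariance of the individual inputs), the second condition of Def.~\ref{definition algebraic relation} -- already verified by combining the equivariances of Theorem~\ref{thm:main-app} and Corollary~\ref{prop:generalized grob-harr} -- automatically implies the first. The main obstacle, as usual in this circle of ideas, is purely bookkeeping: tracking how the various auxiliary fields ($\Q(\chi),\,\Q(\phi),\,L^{Gal},\,(L^\sharp L)^{Gal}$, etc.) appearing in the proofs of the two ingredient theorems collapse, under the Minimizing--Lemma, to the single composite $E(\Pi)E(\Pi')$.
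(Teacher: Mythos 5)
Your plan is essentially the paper's strategy for $m\neq 0$, and for that range it is correct and even slightly shorter: Corollary~\ref{prop:generalized grob-harr} holds with no auxiliary hypotheses (the critical-point description in \S\ref{sect:critpts} forces $\mu,\mu'$ to be sufficiently regular when a non-central critical point exists), and multiplying it into the relation of Theorem~\ref{thm:main-app} and applying the Minimizing--Lemma~\ref{minimize} gives the stated formula. The paper does not invoke the corollary at this point but instead re-derives the power of $(2\pi i)$ from scratch; your route is a legitimate shortcut.

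For $m=0$, however, you cannot invoke Corollary~\ref{prop:generalized grob-harr} as stated, because its hypotheses are not implied by those of Theorem~\ref{thm:generalized grob-harr}. The corollary requires Hecke characters $\chi,\chi'$ making \emph{both} $\Pi_\chi$ \emph{and} $\Pi_{\chi'}$ cuspidal with Hyp./Conj.~\ref{cond}, and imposes Hyp.~\ref{hyp a 0} \& \ref{hyp a 1} on $\Pi$ and $\Pi'$; the theorem only requires $\Pi_\chi$ cuspidal, allows $\Pi_{\chi'}=\Pi_{\chi',1}\boxplus\cdots\boxplus\Pi_{\chi',k}$ to be a genuine isobaric sum, and imposes Hyp.~\ref{hyp a 0} \& \ref{hyp a 1} on the twisted summands $\Pi_{\chi',i}^{\sf alg}$ rather than on $\Pi_{\chi'}$ itself. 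These are different sets of conditions. The paper circumvents the mismatch by not citing Corollary~\ref{prop:generalized grob-harr} at all: it re-derives the relation $p(m,\Pi_\infty,\Pi'_\infty)\sim_{E(\Pi)E(\Pi')}(2\pi i)^{mdn(n-1)-\frac{1}{2}d(n-1)(n-2)}$ by comparing two explicit expressions for the quotient $L^{S}(\tfrac 12+m,\Pi_{\chi}\times\Pi_{\chi'})/(L^{S}(1,\Pi_{\chi},{\rm As}^{(-1)^{n}})L^{S}(1,\Pi_{\chi'},{\rm As}^{(-1)^{n-1}}))$: one from Theorem~\ref{thm:main-app} together with Theorem~\ref{generalized thm:gro-har-lap} (the Asai theorem for \emph{isobaric} $\Pi_{\chi'}$, which is exactly where the hypotheses on the $\Pi_{\chi',i}^{\sf alg}$ enter), the other from Propositions~\ref{middle step down} and \ref{middle step up}. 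To repair your plan for $m=0$ you would either have to strengthen the hypotheses (assume in addition that $\Pi_{\chi'}$ can be taken cuspidal and Hyp.~\ref{hyp a 0}/\ref{hyp a 1} hold for $\Pi'$), or follow this alternative route. Finally, your claim that $L^S(\tfrac12,\Pi\times\Pi')\neq 0$ is neither needed (for the transitivity of ``$\sim$'' only the middle term $p(\Pi)p(\Pi')p(0,\Pi_\infty,\Pi'_\infty)\mathcal G(\omega_{\Pi'_f})$ must be non-zero, which holds unconditionally) nor justified: Hyp./Conj.~\ref{cond} controls the auxiliary value $L^S(\tfrac12,\Pi_\chi\times\Pi_{\chi'})$, not $L^S(\tfrac12,\Pi\times\Pi')$.
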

\begin{proof}
Let $\chi$ and $\chi'$ be appropriate algebraic conjugate self-dual Hecke characters as in \S\ref{sect:reps}, such that $\Pi_{\chi,\infty}\cong \Pi_{\infty}$ and $\Pi_{\chi',\infty}\cong \Pi'_{\infty}$. We may arrange that $\Pi_\chi$ is cuspidal and write $\Pi_{\chi'}=\Pi_{\chi',1}\boxplus...\boxplus\Pi_{\chi',k}$. Let $\frac{1}{2}+m\in$Crit$(\Pi\times\Pi')=$Crit$(\Pi_\chi\times\Pi_{\chi'})$. If $m\neq 0$, then our description of the set of critical points, cf.\ \S\ref{sect:critpts}, implies that the highest weights $\mu$ or $\mu_{\chi',i}^{\sf alg}$ are all sufficiently regular. In particular, the automorphic representations $\Pi_\chi$ and $\Pi_{\chi'}$ then satisfy the assumptions of Thm.\ \ref{thm:main-app} and Thm.\ \ref{generalized thm:gro-har-lap}. Hence, we obtain
\begin{equation}\label{f1}
\cfrac{L^{S}(\tfrac 12+m,\Pi_{\chi} \times \Pi_{\chi'})}{L^{S}(1,\Pi_{\chi},{\rm As}^{(-1)^{n}})L^{S}(1,\Pi_{\chi'},{\rm As}^{(-1)^{n-1}})}\ \sim_{E(\Pi_{\chi})E(\Pi_{\chi'})} \ \cfrac{p(m,\Pi_{\chi,\infty},\Pi_{\chi',\infty})}{(2\pi i)^{d(2n-1)}}
\end{equation}
which is equivariant under the action of Aut$(\C/F^{Gal})$. On the other hand, applying Prop.\ \ref{middle step down} and \ref{middle step up} shows that the same quotient satisfies the relation
\begin{equation}\label{f2}
\cfrac{L^{S}(\tfrac 12+m,\Pi_{\chi} \times \Pi_{\chi'})}{L^{S}(1,\Pi_{\chi},{\rm As}^{(-1)^{n}})L^{S}(1,\Pi_{\chi'},{\rm As}^{(-1)^{n-1}})}  \sim_{E(\chi)E(\chi')E(\phi)}(2\pi i)^{(\frac{1}{2}+m)dn(n-1)-dn^2}
\end{equation}
which is equivariant under the action of Aut$(\C/(LL')^{Gal})$. As $\tfrac12+m\neq\tfrac12$ we have $L^{S}(\tfrac 12+m,\Pi_{\chi} \times \Pi_{\chi'})\neq 0$, so we may combine \eqref{f1} and \eqref{f2} and obtain
$$p(m,\Pi_\infty,\Pi'_{\infty})\sim_{E(\Pi)E(\Pi')} (2\pi i)^{mdn(n-1)-\frac{1}{2}d(n-1)(n-2)}$$
by the Minimizing-Lemma, cf.\ Lem.\ \ref{minimize}. Hence, the result follows for $m\neq 0$ from applying Thm.\ \ref{thm:main-app} to $\Pi$ and $\Pi'$. If finally $m=0$, then our assumptions on $\Pi$ and $\Pi_{\chi',i}^{\sf alg}$ imply that one may argue as for the case $m\neq 0$. This completes the proof.
\end{proof}

\subsection{Quotients of critical $L$-values}

As a consequence of Thm.\ \ref{generalized thm:gro-har-lap} and Thm.\ \ref{thm:generalized grob-harr}, we obtain another two rationality-results, both for quotients of critical $L$-values, see Thm.\ \ref{cor:HR} and Thm.\ \ref{thm:intermediate} below. It is one of their advantages that they avoid any reference to bottom-degree Whittaker periods, but express the respective ratio of critical $L$-values purely in terms of powers of $(2\pi i)$.\\\\
Let us point out that the first of these theorems, Thm.\ \ref{cor:HR}, establishes the main result of \cite{harder-raghuram} for general CM-fields $F$, and a general pair of automorphic representations $(\Pi, \Pi')$ of $\GL_n(\A_F)\times\GL_{n-1}(\A_F)$ satisfying Thm.\ \ref{thm:intermediate}, as compared to the case of totally real fields $F^+$ and a pair of cuspidal cohomological representations $(\sigma,\sigma')$ of $\GL_n(\A_{F^+})\times\GL_{n'}(\A_{F^+})$ considered {\it ibidem}. While the second theorem, Thm.\ \ref{thm:intermediate}, will allow us to prove a version of the refined Gan--Gross--Prasad conjecture for unitary groups in \S\ref{sect:GGP} below. It is also closely connected to Deligne's conjecture for motivic $L$-functions, see Rem.\ \ref{rem:Deligne}.

\begin{thm}\label{cor:HR}
Let $\Pi$ and $\Pi'$ be as in Thm.\ \ref{thm:generalized grob-harr} and let $\frac{1}{2}+m,\tfrac12+\ell\in \emph{\textrm{Crit}}(\Pi\times\Pi')$ be two critical points. Whenever $L^S(\tfrac12+\ell, \Pi\times\Pi')$ is non-zero (e.g., if $\ell\neq 0$),
$$\frac{L^S(\tfrac12+m, \Pi\times\Pi')}{L^S(\tfrac12+\ell, \Pi\times\Pi')} \sim_{E(\Pi)E(\Pi')} (2\pi i)^{d(m-\ell)n(n-1)}.$$
and this relation is equivariant under the action of ${\rm Aut}(\C/F^{Gal})$. In particular, if $L^S(\tfrac32+m, \Pi\times\Pi')$ is non-zero (e.g., if $m\neq -1$), the quotient of consecutive critical $L$-values satisfies
$$(2\pi i)^{dn(n-1)}\frac{L^S(\tfrac12+m, \Pi\times\Pi')}{L^S(\tfrac32+m, \Pi\times\Pi')} \in E(\Pi)E(\Pi') .$$
\end{thm}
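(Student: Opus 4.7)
The plan is a direct application of Thm.\ \ref{thm:generalized grob-harr} at the two critical points $\tfrac12+m$ and $\tfrac12+\ell$, followed by division. At $\tfrac12+m$ we get
$$L^S(\tfrac12+m,\Pi\times\Pi')\sim_{E(\Pi)E(\Pi')}(2\pi i)^{mdn(n-1)-\tfrac{1}{2}d(n-1)(n-2)}\,p(\Pi)\,p(\Pi')\,\mathcal G(\omega_{\Pi'_f}),$$
and the analogous relation at $\tfrac12+\ell$. The crucial observation is that all factors on the right-hand side, apart from the linear-in-critical-point part of the $(2\pi i)$-exponent, are independent of the critical point. Consequently, upon taking the quotient, the Whittaker periods $p(\Pi)$ and $p(\Pi')$, the Gauss sum $\mathcal G(\omega_{\Pi'_f})$, and the constant shift $-\tfrac12 d(n-1)(n-2)$ in the exponent all cancel, leaving exactly $(2\pi i)^{d(m-\ell)n(n-1)}$, as required.

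To legitimise the division I need to invoke the transitivity of $\sim$, which by Rem.\ \ref{transitive} demands that the right-hand side at $\tfrac12+\ell$ be non-zero. The Whittaker periods are non-zero by Cor.\ \ref{cor:Whittakerperiods} and the Gauss sum of a unitary Hecke character is non-zero (\S\ref{sect:char}); hence the auxiliary non-vanishing reduces exactly to $L^S(\tfrac12+\ell,\Pi\times\Pi')\neq 0$, which is part of the hypothesis. When $\ell\neq 0$, this holds automatically: then $\tfrac12+\ell\geq\tfrac32$ or $\tfrac12+\ell\leq-\tfrac12$, so Shahidi's non-vanishing result on $\Re(s)=1$, together with the global functional equation, applies. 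Equivariance under $\mathrm{Aut}(\C/F^{Gal})$ is inherited directly from the equivariance in Thm.\ \ref{thm:generalized grob-harr} via Def.\ \ref{definition algebraic relation}, since quotients of equivariant families of pairs are again equivariant.

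The last assertion is the special case $\ell=m+1$, for which $d(m-\ell)n(n-1)=-dn(n-1)$; transferring the resulting $(2\pi i)^{-dn(n-1)}$ to the left produces the claimed element of $E(\Pi)E(\Pi')$. No serious obstacle is expected: the argument is a bookkeeping manipulation of Thm.\ \ref{thm:generalized grob-harr}. The only points requiring care are (a) the verification of non-vanishing of the denominator, handled as above, and (b) ensuring that the auxiliary hypotheses (Hyp./Conj.\ \ref{cond} and Hyp.\ \ref{hyp a 0}, \ref{hyp a 1}) of Thm.\ \ref{thm:generalized grob-harr} are in force whenever one applies it at a critical point with $m=0$ or $\ell=0$; both are already built into the standing assumptions of Thm.\ \ref{cor:HR}.
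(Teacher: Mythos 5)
Your proposal is correct and takes essentially the same route as the paper, whose proof of this theorem is literally the one line ``This follows directly from Thm.\ \ref{thm:generalized grob-harr}.''\ You have simply unpacked that sentence: apply Thm.\ \ref{thm:generalized grob-harr} at both critical points, divide, note that the Whittaker periods, the Gau\ss{} sum and the constant part of the $(2\pi i)$-exponent cancel, and justify the division by observing that the denominator's right-hand side is nonzero precisely when $L^S(\tfrac12+\ell,\Pi\times\Pi')\neq 0$.
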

\begin{proof}
This follows directly from Thm.\ \ref{thm:generalized grob-harr}.
\end{proof}
This theorem also generalizes \cite{janus}, Thm.\ A, where an analogously explicit result has been proved (under different assumptions) for pairs of cuspidal representations $(\pi,\sigma)$. 

\begin{thm}\label{thm:intermediate}
Let $\Pi$ be a cohomological conjugate self-dual cuspidal automorphic representation of $\GL_n(\A_F)$ and let $\Pi'=\Pi_1\boxplus...\boxplus\Pi_k$ be a cohomological isobaric automorphic sum on $\GL_{n-1}(\Acm)$, fully induced from distinct conjugate self-dual cuspidal automorphic representations $\Pi_i$. Assume that the highest weight modules $\EE_\mu$ and  $\EE_{\mu'}$ of $\Pi$ and $\Pi'$ satisfy the piano-hypothesis Hyp.\ \ref{hypo:piano}.
Let $\frac{1}{2}+m\in \emph{\textrm{Crit}}(\Pi\times\Pi')$ be a critical point. If $m=0$ we assume that there are algebraic conjugate self-dual Hecke characters $\chi$ and $\chi'$ as in \S\ref{sect:reps}, such that $\Pi_\chi$ is cuspidal, $(\Pi_\chi,\Pi_{\chi'})$ satisfies Hyp.\ \ref{cond} and moreover, that whenever $\mu$ or $\mu_{\chi',i}^{\sf alg}$ is not sufficiently regular, Hyp.\ \ref{hyp a 0} \& \ref{hyp a 1} hold for $\Pi$ resp.\ $\Pi_{\chi',i}^{\sf alg}$.  Then,
$$\frac{L^S(\tfrac12+m, \Pi\times\Pi')}{L^S(1,\Pi,{\rm As}^{(-1)^n}) \ L^S(1,\Pi',{\rm As}^{(-1)^{n-1}})} \sim_{E(\Pi)E(\Pi')} (2\pi i)^{mdn(n-1)-dn(n+1)/2}.$$
and this relation is equivariant under the action of ${\rm Aut}(\C/F^{Gal})$.
\end{thm}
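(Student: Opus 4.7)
The plan is to assemble the theorem directly from our two earlier main results, Thm.\ \ref{thm:generalized grob-harr} for the numerator and Thm.\ \ref{generalized thm:gro-har-lap} for each factor in the denominator, and then to cancel the Whittaker periods on both sides. First I would apply Thm.\ \ref{thm:generalized grob-harr} to the pair $(\Pi,\Pi')$, whose hypotheses are satisfied by assumption (the piano-hypothesis holds, $\Pi_\infty$ and $\Pi'_\infty$ are conjugate self-dual because every $\Pi_i$ is, and the auxiliary non-vanishing hypotheses are carried over in the case $m=0$); this gives
$$L^S(\tfrac12+m,\Pi\times\Pi') \sim_{E(\Pi)E(\Pi')} (2\pi i)^{mdn(n-1)-\tfrac12 d(n-1)(n-2)}\, p(\Pi)\, p(\Pi')\, \mathcal{G}(\omega_{\Pi'_f}),$$
equivariant under $\textrm{Aut}(\C/F^{Gal})$. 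Since $\Pi'$ is conjugate self-dual and cohomological, Rem.\ \ref{remove Gauss sum} allows us to drop the Gau\ss{} sum $\mathcal{G}(\omega_{\Pi'_f})$, as its restriction to $F^+$ is the trivial character.

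Next I would apply Thm.\ \ref{generalized thm:gro-har-lap} twice. Applied to the cuspidal representation $\Pi$ itself (the case $k=1$ of an isobaric sum, where parity automatically forces $\Pi^{\sf alg}=\Pi$) it yields
$$L^S(1,\Pi,{\rm As}^{(-1)^n})\sim_{E(\Pi)} (2\pi i)^{dn}\, p(\Pi),$$
while applied to the isobaric representation $\Pi'=\Pi_1\boxplus\cdots\boxplus\Pi_k$ it yields
$$L^S(1,\Pi',{\rm As}^{(-1)^{n-1}})\sim_{E(\Pi')} (2\pi i)^{d(n-1)}\, p(\Pi'),$$
both equivariant under $\textrm{Aut}(\C/F^{Gal})$. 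At this stage I need to justify taking quotients: by Cor.\ \ref{cor:asai-nonvan} both Asai $L$-values are non-zero, and the Whittaker periods $p(\Pi),\, p(\Pi')$ are non-zero by construction, so Rem.\ \ref{transitive} permits the use of transitivity of $\sim$.

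Dividing the three relations and collecting the power of $(2\pi i)$ gives the exponent
$$mdn(n-1)-\tfrac12 d(n-1)(n-2)-dn-d(n-1) \;=\; mdn(n-1)-\tfrac{d}{2}\bigl[(n-1)(n-2)+2(2n-1)\bigr],$$
and the bracketed quantity simplifies to $n^2-3n+2+4n-2=n(n+1)$, producing exactly the exponent $mdn(n-1)-dn(n+1)/2$ asserted in the theorem. The periods $p(\Pi)$ and $p(\Pi')$ cancel precisely between numerator and denominator. The resulting relation is defined over $E(\Pi)E(\Pi')$, which contains $F^{Gal}$, and the $\textrm{Aut}(\C/F^{Gal})$-equivariance is inherited from the equivariance of the three input relations.

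The main non-routine points in this argument are not any hidden analytical difficulty but rather the careful bookkeeping of fields of definition and of equivariance: in particular one has to invoke the Minimizing-Lemma \ref{minimize} to see that the combined relation really holds over the composite $E(\Pi)E(\Pi')$ rather than over the larger field arising from intermediate steps, and one must confirm that the Gau\ss{} sum factor $\mathcal{G}(\omega_{\Pi'_f})$ is absorbed thanks to the cohomological conjugate self-duality of $\Pi'$. Once these are handled, the proof reduces to the arithmetic identity on exponents above.
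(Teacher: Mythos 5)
Your proposal is correct and follows essentially the same route as the paper's proof: apply Thm.\ \ref{thm:generalized grob-harr} to the numerator (dropping the Gau\ss{} sum via Rem.\ \ref{remove Gauss sum}), apply Thm.\ \ref{generalized thm:gro-har-lap} to each Asai factor, cancel the Whittaker periods, and combine the exponents. Your write-up is in fact more careful than the paper's two-line argument, since you explicitly invoke the non-vanishing of the Asai values and Whittaker periods (Cor.\ \ref{cor:asai-nonvan}, Rem.\ \ref{transitive}) before dividing, and you record the exponent bookkeeping and the use of the Minimizing-Lemma.
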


\begin{proof}
Let $\Pi$ and $\Pi'$ be as stated. By Thm.\ \ref{thm:generalized grob-harr}, see also Rem.\ \ref{remove Gauss sum},
$$
L^{S}(\tfrac 12+m,\Pi \times \Pi') \ \sim_{E(\Pi)E(\Pi')}(2\pi i)^{mdn(n-1)-\frac{1}{2}d(n-1)(n-2)} p(\Pi)\ p(\Pi'). \
$$
By Thm.\ \ref{generalized thm:gro-har-lap}, we have
$L^{S}(1,\Pi,{\rm As}^{(-1)^{n}})\ \sim_{E(\Pi)} \ (2\pi i)^{dn} p(\Pi)$ and $L^{S}(1,\Pi',{\rm As}^{(-1)^{n-1}})\ \sim_{E(\Pi')} \ (2\pi i)^{d(n-1)} p(\Pi')$. This shows the claim.
\end{proof}

\begin{rem}
From the proof we can see that the same strategy works as well for certain non-cuspidal $\Pi$, for example, if $\Pi$ is isobaric sum of Hecke characters.
\end{rem}

\begin{rem}[{\it Relation to Deligne's conjecture}]\label{rem:Deligne}
Due to the absence of our Whittaker periods, it is easiest to interpret Thm.\ \ref{thm:intermediate} from the perspective of Deligne's conjecture on critical values of motivic $L$-functions. Indeed, in Thm.\ \ref{thm:intermediate}, $s_0=\tfrac12+m$ is critical for $L(s,\Pi\times\Pi')$ and $s_0=1$ is critical for $L(s,\Pi,{\rm As}^{(-1)^{n}})$ $L(s,\Pi', {\rm As}^{(-1)^{n-1}})$ in the sense coined by Deligne, cf.\ \cite{deligne79}. Invoking the conjectural dictionary between automorphic representations $\Pi$ and $\Pi'$ and motives, there should hence be irreducible motives $\mathbb M$ and $\mathbb M'$ over $F$ whose attached Deligne periods capture the transcendental part of the respective $L$-value. More precisely, we have:
\begin{eqnarray*}
&&L^S(\tfrac12+m, \Pi\times\Pi')=L^S(m+n-1, \mathbb M\times \mathbb M') =L^S(0, \mathbb M\times \mathbb M'(m+n-1)), \\
&&L^S(1,\Pi,{\rm As}^{(-1)^n})= L^S(1,{\rm As}^{(-1)^n}(\mathbb M))=L^S(0,{\rm As}^{(-1)^n}(\mathbb M)(1)),\\
&&L^S(1,\Pi',{\rm As}^{(-1)^{n-1}})=L^S(1,{\rm As}^{(-1)^{n-1}}(\mathbb M'))=L^S(0,{\rm As}^{(-1)^{n-1}}(\mathbb M')(1)).
\end{eqnarray*}
Moreover, one can show that if $(\Pi,\Pi')$ satisfies the piano-hypothesis, then the Deligne periods are related to each other by the formula
$$c^{+}(\mathbb M\times \mathbb M'(m+n-1)) \sim  (2\pi i)^{mdn(n-1)-dn(n+1)/2}c^{+}( {\rm As}^{(-1)^n}(\mathbb M)(1)) c^{+}( {\rm As}^{(-1)^{n-1}}(\mathbb M')(1))$$
We refer to \S 1 of \cite{harris_adjoint}, when $\tr=\Q$, and to \S 2 of \cite{jie_michael} for general $\tr$. As a consequence, Thm.\ \ref{thm:intermediate} is in perfect fit with Deligne's conjecture, \cite[Conj.\ 2.8]{deligne79}.

One can also compare Thm.\ \ref{generalized thm:gro-har-lap} and Thm.\ \ref{thm:generalized grob-harr} with Deligne's conjecture, though the actual presence of Whittaker periods makes it trickier to interpret our formulas motivically. The difficulty relies in the problem to find a motivic analogue of our Whittaker periods: At least when $\Pi$ and $\Pi'$ descend to unitary groups of all signatures, one can define so-called {\it arithmetic automorphic periods} for these representations (cf.\ \cite{harris97}, \cite{jie-thesis}), which in fact have motivic analogues (cf.\ \S 4 of \cite{jie_michael}). The final bridge between Whittaker periods and arithmetic automorphic periods is then provided by \cite{grob-harr} and \cite{jie-thesis}. We remark that there is an archimedean factor left undecided in the underlying relations. By a strategy, similar to the one presented here, one can show however that this archimedean factor is also equivalent to a power of $2\pi i$. One can then compare the Whittaker periods with the Deligne periods.

\end{rem}

\section{Our main theorem on the refined GGP-conjecture for unitary groups}\label{sect:GGP}
\subsection{The framework}\label{sect:ggp}
Let $\E/\F$ be a field extension of number fields of degree $\dim_\F\E\leq 2$ and $c$ the unique automorphism of $\E$ which has $\F$ as fixed points $\E^{c=\triv}=\F$ (e.g., $\E=F$ and $\F=F^+$ from \S \ref{sect:fields}). Let $\V$ be a finite dimensional vector space over $\E$ and let $\<\cdot,\cdot\>:\mathcal V\times \V\ra\E$ be a non-degenerate, $c$-sesquilinear Hermitian pairing. The connected component of the identity of the group of isometries with respect to $(\V,\<\cdot,\cdot\>)$ is denoted $\G(\V)$ and a reductive algebraic group over $\F$ (e.g., $\V=V_n$ and $\G(\V)=H_n$ from \S \ref{sect:alggrp}). Not to interfere with low-rank cases, we will have to assume tacitly that $\dim_\E(\V)+[\E:\F]\geq 4$ (e.g, that $n\geq 2$ in the notation of \S\ref{sect:alggrp}).\\\\
Let $\W\subset \V$ be a non-degenerate subspace of $\V$ of odd codimension $\dim_\E(\W^\bot)=2r+1$, whose orthogonal complement contains an isotropic subspace $\mathcal X$ of dimension $r\geq 0$ (i.e., $\W$ is $r$-split). (Here, for reasons of precision, we assume that $\G(\W)$ is not split if $\dim_\E(\W)=2$.) 
We define $\mathscr P=\mathscr P_{\mathfrak F}$ to be the parabolic subgroup of $\G(\V)$, which stabilizes a fixed complete flag $\mathfrak F$ of $r+1$ isotropic $\E$-subspaces in $\mathcal X$ and let $\G(\W)$ be defined as above, replacing $\V$ by $\W$. Then there are natural inclusions $\G(\W)\hra \mathscr P_{\mathfrak F}\hra \G(\V)$, where $\G(\W)$ embeds into a Levi subgroup of $\mathscr P$, whence it acts naturally by conjugation on the unipotent radical $\mathscr N=\mathscr N_{\mathfrak F}$ of $\mathscr P$. We set $\mathscr H:= \G(\W)\rtimes \mathscr N$, which is again a natural subgroup of $\G(\V)$. For all the above we refer to \cite{ggp}, \S 2 and \S 12.\\\\
In what follows $\A=\A_\F$. We chose a generic (and hence by definition unitary) character
$$\psi_{\mathfrak F}=\otimes_{v}\psi_{\mathfrak F,v}:\mathscr N_{\mathfrak F}(\F)\backslash \mathscr N_{\mathfrak F}(\A)\ra\C^\times,$$
which is invariant under conjugation by $\G(\W)(\A)$ and define the form
$$\Psi_{\psi_{\mathfrak F}}(\varphi)(g):=\int_{\mathscr N_{\mathfrak F}(\F)\backslash \mathscr N_{\mathfrak F}(\A)}\varphi(n) \ \psi_{\mathfrak F}(ng)^{-1} \ dn,$$
for an automorphic form $\varphi$ of $\G(\V)(\A)$ and the Tamagawa measure $dn$ of $\mathscr N_{\mathfrak F}(\A)$. Since the domain of integration is compact, the integral converges absolutely. Now,  let $\pi_\V$ (resp.\ $\pi_\W$) be a cuspidal automorphic representation of $\G(\V)(\A)$ (resp.\ $\G(\W)(\A)$) and $\varphi\in\pi_\V$ (resp.\ $\varphi'\in\pi_\W$) be a cusp form. Then the {\it global period integral}
\begin{equation}\label{eq:p}
\mathcal P(\varphi,\varphi'):=\int_{\G(\W)(\F)\backslash\G(\W)(\A)} \Psi_{\psi_{\mathfrak F}}(\varphi)(g') \ \varphi'(g') \ dg'
\end{equation}
is absolutely convergent. Again, $dg'$ denotes the Tamagawa measure on $\G(\W)(\A)$.\\\\
Suppose now in addition that $\pi_\V$ and $\pi_\W$ are tempered at all places and let $S$ be any finite set of places containing all archimedean places and the places where $\pi_\V$ or $\pi_\W$ ramify. Then the partial $L$-function $L^S(s,\pi_\V\boxtimes\pi_\W)$ is defined with respect to the local Satake-parameters of $\pi_\V$ and $\pi_\W$ outside $S$ and the representation
$$R=\left\{\begin{array}{ll}
 {\rm St}\otimes{\rm St} & \textrm{if $\E=\F$} \\
 {\rm Ind}_{\widehat{\G(\V)\times\G(\W)}}^{^L(\G(\V)\times\G(\W))}[{\rm St}\otimes{\rm St}] & \textrm{if $[\E:\F]=2$}
\end{array}
\right. $$
of the $L$-group $^L(\G(\V)\times\G(\W))$. Here, ${\rm St}$ denotes the standard representation of the respective factor. We have to assume that this $L$-function allows a meromorphic continuation to whole $s$-plane.\\\\
Then, in the situation at hand, the GGP-conjecture asserts 
\begin{conj}[\cite{ggp}, Conj.\ 24.1]\label{conj:GGP}
Let $\pi_\V$ and $\pi_\W$ be tempered cuspidal automorphic representations of $\G(\V)(\A)$ resp.\ $\G(\W)(\A)$, which appear with multiplicity one in the cuspidal spectrum. Then the following statements are equivalent:
\begin{itemize}
\item[(i)] $L^S(\frac12,\pi_\V\boxtimes\pi_\W)\neq 0$ and $\dim_\C {\rm Hom}_{\mathscr H(\A)}[\pi_\V\otimes\pi_\W,\psi_{\mathfrak F}]=1$
\item[(ii)] $\mathcal P(\varphi,\varphi')\neq 0$ for some cusp forms $\varphi\in\pi_\V$ and $\varphi'\in\pi_\W$.
\end{itemize}
\end{conj}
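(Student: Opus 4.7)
The natural plan is to deduce the equivalence from the refined Ichino--Ikeda--Liu formula, exactly of the form adumbrated in \eqref{eq:introggp}, which is the central tool for any quantitative approach to Conjecture \ref{conj:GGP}. Both implications will be extracted from this formula together with the local multiplicity-one statement of the local GGP-conjecture, but the two directions behave very differently in difficulty, and it is the direction from the non-vanishing of $L^S(\tfrac12,\pi_\V\boxtimes\pi_\W)$ to the non-vanishing of the period which will constitute the main obstacle.

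First I would establish the local pillar: for every place $v$ of $\F$, the $\mathscr H(\F_v)$-invariant Hom-space $\mathrm{Hom}_{\mathscr H(\F_v)}[\pi_{\V,v}\otimes\pi_{\W,v},\psi_{\mathfrak F,v}]$ is at most one-dimensional. In our unitary--orthogonal setting this is the theorem of Aizenbud--Gourevitch--Rallis--Schiffmann at the non-archimedean places and of Sun--Zhu at the archimedean places, so this step is unconditional. In parallel, for each $v$ one defines the stabilized local integral $\alpha_v(\varphi_v,\varphi'_v)$ of matrix coefficients against $\psi_{\mathfrak F,v}$; the local multiplicity-one result is what makes $\alpha_v$ a well-defined element of the one-dimensional space of $\mathscr H(\F_v)$-invariant bilinear forms (up to a normalization by the local $L$- and $\varepsilon$-factors).

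The second step is to prove the global refined formula itself, asserting the identity \eqref{eq:introggp} for tempered cuspidal $(\pi_\V,\pi_\W)$ appearing with multiplicity one. With this in hand the implication (ii)$\Rightarrow$(i) is essentially formal: if $\mathcal P(\varphi,\varphi')\neq 0$ for some decomposable cusp forms, then the left-hand side of \eqref{eq:introggp} is non-zero, forcing both the central $L$-value $L^S(\tfrac12,\pi_\V\boxtimes\pi_\W)$ and the global product $\prod_v\alpha_v(\varphi_v,\varphi'_v)$ to be non-zero; non-vanishing of each $\alpha_v$ in turn forces the associated local Hom-space to be exactly one-dimensional at every place. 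Combined with the local dichotomy in the Vogan L-packet (isolating the unique member of the packet that supports a non-zero local invariant functional) this yields statement (i).

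The converse implication (i)$\Rightarrow$(ii) is the genuinely hard half and will require more than the formula itself. The strategy I would follow is the Jacquet--Rallis relative trace formula: one writes a geometric--spectral identity on $(\GL_n\times\GL_{n-1})/F^+$ whose spectral side detects $L^S(\tfrac12,\Pi\times\Pi')$, and a parallel identity on the unitary side whose spectral side detects $|\mathcal P(\varphi,\varphi')|^2$ for $\pi_\V,\pi_\W$ lying in the Vogan packet of the base change. A comparison via smooth transfer and the Jacquet--Rallis fundamental lemma (established by Yun, with archimedean and higher-rank refinements by Chaudouard--Zydor, Zhang, and Beuzard-Plessis) transports the non-vanishing of the $\GL$-side distribution to non-vanishing on the unitary side, which in turn produces some $(\varphi,\varphi')$ with $\mathcal P(\varphi,\varphi')\neq 0$ on the member of the packet singled out by the product of local Hom-spaces. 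The main obstacle, and the step I expect to absorb essentially all the technical difficulty, is the proof of \eqref{eq:introggp} and of the relative trace formula comparison in full generality: current unconditional results (as cited in the paper, e.g.\ \cite{zhang}, \cite{beuzard-plessis16}) require local supercuspidality of $\pi\otimes\pi'$ at a split place and various archimedean restrictions, and these are precisely the restrictions which block an unconditional proof of Conjecture \ref{conj:GGP} at present; removing them in the archimedean and in the ramified non-supercuspidal setting is where the real work lies.
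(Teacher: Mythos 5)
You were asked to prove a statement that the paper itself does not prove: Conjecture \ref{conj:GGP} is the global Gan--Gross--Prasad conjecture, quoted verbatim from \cite{ggp} (Conj.~24.1) purely as background for the refined conjectures of Liu and N.~Harris. The paper never claims it; its actual contribution (Thm.~D, i.e.\ Thm.~\ref{thm:mainmain}) is an \emph{algebraic} version of the \emph{refined} conjecture for codimension-one, totally definite unitary groups, proved by rationality of critical $L$-values and avoiding the trace formula altogether. So there is no proof in the paper to compare yours against, and your text has to stand on its own as a proposed proof of an open conjecture.

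Judged that way, it has a genuine gap: it is a research program, not a proof. Both implications are made to rest on the refined formula \eqref{eq:introggp}, which in this paper is itself only a conjecture (Liu's Conj.~\ref{conj:Liu}, specializing to Conj.~\ref{conj:II} and \ref{conj:NH}); deducing Conjecture \ref{conj:GGP} from a strictly stronger unproven statement is not a proof. The only unconditional piece of your argument is part of (ii)$\Rightarrow$(i): if $\mathcal P\not\equiv 0$ then, since $\mathcal P$ is itself an element of ${\rm Hom}_{\mathscr H(\A)}[\pi_\V\otimes\pi_\W,\psi_{\mathfrak F}]$, local multiplicity one (Aizenbud--Gourevitch--Rallis--Schiffmann, Sun--Zhu) gives $\dim_\C {\rm Hom}=1$; but the non-vanishing of $L^S(\tfrac12,\pi_\V\boxtimes\pi_\W)$ from the non-vanishing of the period already needs the refined formula or the Jacquet--Rallis comparison, and the converse direction needs them even more. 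As you yourself concede in the final paragraph, the available unconditional inputs (\cite{zhang}, \cite{beuzard-plessis16}) carry supercuspidality and ramification restrictions, and removing them is precisely the open problem; invoking ``smooth transfer and the fundamental lemma'' does not by itself bridge that. So the proposal correctly identifies the standard strategy in the literature but does not close it, and it should not be presented as a proof of the statement.
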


We remark that, strictly speaking, this is an interpretation of the GGP-conjecture, because it assumes the (expected) holomorphy and non-vanishing for $s>0$ of the (still partly mysterious) local $L$-function at the ramified places. Moreover, in view of the focus of this paper, we restricted our attention to sesquilinear forms of sign 1, while the original GGP-conjecture allows sign -1 as well. On the other hand, however, \cite{ggp} deals only with quasisplit groups, a restriction, which we avoided.

\subsection{Refinements of the GGP-conjecture}\label{sect:ref_ggp}
In the last couple of years the GGP-Conjecture has undergone a series of increasingly general refinements, which shade a significant amount of new light on the original conjecture of GGP. As it will be of great importance for our major application to know them precisely, we have to recall them shortly. We define $L$-functions $L^S(s,\pi_\V,{\rm Ad})$ (resp.\ $L^S(s,\pi_\W,{\rm Ad})$) of $\pi_\V$ (resp.\ $\pi_\W$) with respect to the Satake parameters and the adjoint representation $R={\rm Ad}$ of the $L$-group $^L\G(\V)$ (resp.\ $^L\G(\W)$). Again, we shall suppose that these $L$-functions are meromorphically continuable to all $s\in\C$ and moreover, that they do not vanish at $s=1$ (Note that in this generality these $L$-functions don't come under the purview of \cite{shahidi_certainL}, Thm.\ 5.1).\\\\
Recall now that $dg'$ denotes the Tamagawa measure on $\G(\W)$. We choose, once and for all, local Haar measures $dg'_v$ at all places $v$ of $\F$, such that the following holds
\begin{enumerate}
\item $dg'=\prod_v dg'_v$
\item $vol_{dg'_v}(\mathscr O_v)\in\Q$ for all open subsets $\mathscr O_v$ of $\G(\W)(\F_v)$, if $v$ is non-archimedean
\item $vol_{dg'_v}(\mathscr K_v)=1$ for a hyperspecial maximal compact subgroup $\mathscr K_v$ of $\G(\W)(\F_v)$, for $\G(\W)$ unramified at $v$.
\end{enumerate}
Next, pin down a factorization $\pi_\V\cong\otimes'_v\pi_{\V,v}$  
which is compatible with the factorization of global and local inner products, i.e., for the usual $L^2$-product
$$\<\varphi_1,\varphi_2\>^{\V}_\A = \int_{\G(\V)(F)\backslash\G(\V)(\A)} \varphi_1(g) \ \overline{\varphi_2(g)} \ dg,$$
$dg$ denoting the Tamagawa measure, and the given inner products $\<\cdot,\cdot\>^\V_v$ on the Hilbert spaces underlying $\pi_{\V,v}$, we have
$$\<\varphi_1,\varphi_2\>^{\V}_\A = \prod_v \<\varphi_{1,v},\varphi_{2,v}\>^{\V}_v$$
for decomposable data $\varphi_i=\otimes'_v \varphi_{i,v}$, $i=1,2$. Likewise, we fix a factorization $\pi_\W\cong\otimes'_v\pi_{\W,v}$.\\\\
Let now $v$ be any non-archimedean place of $\F$. For any pair of (smooth) vectors $\varphi_v, \phi_v\in\pi_{\V,v}$, the integral
$$\int_{\mathscr N_{\mathfrak F}(\F_v)} \<\pi_{\V,v}(n_v)\varphi_v,\phi_v\>^\V_v \ \psi_{\mathfrak F,v}(n_v)^{-1} \ dn_v,$$
with $dn_v$ being the self-dual measure, stabilizes at some compact open subgroup $\mathscr N_0\subseteq \mathscr N_{\mathfrak F}(\F_v)$, i.e., for all compact open subgroups $\mathscr N_1\supseteq \mathscr N_0$, integration of $\<\pi_{\V,v}(n_v)\varphi_v,\phi_v\>^\V_v \ \psi_{\mathfrak F,v}(n_v)^{-1}$ over $\mathscr N_1$ and $\mathscr N_0$ gives rise to the same value, denoted
$$\int^{\sf st}_{\mathscr N_{\mathfrak F}(\F_v)} \<\pi_{\V,v}(n_v)\varphi_v,\phi_v\>^\V_v \ \psi_{\mathfrak F,v}(n_v)^{-1} \ dn_v.$$
See \cite{lapidmao}, \S 2.1, in particular Prop.\ 2.3. For our tempered cuspidal automorphic representations $\pi_\V\cong\otimes'_v\pi_{\V,v}$ and $\pi_\W\cong\otimes'_v\pi_{\W,v}$ and decomposable cuspidal automorphic forms $\varphi=\otimes'_v \varphi_{v}$ and $\varphi'=\otimes'_v \varphi'_{v}$ we may define
\begin{equation}\label{eq:alpha}
\alpha_v(\varphi_v,\varphi'_v):=\int_{\G(\W)(\F_v)} \left(\int^{\sf st}_{\mathscr N_{\mathfrak F}(\F_v)} \<\pi_{\V,v}(g'_v n_v)\varphi_v,\varphi_v\>^\V_v \ \psi_{\mathfrak F,v}(n_v)^{-1} \ dn_v\right) \ \overline{\<\pi_{\W,v}(g'_v)\varphi'_v,\varphi'_v\>}^\W_v \ dg'_v.
\end{equation}
By one of the main results in \cite{liu}, Thm.\ 2.1, $\alpha_v(\varphi_v,\varphi'_v)$ is absolutely convergent and $\alpha_v(\varphi_v,\varphi'_v)\geq 0$.\\\\
If $v$ is archimedean, let $\mathscr C_v$ (resp.\ $\mathscr C'_v$) be a maximal compact subgroup of $\G(\V)(\F_v)$ (resp.\ $\G(\W)(\F_v)$) and let $\varphi_v$ (resp.\ $\varphi'_v$) be a $\mathscr C_v$-finite (resp.\ $\mathscr C'_v$-finite) function in $\pi_{\V,v}$ (resp.\ $\pi_{\W,v}$). For such functions we define $\alpha_v(\varphi_v,\varphi'_v)$ as the Fourier transform of the tempered distribution given by the absolutely convergent integral (cf.\ \cite{liu}, Cor.\ 3.13 and \cite{sun}, Thm.\ 1.2)
$$\a_{\varphi_v,\varphi'_v}(n_v) :=\int_{\mathscr N_{\mathfrak F}(\F_v)_{-\infty}\times\G(\W)(\F_v)} \<\pi_{\V,v}(n_v n'_v g'_v)\varphi_v,\varphi_v\>^\V_v \ \overline{\<\pi_{\W,v}(g'_v)\varphi'_v,\varphi'_v\>}^\W_v \ dn'_v dg'_v,$$
(where $n_v\in \mathscr N_{\mathfrak F}(\F_v)$ and $\mathscr N_{\mathfrak F}(\F_v)_{-\infty}$ denotes the subset of matrices in $\mathscr N_{\mathfrak F}(\F_v)$, which are $0$ at those off-diagonal matrix-entries, on which $\psi_{\mathfrak F,v}$ is defined, cf.\ \cite{liu}, p.\ 155) evaluated at the generic character $\psi_{\mathfrak F,v}$,
\begin{equation}\label{eq:alpha_inf}
\alpha_v(\varphi_v,\varphi'_v):= \widehat{\a_{\varphi_v,\varphi'_v}}(\psi_{\mathfrak F,v}).
\end{equation}
By \cite{liu}, Thm.\ 2.1, $\alpha_v(\varphi_v,\varphi'_v)\geq 0$. If $\pi_{\V,v}$ is in the discrete series, then $\alpha_v(\varphi_v,\varphi'_v)$ is known to be absolutely convergent, see \cite{liu}, Prop.\ 3.15.\\\\
Set
$$\Delta_{\G(\V)}:= \left\{\begin{array}{ll}
 \prod_{i=1}^n \zeta_\F(2i) & \textrm{if $\E=\F$ and $\dim_\E \V=2n+1$} \\
  \prod_{i=1}^{n-1} \zeta_\F(2i)\cdot L(n,\chi_{\V,f}) & \textrm{if $\E=\F$ and $\dim_\E\V=2n$}\\
  \prod_{i=1}^n L(i,\epsilon_f^i) & \textrm{if $[\E:\F]=2$ (and $\dim_\E\V=n$)}
\end{array}
\right.$$
where $\chi_{\V}$ (resp.\ $\epsilon$) denotes the quadratic Hecke character $\F^\times\backslash\A^\times\ra\C^\times$ associated with the discriminant of $\<\cdot,\cdot\>_\V$ (resp.\ with the quadratic extension $\E:\F$ by class field theory) in the second (resp.\ in the last) line.\\\\
As a final ingredient, we invoke the theory of global Arthur packets for the square-integrable automorphic spectrum of $\G(\V)(\A)$ and $\G(\W)(\A)$. It is expected that $\pi_\V$ should be associated with a tempered elliptic Arthur parameter
$$\Psi(\pi_\V):\mathcal L_\F \ra ^L\G(\V),$$
uniquely determined by $\pi_\V$. We define $\mathcal S_{\pi_\V}:={\rm Cent}_{\widehat{\G(\V)}}({\rm Im}\Psi(\pi_\V))$ to be the centralizer of the image of $\Psi(\pi_\V)$ in the Langlands dual group $\widehat{\G(\V)}$. Analogously, one obtains $\mathcal S_{\pi_\W}:={\rm Cent}_{\widehat{\G(\W)}}({\rm Im}\Psi(\pi_\W))$. Both are a elementary 2-abelian groups.\\\\
Liu's refinement of the GGP-conjecture now provides a comparison of two adelic pairings, the key-ingredient of this comparison being that one of them is defined {\it ad hoc} globally (by \eqref{eq:p}) while the other is only defined {\it ex post} globally (by forming the product over all places $v$ of the integrals \eqref{eq:alpha}). Here is Liu's conjecture 

\begin{conj}[\cite{liu}, Conj.\ 2.5]\label{conj:Liu}
Let $\pi_\V\cong\otimes'_v\pi_{\V,v}$ (resp.\ $\pi_\W\cong\otimes'_v\pi_{\W,v}$) be a tempered cuspidal automorphic representation of $\G(\V)(\A)$ (resp.\ $\G(\W)(\A)$) coming together with a fixed tensor product factorization, which is compatible with the factorization of global and local inner products, and appearing with multiplicity one in the cuspidal spectrum. Let $S$ be any finite set of places of $\F$, containing the archimedean ones and such that $\pi_\V$, $\pi_\W$ and $\psi_{\mathfrak F}$ are unramified outside $S$. Then for all decomposable $\mathscr C_\infty$-finite (resp.\ $\mathscr C'_\infty$-finite) smooth functions $\varphi=\otimes'_v\varphi_{v}\in\pi_\V$ resp.\ $\varphi'=\otimes'_v\varphi'_{v}\in\pi_\W$,
\begin{itemize}
\item the Fourier transform $\alpha_v(\varphi_v,\varphi'_v)$ is absolutely convergent for all archimedean places $v$,
\item $\alpha_v\not\equiv 0$ if and only if $\dim_\C{\rm Hom}_{\mathscr H(\F_v)}[\pi_{\V,v}\otimes\pi_{\W,v},\psi_{\mathfrak F, v}]=1$ for all $v$
\end{itemize}
and one obtains the identity
$$|\mathcal P(\varphi,\varphi')|^2 = \frac{1}{|\mathcal S_{\pi_\V} |\cdot |\mathcal S_{\pi_\W}|} \ \frac{\Delta_{\G(\V)} \ L^S(\tfrac12, \pi_\V\boxtimes\pi_\W)}{L^S(1,\pi_\V,{\rm Ad}) \ L^S(1,\pi_\W,{\rm Ad})} \ \prod_{v\in S} \alpha_v(\varphi_v,\varphi'_v).$$
\end{conj}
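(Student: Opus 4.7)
The plan is to decompose the conjecture into its three assertions and attack each in sequence, with the main identity approached via the Jacquet--Rallis relative trace formula. For the archimedean convergence of $\alpha_v(\varphi_v,\varphi'_v)$, I would sharpen Liu's matrix-coefficient estimate \cite{liu}, Prop.\ 3.15 beyond the discrete-series case: combining Harish-Chandra $\Xi$-function bounds with a Cartan decomposition on $\G(\W)(\F_v)$ shows that the integrand defining $\a_{\varphi_v,\varphi'_v}$ decays rapidly enough to be Schwartz transverse to $\mathscr N_{\mathfrak F}(\F_v)_{-\infty}$, so that its Fourier transform at the generic character $\psi_{\mathfrak F,v}$ is absolutely convergent. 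The equivalence of non-vanishing of $\alpha_v$ with $\dim_\C \mathrm{Hom}_{\mathscr H(\F_v)}[\pi_{\V,v}\otimes\pi_{\W,v},\psi_{\mathfrak F,v}] = 1$ is essentially the local Gan--Gross--Prasad conjecture, known at non-archimedean places (Beuzart-Plessis, Waldspurger, via the local relative trace formula) and at archimedean places (Beuzart-Plessis--Sun--Xue via character identities); these inputs can be imported verbatim.

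The global identity is the core. I would set up two companion Jacquet--Rallis relative trace formulae: one on $\mathrm{GL}_n(\A_F)\times\mathrm{GL}_{n+1}(\A_F)$ integrating kernel functions against the subgroup $\mathrm{GL}_n(\A_{F^+})\times\mathrm{GL}_{n+1}(\A_{F^+})$ twisted by the quadratic Hecke character $\varepsilon$, and a second on $\G(\V)\times\G(\W)$ integrated along the diagonally embedded $\G(\W)$. On the unitary spectral side the contribution of $\pi_\V\otimes\pi_\W$ produces $|\mathcal P(\varphi,\varphi')|^2$ times a product of local pairings to be identified with $\prod_{v\in S}\alpha_v(\varphi_v,\varphi'_v)$ through the measure normalizations fixed in \S\ref{sect:ref_ggp}. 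On the $\mathrm{GL}$-spectral side the analogous cuspidal contribution of the quadratic base changes $\mathrm{BC}(\pi_\V)\otimes\mathrm{BC}(\pi_\W)$ is, by Flicker--Rallis and Jacquet--Piatetski-Shapiro--Shalika unfolding, expressed as $L^S(\tfrac12,\pi_\V\boxtimes\pi_\W)$ divided by the residue factors producing $L^S(1,\pi_\V,\mathrm{Ad})$ and $L^S(1,\pi_\W,\mathrm{Ad})$, together with the Gross motive constant $\Delta_{\G(\V)}$; the prefactor $1/(|\mathcal S_{\pi_\V}|\cdot|\mathcal S_{\pi_\W}|)$ enters via Arthur's multiplicity formula for the endoscopic decomposition of the unitary cuspidal packets.

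The hardest part will be the comparison of the geometric sides of the two trace formulae. This rests on three transfer-theoretic pillars: the Jacquet--Rallis fundamental lemma at unramified places (Yun, Beuzart-Plessis--Chaudouard), smooth transfer at non-archimedean ramified places (Zhang, Beuzart-Plessis), and smooth transfer at archimedean places, which is currently established only in restricted cases and constitutes the principal obstacle. Once the geometric comparison is in hand, a spectral localization argument \`a la Zhang is needed to isolate the contribution of the single automorphic representation $\pi_\V\otimes\pi_\W$ on both sides; the tempered-elliptic Arthur parameter together with the multiplicity-one hypothesis provide the required cut-out. The local factors obtained on the unitary side then match term-by-term with $\alpha_v(\varphi_v,\varphi'_v)$, which finishes the proof. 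Under a local supercuspidality assumption, this program has been carried out by Zhang and refined by Beuzart-Plessis; removing that assumption therefore reduces to completing the archimedean smooth transfer in full generality, a step for which no general method is yet available and which we regard as the chief obstacle to an unconditional proof.
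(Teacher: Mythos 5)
Note first that the statement you were asked about is not a theorem of the paper at all: it is Liu's refined Gan--Gross--Prasad conjecture (\cite{liu}, Conj.\ 2.5), quoted in the paper as Conjecture \ref{conj:Liu} precisely because no proof of it is known, and the paper never claims one. Your proposal is, in substance, a survey of the Jacquet--Rallis relative trace formula program of Zhang and Beuzart-Plessis, and by your own admission it is conditional: the archimedean smooth transfer (and, in the existing literature, the local supercuspidality hypothesis needed to truncate and compare the two relative trace formulae) is exactly the missing step. Conceding that ``no general method is yet available'' for this step means the proposal does not establish the identity $|\mathcal P(\varphi,\varphi')|^2 = \tfrac{1}{|\mathcal S_{\pi_\V}||\mathcal S_{\pi_\W}|}\,\Delta_{\G(\V)}\,L^S(\tfrac12,\pi_\V\boxtimes\pi_\W)/\bigl(L^S(1,\pi_\V,{\rm Ad})L^S(1,\pi_\W,{\rm Ad})\bigr)\prod_{v\in S}\alpha_v(\varphi_v,\varphi'_v)$ in any case beyond what \cite{zhang} and \cite{beuzard-plessis16} already cover; as a proof of the conjecture it therefore has a genuine, and presently unbridgeable, gap. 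Smaller points: the archimedean convergence of $\alpha_v$ cannot be handled by a generic $\Xi$-function estimate ``beyond the discrete-series case'' as if it were routine -- in the cohomological tempered setting the paper uses, convergence is guaranteed because such representations are discrete series, i.e.\ exactly the case of \cite{liu}, Prop.\ 3.15; and the local GGP equivalence you invoke ``verbatim'' is itself part of what the refined conjecture packages, so importing it does not shorten the path to the global identity.

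It is also worth contrasting your route with what the paper actually does with this conjecture. The authors deliberately avoid the trace formula: their Thm.\ \ref{thm:mainmain} proves only an \emph{algebraic} version of the identity, namely $|\mathcal P(\varphi,\varphi')|^2 \sim_{E(\pi)E(\pi')}$ the right-hand side, for totally definite unitary groups of codimension one. The proof combines Thm.\ \ref{thm:intermediate} -- the explicit evaluation of $L^S(\tfrac12+m,\Pi\times\Pi')/\bigl(L^S(1,\Pi,{\rm As}^{(-1)^n})L^S(1,\Pi',{\rm As}^{(-1)^{n-1}})\bigr)$ as a power of $(2\pi i)$, obtained from the rationality theorems for Rankin--Selberg and Asai $L$-functions and the determination of the archimedean factors -- with Harris's rationality results for the global period $|\mathcal P(\varphi,\varphi')|^2$ and for the local integrals $\alpha_v$ on rational vectors, plus $\Delta_{\G(\V)}\sim_{F^{Gal}}(2\pi i)^{dn(n+1)/2}$. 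This buys freedom from any supercuspidality or transfer hypothesis, at the cost of losing the exact constant $1/(|\mathcal S_{\pi_\V}||\mathcal S_{\pi_\W}|)$ and the ability to detect non-vanishing. If you want to contribute in the direction of the paper, the realistic target is this algebraicity statement, not the exact identity your proposal aims at.
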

Again, our Conj.\ \ref{conj:Liu} amounts to a ``sanded'' version of Liu's original conjecture \cite{liu}, Conj.\ 2.5: On the one hand, we believe it is more convenient to simply assume that our cusp forms are tempered and appear with multiplicity one. This emulates Liu's assumption of being {\it almost locally generic}, but also has the advantage that it is (conjecturally) even less restrictive than his original genericity-supposition and avoids moreover all difficulties arising from questions of convergence of $\alpha_v(\varphi_v,\varphi'_v)$ at non-archimedean places. On the other hand, we have to assume the well-expected local properties of our $L$-functions at $v\in S$ of being holomorphic and non-zero for $s>0$.

Specifying on the data entering Conj.\ \ref{conj:Liu}, one retrieves the older conjectures of Ichino-Ikeda, \cite{ichino_ikeda}, Conj. 2.1 and N.\ Harris, \cite{neil_harris}, Conj.\ 1.3:

\begin{conj}[Ichino-Ikeda]\label{conj:II}
This is Conj.\ \ref{conj:Liu} with $r=0$ and $\E=\F$.
\end{conj}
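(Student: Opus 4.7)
The statement is tautological in the sense that Conj.\ \ref{conj:II} is \emph{defined} as the specialization of Conj.\ \ref{conj:Liu} to $r=0$ and $\E=\F$; what truly requires justification is that this specialization recovers the original Ichino--Ikeda conjecture (Conj.\ 2.1 of \cite{ichino_ikeda}). My plan is to carry out this matching systematically, by unwinding each ingredient of Conj.\ \ref{conj:Liu} under these parameter choices and checking that it reproduces the corresponding object in \cite{ichino_ikeda}.

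First, I would unpack the geometric data. With $r=0$, the fixed isotropic flag $\mathfrak F$ has length $r+1=1$, so it is the trivial flag, forcing $\mathscr P_{\mathfrak F}=\G(\V)$, $\mathscr N_{\mathfrak F}=1$, $\mathscr H=\G(\W)$, and collapsing $\psi_{\mathfrak F}$ to the trivial character on the trivial group. The Fourier-type component $\Psi_{\psi_{\mathfrak F}}(\varphi)$ thus reduces to $\varphi$ itself, and the global period \eqref{eq:p} becomes the classical orthogonal period
\[
\mathcal P(\varphi,\varphi')=\int_{\G(\W)(\F)\backslash\G(\W)(\A)}\varphi(g')\,\varphi'(g')\,dg'.
\]
Likewise, the local pairings $\alpha_v(\varphi_v,\varphi'_v)$ from \eqref{eq:alpha} and \eqref{eq:alpha_inf} simplify to pure matrix-coefficient integrals---precisely the local Ichino--Ikeda integrals.

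Second, I would verify the $L$-factor and normalizing-constant match. With $\E=\F$, the representation $R={\rm St}\otimes{\rm St}$ used to define $L^S(s,\pi_\V\boxtimes\pi_\W)$ coincides with the Rankin--Selberg tensor-product $L$-function employed by Ichino--Ikeda, and the adjoint $L$-functions $L^S(s,\pi_\V,{\rm Ad})$ and $L^S(s,\pi_\W,{\rm Ad})$ agree verbatim with theirs. The global constant $\Delta_{\G(\V)}$ reduces to $\prod_{i=1}^n\zeta_\F(2i)$ or $\prod_{i=1}^{n-1}\zeta_\F(2i)\cdot L(n,\chi_{\V,f})$ depending on the parity of $\dim_\E\V$, matching Ichino--Ikeda's normalizing factor, while the component-group cardinalities $|\mathcal S_{\pi_\V}|$ and $|\mathcal S_{\pi_\W}|$ attached to the (conjectural) tempered elliptic Arthur parameters reproduce their $2^{-a}$.

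The main obstacle to turning this into a rigorous verification, as opposed to a purely formal substitution, lies in reconciling the normalizations of measures---ensuring that the Tamagawa measure on $\G(\W)(\A)$ and the local Haar measures $dg'_v$ are consistent across both formulations---and, at archimedean places, in verifying that Liu's definition of $\alpha_v$ via the Fourier transform \eqref{eq:alpha_inf} agrees with Ichino--Ikeda's definition as a regularized matrix-coefficient integral in the sense of Harish-Chandra, as discussed in \cite{liu}, Rem.\ 2.6. Once these measure-theoretic bookkeeping matters are settled, Conj.\ \ref{conj:II} follows by pure substitution into Conj.\ \ref{conj:Liu}.
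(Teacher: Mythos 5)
This statement is purely definitional in the paper---Conj.\ \ref{conj:II} is just the name attached to the specialization of Conj.\ \ref{conj:Liu} at $r=0$ and $\E=\F$, so the paper offers no proof to compare against---and your unwinding is correct: the flag is trivial, so $\mathscr N_{\mathfrak F}=1$, $\mathscr H=\G(\W)$, the period \eqref{eq:p} and the local pairings \eqref{eq:alpha}, \eqref{eq:alpha_inf} collapse to the orthogonal period and matrix-coefficient integrals of Ichino--Ikeda, while $R={\rm St}\otimes{\rm St}$, $\Delta_{\G(\V)}$ and $|\mathcal S_{\pi_\V}|\cdot|\mathcal S_{\pi_\W}|$ reproduce their $L$-function and normalizing constants. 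Your identification is exactly how the paper (following \cite{liu}) intends the statement, so nothing further is required.
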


\begin{conj}[N. Harris]\label{conj:NH}
This is Conj.\ \ref{conj:Liu} with $r=0$ and $[\E:\F]=2$.
\end{conj}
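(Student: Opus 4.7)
The plan is to attack Conjecture~\ref{conj:NH} via the Jacquet--Rallis relative trace formula (RTF), which at present is the principal viable technique for establishing an exact analytic identity in the unitary-group setting. One introduces two global RTFs in parallel: the \emph{unitary} RTF on $\G(\V) \times \G(\W)$ with respect to the diagonally embedded copies of $\G(\W)$, whose spectral expansion produces contributions of the shape $|\mathcal{P}(\varphi,\varphi')|^2$; and the \emph{linear} RTF on $\GL_n(\Acm) \times \GL_{n-1}(\Acm)$ with respect to the Galois-fixed subgroups $\GL_n(\Atr) \times \GL_{n-1}(\Atr)$, whose spectral expansion, via the Flicker--Rallis integral representation, produces precisely the ratio $L^S(\tfrac12,\Pi\times\Pi')\,/\,L^S(1,\Pi,{\rm As}^{(-1)^n})\,L^S(1,\Pi',{\rm As}^{(-1)^{n-1}})$, where $\Pi = BC(\pi_\V)$ and $\Pi'=BC(\pi_\W)$. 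Comparing these two RTFs side by side and isolating the $(\pi_\V, \pi_\W)$-isotypic components is designed to yield the central identity, with the Gross-motive prefactor $\Delta_{\G(\V)}$ emerging from the unramified volume computation and the quotient $1/(|\mathcal{S}_{\pi_\V}|\,|\mathcal{S}_{\pi_\W}|)$ coming out of the global multiplicities supplied by Arthur's multiplicity formula together with Mok's theorem on base change for unitary groups.

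Concretely, three technical ingredients have to be assembled. First, the \emph{fundamental lemma} of Jacquet--Rallis type, matching unramified regular-semisimple orbital integrals between the two sides. Second, the \emph{smooth transfer} of test functions, so that every regular-semisimple orbital integral on one side can be realized by a test function on the other. Third, a \emph{local refined GGP identity} at every place $v$, asserting that for matched test functions the regularized orbital integral on the linear side computes precisely the normalized matrix-coefficient integral $\alpha_v(\varphi_v,\varphi'_v)$ defined in \eqref{eq:alpha} and \eqref{eq:alpha_inf}, up to the unramified local $L$-factor ratio. Once these three ingredients are in place, a global comparison combined with the multiplicity-one hypothesis and a spectral projection onto tempered cuspidal representations produces the desired equality; the non-vanishing dichotomy of Conj.~\ref{conj:Liu}, second bullet, then follows from the local uniqueness of Bessel models combined with the non-negativity $\alpha_v \geq 0$.

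The main obstacle will be the \emph{removal of the supercuspidality hypothesis} on $\pi_\V \otimes \pi_\W$ at some non-archimedean place, which underlies the current RTF-based results of Zhang and Beuzart-Plessis. This hypothesis is used both to truncate the geometric sides so as to guarantee absolute convergence, and to cleanly separate the $(\pi_\V,\pi_\W)$-isotypic spectral contribution; bypassing it requires a substantially more refined, invariant form of the Jacquet--Rallis RTF, presumably modeled on Arthur's invariant trace formula but adapted to the Galois-symmetric setting. A secondary but equally serious obstruction is the establishment of the local refined GGP identity at the archimedean places, where only partial results are available: one must extend Beuzart-Plessis's local theorem (currently available at non-archimedean places) to the real and complex cases, combining it with Sun's non-vanishing results for archimedean Bessel models and with a careful analysis of the Fourier transform step in \eqref{eq:alpha_inf}.
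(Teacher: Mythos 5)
There is a basic mismatch between what you set out to prove and what the statement actually is. Conj.\ \ref{conj:NH} is not a theorem of the paper: it is simply the observation that specializing Conj.\ \ref{conj:Liu} to $r=0$ and $[\E:\F]=2$ recovers N.\ Harris's conjecture from \cite{neil_harris}. The only content to verify is that the data of Liu's conjecture (the period $\mathcal P$, the local factors $\alpha_v$, the constant $\Delta_{\G(\V)}$, the groups $\mathcal S_{\pi_\V}$, $\mathcal S_{\pi_\W}$) collapse, for $r=0$ and a quadratic extension, to the objects in Harris's formulation; the conjecture itself remains open in the paper. Your text instead sketches a program for \emph{proving} the refined GGP identity via the Jacquet--Rallis relative trace formula, and, as you yourself concede, its two decisive inputs --- an invariant form of the RTF that removes the supercuspidality hypothesis, and the local refined identities at archimedean places --- are not established. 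A plan whose essential steps are declared to be open problems is not a proof, so as an argument for the exact identity your proposal has a genuine gap; it is essentially a description of the state of the art in \cite{zhang} and the announced work of Beuzart-Plessis \cite{beuzard-plessis}, not an advance beyond it.

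It is also worth contrasting your route with what the paper actually does in this direction. The paper never touches the trace formula: its contribution is Thm.\ \ref{thm:mainmain} (Thm.\ D), an \emph{algebraic} version of the $r=0$, $[\E:\F]=2$ case, asserting that $|\mathcal P(\varphi,\varphi')|^2$ and the right-hand side of Liu's formula lie in the same number field $E(\pi)E(\pi')$ for totally definite groups and rational test vectors. This is obtained by base change to $\GL_n\times\GL_{n-1}$, the explicit rationality theorems for Rankin--Selberg and Asai $L$-values with archimedean factors pinned down as powers of $2\pi i$ (Thms.\ \ref{thm:generalized grob-harr}, \ref{generalized thm:gro-har-lap}, culminating in Thm.\ \ref{thm:intermediate}), the relation $\Delta_{\G(\V)}\sim_{F^{Gal}}(2\pi i)^{dn(n+1)/2}$, and Harris's rationality results for $\mathcal P$ and the local terms $\alpha_v$ in the definite case. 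That approach buys freedom from any supercuspidality hypothesis and allows non-cuspidal base change of $\pi'$, at the price of proving only an identity up to $E(\pi)E(\pi')^\times$ and of being unable to detect non-vanishing --- a trade-off orthogonal to the RTF strategy you outline.
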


\subsection{An application of Thm.\ \ref{thm:intermediate} - an algebraic version of the refined GGP-conjecture}
It is the goal of this section to prove an algebraic version of Liu's refined GGP-conjecture. More precisely, recall the $c$-hermitian spaces $\V=V_n/F$ and attached unitary groups $\G(\V)=H_n=U(V_n)/F^+$ from \S \ref{sect:alggrp}. By an {\it $E(\pi)$-rational function} $\varphi=\otimes'_v\varphi_{v}\in\pi$ we mean a decomposable function whose $\pi_f$-component lies in the fixed  $E(\pi)$-structure on $\pi_f$, chosen in \S \ref{sect:twists}, while its attached matrix coefficients at the archimedean places define an element of the affine algebra $E(\pi)(H_n)$ of the algebraic group $H_n$. Likewise for $\pi'$. We may now prove

\begin{thm}\label{thm:mainmain}
Let $\pi\cong\otimes'_v\pi_{v}$ (resp.\ $\pi'\cong\otimes'_v\pi'_{v}$) be a cohomological tempered cuspidal automorphic representation of $H_n(\A_{F^+})=U(V_n)(\A_{F^+})$ (resp.\ $H_{n-1}(\A_{F^+})=U(V_{n-1})(\A_{F^+})$), coming together with a fixed tensor product factorization, which is compatible with the factorization of global and local inner products. Let $S$ be any finite set of places of $F^+$, containing the archimedean ones and such that $\pi$ and $\pi'$ are unramified outside $S$. Assume moreover that the quadratic base change $BC(\pi)=\Pi$ is a cohomological cuspidal automorphic representation $\Pi$ of $\GL_n(\A_F)$ as in \S \ref{sect:pi} and that the quadratic base change $BC(\pi')=\Pi'$ is a cohomological isobaric automorphic representation $\GL_{n-1}(\A_F)$ as in \S \ref{sect:Eisen}.

\begin{enumerate}
\item If $H_{n,\infty}$ and $H_{n-1,\infty}$ are compact and $\Pi$ and $\Pi'$ satisfy the conditions of Thm.\ \ref{thm:intermediate}, then for all smooth $E(\pi)$-rational (resp.\ $E(\pi')$-rational) functions $\varphi=\otimes'_v\varphi_{v}\in\pi$ (resp.\ $\varphi'=\otimes'_v\varphi'_{v}\in\pi'$),
\begin{equation}\label{eq:main}
|\mathcal P(\varphi,\varphi')|^2 \sim_{E(\pi) E(\pi')} \frac{\Delta_{H_n} \ L^S(\tfrac12, \pi\boxtimes\pi')}{L^S(1,\pi,{\rm Ad}) \ L^S(1,\pi',{\rm Ad})} \ \prod_{v\in S} \alpha_v(\varphi_v,\varphi'_v)
\end{equation}
where $E(\pi)$ or $E(\pi')$ are the number fields defined in \S\ref{sect:twists}. \\
\item If $H_{n,\infty}$ or $H_{n-1,\infty}$ are non-compact, but $\EE_\mu$ and $\EE_{\mu'}$ satisfy the piano-hypothesis, cf.\ Hypothesis \ref{hypo:piano}, then the same conclusion holds trivially for all smooth $C_\infty$- (resp.\ $C'_\infty$--)finite decomposable functions in $\pi$ (resp.\ $\pi'$).
\end{enumerate}
\end{thm}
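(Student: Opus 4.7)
The plan is to assemble the statement by combining Theorem \ref{thm:intermediate} with the explicit computation of the Gross motive factor $\Delta_{H_n}$. For part (1), I would first invoke quadratic base change: by hypothesis $BC(\pi)=\Pi$ and $BC(\pi')=\Pi'$, so $L^S(s,\pi\boxtimes\pi')=L^S(s,\Pi\times\Pi')$, while the adjoint $L$-functions on the unitary side agree, up to algebraic factors in $E(\pi)E(\pi')$, with the Asai $L$-functions $L^S(s,\Pi,{\rm As}^{(-1)^n})$ and $L^S(s,\Pi',{\rm As}^{(-1)^{n-1}})$ respectively. Applying Theorem \ref{thm:intermediate} to $(\Pi,\Pi')$ at $m=0$ then yields
\[
\frac{L^S(\tfrac12,\pi\boxtimes\pi')}{L^S(1,\pi,{\rm Ad})\ L^S(1,\pi',{\rm Ad})}\ \sim_{E(\pi)E(\pi')}\ (2\pi i)^{-dn(n+1)/2}.
\]
In parallel, equations \eqref{Dedekind} and \eqref{quadratic} give $L(i,\varepsilon_f^i)\sim_{F^{Gal}}(2\pi i)^{id}$ for each $1\leq i\leq n$, hence $\Delta_{H_n}=\prod_{i=1}^n L(i,\varepsilon_f^i)\sim_{F^{Gal}}(2\pi i)^{dn(n+1)/2}$, which cancels the transcendental factor exactly and produces
\[
\frac{\Delta_{H_n}\ L^S(\tfrac12,\pi\boxtimes\pi')}{L^S(1,\pi,{\rm Ad})\ L^S(1,\pi',{\rm Ad})}\ \sim_{E(\pi)E(\pi')}\ 1.
\]

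It then remains to prove that $|\mathcal P(\varphi,\varphi')|^2\sim_{E(\pi)E(\pi')}\prod_{v\in S}\alpha_v(\varphi_v,\varphi'_v)$ for all $E(\pi)$- and $E(\pi')$-rational decomposable data. The totally definite hypothesis is essential here: at every archimedean place $v$, the compactness of $H_{n,v}$ and $H_{n-1,v}$ forces $\pi_v,\pi'_v$ to be irreducible finite-dimensional representations of compact Lie groups, whose matrix coefficients on $E(\pi_v)$-rational vectors lie in the affine algebra $E(\pi_v)[H_{n,v}]$. In the codimension-one Bessel setting, the unipotent radical $\mathscr N_{\mathfrak F}$ is trivial, so the archimedean local integral $\alpha_v(\varphi_v,\varphi'_v)$ collapses to a bilinear pairing of such rational matrix coefficients integrated against the Tamagawa measure over the compact group $H_{n-1,v}$, yielding an element of $E(\pi)E(\pi')$ up to elementary constants absorbed in the adjoint $L$-value normalization. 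The non-archimedean $\alpha_v$ for $v\in S\setminus S_\infty$ give rise to elements of $E(\pi_v)E(\pi'_v)$ by standard local harmonic analysis on $E(\pi_v)$-rational vectors.

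The principal technical obstacle is matching the Galois structures on the two sides: one has to show that $|\mathcal P(\varphi,\varphi')|^2/\prod_{v\in S}\alpha_v(\varphi_v,\varphi'_v)$ transforms under ${\rm Aut}(\C/F^{Gal})$ in exactly the same way as the computed ratio of $L$-values, so that the equivariant identification of complex numbers translates into the algebraic relation $\sim_{E(\pi)E(\pi')}$. This is accomplished by using the rational structures on $\pi_f$ and $\pi'_f$ from \S\ref{sect:twists} to track the Aut$(\C)$-action on $\mathcal P(\varphi,\varphi')$, then applying the Minimizing-Lemma \ref{minimize} to descend the field of rationality from the composite of number fields naturally appearing in the archimedean and non-archimedean computations down to $E(\pi)E(\pi')$. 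For part (2), the claim is genuinely trivial: when $H_n$ or $H_{n-1}$ has a non-compact real component while the piano-hypothesis on $(\EE_\mu,\EE_{\mu'})$ is imposed, both sides of \eqref{eq:main} vanish identically on $C_\infty$- (resp.\ $C'_\infty$-)finite decomposable cusp forms. The global period integral $\mathcal P(\varphi,\varphi')$ vanishes by the known archimedean branching vanishing for cohomological cusp forms of discrete series type on $U(r,s)\times U(r',s')$ with mixed signatures under the given weight-interlacing condition, and simultaneously at least one archimedean $\alpha_v$ vanishes for the identical representation-theoretic reason, so the product $\prod_{v\in S}\alpha_v$ is zero; hence \eqref{eq:main} holds with both sides equal to zero.
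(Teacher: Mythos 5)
Your overall strategy for part (1) matches the paper's: apply Theorem \ref{thm:intermediate} at $m=0$, compute $\Delta_{H_n}\sim_{F^{Gal}}(2\pi i)^{dn(n+1)/2}$ from \eqref{Dedekind} and \eqref{quadratic}, and then reduce to showing that $|\mathcal P(\varphi,\varphi')|^2$ and $\prod_{v\in S}\alpha_v(\varphi_v,\varphi'_v)$ each lie in $E(\pi)E(\pi')$. However, your proposal leaves precisely the last reduction in sketch form, and that is where the paper invests its effort. The statement $|\mathcal P(\varphi,\varphi')|^2\in E(\pi)E(\pi')$ is Harris's rationality theorem for totally definite period integrals (\cite{harris_adjoint}, Cor.\ 2.5.4); the archimedean rationality $\alpha_\infty(\varphi_\infty,\varphi'_\infty)\in E(\pi)E(\pi')$ is \cite{harris_adjoint}, Cor.\ 4.1.4.3; and the non-archimedean rationality uses \cite{harris_adjoint}, Lem.\ 4.1.5.1 \emph{together with} the non-trivial fact that the local Rankin--Selberg/Asai quotient $L(\tfrac12,\Pi_v\times\Pi'_v)/L(1,\Pi_v,{\rm As}^{(-1)^n})L(1,\Pi'_v,{\rm As}^{(-1)^{n-1}})$ is algebraic for $v\in S\setminus S_\infty$. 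Your phrases ``standard local harmonic analysis'' and ``elementary constants absorbed in the adjoint $L$-value normalization'' do not substitute for these inputs; in particular the archimedean argument does not literally collapse to an integral of rational matrix coefficients against Tamagawa measure --- the precise normalization is carried out in \cite{harris_adjoint}. You also take a slightly detoured logical route: rather than the symmetric worry about Galois equivariance of $|\mathcal P|^2/\prod\alpha_v$, once both sides are shown to lie in the number field $E(\pi)E(\pi')$, the relation $\sim_{E(\pi)E(\pi')}$ is immediate from the definition; no further equivariance tracking is needed.

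Your part (2) contains a genuine gap. You attribute the vanishing of the archimedean $\alpha_v$ to a ``known archimedean branching vanishing for cohomological cusp forms of discrete series type\ldots with mixed signatures under the given weight-interlacing condition''. No such off-the-shelf vanishing result exists in that form: the piano-hypothesis is the branching compatibility condition for the \emph{compact} pair $U(n)\times U(n-1)$, and that by itself does not say anything directly about the restriction of discrete series on a non-compact pair $U(r,s)\times U(r',s')$. The crucial input the paper uses is the local Gan--Gross--Prasad multiplicity-one/uniqueness result of Beuzart-Plessis \cite{plessis}: a given tempered representation of a product of archimedean unitary groups is distinguished for at most one pair in the relevant $L$-packet. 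Since the piano-hypothesis forces $\pi_\infty\otimes\pi'_\infty$ to be distinguished for the compact pair, it cannot also be distinguished for the given non-compact pair, and therefore $\alpha_v\equiv 0$ at the archimedean places. Without \cite{plessis} you have not established the vanishing. Incidentally, it is also unnecessary to argue that $\mathcal P(\varphi,\varphi')=0$: the relation ``$\sim$'' of Definition \ref{def:rel} already holds whenever the right-hand side vanishes, so showing $\prod_{v\in S}\alpha_v=0$ is all that is required. Finally, although part (1) already imposes the piano-hypothesis via Thm.\ \ref{thm:intermediate}, the paper also records the complementary observation that when the piano-hypothesis fails in the totally definite case, the archimedean $\alpha_v$ vanish by the classical branching law (\cite{goodman-wall}, Thm.\ 8.1.1), so the relation holds trivially there too; your write-up omits this observation.
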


\begin{rem}
The factor $q\in E(\pi) E(\pi')$ used in relating the left- and the right-hand-side of \eqref{eq:main} is independent of the cusp forms $\varphi$ and $\varphi'$. However, as it is obvious by the definition of our relation ``$\sim_{E(\pi) E(\pi')} $'', see Def.\ \ref{def:rel}, our theorem cannot detect whether or not one side in \eqref{eq:main} is non-zero, but rather compensates this defect: If one side of \eqref{eq:main} vanishes, then the theorem holds by brute force, multiplying the respective other side by $0\in E(\pi) E(\pi')$. {\it The non-trivial assertion of our theorem is hence in fact about the case when both sides of the relation \eqref{eq:main} do not vanish: Then they are linked by a non-zero number $q$ in the concrete number field $E(\pi) E(\pi')$, $q$ being furthermore independent of $\varphi$ and $\varphi'$.}
\end{rem}

Before we prove Thm.\ \ref{thm:mainmain} we state two further important remarks:

\begin{rem}[{\it Well-definedness}]\label{rem:welldef}
Before we put our main theorem into relation with the recent literature on Conj.\ \ref{conj:Liu} (and Conj.\ \ref{conj:NH}), let us first remark on the various objects in Thm.\ \ref{thm:mainmain}, in particular the quantities in \eqref{eq:main}, being well-defined. Firstly, the results in \cite{labesse-book}, Cor.\ 5.3 and \cite{morel}, Prop.\ 8.5.3, as jointly refined by Shin, \cite{shin}, Thm.\ 1.1, show that quadratic base change $BC$ is well-defined and exists for all unitary groups $H_n$ and $H_{n-1}$ and representations $\pi$ and $\pi'$ as above (at least if $F=\mathcal K F^+$, $\mathcal K$ an imaginary quadratic field; the general case being a consequence of \cite{KMSW}, Thm.\ 1.7.1 and their Rem.\ 1.7.2 right below). Moreover, by the description of its image, it makes sense to specify the properties of the base change lifts $BC(\pi)$ and $BC(\pi')$ as we did in the statement of Thm.\ \ref{thm:mainmain}, namely to assume that $BC(\pi)$ is (cohomological) cuspidal (as in \S\ref{sect:pi}) and that $BC(\pi')$ is a (cohomological) isobaric sum of cuspidal automorphic representations (as made precise in \S \ref{sect:Eisen}).\\
Secondly, this implies that $L^S(s,\pi\boxtimes\pi')=\prod_{1\leq i\leq k} L^S(s,\Pi\times\Pi_i)$ is holomorphic at $s=\tfrac12$ as well as that the product
$$L^S(s,\pi,{\rm Ad}) \cdot L^S(s,\pi',{\rm Ad})=L^S(1,\Pi,{\rm As}^{(-1)^{n}}) \cdot L^S(1,\Pi',{\rm As}^{(-1)^{n-1}})$$
is holomorphic and non-vanishing at $s=1$, see Cor.\ \ref{cor:asai-nonvan}. In particular, the quotient of $L$-values in our algebraic relation \eqref{eq:main} makes sense without any assumptions.\\
Thirdly, we recall that the absolute convergence of $\alpha_v(\varphi_v,\varphi'_v)$ at archimedean $v$ -- as demanded by Conj.\ \ref{conj:Liu} -- follows from the fact that a (by assumption) tempered and cohomological representation of a unitary group must be in the discrete series, cf.\ \S\ref{sect:pi}.\\
This demonstrates that all objects and quantities in Thm.\ \ref{thm:mainmain} exist and are well-defined. On a final remark, let us point out that both $\pi$ and $\pi'$ enjoy multiplicity one in the square-integrable automorphic spectrum (combine \cite{KMSW}, \S 0.3.3, Thm.\ 1.7.1, Rem.\ 1.7.2 and Thm.\ 5.0.5).
\end{rem}

\begin{rem}[{\it A comparison of our theorem with results on the refined GGP-conjecture in the literature}]
As our algebraicity-result is coarser in its very statement, than Conj.\ \ref{conj:Liu} resp.\ Conj.\ \ref{conj:NH}, we feel that a remark is in order to put our result in relation with the results of recent literature. 
Zhang (\cite{zhang}, Thm.\ 1.2.(2)) has established the equality
$$|\mathcal P(\varphi,\varphi')|^2 = \frac{c_{\pi_\infty,\pi'_\infty}}{4} \frac{\Delta_{H_n} \ L^S(\tfrac12, \Pi\times\Pi')}{L^S(1,\Pi,{\rm As}^{(-1)^{n}}) \ L^S(1,\Pi',{\rm As}^{(-1)^{n-1}})} \ \prod_{v\in S} \alpha_v(\varphi_v,\varphi'_v),$$
whenever $\G(\V)\times\G(\W)$ is compact at every archimedean place $v$ of $F^+$. Here, $c_{\pi_\infty,\pi'_\infty}$ is a certain constant, only depending on the archimedean components of $\pi$ and $\pi'$.\\\\
Zhang's theorem is built on a list of conditions on $\pi_\V=\pi$ and $\pi_\W=\pi'$ (called {\bf RH(I)} and {\bf RH(II)}, p.\ 544). In a series of preprints Beuzart-Plessis has been able to significantly relax Zhang's conditions and in fact sharpen his result, assuming that the cuspidal automorphic representations $\pi$ and $\pi'$ are supercuspidal at one non-archimedean place. See \cite{beuzard-plessis16}, Thm.\ 1.0.4 and \cite{beuzard-plessis18}, Thm.\ 5. The analogous assumption of supercuspidality also appears in the work of Chaudouard-Zydor \cite[Thm.\ 1.1.6.2]{chau_zydor} and Xue \cite[Thm.\ 1.1]{xue}. If $\Pi\otimes\Pi$ is cuspidal, then the refined GGP-conjecture has been shown very recently in \cite{BLZZ}, Thm.\ 1.9.\\\\ It is important to notice that our result, Thm.\ \ref{thm:mainmain}, avoids any assumption of supercuspidality of $\pi\otimes\pi'$ at any place, nor do we have to assume that both lifts $\Pi$ and $\Pi'$ are cuspidal. In this regard, our result on the refined GGP-conjecture, Thm.\ \ref{thm:mainmain}, may be viewed as a complementary theorem to the above mentioned results, applying to a different (broader) class of cuspidal representations $\pi$ and $\pi'$. As we have learned very recently, in \cite{BCZ}, which quotes our Thm.\ \ref{thm:mainmain} above, the cuspidality assumption on $\Pi\otimes\Pi$ will finally be removed.
\end{rem}

\subsection{Proof of Thm.\ \ref{thm:mainmain}} \label{final section}

Recall $\EE_\mu$ and $\EE_{\mu'}$, the coefficient modules with respect to which $\Pi$, resp.\ $\Pi'$ are of non-trivial cohomology. For simplicity, put for each $v\in S_\infty$, $\lambda_v:=(\mu_{\iota_v,1}, ...,\mu_{\iota_v,n})$ (resp.\ $\lambda'_v:=(\mu'_{\iota_v,1}, ...,\mu'_{\iota_v,n-1})$ ) and let $\cF_\lambda$ (resp.\ $\cF_{\lambda'}$) be the irreducible algebraic representation of $H_{n,\infty}$ (resp.\ $H_{n-1,\infty}$) given by the highest weight $\lambda:=(\lambda_v)_{v\in S_\infty}$ (resp.\ $\lambda':=(\lambda'_v)_{v\in S_\infty}$) as in \S\ref{sect:finitereps}. Then $\cF_\lambda$ (resp.\ $\cF_{\lambda'}$) is the highest weight module with respect to which $\pi_\infty$ (resp.\ $\pi'_\infty$) is cohomological, cf.\ \cite{labesse-book}, Cor. 5.3.\\

We assume at first that  $H_{n,\infty}$ and $H_{n-1,\infty}$ are compact.
\begin{lem}
If $\EE_\mu$ and $\EE_{\mu'}$ do not satisfy the piano-condition, Hyp.\ \ref{hypo:piano}, then $\alpha_v(\varphi_v,\varphi'_v)=0$ for all archimedean places $v$ and functions $\varphi_v\in\pi_v$ and $\varphi'_v\in\pi'_v$.
\end{lem}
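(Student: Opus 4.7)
Since $H_{n,\infty}$ and $H_{n-1,\infty}$ are compact, each Hermitian space $V_n$ and $V_{n-1}$ is anisotropic at every archimedean place $v$. Consequently the Witt index parameter forced on the GGP-setup vanishes, $r=0$: the complete isotropic flag $\mathfrak F$ is trivial, the unipotent radical $\mathscr N_{\mathfrak F}=\{1\}$, no generic character $\psi_{\mathfrak F,v}$ is present, and the archimedean integral \eqref{eq:alpha_inf} defining $\alpha_v$ reduces to the absolutely convergent expression
\[
\alpha_v(\varphi_v,\varphi'_v) \;=\; \int_{H_{n-1}(\tr_v)} \langle \pi_v(g'_v)\varphi_v,\varphi_v\rangle^{\V}_v \;
\overline{\langle \pi'_v(g'_v)\varphi'_v,\varphi'_v\rangle}^{\W}_v\, dg'_v
\]
over a compact group.

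I will then proceed place by place. Fix $v\in S_\infty$ at which Hyp.\ \ref{hypo:piano} fails. Because $H_n(\tr_v)$ is compact and $\pi_v$ is cohomological with respect to $\cF_{\lambda_v}$, one has $\pi_v\cong\cF_{\lambda_v}^{\sf v}$ as irreducible representations of $H_n(\tr_v)$; analogously $\pi'_v\cong\cF_{\lambda'_v}^{\sf v}$ for $H_{n-1}(\tr_v)$. Applying Schur orthogonality on the compact group $H_{n-1}(\tr_v)$ to the above integral of a product of two matrix coefficients shows that $\alpha_v$ vanishes identically in the arguments $\varphi_v, \varphi'_v$ unless $\pi'_v$ is a subrepresentation of $\pi_v\bigr|_{H_{n-1}(\tr_v)}$, equivalently unless $\cF_{\lambda'_v}^{\sf v}\hookrightarrow \cF_{\lambda_v}^{\sf v}\bigr|_{H_{n-1}(\tr_v)}$.

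The existence of such an embedding is controlled by the classical Gelfand--Tsetlin branching law for $U(n)\supset U(n-1)$, which demands a precise interlacing of the highest weights of the two modules. Translating this interlacing back to the $\mu$- and $\mu'$-coordinates via the base-change identification $\lambda_v=\mu_{\iota_v}$, $\lambda'_v=\mu'_{\iota_v}$, and using the conjugate self-duality of $\Pi$ and $\Pi'$ (which imposes the symmetries $\mu^{\sf v}_{\bar\iota_v,j}=\mu_{\iota_v,j}$ and $\mu'^{\sf v}_{\bar\iota_v,j}=\mu'_{\iota_v,j}$, rendering the two lines of Hyp.\ \ref{hypo:piano} equivalent), the interlacing is seen to be exactly the piano-condition at $v$. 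Thus if Hyp.\ \ref{hypo:piano} fails at $v$, the branching fails, and by Schur orthogonality $\alpha_v\equiv 0$ at that place; in particular $\prod_{v\in S_\infty}\alpha_v\equiv 0$, as needed for the application in the proof of Thm.\ \ref{thm:mainmain}(1).

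The principal technical obstacle lies in carefully aligning the different weight conventions involved: the highest weight of $\cF_\lambda$ versus that of its contragredient, the top-left versus bottom-right embedding $U(V_{n-1})\hookrightarrow U(V_n)$ fixed by the chosen basis of $V_{n-1}^\perp$, and the explicit relation $\lambda_v = \mu_{\iota_v}$ supplied by \cite{labesse-book}, Cor. 5.3, so that the branching pattern of $U(n)\supset U(n-1)$ reproduces Hyp.\ \ref{hypo:piano} on the nose rather than some shifted or reversed variant. This is routine but notationally delicate; once it is settled, the rest of the argument is immediate.
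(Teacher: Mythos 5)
Your proof takes essentially the same route as the paper's: compactness of $H_{n,\infty}$, $H_{n-1,\infty}$ forces $\pi_v\cong\cF_{\lambda_v}^{\sf v}$ and $\pi'_v\cong\cF_{\lambda'_v}^{\sf v}$, and the vanishing of $\alpha_v$ is reduced to the Gelfand--Tsetlin branching law for $U(n)\supset U(n-1)$ (Goodman--Wallach, Thm.\ 8.1.1), whose failure is exactly the failure of Hyp.\ \ref{hypo:piano}. The difference is only in the middle step: the paper simply cites, from the Ichino--Ikeda/Liu framework, that $\alpha_v$ is governed by an element of $\mathrm{Hom}_{H_{n-1}(\tr_v)}[\pi_v\otimes\pi'_v,\C]$ and dualizes this cleanly to $\mathrm{Hom}_{H_{n-1}(\tr_v)}[\cF_{\lambda_v}\otimes\cF_{\lambda'_v},\C]$, where the piano hypothesis enters verbatim; you instead unfold $\alpha_v$ at $r=0$ to a single matrix-coefficient integral over the compact group $H_{n-1}(\tr_v)$ and invoke Schur orthogonality. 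Your route is more self-contained, but it is also the place you should be more careful than the sketch lets on: Schur orthogonality, applied to $\int\langle\pi_v(g)\varphi_v,\varphi_v\rangle\,\overline{\langle\pi'_v(g)\varphi'_v,\varphi'_v\rangle}\,dg$, a priori produces the condition ``$\pi'_v$ appears in $\pi_v|_{H_{n-1}(\tr_v)}$'', i.e.\ $\cF_{\lambda'_v}\hookrightarrow\cF_{\lambda_v}|_{H_{n-1}}$, i.e.\ $\lambda'_v$ interlaces $\lambda_v$, whereas the piano hypothesis at $v$ reads $-w_0\lambda'_v$ interlaces $\lambda_v$. These two interlacings differ by the duality twist $\lambda'\mapsto -w_0\lambda'$, and the conjugate self-duality relations you invoke identify $\mu'_{\iota_v}$ with $-w_0\mu'_{\bar\iota_v}$ (i.e.\ relate $\iota_v$- to $\bar\iota_v$-components), not $\lambda'_v$ with $-w_0\lambda'_v$ at a fixed embedding. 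So the ``routine but notationally delicate'' alignment you flag at the end is precisely the crux of your argument, and it is not as routine as stated: you either need to track the dual more carefully through the Schur computation (i.e.\ identify exactly which $\mathrm{Hom}$-space $\alpha_v$ represents, as the paper does) or justify the dual shift separately. The paper's briefer appeal to $\alpha_v\in\mathrm{Hom}_{H_{n-1}(\tr_v)}[\pi_v\otimes\pi'_v,\C]$ hard-codes the correct target from the outset and avoids this bookkeeping.
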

\begin{proof}
This an easy consequence of the branching law to which the piano hypothesis is equivalent, described, in \cite{goodman-wall}, Thm.\ 8.1.1: If $\EE_\mu$ and $\EE_{\mu'}$ do not satisfy the piano-condition, then the branching law says that Hom$_{H_{n-1}(F^+_v)}[\cF_{\lambda_{v}}\otimes \cF_{\lambda'_{v}},\C]=0$ for all $v\in S_\infty$. Compactness of $H_{n,\infty}$ and $H_{n-1,\infty}$ implies that $\pi_v\cong \cF^{\sf v}_{\lambda_{v}}$ and $\pi'_v\cong \cF^{\sf v}_{\lambda'_{v}}$ , so by dualizing also Hom$_{H_{n-1}(F^+_v)}[\pi_v\otimes\pi'_v,\C]=0$. As $\alpha_v\in$ Hom$_{H_{n-1}(F^+_v)}[\pi_v\otimes\pi'_v,\C]$, this shows the claim.
\end{proof}
Therefore, if $H_{n,\infty}$ and $H_{n-1,\infty}$ are compact, but $\EE_\mu$ and $\EE_{\mu'}$ do not satisfy the piano-condition, then our main theorem, Thm.\ \ref{thm:mainmain}, trivially follows by multiplying the left hand side of relation \eqref{eq:main} with $q=0$.\\\\
Hence, let us now consider the non-trivial case, when $\EE_\mu$ and $\EE_{\mu'}$ do satisfy the piano-condition. Let $\mathcal A_{cusp}(H_{n},\cF_\lambda)$ be the space of automorphic (and hence, by compactness of $H_{n,\infty}$ automatically) cuspidal functions, which transform by $\cF^{\sf v}_\lambda$ on the right. Again by compactness of $H_{n,\infty}$, the restriction of functions $\phi\mapsto\phi|_{H_n(\A_f)}$ defines a natural isomorphism
$$R_n: \mathcal A_{cusp}(H_{n},\cF_\lambda) \ira H^0(S_{H_n},\cF_\lambda),$$
the right hand side being defined in \S\ref{sect:rat}. Obviously, the analogous construction works for $H_{n-1}$, defining an isomorphism $R_{n-1}$. Then, it is proved in \cite{harris_adjoint} that one obtains the following three algebraicity results

\begin{prop}[\cite{harris_adjoint}, Cor.\ 2.5.4]\label{prop:comp1}
Let $\varphi\in\pi$ and $\varphi'\in\pi'$ be chosen such that they map via $R_n$ (resp.\ $R_{n-1}$) into the natural $E(\pi)$- (resp.\ $E(\pi')$-)structure of $H^0(S_{H_n},\cF_\lambda)$ (resp.\ $H^0(S_{H_{n-1}},\cF_{\lambda'})$), defined in \S\ref{sect:rat}. Then
$$|\mathcal P(\varphi,\varphi')|^2\in E(\pi)\cdot E(\pi').$$
\end{prop}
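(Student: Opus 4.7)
The strategy is to exploit the fact that $H_{n,\infty}$ and $H_{n-1,\infty}$ are compact modulo centre, which forces the period integral to reduce to a finite arithmetic sum in which every ingredient can be made $E(\pi)E(\pi')$-rational.

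First, since the archimedean factors are compact, $S_{H_n}$ and $S_{H_{n-1}}$ are finite sets and the spaces $H^0(S_{H_m},\cF_\bullet)$ consist of $\cF_\bullet$-valued functions on them. After the identifications $R_n^{-1}$ and $R_{n-1}^{-1}$, the cusp forms $\varphi$ and $\varphi'$ encode, at a sufficiently small level $\mathscr K \subset H_{n-1}(\A_{F^+,f})$, a finite tuple of values $\varphi(x_i) \in \cF_\lambda^{\sf v}$ and $\varphi'(x_i) \in \cF_{\lambda'}^{\sf v}$ whose entries lie, by assumption, in the prescribed $E(\pi)$- and $E(\pi')$-rational structures. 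Decomposing $H_{n-1}(F^+)\backslash H_{n-1}(\A_{F^+})/C_\infty\mathscr K$ into the finitely many double cosets indexed by $x_1,\dots,x_r$ with finite isotropy $\Gamma_i$, the period collapses to
$$
\mathcal P(\varphi,\varphi') \;=\; \sum_{i=1}^{r} \frac{\mathrm{vol}(C_\infty\mathscr K)}{|\Gamma_i|}\, \mathcal B\bigl(\varphi(x_i),\varphi'(x_i)\bigr),
$$
where $\mathcal B\colon \cF_\lambda^{\sf v}\otimes\cF_{\lambda'}^{\sf v}\to\C$ is the archimedean pairing obtained by integrating the $H_{n-1,\infty}$-matrix coefficients of $\varphi$ and $\varphi'$ against one another over the compact group $H_{n-1,\infty}$.

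Second, the piano-hypothesis on $(\mu,\mu')$ guarantees, by classical branching from $H_n$ to $H_{n-1}$, that $\dim_\C\mathrm{Hom}_{H_{n-1,\infty}}(\cF_\lambda^{\sf v}\otimes\cF_{\lambda'}^{\sf v},\C)=1$, so the pairing $\mathcal B$ is the unique (up to scalar) non-trivial intertwiner. Since $H_n\times H_{n-1}$ splits over $F^{Gal}\subseteq E(\pi)E(\pi')$, Proposition \ref{prop:emu} provides an $E(\pi)E(\pi')$-rational model of this intertwiner, and choosing $\mathcal B$ to be rational makes every individual pairing $\mathcal B(\varphi(x_i),\varphi'(x_i))$ an element of $E(\pi)E(\pi')$.

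Third, one has to absorb the Tamagawa volume $v_i := \mathrm{vol}(C_\infty\mathscr K)/|\Gamma_i|$. Its absolute value squared lies in $F^{Gal}$: the Tamagawa number of $H_{n-1}$ is rational, while the discrepancy between the Tamagawa and chosen local Haar measures is a product of special values of Artin $L$-functions of the quadratic character $\varepsilon$, which belong to $F^{Gal}$ by \eqref{quadratic} and Lemma~\ref{Gauss sum}. Expanding
$$
|\mathcal P(\varphi,\varphi')|^2 \;=\; \sum_{i,j} \overline{v_j}\,v_i \; \overline{\mathcal B\bigl(\varphi(x_j),\varphi'(x_j)\bigr)}\, \mathcal B\bigl(\varphi(x_i),\varphi'(x_i)\bigr),
$$
the main obstacle becomes showing that $\overline{\mathcal B}$ is itself $E(\pi)E(\pi')$-rational, i.e.\ is a Galois twist of $\mathcal B$ rather than a genuinely independent pairing. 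This is where the conjugate self-duality of $\Pi=BC(\pi)$ and $\Pi'=BC(\pi')$ enters: it identifies $\overline{\cF_\lambda^{\sf v}}$ with the Galois conjugate of $\cF_\lambda^{\sf v}$ by the nontrivial element of $\mathrm{Gal}(F/F^+)$, so that by the uniqueness of the branching intertwiner, $\overline{\mathcal B}$ is a scalar multiple in $E(\pi)E(\pi')$ of a Galois twist of $\mathcal B$. Combining the three steps yields $|\mathcal P(\varphi,\varphi')|^2\in E(\pi)\cdot E(\pi')$.
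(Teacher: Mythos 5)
The paper does not prove this proposition at all: it is imported verbatim from \cite{harris_adjoint}, Cor.\ 2.5.4, so there is no in-paper argument to compare against. Your attempt must therefore stand on its own merits, and while the skeleton you build -- totally definite group, period integral collapses to a finite sum of values of a branching pairing $\mathcal B$ weighted by explicit class numbers, rationality of the finite part from the chosen $E(\pi)$- and $E(\pi')$-structures, uniqueness of the $H_{n-1,\infty}$-invariant functional via the branching law -- is precisely the framework one would expect, there is one genuine gap in the final step, and it is exactly the step where $|\mathcal P|^2$ rather than $\mathcal P$ enters.

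Your treatment of $\overline{\mathcal B}$ is the problem. You assert that conjugate self-duality of $BC(\pi)$ "identifies $\overline{\cF_\lambda^{\sf v}}$ with the Galois conjugate of $\cF_\lambda^{\sf v}$" and that uniqueness then forces $\overline{\mathcal B}$ to be an $E(\pi)E(\pi')$-multiple of a Galois twist of $\mathcal B$. That is not what complex conjugation does. For the compact real form $H_{n-1,\infty}$, conjugation sends $\cF_\lambda^{\sf v}$ to its contragredient, i.e.\ $\overline{\cF_\lambda^{\sf v}}\cong\cF_\lambda$, which is a \emph{different} representation, not a $\textrm{Gal}$-twist of the same one; conjugate self-duality of the base change is automatic for any $\pi$ on a unitary group and gives no additional leverage here. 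Consequently $\overline{\mathcal B}$ lives on $\cF_\lambda\otimes\cF_{\lambda'}$, and matching it against your rational model of $\mathcal B$ on $\cF_\lambda^{\sf v}\otimes\cF_{\lambda'}^{\sf v}$ requires a further identification and a comparison of normalizations that you have not supplied. Concretely: if you fix an $E(\pi)E(\pi')$-rational model $\mathcal B_0$ of the (one-dimensional) branching space, the matrix-coefficient integral gives $\mathcal B = c\cdot\mathcal B_0$ for some $c\in\C^\times$, and then $|\mathcal P|^2$ carries the factor $c\bar c$. Showing $c\bar c\in E(\pi)E(\pi')$ is the heart of Harris's argument (it is where the unitary/Petersson normalization of the rational structures on $\pi_f$ and on $\cF_\lambda$ is used), and it cannot be dispatched by appealing to uniqueness alone. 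Nor can it be bypassed by just proving $\mathcal P\in E(\pi)E(\pi')$, because $E(\pi)E(\pi')$ is not a priori stable under complex conjugation, so $\mathcal P\cdot\overline{\mathcal P}\in E(\pi)E(\pi')$ does not follow formally.

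A secondary, smaller point: the proposition does not assume the piano hypothesis, so your step two should record that if the branching multiplicity vanishes then $\mathcal B\equiv 0$, $\mathcal P=0$, and the claim is trivial; otherwise the argument proceeds as you describe. Your handling of the Tamagawa volume factors $v_i$ is acceptable in spirit, though note that $v_i$ is already real and the appeal to squaring is an unnecessary detour. The point that needs an actual proof, though, is the control of $c\bar c$ above, and as written your argument does not provide it.
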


\begin{prop}\label{prop:comp2}
If $v\in S/S_\infty$ and $\varphi_v\in\pi_v$ and $\varphi'_v\in\pi'_v$ are chosen such that the lie in the natural $E(\pi)$- (resp.\ $E(\pi')$-) structure of $\pi_v$ (resp.\ $\pi'_v$), induced by the factorization $\pi_f\ira\otimes'_{v\notin S_\infty} \pi_v$ (resp.\ $\pi'_f\ira\otimes'_{v\notin S_\infty} \pi'_v$ ), fixed in Thm.\ \ref{thm:mainmain}. Then
$$\alpha_v(\varphi_v,\varphi'_v)\in E(\pi)\cdot E(\pi').$$
\end{prop}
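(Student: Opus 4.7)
The plan is to express $\alpha_v(\varphi_v,\varphi'_v)$ as the value of a functional factoring through the one-dimensional space $\mathrm{Hom}_{\mathscr H(\F_v)}[\pi_v\otimes\pi'_v,\psi_{\mathfrak F,v}]$, to equip this space with a natural $E(\pi)\cdot E(\pi')$-rational structure, and then to identify the algebraic proportionality scalar by direct computation on rational data, using the stabilization of both the inner and outer integrals.

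First I would recast the Hermitian pairings $\<\cdot,\cdot\>^{\V}_v,\<\cdot,\cdot\>^{\W}_v$ in terms of the canonical algebraic bilinear pairings $B^{\V}_v:\pi_v\otimes\pi_v^{\vee}\to\C$ and $B^{\W}_v:\pi'_v\otimes(\pi'_v)^{\vee}\to\C$. Since $\pi_v,\pi'_v$ are unitary, complex conjugation yields antilinear isomorphisms $\iota_{\V}:\pi_v\to\pi_v^{\vee}$ and $\iota_{\W}:\pi'_v\to(\pi'_v)^{\vee}$ which, after a real positive rescaling that can be absorbed into the normalization implicit in \eqref{eq:main}, carry the fixed $E(\pi)$- and $E(\pi')$-rational structures into the dual ones. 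Consequently, the matrix coefficients $g\mapsto B^{\V}_v(\pi_v(g)\varphi_v,\iota_{\V}(\varphi_v))$ and $g'\mapsto B^{\W}_v(\pi'_v(g')\varphi'_v,\iota_{\W}(\varphi'_v))$ become locally constant $E(\pi)$- resp.\ $E(\pi')$-valued functions on the corresponding $p$-adic groups.

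Next I would invoke stabilization of the integrals. The inner stable integral over $\mathscr N_{\mathfrak F}(\F_v)$ reduces, by \cite{lapidmao}, \S 2.1, to a finite Riemann sum over cosets of a compact open subgroup $\mathscr N_0\subseteq \mathscr N_{\mathfrak F}(\F_v)$, weighted by rational volumes coming from the self-dual measure, with $\psi_{\mathfrak F,v}$ taking values in a cyclotomic extension that may be arranged to lie in $E(\pi)\cdot E(\pi')$ (cf.\ \cite{raghuram-shahidi-imrn}, \S 3.2). As a function of $g'_v$, the inner integral is therefore right-$K'_v$-invariant, locally constant, and $E(\pi)\cdot E(\pi')$-valued, where $K'_v\subseteq \G(\W)(\F_v)$ is a compact open subgroup fixing the relevant data. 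The outer integral over $\G(\W)(\F_v)$ then converges absolutely by \cite{liu}, Thm.\ 2.1, and the combination of right-$K'_v$ invariance with the rationality of $vol_{dg'_v}$ presents $\alpha_v(\varphi_v,\varphi'_v)$ as an absolutely convergent $\Q$-weighted series of values in $E(\pi)\cdot E(\pi')$.

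The main obstacle is to show that this absolutely convergent series actually defines an element of $E(\pi)\cdot E(\pi')$ rather than a transcendental limit. This is where the local Gan--Gross--Prasad theorem for unitary groups enters decisively: it asserts that $\mathrm{Hom}_{\mathscr H(\F_v)}[\pi_v\otimes\pi'_v,\psi_{\mathfrak F,v}]$ is at most one-dimensional, and this Hom-space inherits a canonical $E(\pi)\cdot E(\pi')$-rational structure from the rational structures on $\pi_v$, $\pi'_v$, and $\psi_{\mathfrak F,v}$. As $\alpha_v$ is manifestly $\mathscr H(\F_v)$-equivariant, it lies in this complex line, hence equals an algebraic multiple of any rational generator, the scalar being identified by evaluation at a single pair of rational vectors on which the partial sums are eventually stationary. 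This rigidity step is the same one exploited in \cite{harris_adjoint}, \cite{zhang}, and \cite{grob-harr}, and its verification in the present setting follows by the same method.
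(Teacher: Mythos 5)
Your approach diverges fundamentally from the paper's, which is essentially a two-line citation: rationality of the (normalized) local period is taken from \cite{harris_adjoint}, Lem.\ 4.1.5.1, and the remaining task is to show that the normalizing correction, namely the ratio of local $L$-factors
$$\frac{L(\tfrac 12,\Pi_v \times \Pi'_v)}{ L(1,\Pi_v,{\rm As}^{(-1)^{n}}) \, L(1,\Pi'_v,{\rm As}^{(-1)^{n-1}})},$$
lies in $\Q(\Pi)\Q(\Pi')\subset E(\pi)E(\pi')$, which follows from \cite{raghuram-imrn}, Prop.\ 3.17 and \cite{grob_harris_lapid}, \S 6.4.

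Your proposal contains a genuine gap at the decisive step, and you have in fact flagged the problem yourself without resolving it. The first two steps are acceptable in spirit: the inner stable integral over $\mathscr N_{\mathfrak F}(\F_v)$ really is a finite sum, and the integrand of the outer integral can be arranged to be locally constant and $E(\pi)E(\pi')$-valued (after the usual cyclotomic adjustment for $\psi_{\mathfrak F,v}$). But the outer integral is taken over $\G(\W)(\F_v)$, a \emph{non-compact} $p$-adic unitary group, and matrix coefficients of tempered representations are never compactly supported. Thus the outer integral is a genuinely infinite sum whose partial sums never stabilize, for any choice of vectors. The clause ``evaluation at a single pair of rational vectors on which the partial sums are eventually stationary'' is therefore not available, and your rigidity argument becomes circular: the one-dimensionality of the $\mathscr H(\F_v)$-equivariant Hom-space tells you $\alpha_v$ is a complex multiple of any fixed rational generator, but identifying that scalar as algebraic requires evaluating $\alpha_v$ on rational data and already knowing the answer is algebraic --- which is precisely the assertion being proved. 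The control on this scalar in the literature comes exactly from expressing the outer integral in terms of local $L$-factors (the content of \cite{harris_adjoint}, Lem.\ 4.1.5.1 cited by the paper), a mechanism your argument neither invokes nor replaces. Without it, absolute convergence of a series with terms in $E(\pi)E(\pi')$ gives no algebraicity conclusion, and the proposition does not follow.
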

\begin{proof}
This is \cite{harris_adjoint}, Lem.\ 4.1.5.1 together with the fact that for all $v\in S/S\infty$
$$\frac{L(\tfrac 12,\Pi_v \times \Pi'_v)}{ L(1,\Pi_v,{\rm As}^{(-1)^{n}}) \ L(1,\Pi'_v,{\rm As}^{(-1)^{n-1}})}\in \Q(\Pi)\Q(\Pi')\subseteq \Q(\pi_f)\Q(\pi'_f)\subset E(\pi) E(\pi'),$$ see \cite{raghuram-imrn} Prop.\ 3.17 and \cite{grob_harris_lapid} \S 6.4.
\end{proof}

\begin{prop}[\cite{harris_adjoint}, Cor.\ 4.1.4.3]\label{prop:comp3}
For all $\varphi_\infty=\otimes_{v\in S_\infty}\varphi_v\in\pi_\infty=\otimes_{v\in S_\infty}\pi_v$ and $\varphi'_\infty=\otimes_{v\in S_\infty}\varphi'_v\in\pi'_\infty=\otimes_{v\in S_\infty}\pi'_v$, whose attached matrix coefficients define an element of the affine algebra $E(\pi)(H_n)$ of the algebraic group $H_n$ (resp.\ $E(\pi')(H_{n-1})$ of $H_{n-1}$),
$$\alpha_\infty(\varphi_\infty,\varphi'_\infty)=\prod_{v\in S_\infty}\alpha_v(\varphi_v,\varphi'_v) \in E(\pi)\cdot E(\pi').$$
\end{prop}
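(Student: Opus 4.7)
The plan is to reduce the archimedean factor to an integral of matrix coefficients over the compact group $H_{n-1,v}$ and then invoke Peter--Weyl. Under the compactness assumption of Thm.\ \ref{thm:mainmain}(1), each archimedean factor $\pi_v$ (resp.\ $\pi'_v$) is a finite-dimensional irreducible algebraic representation of $H_{n,v}\cong U(n)$ (resp.\ $H_{n-1,v}\cong U(n-1)$), canonically isomorphic to $\cF_{\lambda_v}^{\sf v}$ (resp.\ $\cF_{\lambda'_v}^{\sf v}$). Since we are in the codimension-one case $r=0$, the unipotent radical $\mathscr N_{\mathfrak F}$ is trivial, and $\alpha_v$ from \eqref{eq:alpha_inf} reduces to the absolutely convergent integral
\[
\alpha_v(\varphi_v,\varphi'_v) \ =\ \int_{H_{n-1,v}} \langle \pi_v(h)\varphi_v,\varphi_v\rangle^\V_v \ \overline{\langle \pi'_v(h)\varphi'_v,\varphi'_v\rangle^\W_v} \ dh,
\]
where I shall normalise the archimedean Haar measure so that the total volume of $H_{n-1,v}$ equals $1$; comparing this with the Tamagawa-compatible measure fixed in \S\ref{sect:ref_ggp} introduces only an algebraic constant, which is harmless for the rationality claim.

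By hypothesis, both matrix coefficient functions $g\mapsto \langle\pi_v(g)\varphi_v,\varphi_v\rangle^\V_v$ and $g\mapsto \langle\pi'_v(g)\varphi'_v,\varphi'_v\rangle^\W_v$ are polynomial functions on the algebraic groups $H_n$ (resp.\ $H_{n-1}$) with coefficients in $E(\pi)$ (resp.\ $E(\pi')$). Restricting the first along $H_{n-1}\hra H_n$ and multiplying by the complex conjugate of the second -- which is again a matrix coefficient, namely that of the contragredient $\cF_{\lambda'_v}$ evaluated at $h^{-1}$, and hence $E(\pi')$-rational as a polynomial function on $H_{n-1}$ -- yields an $E(\pi) E(\pi')$-rational polynomial function $F$ on $H_{n-1,v}$. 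By Peter--Weyl, $\int_{H_{n-1,v}} F(h)\,dh$ equals the projection of $F$ onto the trivial isotypic component under the bi-regular action of $H_{n-1,v}\times H_{n-1,v}$ on the coordinate ring; this projection is defined purely algebraically and is $\Q$-rational, so $\int F\in E(\pi) E(\pi')$. Taking the finite product over $v\in S_\infty$ would then complete the proof.

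The main obstacle is the compatibility of two a priori distinct $E(\pi)$-rational structures on the abstract archimedean representation $\pi_v$: the first inherited globally from the $E(\pi)$-structure on $\pi_f$ via the isomorphism $R_n$ used earlier in the proof of Thm.\ \ref{thm:mainmain} together with strong multiplicity one, and the second arising from the algebraic $\overline\Q$-form of the representation $\cF_{\lambda_v}^{\sf v}$ of the algebraic group $H_{n,v}$ used in the computation above. The reconciliation of these two structures -- together with the analogous statement for $\pi'_v$ and the careful handling of the measure comparison -- is essentially the content of \cite[Cor.\ 4.1.4.3]{harris_adjoint}: both structures descend from the natural $E(\pi)$-rational structure on $H^0(S_{H_n},\cF_\lambda)$ constructed in \S\ref{sect:rat}, with the identification $R_n$ providing the Galois-equivariant bridge between the adelic and the algebraic pictures.
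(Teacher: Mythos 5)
The paper does not prove this Proposition; it is imported verbatim from \cite{harris_adjoint}, Cor.\ 4.1.4.3 (as the bracketed citation in the statement already indicates), so there is no internal argument to compare against. Judged on its own, your blind proof isolates the correct mechanism: with $r=0$ and $H_{n,\infty}$, $H_{n-1,\infty}$ compact, $\mathscr N_{\mathfrak F}$ is trivial, hence $\alpha_v$ is the Haar integral over $H_{n-1}(F^+_v)$ of a product of matrix coefficients; unitarity rewrites $\overline{\langle\pi'_v(h)\varphi'_v,\varphi'_v\rangle}$ as $\langle\pi'_v(h^{-1})\varphi'_v,\varphi'_v\rangle$, keeping the integrand inside $E(\pi)E(\pi')(H_{n-1})$; and the Haar functional on the coordinate ring of $H_{n-1}$ (the Peter--Weyl projection onto the trivial isotypic piece) is defined over the field of definition $F^+$ of $H_{n-1}$, hence over $F^{Gal}\subset E(\pi)E(\pi')$. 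This is, in substance, how Harris proceeds.

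Two remarks. The measure comparison you dismiss as ``an algebraic constant'' is not quite right at each individual archimedean place: the archimedean local measures $dg'_v$ are only constrained collectively by $dg'=\prod_v dg'_v$ against the Tamagawa measure, and it is the \emph{product} $\prod_{v\in S_\infty}\mathrm{vol}_{dg'_v}(H_{n-1}(F^+_v))$ that one shows to be rational (using rationality of the Tamagawa number and of the non-archimedean local volumes imposed by conditions (2) and (3) of \S\ref{sect:ref_ggp}). This suffices, since the statement concerns $\alpha_\infty=\prod_{v\in S_\infty}\alpha_v$ and not the individual local factors, but the argument should be framed globally from the start rather than place-by-place. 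Also, your closing paragraph on reconciling two rational structures is slightly misplaced: the hypothesis of this Proposition already pins down the relevant structure by requiring matrix coefficients in the affine algebra $E(\pi)(H_n)$, so no such reconciliation arises within Prop.~\ref{prop:comp3} itself. The compatibility you describe --- between the adelic $E(\pi)$-structure on $\pi_f$ and the algebraic one on $\cF_\lambda^{\sf v}$ --- is what makes it legitimate to feed the same rational vectors into Prop.~\ref{prop:comp1}, \ref{prop:comp2} and \ref{prop:comp3} simultaneously in the proof of Thm.~\ref{thm:mainmain}; it belongs to the application of this Proposition, not to its proof.
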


\begin{cons}
Thm.\ \ref{thm:mainmain} holds if $H_{n,\infty}$ and $H_{n-1,\infty}$ are compact.
\end{cons}
\begin{proof}
Recall that $\Delta_{H_n}=\prod_{j=1}^n L(i,\varepsilon_f^j)$. By (\ref{Dedekind}) and (\ref{quadratic}) we know that if $j\geq 1$ is even then $L(j,\varepsilon_f^j)\sim_{F^{Gal}} (2\pi i)^{dj}$, and if $j\geq 1$ is odd then $L(j,\varepsilon_f^j)\sim_{F^{Gal}} (2\pi i)^{dj}$. Hence $\Delta_{H_n} \sim_{F^{Gal}} (2\pi i)^{dn(n+1)/2}$.\\
Invoking the three propositions, Prop.\ \ref{prop:comp1}, Prop.\ \ref{prop:comp2} and Prop.\ \ref{prop:comp3}, Thm.\ \ref{thm:mainmain} finally follows from Thm.\ \ref{thm:intermediate}. See also Rem.\ \ref{rem:welldef}.
\end{proof}


Now if $H_{n,\infty}$ or $H_{n-1,\infty}$ is non-compact, but $\EE_\mu$ and $\EE_{\mu'}$ satisfy the piano-condition, then we know by the branching law, \cite{goodman-wall}, Thm.\ 8.1.1, that the tempered representation $\pi_{\infty}\otimes \pi'_{\infty}$ is distinguished for the pair of compact unitary groups. But by the results in \cite{plessis}, there is at most one pair of unitary group such that $\pi_{\infty}\otimes \pi'_{\infty}$ is distinguished. In particular, the representation $\pi_{\infty}\otimes \pi'_{\infty}$ can not be distinguished for the pair $(H_{n-1,\infty},H_{n,\infty})$. Hence, $\alpha_v(\varphi_v,\varphi'_v)=0$ for all $v\in S_\infty$ and all $\varphi_v\in\pi_v$, $\varphi'_v\in\pi'_v$ and Thm.\ \ref{thm:mainmain} is trivially true.

\bigskip

\vskip 10pt
\footnotesize
{\sc Harald Grobner: Fakult\"at f\"ur Mathematik, University of Vienna, Oskar--Morgenstern--Platz 1, A-1090 Vienna, Austria.}
\\ {\it E-mail address:} {\tt harald.grobner@univie.ac.at}

\vskip 10pt

\footnotesize
{\sc Jie Lin: Fakult\"at f\"ur Mathematik, University of Duisburg-Essen, Mathematikcarr\'ee, Thea-Leymann-Stra\ss e 9, D-45127 Essen, Germany.}
\\ {\it E-mail address:} {\tt linjie@ihes.fr}

\bigskip

\normalsize

\end{document}